\documentclass[9pt,reqno]{amsart}
\usepackage[T1]{fontenc}
\usepackage{lmodern}
\usepackage{microtype}
\usepackage[
	paperwidth=6.75in,paperheight=9.5in,
	inner=0.82in,outer=0.72in,top=0.75in,bottom=0.82in
]{geometry}
\usepackage{amsmath,amssymb,amsthm,mathtools,mathrsfs}
\usepackage{enumitem}
\usepackage{booktabs}
\usepackage{xcolor}
\usepackage{tikz}
\usepackage{placeins}
\usepackage[normalem]{ulem}
\usepackage[colorlinks=true,linkcolor=blue!55!black,citecolor=blue!55!black,
						urlcolor=blue!55!black]{hyperref}

\numberwithin{equation}{section}
\allowdisplaybreaks
\newtheorem{theorem}{Theorem}[section]
\newtheorem{proposition}[theorem]{Proposition}
\newtheorem{lemma}[theorem]{Lemma}
\newtheorem{corollary}[theorem]{Corollary}
\theoremstyle{definition}
\newtheorem{definition}[theorem]{Definition}
\newtheorem{construction}[theorem]{Construction}
\theoremstyle{remark}
\newtheorem{remark}[theorem]{Remark}

\newcommand{\K}{\mathbb{K}}
\newcommand{\N}{\mathbb{N}}
\newcommand{\Q}{\mathbb{Q}}
\newcommand{\Bcal}{\mathcal{B}}
\newcommand{\Kcal}{\mathcal{K}}
\newcommand{\Cal}{\mathop{\rm Cal}}
\newcommand{\dist}{\operatorname{dist}}
\newcommand{\supp}{\operatorname{supp}}
\newcommand{\ran}{\operatorname{ran}}
\newcommand{\norm}[1]{\left\lVert #1\right\rVert}
\newcommand{\abs}[1]{\left\lvert #1\right\rvert}

\title[Duals of separable spaces as Calkin algebras and Universal Ideal Quotients]
{Duals of separable Banach Spaces as Calkin Algebras and Universal Ideal Quotients}
\hypersetup{
	pdftitle={Duals of separable Banach spaces as Calkin algebras},
	pdfsubject={Universal Banach-space realization and a two-sorted Bourgain--Delbaen Calkin construction},
	pdfkeywords={Calkin algebra, Bourgain--Delbaen construction, Argyros--Haydon method,
	finite-extension method, dual Banach space, square-zero radical}
}
\author{Rui Liu}
\address{School of Mathematical Sciences and LPMC, Nankai University, Tianjin
300071, P.R. China}
\email{ruiliu@nankai.edu.cn}

\author{Jie Shen}
\address{School of Mathematical Sciences and LPMC, Nankai University, Tianjin
300071, P.R. China}
\email{1710064@mail.nankai.edu.cn}

\date{September 26, 2026}
\subjclass[2020]{Primary 46B07, 46B25, 46B28; Secondary 46J10, 47L10}
\keywords{Calkin algebra, Bourgain--Delbaen construction,
	Argyros--Haydon method, finite-extension method, dual Banach space,
	square-zero radical}

\begin{document}

\begin{abstract}
	For \(\K=\mathbb{R}\) or \(\mathbb{C}\) and every separable Banach space \(V\), we use an oracle-relative two-sorted finite-extension construction, whose oracle records the local rational structure of \(V\), to construct a separable Banach space \(X_V\) such that
	\[
		\Cal(X_V)=\Bcal(X_V)/\Kcal(X_V)\simeq
		\begin{pmatrix}
			\K&0\\
			V^*&\K
		\end{pmatrix}
	\]
	as Banach algebras.
	The identification of \(V^*\) with the Jacobson radical is isometric.
	Consequently, every nonzero dual Banach space with a separable predual admits, after an equivalent renorming, a unital Banach-algebra structure isomorphic to the Calkin algebra of a separable Banach space.
	Moreover, there is a separable Banach space \(X\) such that every separable Banach space is isometric to \(\mathcal{J}/\Kcal(X)\) for a closed two-sided ideal \(\mathcal{J}\) of \(\Bcal(X)\), and the subspace--ideal correspondence preserves the canonical order.
\end{abstract}

\maketitle
\enlargethispage{3pt}
\tableofcontents

\section{Introduction}

For a Banach space \(Z\), write
\[ \Cal(Z)=\Bcal(Z)/\Kcal(Z), \]
where \(\Bcal(Z)\) is the algebra of bounded operators on \(Z\) and \(\Kcal(Z)\) is the ideal of compact operators on \(Z\).
Throughout, \(\N=\{0,1,\ldots\}\) and \(\N_+=\{1,2,\ldots\}\).
The name comes from Calkin's 1941 study of the Hilbert-space quotient \cite{Calkin1941}.

The main result of this paper is a universal realization theorem at the level of Banach-space isomorphism types.
The algebraic result is a realization of lower triangular Banach algebras whose semisimple quotient is \(\K\oplus\K\) and whose square-zero radical is the dual of a separable Banach space.
Let \(V\) be separable.
On \(\K\oplus V^*\oplus\K\), we consider the multiplication
\[ (\lambda,v,\mu)(\lambda',v',\mu') =\bigl(\lambda\lambda',v\lambda'+\mu v',\mu\mu'\bigr). \]
Equipped with the norm \(\norm{(\lambda,v,\mu)}=|\lambda|+\norm{v}+|\mu|\), this is the lower triangular Banach algebra with scalar diagonal and lower-left corner \(V^*\).
The algebra constructed below has two scalar characters and a square-zero lower-left radical.

For a separable coefficient space \(U\), we write \(X_U\) for the direct-sum space furnished by Theorem~\ref{thm:tri-cal} below.

\begin{theorem}[Dual as Calkin algebra]
	\label{thm:dua-cal}
	For every nonzero dual Banach space \(V^*\) with \(V\) separable, there is a separable Banach space \(Z\) such that \(V^*\) admits an equivalent norm and a unital multiplication for which
	\[ V^*\simeq\Cal(Z) \]
	as Banach algebras.
\end{theorem}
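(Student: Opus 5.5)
The plan is to derive the statement from Theorem~\ref{thm:tri-cal}, which for a separable coefficient space \(U\) yields a separable space \(X_U\) with
\[ \Cal(X_U)\simeq\begin{pmatrix}\K&0\\ U^*&\K\end{pmatrix}, \]
the radical being identified isometrically with \(U^*\). As a Banach space this algebra is \(\K\oplus U^*\oplus\K\). The task is therefore to choose \(U\) so that \(\K\oplus U^*\oplus\K\) is isomorphic to the given \(V^*\). Once this is done, we transport the multiplication and unit along the isomorphism. The transported norm is equivalent to the original norm of \(V^*\), and \(V^*\simeq\Cal(X_U)\) as Banach algebras, so we may take \(Z=X_U\).

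To choose \(U\), we use that \(V\neq 0\). Pick a nonzero \(x\in V\) and a complemented one-dimensional subspace. Write \(V=\K x\oplus V_1\) with \(V_1\) closed, so that \(V_1=\ker f\) for some \(f\in V^*\) with \(f(x)=1\). Then \(V^*\cong \K\oplus V_1^*\), with \(V_1\) separable. We would like to peel off a second copy of \(\K\). If \(V_1\neq0\), repeat the splitting to get \(V^*\cong\K\oplus\K\oplus V_2^*\), and set \(U=V_2\), which is separable (possibly \(\{0\}\)). Then
\[ V^*\cong\K\oplus U^*\oplus\K\cong\Cal(X_U). \]
Here we need Theorem~\ref{thm:tri-cal} to allow \(U=\{0\}\). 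If it requires \(U\) nonzero, we handle the low-dimensional cases separately, as below.

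The main obstacle is the case where \(V\) is finite-dimensional of small dimension, especially \(\dim V=1\), and possibly \(\dim V=2\) if \(U=0\) is excluded. For infinite-dimensional \(V\) the issue disappears: pick a hyperplane \(W\subset V\). Then \(W\) is separable and isomorphic to \(W\oplus\K^2\) whenever it contains complemented finite-dimensional pieces, which it always does. So with \(U\) a closed subspace of codimension two we still have \(U\neq0\).

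For finite \(n=\dim V\ge3\) we take \(U=\K^{n-2}\neq0\). For \(n\le2\) we need separate realizations of \(\K\) and \(\K^2\) as Calkin algebras of separable spaces.

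For \(n=2\), with \(U=0\), the algebra is \(\K\oplus\K\). This is realized by the two-sorted construction with trivial coefficient space. Alternatively, it is realized as the Calkin algebra of \(X_{AH}\oplus X_{AH}'\), where the summands are totally incomparable Argyros--Haydon-type spaces.

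For \(n=1\), we use the Argyros--Haydon space \(X_{AH}\) itself, whose Calkin algebra is \(\K\) (scalar-plus-compact). Its construction gives this for \(\K=\mathbb{R}\) and, in the complex version, for \(\K=\mathbb{C}\).

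I would first check whether Theorem~\ref{thm:tri-cal} permits \(U=\{0\}\). If it does, only the case \(n=1\) needs an external result, namely Argyros--Haydon.
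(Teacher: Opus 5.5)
Your argument is correct and is essentially the paper's proof. The paper also splits off two biorthogonal one-dimensional pieces, writing \(V^*=\K\phi_0\oplus F^\perp\oplus\K\phi_1\) with \(F^\perp\cong (V/F)^*\) isometrically; your closed complement \(V_2\cong V/F\) works equally well, since only an equivalent norm is required. It then applies Theorem~\ref{thm:tri-cal} to \(V/F\), which is explicitly allowed to be \(\{0\}\), so \(\dim V=2\) needs nothing extra, and pulls back the triangular product. For \(\dim V=1\) the paper uses the self-corner \(X_0(0)\), whose Calkin algebra is \(\K\) by the same theorem, so no appeal to Argyros--Haydon (or its complex version) is needed.

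Drop your side remarks, because they are unnecessary and not correct as stated. A hyperplane \(W\) need not be isomorphic to \(W\oplus\K^2\); hyperplanes of Gowers--Maurey spaces are counterexamples. For \(X_{\mathrm{AH}}\oplus X_{\mathrm{AH}}'\), total incomparability only makes the cross operators strictly singular, not compact, so that alternative for \(\dim V=2\) would need a separate argument.
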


Forgetting the multiplication gives the following Banach-space consequence.

\begin{corollary}[Banach-space universality]
	\label{cor:ban-uni}
	For every nonzero separable Banach space \(V\), there is a separable Banach space \(Z_V\) such that
	\[ V^*\simeq\Cal(Z_V) \]
	as Banach spaces, with no algebra structure prescribed on \(V^*\).
	In particular, if \(E\ne\{0\}\) is separable and reflexive, then
	\[ E\simeq\Cal(Z_{E^*}) \]
	as Banach spaces and hence \(\Cal(Z_{E^*})\) is reflexive.
\end{corollary}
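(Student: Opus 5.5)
The plan is to deduce both parts of Corollary~\ref{cor:ban-uni} from Theorem~\ref{thm:dua-cal}, or more directly from the triangular realization of Theorem~\ref{thm:tri-cal}, by forgetting the multiplication. For nonzero separable \(V\), set \(Z_V=X_V\). Then Theorem~\ref{thm:tri-cal} gives a Banach-algebra isomorphism
\[
\Cal(X_V)\simeq\K\oplus V^*\oplus\K ,
\]
where the right-hand side carries the \(\ell_1\)-type norm \(|\lambda|+\norm{v}+|\mu|\). In particular this is a Banach-space isomorphism \(\Cal(X_V)\simeq V^*\oplus\K^2\). It therefore remains to absorb the two-dimensional summand into \(V^*\) up to isomorphism.

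To absorb it, I will show that \(V^*\oplus\K\simeq V^*\) whenever \(V^*\) is infinite dimensional. Since \(V^*\) is infinite dimensional, it contains a nonzero closed hyperplane \(H\), so \(V^*\simeq H\oplus\K\). I also need \(H\simeq H\oplus\K\), that is, that every infinite-dimensional space of this kind is isomorphic to its hyperplanes. This is false for general Banach spaces (Gowers' hyperplane counterexample), so I cannot argue this way in general. Instead I will lean on Theorem~\ref{thm:dua-cal}, which is stated exactly for arbitrary nonzero \(V^*\). Its proof must already handle this point, presumably by applying Theorem~\ref{thm:tri-cal} to a modified coefficient space \(U\) chosen with \(U^*\oplus\K^2\simeq V^*\). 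If \(V^*\) is infinite dimensional, take \(U\) to be a closed subspace of \(V\) of codimension two, suitably arranged. Then \(U^*\cong V^*/U^\perp\) with \(\dim U^\perp=2\), so \(V^*\simeq U^*\oplus\K^2\) because finite-dimensional subspaces are complemented, and \(U\) is separable. This dualization step is the real content: I would quotient the dual, not pass to a hyperplane of \(V^*\). So I take \(Z_V=X_U\).

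The finite-dimensional case splits by dimension. If \(\dim V=n\ge 3\), the same choice of \(U\) with \(\dim U=n-2\) works. If \(n\le 2\), I cannot drop two dimensions. Here I will use the fact that Theorem~\ref{thm:dua-cal} already produces some \(Z\) with \(\Cal(Z)\simeq V^*\) for every nonzero \(V^*\), so its forgetful image gives \(Z_V\) directly. For dimensions \(1\) and \(2\), the natural route is to realize \(\K\) or \(\K^2\) as a Calkin algebra via known constructions (Argyros--Haydon type spaces with \(\Cal\simeq\K\), and direct sums of two such spaces to get \(\K^2\)). I expect this small-dimension bookkeeping to be the main obstacle if it is not already covered by Theorem~\ref{thm:dua-cal}. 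I would simply cite that theorem, since its statement assumes only \(V^*\neq\{0\}\).

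Finally, for the reflexive statement, let \(E\neq\{0\}\) be separable and reflexive. Then \(E^*\) is separable, since a reflexive space with separable dual has a separable predual, and \(E^*\) is nonzero. Applying the first part with \(V=E^*\) gives
\[
\Cal(Z_{E^*})\simeq E^{**}=E ,
\]
using reflexivity. Reflexivity is preserved under Banach-space isomorphism, so \(\Cal(Z_{E^*})\) is reflexive.
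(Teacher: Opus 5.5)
Your argument is correct and is essentially the paper's. The corollary is Theorem~\ref{thm:dua-cal} with the multiplication forgotten. That theorem is proved as you guessed: write \(V^*=\K\phi_0\oplus F^\perp\oplus\K\phi_1\) with \(F^\perp\cong(V/F)^*\) for a two-dimensional \(F\subseteq V\), and apply Theorem~\ref{thm:tri-cal} to \(V/F\); this is the quotient-side counterpart of your codimension-two subspace \(U\). The reflexive part is your \(V=E^*\) step.

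One side remark is wrong, though you do not rely on it. The case \(\dim V=2\) is already covered by \(U=\{0\}\), because Theorem~\ref{thm:tri-cal} allows the zero coefficient space and gives \(\K\oplus\K\). By contrast, a direct sum of two copies of one Argyros--Haydon space has Calkin algebra \(M_2(\K)\), not \(\K^2\).
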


Both statements follow from the following more precise operator result.

\begin{theorem}[Triangular realization theorem]\label{thm:tri-cal}
	For every separable Banach space \(V\), there are separable Banach spaces \(X_0(V)\), \(X_1(V)\), and a bounded linear map
	\[ V^*\longrightarrow\Bcal(X_0(V),X_1(V)), \qquad v\longmapsto T_v, \]
	such that
	\begin{align*}
		\Bcal(X_i(V))/\Kcal(X_i(V))&=\K[I_{X_i(V)}]
		\quad(i=0,1),\\
		\Bcal(X_1(V),X_0(V))&=\Kcal(X_1(V),X_0(V)),\\
		\Bcal(X_0(V),X_1(V))/\Kcal(X_0(V),X_1(V)) &=\{[T_v]:v\in V^*\}.
	\end{align*}
	Moreover, \(v\mapsto[T_v]\) is an isometry, so
	\[ \norm{[T_v]}=\norm{v} \qquad(v\in V^*). \]
	Consequently, for \(X_V=(X_0(V)\oplus X_1(V))_{\ell_\infty}\), we have
	\[
		\Cal(X_V)\simeq
		\left\{\begin{pmatrix}\lambda&0\\v&\mu\end{pmatrix}:
		\lambda,\mu\in\K,\ v\in V^*\right\}
	\]
	as Banach algebras.
	Its Jacobson radical is isometric to \(V^*\), and its square is zero.
\end{theorem}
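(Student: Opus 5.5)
The proof splits into two parts. The first is a construction that yields the three operator-space identities and the isometry \(\norm{[T_v]}=\norm{v}\). The second is a formal matrix computation that turns these into the description of \(\Cal(X_V)\).

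\textbf{The construction.} We would build \(X_0(V)\) and \(X_1(V)\) simultaneously by a two-sorted Bourgain--Delbaen scheme in the manner of Argyros--Haydon. There are two families of coordinates \(\Gamma_0,\Gamma_1\), and each step is a finite extension. A countable oracle records the rational local structure of \(V\): finite-dimensional rational subspaces \(F\subset V\) together with rational approximations of their norms. The dual \(F^*\) is then approximated by finite nets of functionals restricted to \(F\).

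\begin{enumerate}[label=(\roman*)]
\item Each sort on its own carries Argyros--Haydon type (mixed Tsirelson) weighted functionals. This gives hereditary indecomposability, in the sense that every operator on \(X_i(V)\) is a scalar plus a compact, and every operator \(X_1\to X_0\) is compact. The second property is ensured by forbidding special sequences that start in sort \(1\) and are continued into sort \(0\).
\item Cross-sort functionals are added on \(\Gamma_1\) of the form \(e^*_\gamma = \langle v,x\rangle\,(\text{weighted average of } e^*_{\xi}\), \(\xi\in\Gamma_0)\), indexed by rational vectors \(x\in V\) taken from the oracle.
\end{enumerate}

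From (ii) we define \(T_v\) diagonally: it sends the sort-\(0\) block coordinated by an oracle vector \(x\) to the sort-\(1\) block scaled by \(v(x)\). Because \(V\) is separable, a \(w^*\)-dense countable set of functionals suffices to calibrate the coordinates. The upper bound \(\norm{[T_v]}\le\norm v\) comes from the norm of the cross-sort functionals. The lower bound \(\norm{[T_v]}\ge\norm v\) comes from choosing oracle vectors \(x\) with \(\norm x\le1\) and \(v(x)\approx\norm v\) infinitely often, so that \(T_v\) acts with norm close to \(\norm{v}\) on normalized sequences tending to infinity.

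\textbf{The main obstacle} is the surjectivity statement \(\Bcal(X_0,X_1)/\Kcal=\{[T_v]\}\). Given \(T\colon X_0\to X_1\), the plan is to evaluate \(T\) on rapidly increasing sequences of blocks indexed by oracle vectors \(x\). The standard Argyros--Haydon dichotomy for dependent sequences then shows that \(\dist(Tx_n,\ \phi(x)\,x_n')\to0\), where \(\phi\) is a scalar functional on the rational vectors. One then has to show that \(\phi\) is linear and bounded by \(\norm{T}\) times a constant, so that it extends to some \(v\in V^*\), and that \(T-T_v\) is compact.

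Linearity is where the oracle's recorded linear relations among the rational vectors are needed: special sequences must mix blocks indexed by \(x\), \(y\) and \(x+y\). Compactness of \(T-T_v\) comes from the Argyros--Haydon criterion that an operator vanishing asymptotically on all RIS is compact. Isometry rather than mere equivalence requires the weights in the cross-sort functionals to tend to \(1\) along a subsequence. This is where I expect the most delicate estimates.

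\textbf{From the operator identities to the algebra.} Write operators on \(X_V=X_0\oplus X_1\) as \(2\times2\) matrices \((A_{ij})\). Modulo compacts we have \(A_{00}=\lambda I\), \(A_{11}=\mu I\), \(A_{01}\) compact and \(A_{10}=T_v+K\). Hence \(\Cal(X_V)\) is the set of classes of matrices with entries \(\lambda\), \(0\), \([T_v]\), \(\mu\).

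The multiplication rule then follows from \(T_vI=T_v\) and the fact that a product of two lower-left corners vanishes. The map \((\lambda,v,\mu)\mapsto[\cdot]\) is bounded, and it is bijective by the isometry together with injectivity on the diagonal, since \(X_i\) is infinite-dimensional. The open mapping theorem then gives an isomorphism of Banach algebras.

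The radical is the lower-left corner. It is a nilpotent ideal, and the quotient by it is \(\K\oplus\K\), which is semisimple. The radical is therefore isometric to \(V^*\) by the isometry of \(v\mapsto[T_v]\), and its square is zero.
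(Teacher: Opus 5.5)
Your final step, reading off \(\Cal(X_V)\), the radical and its square from the four corner identities, is fine and is essentially Proposition~\ref{prop:cor-alg}. The gap is in the construction, at the point where \(v\mapsto T_v\) must be a bounded \emph{linear} map into operators between spaces built once, independently of \(v\).

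Your item (ii) writes \(\langle v,x\rangle\) into the coordinates of \(\Gamma_1\), so \(X_1\) would depend on \(v\). Replacing \(v\) by elements of finite nets in \(F^*\) forces a \(v\)-dependent choice and destroys linearity. Moreover, there is no reason for a map multiplying scalar source blocks by \(v(x)\) to be bounded between two BD spaces unless every target evaluation of \(T_vy\) factors through a source evaluation of \(y\).

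The missing idea is the protected module. Each target atom \(\gamma\) gets a paired source atom \(\widehat\gamma\). Its fibre is a genuine subspace \(E_\gamma=V_{r(\gamma)}\subseteq V\) carrying the norm of \(V\), and its correction is the restriction-lift of the target correction. With \(T_vx=\sum_\gamma v(D_\gamma x)d^1_\gamma\), induction on rank gives \(e^{1*}_\gamma(T_vx)=v(U_\gamma x)\) with \(U_\gamma=e^0_{\widehat\gamma}\) contractive (Lemma~\ref{lem:int}). So \(\norm{T_v}\le\norm{v}\) holds exactly, and linearity is automatic.

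Once this is in place, the isometry needs no weights tending to \(1\). The zero-correction probe pairs with fibres \(V_n\) satisfy \(\norm{j_\gamma q}=\norm{q}\), \(T_vj_\gamma q=v(q)z_\gamma\) and \(\phi_\gamma(z_\gamma)=1\), and \((j_{\gamma_k}q)_k\) is a weakly null RIS. This gives \(\norm{T_v-K}\ge|v(q)|\) for every compact \(K\). The same vector fibres make the limiting coefficient linear for free, since \(a_\gamma=\phi_\gamma Sj_\gamma\in E_\gamma^*\). Compatibility across probes then comes from two-probe RISs, not from special sequences mixing blocks indexed by \(x\), \(y\), \(x+y\).

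Two further steps would fail as stated.

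\emph{Reverse corner.} Forbidding special sequences that run from sort \(1\) into sort \(0\) only removes a tool; it gives no reason for an operator \(X_1\to X_0\) to be compact. The danger is real, because \(X_0\) must contain lifted copies of the target structure to carry \(T_v\). For a fixed unit \(q\in V_1\) and \(v(q)=1\), the map \(d^1_\gamma\mapsto d^0_{\widehat\gamma}(q)\) is a formal right inverse of \(T_v\), and something must make it unbounded. The paper uses a positive asymmetry. Reserved levels \(r\in\mathcal{R}\) occur only in source-only chains with arbitrary packets and in no target evaluation. Hence an \(n_r\)-average of a \(C\)-RIS in \(X_1\) has norm at most \(4BC/m_r^2\) (Lemma~\ref{lem:mis-wgt}). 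A level-\(r\) source coordinate shows that its image has norm of order at least \(\varepsilon/m_r\) (Proposition~\ref{prop:rev-loc-orb}).

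\emph{Forward corner.} Controlling \(T\) only on blocks indexed by oracle vectors does not make \(T-T_v\) vanish on every RIS. For a general \(x\in X_0\), the local orbit \(\{T_wx:w\in V^*\}\) is finite dimensional rather than a line. The dependent sequences must therefore use certificates annihilating that whole orbit, i.e.\ \(U_fy=0\). After \(v\) is obtained from the probes, you still need the probe-insertion step of Theorem~\ref{thm:fwd-glo}. Replace \(x_k\) by \(x_k+j_{\gamma_k}U_{\eta_k}x_k\) and test with \(e^{1*}_{\eta_k}-\phi_{\gamma_k}\), which annihilates the orbit. This shows that \(S-T_v\) kills every RIS, and only then does Proposition~\ref{prop:ris-red} apply.
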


\begin{theorem}[Universal ideal quotients]\label{cor:ide-lat}
	There is a separable Banach space \(X\) such that every separable Banach space \(Y\) is isometric to \(\mathcal{J}/\Kcal(X)\) for some closed two-sided ideal \(\mathcal{J}\) of \(\Bcal(X)\) containing \(\Kcal(X)\).
	More precisely, if \(\mathcal{E}(X)\) denotes the ideal of inessential operators, then the closed-subspace lattice of \(\ell_\infty\) is lattice-isomorphic to the interval of closed two-sided ideals \(\mathcal{J}\) satisfying \( \Kcal(X)\subseteq\mathcal{J}\subseteq\mathcal{E}(X). \) This correspondence preserves the canonical order in both directions.
\end{theorem}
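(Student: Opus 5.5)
The natural route is to apply Theorem~\ref{thm:tri-cal} with \(V=\ell_1\), so that \(V^*=\ell_\infty\), and take \(X=X_{\ell_1}=(X_0\oplus X_1)_{\ell_\infty}\) with \(X_i=X_i(\ell_1)\). First I would describe \(\mathcal{E}(X)\). By Theorem~\ref{thm:tri-cal}, every \(T\in\Bcal(X)\), written as a \(2\times 2\) operator matrix, satisfies
\[ T=\begin{pmatrix}\lambda I_{X_0}&0\\ T_v&\mu I_{X_1}\end{pmatrix}+K \]
with \(K\) compact. Here \(\lambda,\mu\in\K\) and \(v\in\ell_\infty\) are uniquely determined.

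The image of \(\mathcal{E}(X)\) in \(\Cal(X)\) is the preimage of the radical of \(\Cal(X)\). This holds because inessential operators are exactly those whose Calkin class lies in the radical of \(\Cal(X)\); equivalently, \(T\) is inessential iff \(I-ST\) is Fredholm for all \(S\). Hence \(\mathcal{E}(X)/\Kcal(X)=\mathcal{R}\), where
\[ \mathcal{R}=\{[T_v]\text{ in the lower-left corner}: v\in\ell_\infty\}\cong\ell_\infty \]
isometrically, by the isometry statement of Theorem~\ref{thm:tri-cal} together with the \(\ell_\infty\)-sum norm. I expect to check that the norm of a corner operator in the \(\ell_\infty\)-sum equals the norm of the corner itself modulo compacts.

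Next I would establish the lattice correspondence. Closed ideals \(\mathcal{J}\) with \(\Kcal(X)\subseteq\mathcal{J}\subseteq\mathcal{E}(X)\) correspond bijectively, by taking quotients, to closed two-sided ideals of \(\Cal(X)\) contained in \(\mathcal{R}\). Any such ideal has the form \(\{[T_v]:v\in W\}\) for a closed subspace \(W\subseteq\ell_\infty\). Conversely, any closed subspace \(W\) gives an ideal, because \(\mathcal{R}^2=0\) and the multiplication formula
\[ (\lambda,v,\mu)(0,w,0)(\lambda',v',\mu')=(0,\mu w\lambda',0) \]
shows that multiplying by algebra elements only rescales \(w\). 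Closedness transfers because the identification is isometric, and the quotient map is continuous and open. The resulting correspondence \(W\mapsto\mathcal{J}_W\) is inclusion preserving in both directions, so it is a lattice isomorphism onto the interval.

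Finally, I would derive the universality statement. Every separable \(Y\) embeds isometrically into \(\ell_\infty\) (Banach–Mazur via a norming sequence of functionals). Take \(W\) to be that image. Then \(\mathcal{J}_W/\Kcal(X)\) is isometric to \(W\cong Y\), and here one again uses that the radical is isometrically \(\ell_\infty\). The main obstacle is the identification of \(\mathcal{E}(X)/\Kcal(X)\) with the radical; if the cited characterization of inessential operators via the radical of the Calkin algebra is not available, I would verify it directly. For \(T\) with \(\lambda\neq0\), the operator \(I-\lambda^{-1}T\) restricted to \(X_0\) is not Fredholm because of the diagonal structure; conversely, \(I-ST\) is an invertible-modulo-compacts perturbation when \(T\) is a corner operator, since its class is \(1\) plus a nilpotent.
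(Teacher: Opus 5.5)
Your proposal is correct and follows the paper's proof essentially step for step. You take \(V=\ell_1\), identify \(\mathcal{E}(X)/\Kcal(X)\) isometrically with the square-zero lower-left radical \(\cong\ell_\infty\), use the triangular product to see that every closed subspace of the radical is a two-sided ideal, pull back under the quotient map, and embed \(Y\) isometrically into \(\ell_\infty\). The only differences are minor: where you sketch a direct verification, the paper cites Atkinson's theorem and the radical characterization through Corollary~\ref{cor:ine-com}, and the corner-norm compatibility you flag is exactly the inverse-contraction remark at the end of the proof of Theorem~\ref{thm:tri-cal}.
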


The proof follows from two established lines of construction in Banach-space theory.

\subsection{The Argyros--Haydon construction and related work}
\label{sec:intro-ah}

One line begins in 1974 with Tsirelson's construction of a reflexive space containing neither \(c_0\) nor any \(\ell_p\) \cite{Tsirelson1974}.
Figiel and Johnson gave the dual Tsirelson space its modern version of that norm, providing the recursive method used in many later constructions \cite{FigielJohnson1974}.
Maurey and Rosenthal constructed a normalized weakly null sequence with no unconditional subsequence; their argument was an early source of the coding methods used later to suppress unconditional structure \cite{MaureyRosenthal1977}.
Schlumprecht adapted recursive norming in 1991 to construct the first arbitrarily distortable space \cite{Schlumprecht1991}.
And then Odell and Schlumprecht proved that \(\ell_2\) is arbitrarily distortable \cite{OdellSchlumprecht1994}.

In 1993, Gowers and Maurey combined recursive norming, weighted averages, and special coding to solve the unconditional basic sequence problem.
Their construction gave the first hereditarily indecomposable (HI) space \cite{GowersMaurey1993}.
Here HI means that no infinite-dimensional closed subspace decomposes as a direct sum of two infinite-dimensional closed subspaces.
Gowers later constructed an HI space with an asymptotic unconditional basis \cite{Gowers1995}, and his dichotomy showed that every infinite-dimensional Banach space contains either an unconditional basic sequence or an HI subspace \cite{Gowers1996}.
Ferenczi showed in 1997 that HI can coexist with uniform convexity \cite{Ferenczi1997UC}.
He also proved that, for a complex HI space \(X\), every operator from a subspace of \(X\) into \(X\) is a scalar multiple of the inclusion plus a strictly singular operator \cite{Ferenczi1997Operators}.
Gowers and Maurey developed related coding methods further to construct Banach spaces with small operator algebras \cite{GowersMaurey1997}.

In parallel, asymptotic and mixed-Tsirelson methods developed along a related line.
Maurey, Milman, and Tomczak-Jaegermann introduced a general asymptotic framework and studied asymptotic \(\ell_p\)-spaces \cite{MaureyMilmanTomczak1995}.
Argyros and Deliyanni constructed asymptotic \(\ell_1\)-spaces through mixed-Tsirelson norms \cite{ArgyrosDeliyanni1997}.
Argyros, Deliyanni, Kutzarova, and Manoussakis developed modified mixed-Tsirelson spaces, including arbitrarily distortable and HI examples \cite{ArgyrosDeliyanniKutzarovaManoussakis1998}.
The work of Odell and Schlumprecht on Krivine sets and block finite universality provided the early form of the method later called saturation under constraints \cite{OdellSchlumprecht1995,OdellSchlumprecht2000}.
Argyros and Felouzis used interpolation and HI methods in quotient and operator-factorization problems \cite{ArgyrosFelouzis2000}.
Argyros and Tolias then gave, in 2004, a general HI construction method and organized special convex combinations, rapidly increasing sequences (RIS), the basic inequality, and operator analysis in one framework \cite{ArgyrosTolias2004}.
These methods were further developed in the construction of the first known reflexive space with the hereditary invariant subspace property \cite{ArgyrosMotakis2014}.
Argyros and Motakis later introduced a dual method covering both reflexive HI spaces and HI spaces without reflexive subspaces \cite{ArgyrosMotakis2016}.

The second line comes from separable \(\mathcal{L}_\infty\)-spaces.
In 1980, Bourgain and Delbaen introduced a rank-by-rank finite-dimensional extension scheme and constructed new \(\mathcal{L}_\infty\)-spaces whose duals are isomorphic to \(\ell_1\) and which do not contain \(c_0\) \cite{BourgainDelbaen1980}.
Bourgain and Pisier gave a complementary construction which embeds every separable Banach space into a separable \(\mathcal{L}_\infty\)-space with a quotient having the Radon--Nikod\'ym and Schur properties \cite{BourgainPisier1983}.
Alspach established the small Szlenk index of a Bourgain--Delbaen space \cite{Alspach2000}.
Haydon showed that every infinite-dimensional closed subspace of the original \(X_{a,b}\) contains a further subspace isomorphic to some \(\ell_p\), \(1<p<\infty\) \cite{Haydon2000}.
Freeman, Odell, and Schlumprecht later proved that every Banach space with separable dual embeds into an \(\mathcal{L}_\infty\)-space whose dual is isomorphic to \(\ell_1\) \cite{FreemanOdellSchlumprecht2011}.

In 2011, Argyros and Haydon combined the Bourgain--Delbaen scheme with mixed-Tsirelson restrictions, RIS estimates, and dependent sequences.
They constructed an HI space \(X_{\mathrm{AH}}\) on which every operator is scalar plus compact, and hence \(\Cal(X_{\mathrm{AH}})\simeq\K\) \cite{ArgyrosHaydon2011}.
Zisimopoulou subsequently introduced vector-valued Bourgain--Delbaen \(\mathcal{L}_\infty\)-sums in 2014, which became an important form of the construction \cite{Zisimopoulou2014}.
Later work developed the Bourgain--Delbaen and Argyros--Haydon methods in several directions.
Argyros, Freeman, Haydon, Odell, Raikoftsalis, Schlumprecht, and Zisimopoulou showed that every separable uniformly convex space embeds into a scalar-plus-compact space \cite{ArgyrosEtAl2012}.
Argyros, Gasparis, and Motakis proved that every infinite-dimensional separable \(\mathcal{L}_\infty\)-space is isomorphic to a Bourgain--Delbaen space \cite{ArgyrosGasparisMotakis2016}.
Manoussakis, Pelczar-Barwacz, and \'Swi\k{e}tek constructed an unconditionally saturated Bourgain--Delbaen space with the scalar-plus-compact property \cite{ManoussakisPelczarSwietek2017}.
Argyros and Motakis later constructed a scalar-plus-compact space containing no infinite-dimensional reflexive subspace \cite{ArgyrosMotakis2019}.

The Argyros--Haydon result also began the modern line of explicit Calkin realizations.
In 2012, Tarbard first moved beyond the one-dimensional algebra.
For every \(k\geq2\), he constructed a space whose Calkin algebra is the truncated-polynomial algebra \(\K[t]/(t^k)\), a unital algebra with nilpotent radical \cite{Tarbard2012}.
In his thesis, Tarbard also constructed a space whose Calkin algebra is isometrically isomorphic to the convolution algebra \(\ell_1(\N)\) \cite{Tarbard2013}.

In 2016, Motakis, Puglisi, and Zisimopoulou used Bourgain--Delbaen sums to realize \(C(K)\) for every countable compact metric space \(K\) \cite{MotakisPuglisiZisimopoulou2016}.
Motakis later proved that every compact metric space \(K\) admits a realization whose Calkin algebra is isometrically isomorphic to \(C(K)\) \cite{Motakis2024}.

Kania and Laustsen used \(X_{\mathrm{AH}}\) and one of its subspaces to obtain a three-dimensional triangular Calkin algebra in their 2017 work.
This algebra has two scalar characters and a one-dimensional square-zero off-diagonal radical \cite{KaniaLaustsen2017}.  
Their note added in proof also realizes every finite-dimensional semisimple complex algebra.

Argyros, Deliyanni, and Tolias developed a method for Banach algebras of diagonal operators and constructed an HI example on which every diagonal operator is scalar plus compact \cite{ArgyrosDeliyanniTolias2011}.
Building on this work and on Argyros--Haydon sums, Motakis, Puglisi, and Tolias proved that the algebra of scalar-plus-compact diagonal operators associated with every real Banach space with a Schauder basis is a Calkin algebra.
Their applications include the James spaces \(J_p\) and their duals with natural multiplications.
They also proved, at the Banach-space level, that every nonreflexive Banach space with an unconditional basis is isomorphic to a Calkin algebra \cite{MotakisPuglisiTolias2020} in 2020.

Motakis and Pelczar-Barwacz showed that a Calkin algebra can be infinite-dimensional and reflexive, even isomorphic to a Hilbert space.
More generally, they realized the unitization of \(U\), with coordinatewise multiplication, when \(U\) has a normalized unconditional basis with no \(c_0\) asymptotic version \cite{MotakisPelczar2025}.
Motakis and Puglisi realized the unitization of \(\Kcal(c_0)\).
Their space is an Argyros--Haydon sum of copies of one Argyros--Haydon space, with the internal and external construction parameters kept separate \cite{MotakisPuglisi2025}.

Another related example comes from the reflexive space \(X_{\rm AM}\).
Every operator on this space is scalar plus strictly singular, its strictly singular ideal is nonseparable, and the product of two strictly singular operators is compact \cite{ArgyrosMotakis2016}.
Thus, as observed in \cite{Skillicorn2015}, its Calkin algebra has a nonseparable zero-product radical.
This radical is given as a quotient of operator ideals rather than as a prescribed external Banach space.

These results clarify the scope of the present theorem.
Tarbard's truncated-polynomial examples have one scalar character and finite-dimensional nilpotent radical, while the Kania--Laustsen triangular example has two scalar characters and a one-dimensional square-zero radical.
The Argyros--Motakis--Skillicorn example has a nonseparable square-zero radical but only one scalar character, and that radical is not identified with a prescribed external Banach space.
In the present result, the radical is an arbitrary dual \(V^*\) with separable predual and the semisimple quotient is \(\K\oplus\K\).

\subsection{Recursion theory and connections with other areas of mathematics}
\label{sec:intro-recursion}

Recursion theory studies effective procedures and their dependence on information.
Besides decidability, it provides tools for comparing the information contained in mathematical objects and for organizing constructions by successive finite requirements \cite{Odifreddi1989,Soare2016}.
Turing's oracle machines formalize procedures whose fixed rules may consult a specified oracle \cite[Section~4]{Turing1939}.
The oracle can supply noncomputable information, while each completed computation uses only finitely many queries.

In group theory, Higman's embedding theorem says that a finitely generated group embeds into a finitely presented group if and only if it admits a recursively enumerable set of defining relations \cite{Higman1961}.
Thus a condition on effective presentation has an algebraic characterization by embeddings.

In number theory, the Davis--Putnam--Robinson--Matiyasevich theorem identifies the recursively enumerable subsets of \(\N\) with the Diophantine sets \cite{Matiyasevich1970}.
Membership in such a set can be expressed by the existence of natural-number solutions to a fixed polynomial equation with integer coefficients, with the set element as a parameter.
Since some recursively enumerable sets are undecidable, this yields the negative solution to Hilbert's tenth problem: no algorithm decides whether an arbitrary polynomial equation with integer coefficients has an integer solution.

In Banach-space theory, Pour-El and Richards developed computability structures compatible with linear operations, norms, and effective limits \cite[Chapters~2--3]{PourElRichards1989}.
Their First Main Theorem applies to a closed linear operator between such spaces whose domain contains a computable sequence with dense linear span in the source space and whose images of that sequence form a computable sequence.
Under these hypotheses, the operator sends every computable vector in its domain to a computable vector if and only if it is bounded.

In real analysis, algorithmic randomness gives a precise description of the points at which computable monotone functions are differentiable.
Brattka, Miller, and Nies proved that \(z\in(0,1)\) is computably random if and only if every computable nondecreasing function \(f:[0,1]\to\mathbb R\) is differentiable at \(z\) \cite[Theorem~4.3]{BrattkaMillerNies2016}.
This refines the classical almost-everywhere differentiability theorem for monotone functions by characterizing the points that work simultaneously for all computable functions in this class.

In geometric measure theory, the point-to-set principle of Jack H.~Lutz and Neil Lutz \cite{LutzLutz2018} gives \begingroup \postdisplaypenalty=10000
\[ \dim_{\mathrm H}(E) =\min_{A\subseteq\N}\sup_{x\in E}\dim^A(x) \qquad(\varnothing\ne E\subseteq\mathbb R^n), \]
\endgroup where \(\dim^A(x)\) is the effective Hausdorff dimension of the point \(x\) relative to the oracle \(A\).
The formula turns the task of proving \(\dim_{\mathrm H}(E)\ge s\) into a pointwise problem: for every oracle \(A\) and every \(\varepsilon>0\), one seeks a point \(x\in E\) such that \(\dim^A(x)\ge s-\varepsilon\).

Using algorithmic fractal dimension and this principle, Jack H.~Lutz, Renrui Qi, and Liang Yu \cite{LutzQiYu2024} proved that, for every \(s\in[0,1]\), there is a Hamel basis \(B\) of \(\mathbb R\) over \(\Q\) with \(\dim_{\mathrm H}(B)=s\).
This is a classical geometric and algebraic existence theorem proved by computability-theoretic methods.

\subsection{Outline}
\label{sec:intro-outline}

There are also obstructions to Calkin realization.
Horv\'ath and Kania constructed unital Banach algebras that are not isomorphic to Calkin algebras of separable Banach spaces \cite{HorvathKania2021}.
Acuaviva and Acuaviva constructed a unital Banach algebra that is not isomorphic to the Calkin algebra of any Banach space \cite{AcuavivaAcuaviva2026}.
However, the underlying Banach space of the latter example is not isomorphic to any dual Banach space; see Remark~\ref{rem:acu-not-dual}.
The universality result above concerns dual Banach-space isomorphism types equipped with the displayed triangular multiplication.

The proof uses several standard schemes.
The rank-by-rank extension scheme is the Bourgain--Delbaen scheme \cite{BourgainDelbaen1980}.
The evaluation analysis, the RIS and basic-inequality method, and the exact-pair and dependent-sequence method follow the Argyros--Haydon line \cite{ArgyrosHaydon2011,Tarbard2013,Motakis2024}.
The auxiliary norm is a mixed Tsirelson norm in the sense of Argyros and Deliyanni \cite{ArgyrosDeliyanni1997}.
Vector-valued BD sums provide related background for the use of finite-dimensional fibres \cite{Zisimopoulou2014}.
The fair scheduling of finite requirements uses the recursion-theoretic finite-extension scheme described in \cite[Section~V.2 and Proposition~V.3.13]{Odifreddi1989}.

The proof of Theorem~\ref{thm:tri-cal} has two parts.
Part~I constructs the two spaces by a monotone finite-extension scheme relative to the local structure of \(V\) and the corresponding admissibility data.
The formal language, transition rules, and scheduler are fixed independently of \(V\); see the oracle formulation in Remark~\ref{rem:orc-rec}.

Increasing finite-dimensional fibres store the predual, while protected lifts make \(v\mapsto T_v\) linear and give uniform bounds.
The finite-extension and fairness properties realize every admissible finite template cofinally.
Part~I also supplies interval restriction, common-stem comparison, and probes.
The consequences used later are collected as \hyperref[itm:ana-g1]{\textup{(G1)}}--\hyperref[itm:ana-g7]{\textup{(G7)}} and \hyperref[itm:ana-ea1]{\textup{(EA1)}}--\hyperref[itm:ana-ea4]{\textup{(EA4)}}.

Part~II proves the four operator estimates.
A basic inequality gives the upper estimate for dependent sequences.
A persistent error is converted into rational packet data and then into an exact finite certificate.
The finite-extension property places this certificate in a dependent sequence, whose lower estimate gives a contradiction.
A missing-level argument treats the reverse corner.
Finally, probes make the local forward coefficients compatible and determine one \(v\in V^*\).

Readers interested only in the proof strategy may read Subsection~\ref{sec:ana-inp}, then Propositions~\ref{prop:crt-loc-orb} and~\ref{prop:rev-loc-orb}, then Lemma~\ref{lem:slf-glo}, and finally Corollary~\ref{cor:slf-rev} and Theorem~\ref{thm:fwd-glo}.

The triangular theorem has several immediate algebraic consequences.
We record two of them here.
The remaining consequences are stated and proved in Subsection~\ref{sec:str-con}.

\begin{corollary}[Universal reflexive Calkin radicals]
	\label{cor:ref-rad}
	For every separable reflexive Banach space \(E\), there is a separable Banach space \(Y_E\) such that \(\Cal(Y_E)\) is reflexive as a Banach space and
	\[
		\Cal(Y_E)\simeq
		\left\{\begin{pmatrix}\lambda&0\\e&\mu\end{pmatrix}:
		\lambda,\mu\in\K,\ e\in E\right\}.
	\]
	In particular,
	\begin{align*}
		\operatorname{rad}\Cal(Y_E)\simeq E,\qquad
		\bigl(\operatorname{rad}\Cal(Y_E)\bigr)^2=\{0\},\qquad
		\Cal(Y_E)/\operatorname{rad}\Cal(Y_E)\simeq\K\oplus\K.
	\end{align*}
	Thus every separable reflexive Banach space is isometric to the codimension-two square-zero radical of a reflexive Calkin algebra.
\end{corollary}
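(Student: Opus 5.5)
The plan is to apply Theorem~\ref{thm:tri-cal} with \(V=E^*\). Since \(E\) is separable and reflexive, \(E^*\) is separable and reflexive as well, so it is a legitimate separable coefficient space. I will set \(Y_E=X_{E^*}=(X_0(E^*)\oplus X_1(E^*))_{\ell_\infty}\). Theorem~\ref{thm:tri-cal} then gives
\[
\Cal(Y_E)\simeq\left\{\begin{pmatrix}\lambda&0\\w&\mu\end{pmatrix}:\lambda,\mu\in\K,\ w\in E^{**}\right\}
\]
as Banach algebras. It also tells us that the radical is isometric to \(E^{**}\) and has zero square. The canonical embedding \(J_E\colon E\to E^{**}\) is an onto isometry by reflexivity. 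Transporting the lower-left entry through \(J_E^{-1}\) therefore yields the displayed triangular algebra with entries \(e\in E\). The multiplication is unaffected, because the product \((\lambda,v,\mu)(\lambda',v',\mu')=(\lambda\lambda',v\lambda'+\mu v',\mu\mu')\) only uses the linear structure of the corner, and \(J_E\) is linear.

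Next I identify the three algebraic invariants. In the triangular algebra \(A\), let \(N=\{(0,v,0)\}\). This is a two-sided ideal, and \(N^2=0\) directly from the product formula. The map \((\lambda,v,\mu)\mapsto(\lambda,\mu)\) is a surjective unital homomorphism onto \(\K\oplus\K\) with kernel \(N\). Since \(\K\oplus\K\) is semisimple and \(N\) is a nil ideal, \(N=\operatorname{rad}A\). Hence \(A/\operatorname{rad}A\simeq\K\oplus\K\) and \(\operatorname{rad}A\) has codimension two. The isometric statement \(\operatorname{rad}\Cal(Y_E)\simeq E\) follows by composing the isometry \(v\mapsto[T_v]\) from Theorem~\ref{thm:tri-cal} with the isometry \(J_E^{-1}\).

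Finally, reflexivity. \(\Cal(Y_E)\) is isomorphic as a Banach space to \(\K\oplus E\oplus\K\) with the \(\ell_1\)-type norm. A finite direct sum of reflexive spaces is reflexive, and reflexivity is an isomorphic invariant, so \(\Cal(Y_E)\) is reflexive. The one point to check carefully is that the Banach-algebra isomorphism from Theorem~\ref{thm:tri-cal} is a topological linear isomorphism onto \(\K\oplus E^{**}\oplus\K\) with the norm given in the introduction. This is part of that theorem's conclusion, so no real obstacle remains: the corollary is a bookkeeping consequence of Theorem~\ref{thm:tri-cal} together with \(E\cong E^{**}\).
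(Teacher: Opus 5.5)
Your proposal is correct and follows the paper's proof essentially step for step. You apply Theorem~\ref{thm:tri-cal} with \(V=E^*\), set \(Y_E=X_{E^*}\), and identify \(E^{**}\) with \(E\) by reflexivity; the radical and semisimple quotient then come from \eqref{eq:tgt-rad}, and reflexivity from \(\K\oplus E\oplus\K\). Your explicit transport of the isometry \(v\mapsto[T_v]\) through \(J_E^{-1}\) only spells out what the paper leaves implicit.
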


\begin{corollary}[An \(\ell_\infty\)-radical]
	\label{cor:ell-rad}
	There is a separable Banach space \(X_{\ell_1}\) such that
	\[
		\Cal(X_{\ell_1})\simeq
		\left\{\begin{pmatrix}\lambda&0\\v&\mu\end{pmatrix}:
		\lambda,\mu\in\K, \ v\in\ell_\infty\right\}
	\]
	as Banach algebras.
	Its Jacobson radical is isometric to \(\ell_\infty\) and has square zero.
	In particular, the whole Calkin algebra is nonseparable.
\end{corollary}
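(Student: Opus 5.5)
The proof applies Theorem~\ref{thm:tri-cal} with \(V=\ell_1\). Since \(\ell_1\) is separable, the theorem provides separable spaces \(X_0(\ell_1)\) and \(X_1(\ell_1)\), and we set \(X_{\ell_1}:=X_{V}=(X_0(\ell_1)\oplus X_1(\ell_1))_{\ell_\infty}\), which is separable. The theorem then identifies \(\Cal(X_{\ell_1})\), as a Banach algebra, with the lower triangular algebra whose lower-left corner is \(\ell_1^*\). It also says that the Jacobson radical is isometric to \(\ell_1^*\) and has square zero.

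The only remaining input is the classical isometric identification \(\ell_1^*=\ell_\infty\), where \(b\in\ell_\infty\) acts by \(a\mapsto\sum_n a_nb_n\). This holds over both \(\K=\mathbb{R}\) and \(\K=\mathbb{C}\). Transporting the multiplication \((\lambda,v,\mu)(\lambda',v',\mu')=(\lambda\lambda',v\lambda'+\mu v',\mu\mu')\) along this linear isometry yields the displayed algebra with \(v\in\ell_\infty\). Since the transport is linear and isometric in the corner, the algebra isomorphism survives. The radical, which is isometric to \(V^*\), is therefore isometric to \(\ell_\infty\) and still squares to zero.

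For nonseparability, \(\ell_\infty\) is nonseparable: the indicator vectors \(\mathbf{1}_A\), \(A\subseteq\N\), are uncountably many and pairwise at distance \(1\). The radical is a subspace of \(\Cal(X_{\ell_1})\), and a subspace of a separable metric space is separable. Equivalently, via the isomorphism, \(\ell_\infty\) embeds isomorphically into \(\Cal(X_{\ell_1})\). Hence \(\Cal(X_{\ell_1})\) is nonseparable.

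There is no real obstacle here. All the difficulty lies in Theorem~\ref{thm:tri-cal}, which we are allowed to assume. The one point to check is that Theorem~\ref{thm:tri-cal} places no restriction on \(V\) beyond separability, such as reflexivity or a separable dual. As stated, it does not, so \(V=\ell_1\) is admissible.
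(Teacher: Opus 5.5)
Your proposal is correct and is essentially the paper's own proof: apply Theorem~\ref{thm:tri-cal} with \(V=\ell_1\) and transport the structure along the isometric identification \(\ell_1^*=\ell_\infty\). Your additional remarks spell out what the paper leaves implicit: that \(X_{\ell_1}\) is separable, and that \(\Cal(X_{\ell_1})\) is nonseparable because it contains the closed subspace \(\operatorname{rad}\Cal(X_{\ell_1})\), which is isometric to \(\ell_\infty\).
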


Corollaries~\ref{cor:ban-uni} and \ref{cor:ref-rad} complement the coordinatewise unitizations in \cite{MotakisPelczar2025}.
In Corollary~\ref{cor:ref-rad}, the separable reflexive space \(E\) is arbitrary and occurs as the square-zero radical of the displayed two-character triangular algebra.
The Argyros--Motakis--Skillicorn example already has a nonseparable square-zero radical given as a quotient of operator ideals.
In contrast, Corollary~\ref{cor:ell-rad} prescribes the radical as \(\ell_\infty\) and identifies the whole triangular algebra.

Write \(M[0,1]\) for the space of finite regular \(\K\)-valued Borel measures and \(JT\) for the classical James--tree space \cite{James1974}.
Further immediate instances of the triangular theorem are
\[
	\begin{array}{c|c}
		V&\operatorname{rad}\Cal(X_V)\\ \hline
		c_0&\ell_1\\
		C[0,1]&M[0,1]\\
		JT&JT^*
	\end{array}
\]

\section{Preliminaries and four-corner reduction}

For \(X=X_0\oplus X_1\), Theorem~\ref{thm:tri-cal} reduces to the following four-corner statements:
\begin{align*}
	\Bcal(X_i)/\Kcal(X_i)&=\K[I_{X_i}]\quad(i=0,1),\\
	\Bcal(X_1,X_0)&=\Kcal(X_1,X_0),\\
	\Bcal(X_0,X_1)/\Kcal(X_0,X_1)
	&=\{[T_v]:v\in V^*\}.
\end{align*}
The map \(v\mapsto[T_v]\) must also be an isometry.
Its lower estimate comes from the probes.

Parts~I and~II implement this reduction.
Part~I constructs the two-sorted datum and records the analytic consequences of the construction that are used in Part~II.
Part~II proves the RIS and dependent-sequence estimates, classifies the four corners modulo compact operators, and derives the algebraic consequences in Subsection~\ref{sec:str-con}.

Here FDD means a finite-dimensional decomposition.
The principal notation is as follows.
\begin{description}[leftmargin=34mm,style=nextline]
	\item[\(i=0,1\)] the source and target sides, respectively; \item[\(V,V^*\)] the separable predual and its dual coefficient space; \item[\(V_r\)] the nested finite-dimensional source fibres exhausting \(V\); \item[\(m_h^{-1},n_h\)] the weight and maximal age at level \(h\); \item[\(\operatorname{rank}\gamma,\operatorname{lev}(\gamma),\operatorname{age}\gamma\)] the rank, level, and age of a node; these are different integers; \item[\(e_{\gamma,\varphi}^{i*},d_{\gamma,\varphi}^{i*}\)] the ambient coordinate evaluation and its Bourgain--Delbaen (BD) biorthogonal part; \item[\(D_\gamma,U_\gamma\)] the source FDD coefficient and the full protected source coordinate attached to a target label \(\gamma\); \item[\(P_I^i\)] the FDD projection on side \(i\) associated with a rank interval \(I\); \item[\(\mathcal{K}_i\)] the scalarized ambient coordinate norming set of \(X_i\); \item[\(\mathscr{M}_{ij}(x)\)] the local orbit of \(x\) for the corner \(X_i\to X_j\); \item[\(\mathsf{shist},\mathsf{thist}\)] the same-side structural history and the target coding history of a protected node; \item[\(\langle\eta_s\rangle,\ f_s\)] the complete strong packet formed from the terminal atom \(\eta_s\) of the \(s\)-th complete inner chain and its induced functional \(f_s=e_{\eta_s}^{j*}\) on \(X_j\), respectively.
\end{description}

\subsection{The predual and its finite-dimensional fibres}
\label{sec:coe-fib}

Fix a separable Banach space \(V\), and put \(d=\dim V\in\N\cup\{\infty\}\).
If \(d=\infty\), choose a linearly independent total sequence \((h_n)_{n\ge1}\).
If \(1\le d<\infty\), choose a basis \((h_n)_{n=1}^d\); if \(d=0\), use the empty sequence.
For \(r\in\N_+\), set
\[
	V_r=
	\begin{cases}
		\operatorname{span}\{h_1,\ldots,h_r\},&d=\infty,\\
		\operatorname{span}\{h_1,\ldots,h_{\min\{r,d\}}\},&d<\infty,
	\end{cases}
\]
where the span of the empty set is \(\{0\}\).
Give \(V_r\) the ordered basis inherited from the displayed sequence, let \((V_r)_{\Q}\) denote the vectors with rational coordinates in this basis, and put
\[ V_{\Q}=\bigcup_{r\in\N_+}(V_r)_{\Q}. \]
In the complex case, rational coordinates and matrix entries lie in \(\Q+i\Q\).
A finite-dimensional operator between such coordinate spaces is called rational when its matrix has rational entries.
Then \((V_r)\) is increasing, each \(V_r\) is finite dimensional, and both \(V_{\Q}\) and \(\bigcup_rV_r\) are dense in \(V\).

For \(r,s\in\N_+\) with \(r\le s\), write \(J_{rs}:V_r\to V_s\) for the inclusion and \(R_{rs}=J_{rs}^*:V_s^*\to V_r^*\) for restriction.
The map \(J_{rs}\) is an isometry, while \(R_{rs}\) is a contractive surjection by the Hahn--Banach theorem; it has norm one whenever \(V_r\ne\{0\}\).
Finally, if \(v_r\in V_r^*\) satisfy
\[ R_{rs}v_s=v_r\quad(r,s\in\N_+,\ r\le s), \qquad \sup_{r\in\N_+}\norm{v_r}<\infty, \]
then \(v(u)=v_r(u)\) for \(u\in V_r\) defines a bounded functional on \(\bigcup_rV_r\).
It extends uniquely to \(v\in V^*\), with \(\norm{v}=\sup_r\norm{v_r}\).
Conversely, every \(v\in V^*\) gives such a compatible family by restriction.
The construction below uses only this exhaustion and restriction system.
It requires no approximation properties for \(V\), no projections \(V_s\to V_r\), and no coherent right inverses of the maps \(R_{rs}\).

\subsection{Target algebra}

We work over \(\K\), real or complex.
In the complex case every rational code is taken over \(\Q+i\Q\), and every separating inequality is written with a real part.

With \(V\) fixed as above, equip \(\K\oplus V^*\oplus\K\) with the norm
\begin{equation}\label{eq:tri-norm}
	\norm{(\lambda,v,\mu)}_{\mathfrak{A}} =|\lambda|+\norm{v}_{V^*}+|\mu|,
\end{equation}
and define
\begin{equation}\label{eq:tri-prd}
	(\lambda,v,\mu)(\lambda',v',\mu') =\bigl(\lambda\lambda',\,v\lambda'+\mu v',\, \mu\mu'\bigr).
\end{equation}
Equivalently, this is the lower triangular algebra
\begin{equation}\label{eq:tgt-alg}
	\mathfrak{A}= \left\{
	\begin{pmatrix}\lambda&0\\v&\mu\end{pmatrix}:
	\lambda,\mu\in\K,\ v\in V^*
	\right\}.
\end{equation}
Let
\[
	\mathfrak{N}=
	\left\{\begin{pmatrix}0&0\\v&0\end{pmatrix}:v\in V^*\right\}.
\]
Direct multiplication gives \(\mathfrak{N}^2=\{0\}\), so the nilpotent ideal \(\mathfrak{N}\) is contained in the Jacobson radical.
On the other hand, \(\mathfrak{A}/\mathfrak{N}\cong\K\oplus\K\), which is semisimple.
The image of the radical in a quotient lies in the radical of the quotient; hence \(\operatorname{rad}(\mathfrak{A})\subseteq\mathfrak{N}\).
Therefore
\begin{equation}\label{eq:tgt-rad}
	\operatorname{rad}(\mathfrak{A})=\mathfrak{N}, \qquad \mathfrak{N}^2=\{0\}.
\end{equation}
Obviously, the displayed norm (\ref{eq:tri-norm}) is submultiplicative.
Thus \(\mathfrak{A}\) is a Banach algebra.

\subsection{Four-corner reduction}

The construction below produces Banach spaces \(X_0,X_1\) and a bounded linear injection
\[ V^*\longrightarrow\Bcal(X_0,X_1),\qquad v\longmapsto T_v. \]
The operator theorem establishes
\begin{align}
	\Bcal(X_i)/\Kcal(X_i)&=\K[I_{X_i}]\quad(i=0,1),
	\label{eq:slf-cor}\\
	\Bcal(X_1,X_0)&=\Kcal(X_1,X_0),
	\label{eq:rev-cor}\\
	\Bcal(X_0,X_1)/\Kcal(X_0,X_1)
	&=\{[T_v]:v\in V^*\}.
	\label{eq:fwd-cor}
\end{align}

\begin{proposition}\label{prop:cor-alg}
	Assume \eqref{eq:slf-cor}--\eqref{eq:fwd-cor} and that \(X_0,X_1\) are infinite dimensional, that \(v\mapsto T_v:V^*\to\Bcal(X_0,X_1)\) is bounded and linear, and that \(v\mapsto[T_v]\) is an isomorphic embedding.
	Give \(X=(X_0\oplus X_1)_{\ell_\infty}\) its standard finite-sum norm.
	Then the map
	\[
		\Phi:\Cal(X)\longrightarrow\mathfrak{A}, \qquad \left[
		\begin{pmatrix}A&B\\C&D\end{pmatrix}
		\right]
		\longmapsto
		\begin{pmatrix}\lambda&0\\v&\mu\end{pmatrix},
	\]
	where \(A-\lambda I_{X_0}\), \(B\), \(C-T_v\), and \(D-\mu I_{X_1}\) are compact, is a Banach-algebra isomorphism.
	Here
	\[ A:X_0\to X_0,\quad B:X_1\to X_0, \quad C:X_0\to X_1,\quad D:X_1\to X_1. \]
\end{proposition}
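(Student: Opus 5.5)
We prove Proposition~\ref{prop:cor-alg} by showing that \(\Phi\) is a well-defined, bijective, bounded algebra homomorphism, and then invoking the open mapping theorem. Throughout, every \(S\in\Bcal(X)\) is written as the \(2\times2\) matrix \(\begin{pmatrix}A&B\\C&D\end{pmatrix}\) of its corners relative to \(X=X_0\oplus X_1\). A standard fact is that \(S\) is compact if and only if all four corners are compact: the corners are \(P_jSJ_i\), where \(P_j\) are the coordinate projections and \(J_i\) the coordinate inclusions.

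\emph{Well-definedness.} By \eqref{eq:slf-cor}, \eqref{eq:rev-cor} and \eqref{eq:fwd-cor}, every \(S\) admits \(\lambda,\mu,v\) with \(A-\lambda I\), \(B\), \(C-T_v\) and \(D-\mu I\) compact. These parameters are unique.
\begin{itemize}
	\item Since \(X_0\) is infinite dimensional, \(I_{X_0}\) is not compact, so \(\lambda\) is unique; likewise \(\mu\) is unique.
	\item If \(T_v-T_{v'}\) is compact, then \([T_{v-v'}]=0\). Because \(v\mapsto[T_v]\) is an isomorphic embedding, it is injective, so \(v=v'\).
\end{itemize}
By the compactness criterion above, \(S\) is compact exactly when \(\lambda=\mu=0\) and \(v=0\). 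Hence \(\Phi\) is well defined on \(\Cal(X)\) and injective. It is surjective because \(\begin{pmatrix}\lambda I&0\\T_v&\mu I\end{pmatrix}\) lies in \(\Bcal(X)\), as \(T_v\) is bounded. Linearity is clear, using the linearity of \(v\mapsto T_v\).

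\emph{Multiplicativity.} This is the one computation needing care, and it is short. Multiply \(\begin{pmatrix}\lambda I&0\\T_v&\mu I\end{pmatrix}\begin{pmatrix}\lambda' I&0\\T_{v'}&\mu' I\end{pmatrix}\). Since \(\Kcal\) is an ideal, we may work modulo compacts, and the product is
\[
\begin{pmatrix}\lambda\lambda' I&0\\ \lambda' T_v+\mu T_{v'}&\mu\mu' I\end{pmatrix}
=\begin{pmatrix}\lambda\lambda' I&0\\ T_{\lambda' v+\mu v'}&\mu\mu' I\end{pmatrix}.
\]
This matches the product \eqref{eq:tri-prd}, with \(v\lambda'+\mu v'\) in the lower-left corner. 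Hence \(\Phi\) is multiplicative. It is also unital, sending \([I_X]\) to \((1,0,1)\).

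\emph{Boundedness of \(\Phi\).} This is the only analytic point. We must bound each of \(|\lambda|\), \(|\mu|\) and \(\norm{v}\) by \(C\norm{[S]}\).
\begin{itemize}
	\item \textbf{The corner \(v\).} The map \([S]\mapsto[C]=[P_1SJ_0]\), from \(\Cal(X)\) to \(\Bcal(X_0,X_1)/\Kcal(X_0,X_1)\), is contractive. Since \(v\mapsto[T_v]\) is an isomorphic embedding, \(\norm{v}\le c\norm{[T_v]}=c\norm{[C]}\le c\norm{[S]}\).
	\item \textbf{The scalars \(\lambda,\mu\).} We use \(\norm{[\lambda I_{X_0}]}=|\lambda|\) in \(\Cal(X_0)\). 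Indeed, if \(K\) is compact, then \(\norm{\lambda I-K}\ge|\lambda|\), because the essential norm of the identity on an infinite-dimensional space is at least \(1\) (Riesz lemma). Combined with the contraction \([S]\mapsto[A]\), this gives \(|\lambda|\le\norm{[S]}\), and similarly \(|\mu|\le\norm{[S]}\).
\end{itemize}
Thus \(\Phi\) is a bounded bijection between Banach spaces. By the open mapping theorem its inverse is bounded, so \(\Phi\) is a Banach-algebra isomorphism.

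The main obstacle is only bookkeeping: the uniqueness of the decomposition, which relies on infinite dimensionality and on the injectivity of \(v\mapsto[T_v]\). Alternatively, boundedness of \(\Phi^{-1}\) can be checked directly: it is at most \(|\lambda|+\norm{T}\norm{v}+|\mu|\), up to the norm of the sum.
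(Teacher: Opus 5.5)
Your proposal is correct and follows essentially the same route as the paper: corner-wise compactness, uniqueness of \(\lambda,\mu,v\) from infinite dimensionality and injectivity of \(v\mapsto[T_v]\), contractive corner extractions, and the same lower-left computation \(\lambda'T_v+\mu T_{v'}=T_{v\lambda'+\mu v'}\). The only cosmetic differences are these: the paper bounds the inverse \((\lambda,v,\mu)\mapsto\bigl[\begin{pmatrix}\lambda I_{X_0}&0\\T_v&\mu I_{X_1}\end{pmatrix}\bigr]\) directly instead of invoking the open mapping theorem, and it obtains boundedness of the scalar coordinates from the one-dimensionality of \(\K[I_{X_i}]\) rather than from your explicit estimate \(|\lambda|\le\norm{[S]}\).
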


\begin{proof}
	Let \(\jmath_i:X_i\to X\) and \(\pi_i:X\to X_i\) be the canonical injections and projections.
	If \(R\in\Kcal(X)\), then each corner \(\pi_jR\jmath_i\) is compact.
	Conversely, if all four corners of \(R\) are compact, then
	\[ R=\sum_{i,j=0}^1\jmath_j(\pi_jR\jmath_i)\pi_i \]
	is a finite sum of compact operators.
	Hence compactness of an operator on \(X_0\oplus X_1\) is equivalent to compactness of its four entries.

	By \eqref{eq:slf-cor}, the scalars \(\lambda\) and \(\mu\) are unique.
	Otherwise the identity on one of the infinite-dimensional spaces \(X_i\) would be compact.
	By the assumed injectivity of \(v\mapsto[T_v]\), the vector \(v\) is unique as well.
	Thus \(\Phi\) is well defined and injective.
	Equations \eqref{eq:slf-cor}--\eqref{eq:fwd-cor} show that every class has such a representative and that every triple \((\lambda,v,\mu)\) occurs,
	so \(\Phi\) is surjective.

	Each corner extraction \([R]\mapsto[\pi_jR\jmath_i]\) has norm at most \(\|\pi_j\|\,\|\jmath_i\|\).
	The two scalar quotient coordinates are bounded because \(\K[I_{X_i}]\) is one dimensional,
	and the \(V^*\)-coordinate is bounded because \(v\mapsto[T_v]\) is an isomorphic embedding.
	Hence \(\Phi\) is bounded.
	Conversely,
	\[
		(\lambda,v,\mu)\longmapsto
		\left[\begin{pmatrix}\lambda I_{X_0}&0\\T_v&\mu I_{X_1}\end{pmatrix}\right]
	\]
	is bounded by the boundedness of \(v\mapsto T_v\).
	This is the inverse of \(\Phi\).

	It remains to check multiplication.
	Choose representatives
	\[
		R=\begin{pmatrix}\lambda I_{X_0}+K_{00}&K_{10}\\T_v+K_{01}&\mu I_{X_1}+K_{11}\end{pmatrix},
		\qquad
		R'=\begin{pmatrix}\lambda' I_{X_0}+K'_{00}&K'_{10}\\T_{v'}+K'_{01}&\mu' I_{X_1}+K'_{11}\end{pmatrix},
	\]
	where every \(K_{ij},K'_{ij} (i,j=0,1)\) is compact.
	Since \(\Kcal\) is a two-sided operator ideal, the product is represented by
	\begin{equation}\label{eq:cal-mul}
		\begin{pmatrix}\lambda I_{X_0}&0\\T_v&\mu I_{X_1}\end{pmatrix}
		\begin{pmatrix}\lambda' I_{X_0}&0\\T_{v'}&\mu' I_{X_1}\end{pmatrix}=
		\begin{pmatrix}
			\lambda\lambda' I_{X_0}&0\\
			T_{v\lambda'+\mu v'}&\mu\mu' I_{X_1}
		\end{pmatrix}.
	\end{equation}
	Linearity of \(v\mapsto T_v\) gives the lower-left entry in the equation~(\ref{eq:cal-mul}).
	This is exactly \eqref{eq:tri-prd}, and therefore \(\Phi\) is a Banach-algebra isomorphism.
\end{proof}

\part{Oracle-recursive finite extensions}

Part~I has three steps.
Section~3 gives the local extension estimate with constants independent of the fibre dimensions.
Section~4 defines the protected module and the finite separation certificates used later.
Section~5 defines the finite requirement language and the oracle for the local structure of \(V\),
then runs the fair rank recursion.
Its final theorem records the analytic properties of the resulting two-sorted datum.
The construction is completed before any operator is chosen.

\section{Block Bourgain--Delbaen (BD) extensions}

\subsection{Operator-valued BD lemma}

The next lemma is a block-valued form of the extension estimate used in the BD and AH constructions \cite{BourgainDelbaen1980,ArgyrosHaydon2011}.
We use genuine finite-dimensional subspaces of \(V\) as source blocks.
They allow us to scalarize by \(v|_{V_r}\) without making a \(v\)-dependent choice from a finite net in \(B_{V_r^*}\).
This preserves the linearity of \(v\mapsto T_v\).

Let \((F_n)_{n\in\N_+}\) be finite-dimensional Banach spaces and put
\[ \mathcal{E}_*=\left(\bigoplus_{n=1}^\infty F_n^*\right)_{\ell_1}, \qquad \mathcal{E}=\left(\bigoplus_{n=1}^\infty F_n\right)_{\ell_\infty}. \]
Here and below, subscripts \(*\) denote chosen preduals; for notational convenience, superscripts * are retained for predual objects when no confusion can arise.
Write \(\iota_n:F_n^*\to{\mathcal{E}_*}\) for the canonical injection.
A unit block-triangular system is a family
\[ d_n^*\varphi=\iota_n\varphi-c_n^*\varphi,\qquad \varphi\in F_n^*, \]
where \(c_n^*:F_n^*\to\bigoplus_{k<n}F_k^*\).
We call \(c_n^*\varphi\) the correction functional. In the scalar case, \(c_n^*\) is the Bourgain--Delbaen extension functional in the terminology of Motakis \cite{Motakis2024}.
Denote by \(P_m^*\) the algebraic projection onto the first \(m\) ranges \(d_k^*(F_k^*)\),
along the later \(d^*\)-blocks.
We use the conventions \( P_0^*=0\) and \( c_1^*=0\).

Figure~\ref{fig:bd-triangular} displays the ambient components of \(d_r^*\), with rows indexed by \(r\) and columns by the ambient rank \(k\).
The diagonal entries are identities; each \(\bullet\) is a possibly zero component of \(-c_r^*\), and all entries with \(k>r\) vanish.

\begin{figure}[htbp]
	\centering
\begin{tikzpicture}[x=1.30cm,y=0.60cm,
	every node/.style={font=\small,inner sep=2pt}]
	\foreach \r in {1,...,5} {
		\foreach \k in {1,...,5} {
			\ifnum\k<\r
				\fill[blue!7] ({\k-1},{-\r}) rectangle (\k,{-\r+1});
			\else
				\ifnum\k=\r
					\fill[green!10] ({\k-1},{-\r}) rectangle (\k,{-\r+1});
				\else
					\fill[black!2] ({\k-1},{-\r}) rectangle (\k,{-\r+1});
				\fi
			\fi
		}
	}
	\draw[black!30,step=1] (0,-5) grid (5,0);
	\node at (2.5,1.25) {Ambient blocks};
	\node[anchor=east] at (-0.18,0.50) {BD maps};
	\foreach \k/\lab in {1/{F_1^*},2/{F_2^*},3/{F_3^*},4/{\cdots},5/{F_n^*}}
		\node at ({\k-0.5},0.50) {\(\lab\)};
	\foreach \r/\lab in {1/{d_1^*},2/{d_2^*},3/{d_3^*},4/{\vdots},5/{d_n^*}}
		\node[anchor=east] at (-0.18,{-\r+0.5}) {\(\lab\)};
	\foreach \r/\k/\entry in {
		1/1/{I_{F_1^*}},1/2/{0},1/3/{0},1/4/{\cdots},1/5/{0},
		2/1/{\bullet},2/2/{I_{F_2^*}},2/3/{0},2/4/{\cdots},2/5/{0},
		3/1/{\bullet},3/2/{\bullet},3/3/{I_{F_3^*}},3/4/{\cdots},3/5/{0},
		4/1/{\vdots},4/2/{\vdots},4/3/{\vdots},4/4/{\ddots},4/5/{\vdots},
		5/1/{\bullet},5/2/{\bullet},5/3/{\bullet},5/4/{\cdots},5/5/{I_{F_n^*}}}
		\node at ({\k-0.5},{-\r+0.5}) {\(\entry\)};
\end{tikzpicture}
	\caption{The unit lower triangular BD structure.}
	\label{fig:bd-triangular}
\end{figure}

We call the corrections in \eqref{eq:blk-t0} and \eqref{eq:blk-t1} type zero and type one, respectively. Their scalar versions correspond to Types~1 and~2 in \cite[Definition~3.3]{ArgyrosHaydon2011}. Motakis \cite[Proposition~3.2]{Motakis2024} uses types~(a) and~(b) for the analogous scalar corrections without and with a predecessor term, respectively.

\begin{lemma}[Block BD lemma]\label{lem:blk-bd}
	Fix \(0<\theta<1/2\).
	Suppose every new block \(F_{n+1}\) is a finite \(\ell_\infty\)-sum of direct summands \(G\),
	and on each \(G^*\) the correction has one of the forms
	\begin{align}
		c_{n+1}^*|_{G^*}&=\beta(I_{{\mathcal{E}_*}}-P_k^*)B^*,
		\label{eq:blk-t0}\\
		c_{n+1}^*|_{G^*}&=\iota_jA^*+\beta(I_{{\mathcal{E}_*}}-P_k^*)B^*.
		\label{eq:blk-t1}
	\end{align}
	Here \(0\le k<n\), \(0\le\beta\le\theta\),
	\[ B^*:G^*\longrightarrow\bigoplus_{k<r\le n}F_r^* \]
	is a contraction into the canonical \(\ell_1\)-sum, and in \eqref{eq:blk-t1}, \(A^*:G^*\to F_j^*\), \(j\le k\),
	is a contraction.
	Then \(\big(d_n^*(F_n^*)\big)\) is a Schauder decomposition of \(\mathcal{E}_*\), and
	\[ \sup_m\norm{P_m^*}\le M=(1-2\theta)^{-1}. \]
	Its biorthogonal blocks \(d_n(F_n)\) span the Banach space
	\[ X=\overline{\operatorname{span}}\{d_n(F_n):n\in\N_+\}\subseteq\mathcal{E}. \]
	Let \(P_m=(P_m^*)^*|_X\) and \(P_{[a,b]}=P_b-P_{a-1}\), and then the ambient coordinate maps and FDD projections satisfy
	\[ e_n:X\longrightarrow F_n,\qquad \norm{e_n}\le1,\qquad \norm{P_m}\le M,\qquad \norm{P_{[a,b]}}\le2M \]
	for \(n,m\in\N_+\) and \(1\le a\le b\).
\end{lemma}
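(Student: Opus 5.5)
We follow the standard Bourgain--Delbaen argument, in its Argyros--Haydon form, adapted to blocks. The core estimate is that for every \(m\) and every \(\varphi\in F_n^*\) with \(n>m\),
\[
	\norm{P_m^*d_n^*\varphi}\le (M-1)\norm{\varphi}.
\]
More precisely, we prove by induction on \(n\) the ambient bound
\[
	\norm{P_m^*\iota_n\varphi}\le M\norm{\varphi}
	\qquad(\varphi\in F_n^*,\ m\in\N).
\]
Everything else then follows. The \(d_n^*\)-blocks span the same finite sums \(\bigoplus_{k\le n}F_k^*\) as the \(\iota_k\), because the system is unit triangular. Hence \(P_m^*\) is well defined on the dense subspace of finitely supported elements. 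An \(\ell_1\)-sum element is a norm-convergent sum of its ambient pieces, so the bound \(\norm{P_m^*x}\le M\norm{x}\) extends from single blocks to all of \(\mathcal{E}_*\). Since \(P_m^*\) is a projection onto the first \(m\) ranges along the later ones, and \(P_m^*x\to x\) on the dense set, we obtain a Schauder decomposition with constant \(M\).

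For the induction, take \(n+1>m\) (for \(m\ge n+1\) we have \(P_m^*\iota_{n+1}\varphi=\iota_{n+1}\varphi\)). Since \(d_{n+1}^*\) lies in the later ranges, we have \(P_m^*d_{n+1}^*\varphi=0\), and therefore
\[
	P_m^*\iota_{n+1}\varphi=P_m^*c_{n+1}^*\varphi .
\]
Because \(\varphi\) splits \(\ell_1\)-wise across the summands \(G^*\) (the dual of a finite \(\ell_\infty\)-sum is the \(\ell_1\)-sum of the duals), it suffices to treat \(\varphi\in G^*\).

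In type zero we must bound \(\beta P_m^*(I-P_k^*)B^*\varphi\). If \(m\le k\), this term is zero. If \(m>k\), it equals \(\beta(P_m^*-P_k^*)B^*\varphi\). Here \(B^*\varphi\) is supported on ranks \(\le n\), so the induction hypothesis applies and gives a bound of \(2M\theta\norm{\varphi}\).

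In type one we add \(P_m^*\iota_jA^*\varphi\) with \(j\le k\). If \(m\ge j\), this is just \(\iota_jA^*\varphi\), of norm at most \(\norm{\varphi}\). If \(m<j\le k\), the \(\beta\)-term vanishes, and the induction hypothesis gives \(\norm{P_m^*\iota_jA^*\varphi}\le M\norm{\varphi}\). Otherwise \(m\ge j\), and the total is at most \(\norm{\varphi}+2\theta M\norm{\varphi}=M\norm{\varphi}\), since \(1+2\theta M=M\) exactly when \(M=(1-2\theta)^{-1}\).

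The point needing care is that the induction hypothesis must be stated for all \(m\) simultaneously, and for the differences \(P_m^*-P_k^*\) as well. We handle this by bounding each of \(P_m^*\) and \(P_k^*\) separately by \(M\), which is where the factor \(2\theta\) comes from. One must also check that \(P_m^*\) applied to an element of rank \(\le n\) is computed inside the finite system, so the recursion is legitimate.

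For the predual side, the biorthogonal blocks \(d_n:F_n\to\mathcal{E}\) are defined by
\[
	\langle d_n f,d_k^*\psi\rangle=\delta_{nk}\,\psi(f).
\]
Equivalently, \(d_n=(\text{coordinate functional for the }n\text{-th block})^*\) restricted suitably, and its image lies in \(\mathcal{E}=\mathcal{E}_*^*\). We set \(P_m=(P_m^*)^*|_X\). The subspace \(X\) is invariant, since \((P_m^*)^*\) fixes \(d_k(F_k)\) for \(k\le m\) and kills it for \(k>m\). This gives \(\norm{P_m}\le M\) and \(\norm{P_{[a,b]}}\le 2M\). Finally, \(e_n\) is the restriction to \(X\) of the \(n\)-th coordinate projection of the \(\ell_\infty\)-sum, which has norm at most \(1\).
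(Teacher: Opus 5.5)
Your proof is correct and follows the paper's argument essentially step for step. You induct on the rank to get $\|P_m^*\iota_{n+1}\varphi\|\le M\|\varphi\|$ from $P_m^*\iota_{n+1}\varphi=P_m^*c_{n+1}^*\varphi$, split $m\le k$ versus $m>k$ using $1+2\theta M=M$, and use $\ell_1$-additivity across summands and blocks. You then define $d_n$ by biorthogonality, with $(P_m^*)^*$ preserving $X$.

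Two cosmetic points. Your opening display is vacuous, because $P_m^*d_n^*\varphi=0$ for $n>m$. You should also justify, as the paper does, that $z^*\mapsto\varphi_n$ is bounded (so that $d_nf\in\mathcal{E}$) via $\|\varphi_n\|\le\|(P_n^*-P_{n-1}^*)z^*\|\le 2M\|z^*\|$, which uses the unit diagonal of $d_n^*$.
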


\begin{proof}
	On the algebraic direct sum, the rank-\(n\) coefficient of \(d_n^*\varphi\) is \(\varphi\).
	Thus the change from the ambient blocks \(\iota_n(F_n^*)\) to \(d_n^*(F_n^*)\) is unit lower triangular.
	Hence it is invertible on every finite initial sum.
	Consequently, the algebraic projections \(P_m^*\) are well defined.
	Moreover, we have
	\begin{equation}\label{eq:bd-prj-cmp}
		P_m^*P_k^*=P_{\min\{m,k\}}^*.
	\end{equation}

	Assume inductively that \(\|P_s^*\|\le M\) on the first \(n\) ambient blocks, and take \(m<n+1\).
	Since \(P_m^*d_{n+1}^*\varphi=0\), we know
	\[ P_m^*\iota_{n+1}\varphi=P_m^*c_{n+1}^*\varphi. \]
	On a type-one summand, for \(\varphi\in G^*\), \eqref{eq:bd-prj-cmp} gives
	\[
		\begin{aligned}
			P_m^*c_{n+1}^*\varphi
			&=P_m^*\bigl(\iota_jA^*\varphi+\beta(I_{\mathcal{E}_*}-P_k^*)B^*\varphi\bigr)\\
			&=P_m^*\iota_jA^*\varphi
			+\beta(P_m^*-P_m^*P_k^*)B^*\varphi\\
			&=P_m^*\iota_jA^*\varphi
			+\beta(P_m^*-P_{m\wedge k}^*)B^*\varphi.
		\end{aligned}
	\]
	If \(m\le k\), then \(m\wedge k=m\), so the second term vanishes and the inductive estimate gives
	\[
		\begin{aligned}
			\norm{P_m^*\iota_{n+1}\varphi}
			=\norm{P_m^*\iota_jA^*\varphi} \le M\norm{A^*\varphi}
			\le M\norm{\varphi}.
		\end{aligned}
	\]
	If \(m>k\), then \(m\wedge k=k\) and \(j\le k<m\).
	Unit triangularity gives \(P_m^*\iota_jA^*\varphi=\iota_jA^*\varphi\), and hence
	\[
		\begin{aligned}
			\norm{P_m^*\iota_{n+1}\varphi}
			&\le \norm{A^*\varphi}
			+\beta\norm{(P_m^*-P_k^*)B^*\varphi}\\
			&\le \bigl(1+\beta(\norm{P_m^*}+\norm{P_k^*})\bigr)\norm{\varphi}\\
			&\le(1+2\beta M)\norm{\varphi}
			\le(1+2\theta M)\norm{\varphi}
			=M\norm{\varphi}.
		\end{aligned}
	\]
	On a type-zero summand there is no predecessor term, so
	\[
		\begin{aligned}
			\norm{P_m^*\iota_{n+1}\varphi}=\norm{\beta(P_m^*-P_{m\wedge k}^*)B^*\varphi}
			\le\beta\bigl(\norm{P_m^*}+\norm{P_{m\wedge k}^*}\bigr)\norm{\varphi}
			\le2\beta M\norm{\varphi}
			\le M\norm{\varphi}.
		\end{aligned}
	\]
	Since \(F_{n+1}\) is an \(\ell_\infty\)-sum of the summands \(G\), every
	\(\varphi\in F_{n+1}^*\) has a decomposition \(\varphi=\sum_G\varphi_G\) with
	\(\norm{\varphi}=\sum_G\norm{\varphi_G}\). Therefore
	\[ \norm{P_m^*\iota_{n+1}\varphi} \le\sum_G\norm{P_m^*\iota_{n+1}\varphi_G} \le M\sum_G\norm{\varphi_G} =M\norm{\varphi}. \]
	Thus
	\begin{equation}\label{eq:blk-prj-bnd}
		\norm{P_m^*\iota_{n+1}\varphi}\le M\norm{\varphi}
		\qquad(\varphi\in F_{n+1}^*).
	\end{equation}
	If \(z\) is supported by the first \(n\) ambient blocks, the ambient
	\(\ell_1\)-sum is additive across the last block. Using the inductive estimate and
	\eqref{eq:blk-prj-bnd}, we obtain
	\[
		\begin{aligned}
			\norm{P_m^*(z+\iota_{n+1}\varphi)}
			&\le\norm{P_m^*z}+\norm{P_m^*\iota_{n+1}\varphi}\\
			&\le M\norm{z}+M\norm{\varphi}\\
			&=M\bigl(\norm{z}+\norm{\varphi}\bigr)
			=M\norm{z+\iota_{n+1}\varphi}.
		\end{aligned}
	\]
	For \(m=n+1\), the projection is the identity on the first \(n+1\) blocks.
	This closes the induction and proves \(\|P_m^*\|\le M\) on every finite ambient initial sum.
	Since finitely supported vectors are dense in \({\mathcal{E}_*}\), each \(P_m^*\) extends to \({\mathcal{E}_*}\),
	the same estimate holds there, and \(P_m^*z^*\to z^*\) for every \(z^*\in{\mathcal{E}_*}\) as \(m \to \infty\).
	Thus \((d_n^*(F_n^*))\) is a Schauder decomposition of \({\mathcal{E}_*}\).

	Then we have the unique expansion \(z^*=\sum_n d_n^*\varphi_n\).
	For \(x\in F_n\), define \(d_nx\in{\mathcal{E}_*^*}=\mathcal{E}\) by
	\[ \langle d_nx,z^*\rangle=\varphi_n(x). \]
	If \(Q_n^*=P_n^*-P_{n-1}^*\), then \(Q_n^*z^*=d_n^*\varphi_n\),
	and the rank-\(n\) ambient coordinate of \(d_n^*\varphi_n\) is \(\varphi_n\).
	Therefore
	\[ \norm{\varphi_n}\le\norm{Q_n^*z^*} \le2M\norm{z^*}. \]
	The identities
	\[ \langle d_nx,d_k^*\psi\rangle=\delta_{nk}\psi(x) \]
	give us the block biorthogonality.
	Let \(X=\overline{\operatorname{span}}\{d_n(F_n):n\in\N_+\}\subseteq\mathcal{E}\).
	For \(x\in F_n\) and every \(d_k^*\psi\), biorthogonality then gives
	\begin{align*}
		\langle (P_m^*)^*d_nx,d_k^*\psi\rangle
		=\langle d_nx,P_m^*d_k^*\psi\rangle=\mathbf{1}_{\{k\le m\}}\delta_{nk}\psi(x)
		=\mathbf{1}_{\{n\le m\}}
		\langle d_nx,d_k^*\psi\rangle.
	\end{align*}
	Since the \(\big(d_k^*(F_k^*)\big)\) span \({\mathcal{E}_*}\), this proves
	\[ (P_m^*)^*d_nx=\mathbf{1}_{\{n\le m\}}d_nx. \]
	Thus we have
	\[ (P_m^*)^*(X)\subseteq X,\qquad P_m=(P_m^*)^*|_X,\qquad \operatorname{ran}P_m=\bigoplus_{n=1}^m d_n(F_n),\qquad \norm{P_m}\le\norm{P_m^*}\le M. \]
	For the interval projection \(P_{[a,b]}=P_b-P_{a-1}\), we have
	\[ \norm{P_{[a,b]}}\le\norm{P_b}+\norm{P_{a-1}}\le2M. \]
	Finally the ambient coordinate projection \(e_n:\mathcal{E}\to F_n\) is contractive in the \(\ell_\infty\)-sum norm and
	its restriction to \(X\) is therefore contractive as claimed.
\end{proof}

\begin{remark}\label{rem:vec-blk}
	The bound in Lemma~\ref{lem:blk-bd} does not depend on \(\dim F_n\).
	Hence the finite-dimensional spaces \(V_r\) can be kept as vector fibres.
\end{remark}

\subsection{Parameters and weight pools}

Retain the finite-dimensional exhaustion \((V_r)\), its compatible ordered bases,
and the rational core \(V_{\Q}\) fixed in Subsection~\ref{sec:coe-fib}.
All certificates and all finite ambient block functionals used as construction data are rational with respect to these finite-dimensional structures.
Rational points are used for the countable scheduling of requirements and
the blocks \(V_r\) carry the norm inherited from \(V\).

Fix any integer \(m_1\ge2^8\). Recursively choose each
\(m_{j+1}\) to be the least integer satisfying
\begin{equation}\label{eq:par-grw}
	m_1\ge2^8,\qquad m_{j+1}\ge m_j^5,
\end{equation}
and, writing
\[
	\begin{aligned}
		A_1&=1,& A_j&=4\max_{i<j}n_i\quad(j\ge2),\\
		D_j&=\lceil2\log_2m_j\rceil,&
		L_j&=\lceil m_j\rceil(1+A_j+\cdots+A_j^{D_j+2}),
	\end{aligned}
\]
choose \(n_j\) to be the least integer satisfying
\begin{equation}\label{eq:n-grw}
	n_j\ge 2^{j+8}L_jm_j^4.
\end{equation}
For later use in Part~II, set
\begin{equation}\label{eq:par-led-def}
	S_j=1+A_j+\cdots+A_j^{D_j+2},
	\qquad
	\Lambda_j=2^{j+12}S_jn_jm_j^4.
\end{equation}
The construction constants are fixed once and for all by
\begin{equation}\label{eq:cst-led}
	\begin{aligned}
		M&=(1-2m_1^{-1})^{-1},&
		\kappa&=2M,&
		C_{\rm tr}&=2+2\kappa,\\
		B&=4\kappa+4,&
		A_{\rm ex}&=3B+M,&
		C_{\rm off}&=8B\kappa,\\
		K_{\rm dep}&=32\kappa(C_{\rm off}+1).
	\end{aligned}
\end{equation}
Their list of dependencies is:
\begin{center}
	\begin{tabular}{c|c|c}
		constant & definition & dependence \\ \hline
		\(M\) & \((1-2m_1^{-1})^{-1}\) & \(m_1\) only \\
		\(\kappa,C_{\rm tr},B\) & \eqref{eq:cst-led} & \(M\) only \\
		\(A_{\rm ex}\) & \(3B+M\) & \(M,B\) only \\
		\(C_{\rm off}\) & \(8B\kappa\) & \(B,\kappa\) only \\
		\(K_{\rm dep}\) & \(32\kappa(C_{\rm off}+1)\) &
		\(\kappa,C_{\rm off}\) only
	\end{tabular}
\end{center}
These constants are universal and depend only on \(m_1\).

\begin{lemma}[{List of parameter estimates}]\label{lem:par-led}
	For every \(j\ge1\), we have
	\begin{gather}
		\sum_{r=1}^\infty m_r^{-1}<1,\qquad
		n_j\ge m_j^2,\qquad
		S_j\le L_j/m_j<n_j,
		\label{eq:led-elm}\\
		\frac{S_j}{n_j}\le2^{-j-8}m_j^{-5},
		\qquad
		\frac{S_j}{\Lambda_j}\le2^{-j-12}m_j^{-4},
		\label{eq:led-rat}\\
		m_1^{-(D_j+2)}\le m_j^{-2},\qquad
		m_{j+1}^{-1}\le m_j^{-5},\qquad
		(n_jm_j)^{-1}\le m_j^{-3}.
		\label{eq:led-dpt-tal}
	\end{gather}
	Furthermore, if \(d\in\N\) and an index \(p\) satisfies
	\[ m_p\ge2^{j+d+10}L_jn_jm_j^4, \]
	then \(m_p\ge\Lambda_j\).
\end{lemma}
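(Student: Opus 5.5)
These estimates are pure arithmetic consequences of the recursive choices \eqref{eq:par-grw}, \eqref{eq:n-grw} and the definitions of \(A_j,D_j,L_j,S_j,\Lambda_j\). I would verify them in the order listed, using only \(m_1\ge 2^8\), \(m_{j+1}\ge m_j^5\) and \(n_j\ge 2^{j+8}L_jm_j^4\).

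\emph{Display \eqref{eq:led-elm}.} For the series, \(m_{r+1}\ge m_r^5\ge 2^8m_r\), so \(\sum_r m_r^{-1}\le m_1^{-1}\sum_{k\ge0}2^{-8k}<2m_1^{-1}<1\). Since \(L_j\ge m_j\), we get \(n_j\ge 2^{j+8}m_j^5\ge m_j^2\). By definition \(L_j=\lceil m_j\rceil S_j=m_jS_j\) (the \(m_j\) are integers), so \(S_j=L_j/m_j\). Finally \(n_j\ge 2^{j+8}L_jm_j^4>L_j/m_j\).

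\emph{Display \eqref{eq:led-rat}.} Using \(L_j=m_jS_j\),
\[
S_j/n_j\le S_j/(2^{j+8}m_jS_jm_j^4)=2^{-j-8}m_j^{-5}.
\]
Directly from the definition of \(\Lambda_j\),
\[
S_j/\Lambda_j=2^{-j-12}n_j^{-1}m_j^{-4}\le 2^{-j-12}m_j^{-4}.
\]

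\emph{Display \eqref{eq:led-dpt-tal}.} The second inequality is \eqref{eq:par-grw}. The third follows from \(n_j\ge m_j^2\). For the first, \(D_j+2\ge 2\log_2 m_j\), so
\[
m_1^{-(D_j+2)}\le 2^{-8(D_j+2)}\le 2^{-16\log_2m_j}=m_j^{-16}\le m_j^{-2}.
\]

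\emph{Final claim.} If \(m_p\ge 2^{j+d+10}L_jn_jm_j^4\), then, since \(L_j=m_jS_j\ge S_j\) and \(d\ge0\),
\[
m_p\ge 2^{j+10}\cdot S_j n_jm_j^4\cdot m_j .
\]
It remains to compare this with \(\Lambda_j=2^{j+12}S_jn_jm_j^4\), i.e.\ to absorb a factor \(4\). This is where I would be careful: the slack comes from \(m_j\ge 2^8\ge4\), so \(2^{j+10}m_j\ge 2^{j+12}\), which gives \(m_p\ge\Lambda_j\).

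No step is a genuine obstacle. The only points needing care are the exponent bookkeeping for \(m_1^{-(D_j+2)}\) and not losing the factor \(m_j\) hidden in \(L_j\) in the last claim.
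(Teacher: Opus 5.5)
Your proposal is correct and matches the paper's argument: \(L_j=m_jS_j\) by integrality of \(m_j\), the growth conditions on \(m_j\) and \(n_j\) give the elementary and ratio bounds, and \(m_j\ge 2^8\) absorbs the factor \(4\) in the final claim. The differences are cosmetic. You bound the series by a geometric series with ratio \(2^{-8}\), and in the depth estimate you use \(m_1\ge 2^8\) instead of \(m_1\ge 2\), which gives the stronger but unneeded bound \(m_j^{-16}\).
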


\begin{proof}
	Since \(m_1\ge2^8\) and \(m_{r+1}\ge m_r^5\), the series is bounded by \(2^{-8}+2^{-40}+2^{-200}+\cdots<1\).
	The definition of \(L_j\), integrality of \(m_j\), and \eqref{eq:n-grw} give \(L_j=m_jS_j\), \(n_j\ge m_j^2\),
	and the remaining assertions in \eqref{eq:led-elm}.
	Since \(L_j=m_jS_j\), \eqref{eq:n-grw} gives
	\[ \frac{S_j}{n_j}\le 2^{-j-8}m_j^{-5}. \]
	Moreover, \eqref{eq:par-led-def} and \(n_j\ge1\) give
	\[ \frac{S_j}{\Lambda_j} =\frac{1}{2^{j+12}n_jm_j^4} \le 2^{-j-12}m_j^{-4}. \]
	This proves \eqref{eq:led-rat}.
	Since \(D_j\ge2\log_2m_j\) and \(m_1\ge2\),
	\[ m_1^{-(D_j+2)}\le2^{-D_j}\le m_j^{-2}. \]
	The other two estimates in \eqref{eq:led-dpt-tal} follow from \eqref{eq:par-grw} and \(n_j\ge m_j^2\).
	Finally, \(L_j=m_jS_j\) and \(m_j\ge2^8\) show that the last displayed lower bound dominates \(2^{j+12}S_jn_jm_j^4=\Lambda_j\).
\end{proof}
Partition the weight indices into pairwise disjoint infinite recursive sets
\[ \mathcal{G},\quad \mathcal{C}_{00},\quad\mathcal{C}_{11}, \quad\mathcal{C}_{01},\quad\mathcal{R}. \]
The set \(\mathcal{G}\) is used for generic averaging chains.
The three \(\mathcal{C}\)-sets carry source-self, target-self, and forward histories. The coding is a modified version of the odd-weight special coding in the AH construction \cite{ArgyrosHaydon2011}.
The set \(\mathcal{R}\) is source-only and is used to prove compactness of the reverse corner.
Each \(\mathcal{C}\)-set is split once more into seed and coded weights.

\section{\texorpdfstring{Protected module}{Protected module}}

This section works conditionally with a finite two-sorted block-triangular system whose corrections have the three forms displayed below.
All identities in the present section are finite and algebraic.
The rank recursion in Section~\ref{sec:two-dat} constructs a system satisfying these hypotheses and then completes the definitions of \(X_0\) and \(X_1\).
In this section, rank may be viewed simply as the stage at which a label is introduced or  as its FDD index equivalently.

Target blocks are finite \(\ell_\infty\)-sums of scalar pieces.
For every target label \(\alpha\), its paired source atom is denoted by \(\widehat{\alpha}\),
has the same rank as \(\alpha\), and carries the vector fibre \(E_\alpha=V_{r(\alpha)}\).
Fix one such target label \(\gamma\).
Also, we shall call \(\big(\widehat{\alpha}\big)_\alpha\) the protected mirrors.
Require \(E_\eta\subseteq E_\gamma\) for every old target label \(\eta\) used below,
including the predecessor \(\xi\) in the type-one case.
Write
\[ J_{\eta\gamma}:E_\eta\longrightarrow E_\gamma,\qquad R_{\eta\gamma}=J_{\eta\gamma}^*:E_\gamma^*\longrightarrow E_\eta^* \]
for inclusion and restriction, and let \(\epsilon_{\widehat{\eta}}^{0*}:E_\eta^*\to{\mathcal{E}_{0,*}}\) be the canonical source ambient-coordinate injection.

Fix \(k<n=\operatorname{rank}\gamma\), finitely many target labels \(\eta\) of ranks strictly between \(k\) and \(n\),
and scalars \(a_\eta\) with \(\sum_\eta|a_\eta|\le1\).
Put
\[ I=\{k+1,\ldots,n-1\},\qquad B^*=\sum_\eta a_\eta e_\eta^{1*}. \]
Since \(B^*\) has ambient support in \(I\), unit triangularity gives
\begin{equation}\label{eq:prt-tal-win}
	P_I^{1*}B^*=(I_{{\mathcal{E}_{1,*}}}-P_k^{1*})B^*.
\end{equation}
Suppose that the scalar target correction on \(\gamma\) is, respectively,
\begin{align}
	c_\gamma^{1*}&=0, \label{eq:tgt-ter}\\
	c_\gamma^{1*}&=\beta P_I^{1*}\sum_\eta a_\eta e_\eta^{1*},
	\label{eq:tgt-t0}\\
	c_\gamma^{1*}&=e_\xi^{1*}
	+\beta P_I^{1*}\sum_\eta a_\eta e_\eta^{1*},
	\label{eq:tgt-t1}
\end{align}
Here \(h\ge1\), \(\beta=m_h^{-1}\), and, in \eqref{eq:tgt-t1}, \(\xi\) is an old target label of rank \(k\).
On the paired source vector piece, for every \(\varphi\in E_\gamma^*\), define
\begin{align}
	c_{\widehat{\gamma}}^{0*}\varphi&=0,
	\label{eq:src-lft-ter}\\
	c_{\widehat{\gamma}}^{0*}\varphi
	&=\beta P_I^{0*}\sum_\eta a_\eta
	\epsilon_{\widehat{\eta}}^{0*}(R_{\eta\gamma}\varphi),
	\label{eq:src-lft-t0}\\
	c_{\widehat{\gamma}}^{0*}\varphi
	&=\epsilon_{\widehat{\xi}}^{0*}(R_{\xi\gamma}\varphi)
	+\beta P_I^{0*}\sum_\eta a_\eta
	\epsilon_{\widehat{\eta}}^{0*}(R_{\eta\gamma}\varphi).
	\label{eq:src-lft-t1}
\end{align}
It follows that
\[ \norm{\sum_\eta a_\eta \epsilon_{\widehat{\eta}}^{0*}(R_{\eta\gamma}\varphi)}_{\ell_1} \le\sum_\eta|a_\eta|\,\norm{R_{\eta\gamma}\varphi} \le\norm{\varphi}. \]
The first term in \eqref{eq:src-lft-t1} is the canonical injection of a contraction into one old block.
It has norm at most one, and \(\beta\le m_1^{-1}<1/2\).
Thus these are the two correction types in Lemma~\ref{lem:blk-bd}.
The projected terms are controlled by the projection estimate in that lemma as well.

Let \(X_0^{\rm alg}\) denote the algebraic source FDD, and let \(D_\gamma:X_0^{\rm alg}\to E_\gamma\) be the FDD coefficient map determined by
\[ \varphi(D_\gamma x) =d_{\widehat{\gamma}}^{0*}(\varphi)(x). \]
Let \(e_{\widehat{\gamma}}^0:X_0\to E_\gamma\) be the ambient source-coordinate map.
Pairing \eqref{eq:src-lft-ter}--\eqref{eq:src-lft-t1} with an arbitrary \(\varphi\in E_\gamma^*\) gives
\begin{align}
	e_{\widehat{\gamma}}^0
	&=D_\gamma, \label{eq:lft-ter}\\
	e_{\widehat{\gamma}}^0
	&=D_\gamma+\beta\sum_\eta a_\eta
	J_{\eta\gamma}e_{\widehat{\eta}}^0P_I^0,
	\label{eq:lft-t0}\\
	e_{\widehat{\gamma}}^0
	&=D_\gamma+J_{\xi\gamma}e_{\widehat{\xi}}^0
	+\beta\sum_\eta a_\eta
	J_{\eta\gamma}e_{\widehat{\eta}}^0P_I^0.
	\label{eq:lft-t1}
\end{align}
Put
\( U_\gamma:=e_{\widehat{\gamma}}^0. \)
Since this is an ambient coordinate evaluation, we have
\begin{equation}\label{eq:u-bnd}
	\sup_\gamma\norm{U_\gamma x}_V\le\norm{x}_{X_0}.
\end{equation}

On the algebraic source FDD define
\begin{equation}\label{eq:def-tv}
	T_vx=\sum_\gamma v(D_\gamma x)d_\gamma^1,\qquad v\in V^*,
\end{equation}
Thus \(T_v\) depends only on the coefficients in the paired fibre \(E_\gamma\).
If \(x\in X_0^{\rm alg}\), only finitely many FDD coefficients \(D_\gamma x\) are nonzero,
so the sum in \eqref{eq:def-tv} is finite.
Moreover,
\[ D_\gamma(P_I^0x) =\mathbf{1}_{\{\operatorname{rank}\gamma\in I\}}D_\gamma x, \qquad P_I^1d_\gamma^1 =\mathbf{1}_{\{\operatorname{rank}\gamma\in I\}}d_\gamma^1. \]
The paired atoms have the same ranks. And by summing these identities, we know \(P_I^1T_v=T_vP_I^0\) on \(X_0^{\rm alg}\).
The norm estimate below therefore extends \(T_v\) uniquely from \(X_0^{\rm alg}\) to all of \(X_0\).

The protected lift was chosen so that target evaluation commutes with scalarization by every \(v\in V^*\).
This is where the vector fibres are used.

\begin{lemma}[Scalarization identity]\label{lem:int}
	For every target node \(\gamma\), every finite source vector \(x\), and every \(v\in V^*\),
	\begin{equation}\label{eq:int}
		e_\gamma^{1*}(T_vx)=v(U_\gamma x).
	\end{equation}
	Consequently \(\|T_v\|\le\|v\|\).
\end{lemma}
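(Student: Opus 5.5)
Since both sides of \eqref{eq:int} are linear in \(x\) and continuous, it suffices to verify the identity on finite source vectors, by induction on \(\operatorname{rank}\gamma\). The key tool is the target analogue of the coordinate identities \eqref{eq:lft-ter}--\eqref{eq:lft-t1}. Pairing \eqref{eq:tgt-ter}--\eqref{eq:tgt-t1} with \(y\in X_1\) in the same way gives
\[
	e_\gamma^{1*}=d_\gamma^{1*}+\mathbf{1}_{\rm t1}\,e_\xi^{1*}
	+\beta\sum_\eta a_\eta e_\eta^{1*}P_I^1,
\]
with the predecessor term present only in type one and the \(\beta\)-term absent in the terminal case \eqref{eq:tgt-ter}. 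I will evaluate this at \(y=T_vx\).

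The three terms are handled separately. By the definition \eqref{eq:def-tv} and biorthogonality, \(d_\gamma^{1*}(T_vx)=v(D_\gamma x)\). For the projected term, use the intertwining \(P_I^1T_v=T_vP_I^0\) established after \eqref{eq:def-tv}. Each \(\eta\) and \(\xi\) has rank less than \(n=\operatorname{rank}\gamma\), so the induction hypothesis applies and gives
\[
	e_\eta^{1*}(P_I^1T_vx)=e_\eta^{1*}(T_vP_I^0x)=v(U_\eta P_I^0x),
	\qquad
	e_\xi^{1*}(T_vx)=v(U_\xi x).
\]
Since \(U_\eta\) takes values in \(E_\eta\subseteq E_\gamma\), we may write \(v(U_\eta z)=v(J_{\eta\gamma}e^0_{\widehat\eta}z)\); here \(v\) acts as a functional on all of \(V\), so the inclusions \(J_{\eta\gamma}\) are harmless. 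Summing the three contributions and comparing with \eqref{eq:lft-ter}--\eqref{eq:lft-t1} yields exactly \(v(U_\gamma x)\). The base case is rank one, where \(c^{1*}_\gamma=0\) and \(c^{0*}_{\widehat\gamma}=0\).

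The one point requiring care is that the target coordinate identity is genuinely the adjoint of the scalar correction. This holds because \(d^{1*}_\gamma=e^{1*}_\gamma-c^{1*}_\gamma\) as functionals on \(X_1\), and \(P_I^{1*}\) acts on functionals as the dual of \(P_I^1\). The same pairing argument that produced \eqref{eq:lft-t1} applies, with \(E_\gamma\) replaced by \(\K\). One must also check that the intertwining \(P_I^1T_v=T_vP_I^0\) holds on \(X_0^{\rm alg}\), where the induction is run, before \(T_v\) has been extended. Finite sums make both points routine.

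For the norm bound, note that \(X_1\subseteq\mathcal{E}_1\) carries the \(\ell_\infty\)-norm over target ambient coordinates, and each target fibre is scalar. Hence
\[
	\norm{T_vx}=\sup_\gamma\abs{e_\gamma^{1*}(T_vx)}=\sup_\gamma\abs{v(U_\gamma x)}\le\norm{v}\sup_\gamma\norm{U_\gamma x}_V\le\norm{v}\norm{x},
\]
using \eqref{eq:u-bnd}. This holds on \(X_0^{\rm alg}\), and density extends \(T_v\) to \(X_0\) with \(\norm{T_v}\le\norm{v}\). I expect the main obstacle to be the bookkeeping that the ambient target coordinate is computed in the same rank-by-rank fashion as the source lift. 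Once the displayed target identity is written down, the rest is the linear comparison above.
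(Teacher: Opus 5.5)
Your proposal is correct and follows the paper's proof essentially step for step: strong induction on \(\operatorname{rank}\gamma\), writing \(e_\gamma^{1*}=d_\gamma^{1*}+c_\gamma^{1*}\) and evaluating at \(T_vx\) via \(d_\gamma^{1*}(T_vx)=v(D_\gamma x)\), the intertwining \(P_I^1T_v=T_vP_I^0\), and the inductive hypothesis, then matching with \eqref{eq:lft-ter}--\eqref{eq:lft-t1}, and finally taking the supremum over the scalar target coordinates together with \eqref{eq:u-bnd}. The only quibble is your opening appeal to continuity, which is unnecessary and slightly premature. It is unnecessary because the identity is only claimed for finite source vectors, and premature because boundedness of \(T_v\) is the consequence being proved.
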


\begin{proof}
	We induct on the rank of \(\gamma\).
	If \(\gamma\) is terminal, then \(e_\gamma^{1*}=d_\gamma^{1*}\).
	The definition of \(T_v\) and \eqref{eq:lft-ter} give
	\[ e_\gamma^{1*}(T_vx) =d_\gamma^{1*}(T_vx) =v(D_\gamma x) =v(U_\gamma x). \]

	Suppose next that \(\gamma\) has the type-zero correction \eqref{eq:tgt-t0}.
	All labels \(\eta\) in the finite sum have smaller rank, so the inductive hypothesis can apply to them.
	By \(P_I^1T_v=T_vP_I^0\) and \eqref{eq:lft-t0}, we obtain
	\begin{align*}
		e_\gamma^{1*}(T_vx)
		&=d_\gamma^{1*}(T_vx)
		+\beta\sum_\eta a_\eta
		e_\eta^{1*}(P_I^1T_vx)\\
		&=v(D_\gamma x)
		+\beta\sum_\eta a_\eta
		v(U_\eta P_I^0x)\\
		&=v\!\left(D_\gamma x
		+\beta\sum_\eta a_\eta
		J_{\eta\gamma}U_\eta P_I^0x\right)
		=v(U_\gamma x).
	\end{align*}

	Finally suppose that \(\gamma\) has the type-one correction \eqref{eq:tgt-t1}.
	The label \(\xi\) and all labels \(\eta\) in the finite sum again have smaller rank. By the same argument and \eqref{eq:lft-t1}, we have
	\begin{align*}
		e_\gamma^{1*}(T_vx)
		&=d_\gamma^{1*}(T_vx)+e_\xi^{1*}(T_vx)
		+\beta\sum_\eta a_\eta
		e_\eta^{1*}(P_I^1T_vx)\\
		&=v(D_\gamma x)+v(U_\xi x)
		+\beta\sum_\eta a_\eta v(U_\eta P_I^0x)\\
		&=v\!\left(D_\gamma x+J_{\xi\gamma}U_\xi x
		+\beta\sum_\eta a_\eta
		J_{\eta\gamma}U_\eta P_I^0x\right)
		=v(U_\gamma x),
	\end{align*}
	This completes the induction and proves \eqref{eq:int} for every target coordinate.
	Finally, by \eqref{eq:u-bnd},
	\[ \norm{T_vx}_{X_1} =\sup_\gamma\abs{e_\gamma^{1*}(T_vx)} \le\norm{v}\sup_\gamma\norm{U_\gamma x}_V \le\norm{v}\norm{x}, \]

\end{proof}

For a finite scalar target functional
\( h=\sum_{\eta\in F}a_\eta e_\eta^{1*}, \)
let
\begin{equation}\label{eq:uh}
	U_hx=\sum_{\eta\in F}a_\eta U_\eta x\in V.
\end{equation}
The summands are viewed in a common finite-dimensional fibre \(V_r\).
Lemma~\ref{lem:int} then gives
\begin{equation}\label{eq:h-int}
	h(T_wx)=w(U_hx)\qquad(w\in V^*).
\end{equation}

Recall that a finite dimensional subspace is rational if it has a basis of rational vectors.
At a finite stage, let \(x\) be a rational vector in one of \(X_0, X_1\).
For a corner \(i\to j\), define its finite dimensional orbit by
\begin{equation}\label{eq:fou-orb}
	\begin{aligned}
		\mathscr{M}_{00}(x)&=\K x,&
		\mathscr{M}_{11}(x)&=\K x,\\
		\mathscr{M}_{01}(x)&=\{T_vx:v\in V^*\},&
		\mathscr{M}_{10}(x)&=\{0\}.
	\end{aligned}
\end{equation}
These are finite-dimensional rational subspaces whenever \(x\) is a rational algebraic vector.
Only the forward case needs verification.
For \(x\in X_0^{\rm alg}\), put
\[ F=\{\gamma:D_\gamma x\ne0\}, \qquad E_x=\operatorname{span}\{D_\gamma x:\gamma\in F\}\subseteq V. \]
The set \(F\) is finite, and \(E_x\) is finite dimensional.
By the Hahn--Banach theorem, the restriction map \(V^*\to E_x^*\) is onto, and therefore
\[ \mathscr{M}_{01}(x) =\left\{\sum_{\gamma\in F}a(D_\gamma x)d_\gamma^1: a\in E_x^*\right\}. \]
This is the range of the finite-dimensional linear map
\[ E_x^*\longrightarrow\operatorname{span}\{d_\gamma^1:\gamma\in F\}, \qquad a\longmapsto\sum_{\gamma\in F}a(D_\gamma x)d_\gamma^1. \]
When \(x\) is rational, the vectors \(D_\gamma x\) have rational coordinates in a common \(V_r\),
so the map has a representation by a rational matrix.
Gaussian elimination shows that its range is a rational finite-dimensional subspace.

For a finite interval \(I\), write \(F_n^j\) for the \(n\)-th ambient FDD block used in the construction of \(X_j\),
and put
\[ \mathcal{E}_j(I)=\left(\bigoplus_{n\in I}F_n^j\right)_{\ell_\infty}, \qquad {\mathcal{E}_{j,*}(I)}=\left(\bigoplus_{n\in I}(F_n^j)^*\right)_{\ell_1}. \]
Thus the ambient predual norm is an \(\ell_1\)-sum of the actual block-dual norms;
when all blocks are scalar it is the usual scalar \(\ell_1\)-norm.
For brevity, \(\|h\|_{\ell_1}\) for any later finite ambient functional always means this block \(\ell_1\)-sum norm.
A \emph{separation certificate} is a tuple
\[ \mathfrak{c}=(i,j,x,z,h,I,L,\varepsilon), \qquad L=\mathscr{M}_{ij}(x), \]
where \(I\subseteq\N_+\) is a finite rank interval, \(\varepsilon\in{\Q_+}\), and
\[ \supp_{\rm FDD}z\cup\supp_{\rm FDD}L\subseteq I, \qquad \supp_{\rm FDD}L:=\bigcup_{y\in L}\supp_{\rm FDD}y. \]
Moreover, \(h\in{\mathcal{E}_{j,*}(I)}\) is rational,
\begin{equation}\label{eq:sep-crt}
	\norm{h}_{{\mathcal{E}_{j,*}(I)}}\le1,\qquad h|_L=0,\qquad
	\operatorname{Re}h(z)>\varepsilon
\end{equation}
for the rational output \(z\) which is to be separated.

\section{Two-sorted BD recursion}
\label{sec:two-dat}

The Argyros--Haydon construction builds on the coding of special histories in the work of Gowers and Maurey \cite[Sections~1 and~3]{GowersMaurey1993}.
In the AH operator argument, one assumes that a bounded operator \(T\) satisfies \(\dist(Tx_k,\mathbb R x_k)\ge\varepsilon>0\) along a RIS.
For arbitrarily large \(j\), the argument constructs a mean \(\bar z\) of a dependent sequence such that
\[ \norm{\bar z}\le \frac{C}{m_{2j-1}^{2}}, \qquad \norm{T\bar z}\ge \frac{c}{m_{2j-1}}, \]
where \(C,c>0\) are independent of \(j\).
The odd index \(2j-1\) marks an outer operation restricted by history coding; even weights provide the rich averaging operations used in the inner constructions.
If the same outer scale admitted the unrestricted averaging operations available at even weights, the argument of \cite[Proposition~4.8]{ArgyrosHaydon2011} would give a lower bound of order \(m_{2j-1}^{-1}\) for \(\norm{\bar z}\), since the selected blocks have norms bounded away from zero.
Comparison with competing coded histories gives the first bound, while the chosen outer evaluation gives the second. Together they contradict the boundedness of \(T\).
At each step, the actual predecessor determines the next coded weight, and its rank fixes the lower support cutoff for the next block.
The inner exact pair must then be joined to the outer chain across the corresponding rank interval.
In this method, the rank sets must therefore be specified completely so that each recursive step is well defined.
This fixed grammar ensures that every admitted chain is legal and that no legal continuation required by the proof is omitted \cite[Sections~4, 6, and~7]{ArgyrosHaydon2011}.

Here, inner realization and outer continuation are specified by separate finite requirements, \hyperref[sec:req-cat]{\textup{(R3)}} and \hyperref[sec:req-cat]{\textup{(R4)}}.
A compatible finite certificate records the data required for the outer step, and a fair no-injury schedule eventually realizes every admissible requirement whose dependencies are available.
The resulting finite-extension property allows later operator arguments to select suitable realizations inside the completed datum.
The procedure is uniform in \(V\), with \(\mathcal O_V\) supplying the finite-dimensional information needed to decide local admissibility.

\noindent\textbf{Construction architecture.}
The main components have the following roles.
\begin{center}
	{\small
	\renewcommand{\arraystretch}{1.12}
	\begin{tabular}{@{}l l l@{}}
		\toprule
		Layer & Main objects & Role \\
		\midrule
		Static objects
			& \parbox[t]{0.29\textwidth}{\raggedright atoms, fibres, records, structural and coding histories}
			& \parbox[t]{0.45\textwidth}{\raggedright They form the finite state. Once introduced, they remain fixed.} \\
		Dynamic requests
			& \parbox[t]{0.29\textwidth}{\raggedright \hyperref[sec:req-cat]{\textup{(R1)}--\textup{(R5)}}, admissibility, the fair scheduler}
			& \parbox[t]{0.45\textwidth}{\raggedright Each rank adds a probe pair and realizes at most one active requirement. Cutoff and copy parameters provide cofinal realizations.} \\
		Analytic objects
			& \parbox[t]{0.29\textwidth}{\raggedright evaluations, represented packets, separation certificates}
			& \parbox[t]{0.45\textwidth}{\raggedright Packets define corrections; evaluations read the atoms; certificates supply the data for continuations.} \\
		Use in Part~II
			& \parbox[t]{0.29\textwidth}{\raggedright \hyperref[itm:ana-g1]{\textup{(G1)}}--\hyperref[itm:ana-g7]{\textup{(G7)}} and \hyperref[itm:ana-ea1]{\textup{(EA1)}}--\hyperref[itm:ana-ea4]{\textup{(EA4)}}}
			& \parbox[t]{0.45\textwidth}{\raggedright These are the construction's analytic consequences used in the operator arguments.} \\
		\bottomrule
	\end{tabular}}
\end{center}

Subsections~\ref{sec:fin-req} and~\ref{sec:stg-cns} define the finite requirements and prove the finite-extension property.
The evaluation and structural properties then lead to Theorem~\ref{thm:dat-cns}.
Readers interested primarily in Part~II may proceed to the analytic inputs in Subsection~\ref{sec:ana-inp}.

\subsection{Finite states and requirements}\label{sec:fin-req}
We now construct the datum from which the two spaces are obtained.
At each rank, an admissible extension depends on finitely many earlier records and on the local structure of \(V\) carried by the finite-dimensional fibres \(V_r\).
The terms chain, cell, stem, and restriction refer to Definitions~\ref{def:fml-pkt}--\ref{def:crt-stm}.
All requirements and finite dependency strings belong to the fixed countable language below.
The fair rank recursion is completed before Part~II.
Later operator arguments may select atoms already present, but they do not change the datum.
The same datum simultaneously determines the family \((T_v)_{v\in V^*}\).

We use the finite-extension method in an oracle-relative form;
see \cite[Section~V.2]{Odifreddi1989}.
In its usual form, the conditions are finite strings ordered by extension. Equivalently, by \cite[Proposition~V.3.13]{Odifreddi1989}, we may regard infinite end extensions as a Baire space whose basic open sets are determined by finite conditions.
Eventual satisfaction of each activated requirement is dense open above its activating condition, so branches satisfying every activated requirement form a comeager set.
We do not use this category argument below, but instead obtain one such branch directly from persistence, no injury, and least-code fair scheduling.

Here a finite condition is the two-sorted datum below a given rank, ordered by end extension.
Since the conditions and requirements are finitely coded, the same scheduling principle applies.
The legality of each BD extension is verified in Lemma~\ref{lem:req-adm},
and fairness for the conditional requirements is verified in Lemma~\ref{lem:fai-per}.

\begin{remark}[Oracle-recursive form]\label{rem:orc-rec}
	Fix once and for all a recursive tagged syntax, independent of \(V\), for finite words built from a scalar-field tag,
	ranks, dimensions, rational coordinate vectors, rational matrices, rational subspaces, and finite lists.
	We identify these finite words with natural numbers by a fixed G\"odel coding \cite{Godel1931}.
	We use Turing's oracle terminology \cite{Turing1939}.
	Relative to the exhaustion and rational structures fixed in Subsection~\ref{sec:coe-fib},
	let \(\mathcal{O}_V\subseteq\N\) be the set consisting of the scalar-field tag and the tagged codes of the following true statements:
	\begin{enumerate}[label={\textup{(\arabic*)}}]
		\item \(\dim V_r=k\);
		\item a coded rational vector, matrix, subspace, or finite list is well typed for the indicated fibres \(V_r\), their duals, or the finite sums used below;
		\item \(\norm{x}\mathrel{\triangleleft}q\);
		\item \(\norm{A}\mathrel{\triangleleft}q\);
		\item \(\sum_{l=1}^s|a_l|\norm{A_l}\mathrel{\triangleleft}q\).
	\end{enumerate}
	Here \(r,k,s\in\N\), \(q\in\Q\), the \(a_l\)'s are rational scalars,
	\(\triangleleft\) is one of \(<,\leq,>,\geq\), and all displayed objects have well-typed rational codes.
	Thus \(\mathcal{O}_V\) records the rational finite-dimensional structure of the fibre system together with its norm and operator-norm information.
	\begin{figure}[htbp]
		\centering
\begin{tikzpicture}[x=1cm,y=1cm,>=stealth,
	every node/.style={font=\small,align=center}]
	\node (input) at (0.9,0) {\(\ulcorner\mathsf{prop}\urcorner\)};
	\node[above=8pt] at (input.north) {G\"odel code};
	\node[draw=blue!55!black,fill=blue!4,rounded corners=3pt,
		minimum width=2.2cm,minimum height=1.15cm] (oracle) at (4.2,0)
		{Oracle\\\(\mathcal O_V\)};
	\node (output) at (8.5,0)
		{\(\operatorname{bool}(\mathsf{prop}\text{ is true})\)};
	\node[above=8pt] at (output.north) {Truth value};
	\node[below=8pt] at (output.south) {\(\mathsf{true}\;/\;\mathsf{false}\)};
	\draw[->,thick] (input.east) -- (oracle.west);
	\draw[->,thick] (oracle.east) -- (output.west);
	\node (vdata) at (4.2,-1.85)
		{rational finite-dimensional data of \(V\)\\and norm information};
	\draw[<->,blue!55!black] (vdata.north) -- node[right=4pt,text=black]
		{encode / decode} (oracle.south);
\end{tikzpicture}
		\caption{Oracle \(\mathcal{O}_V\).}
		\label{fig:oracle-query}
	\end{figure}

	Before rank \(n\) is constructed, the current state consists of all data from lower ranks.
	All such data are finite words over the fixed countable alphabet introduced below.
	Moreover, the five requirement types form a countable language of finite extensions.
	An admissibility test therefore reads only finitely many old records.
	Its algebraic, support, and rank tests are decided from their finite codes,
	whereas every remaining analytic test is one of the queries recorded by \(\mathcal{O}_V\).
	Hence the test makes only finitely many oracle queries.
	Once an admissible requirement is selected, the new fibres, records, histories,
	and correction maps are produced by a finite transition rule.

	At stage \(n\), only requirement codes at most \(n\) are checked,
	so each transition is a finite procedure.
	By Lemma~\ref{lem:req-adm}, every correction uses only lower-rank atoms,
	and a target atom and its simultaneous protected mirror do not depend on one another.
	Consequently, Construction~\ref{con:rnk-rec} is an oracle-recursive monotone finite-extension construction.
	Equivalently, there is a single oracle program \(\mathfrak{M}\), independent of \(V\),
	such that \(\mathfrak{M}^{\mathcal{O}_V}\) produces codes for the increasing sequence of finite initial data.
	This formulation does not assert that \(\mathcal{O}_V\) is computable or that the resulting presentation is computable without the oracle.
	\begin{figure}[htbp]
		\centering
\begin{tikzpicture}[x=1cm,y=1cm,>=stealth,
	every node/.style={font=\small,align=center}]
	\draw[thick,rounded corners=4pt] (0,-2.15) rectangle (5.5,2.25);
	\node at (2.75,1.88) {Turing machine \(\mathfrak M\)};
	\node (input) at (2.75,1.12) {current finite state};
	\node[draw,rounded corners=2pt,minimum width=3.3cm,
		minimum height=0.95cm] (compute) at (2.75,0)
		{finite transition rules};
	\draw[->] (input.south) -- (compute.north);
	\node (output) at (2.75,-1.65) {next finite state};
	\draw[->] (compute.south) -- node[right=4pt,align=left]
		{use the answer\\and continue} (output.north);
	\node[draw=blue!55!black,fill=blue!4,rounded corners=3pt,
		minimum width=2cm,minimum height=1.55cm] (oracle) at (10,0)
		{Oracle\\\(\mathcal O_V\)};
	\node[align=center] (vdata) at (10,-1.95)
		{rational finite-dimensional\\data of \(V\) and norm information};
	\draw[<->,blue!55!black] (vdata.north) -- node[right=4pt,text=black]
		{encode / decode} (oracle.south);
	\node at (7.85,1.55)
		{finitely many queries\\\(j=1,\ldots,m_n,\qquad m_n<\infty\)};
	\draw[->,thick] (4.4,0.25) -- (9,0.25);
	\node[above=5pt] at (7.25,0.25)
		{query: \(\ulcorner\mathsf{prop}_j\urcorner\)};
	\draw[->,thick,blue!55!black] (9,-0.25) -- (4.4,-0.25);
	\node[below=5pt] at (7.3,-0.25)
		{\(\operatorname{bool}(\mathsf{prop}_j\text{ is true})\)};
\end{tikzpicture}
		\caption{Turing machine \(\mathfrak{M}\) and \(\mathfrak{M}^{\mathcal{O}_V}\)}
		\label{fig:oracle-machine-interaction}
	\end{figure}

\end{remark}

For a finite set \(A\) and finite-dimensional spaces \((G_\alpha)_{\alpha \in A}\), write
\[ \ell_\infty(A;G_\alpha) =\left(\bigoplus_{\alpha\in A}G_\alpha\right)_{\ell_\infty}, \qquad \ell_1(A;G_\alpha^*) =\left(\bigoplus_{\alpha\in A}G_\alpha^*\right)_{\ell_1}. \]
Each \(V_r\) is equipped with the rational structure induced by its ordered basis from Subsection~\ref{sec:coe-fib}.
The scalar types have their canonical coordinate structures,
and all other finite-dimensional structures below are built from these.
For each rank \(n\in\N_+\), there are finite sets
\[ \Delta_n^1,\qquad \Delta_n^{0,\mathrm{so}},\qquad \Delta_n^0= \{\widehat{\gamma}:\gamma\in\Delta_n^1\}\cup \Delta_n^{0,\mathrm{so}}. \]
Every \(\gamma\in\Delta_n^1\) is a scalar target atom and has a paired source atom \(\widehat{\gamma}\).
Its source fibre is \(E_\gamma=V_{r(\gamma)}\).
Every atom in \(\Delta_n^{0,\mathrm{so}}\) is scalar and source-only.
Thus
\begin{equation}\label{eq:fml-fn}
	F_n^1=\ell_\infty(\Delta_n^1;\K),
	\qquad
	F_n^0=
	\left(\bigoplus_{\gamma\in\Delta_n^1}E_\gamma
	\oplus
	\bigoplus_{\alpha\in\Delta_n^{0,\mathrm{so}}}\K
	\right)_{\ell_\infty}.
\end{equation}
For \(i=0,1\), let \(G_\alpha^i\) denote the fibre of the atom \(\alpha\in\Delta_n^i\), and let
\[ \epsilon_\alpha^{i*}:(G_\alpha^i)^*\longrightarrow(F_n^i)^* \]
be the canonical ambient injection.
Explicitly, \(\epsilon_\alpha^{i*}(\psi)((x_\beta)_\beta)=\psi(x_\alpha)\) and if \(G_\alpha^i=\K\), then \(1\in(G_\alpha^i)^*\) denotes the functional \(z\mapsto z\).
We use the same symbol after embedding \((F_n^i)^*\) into the finite ambient \(\ell_1\)-sum of all preceding blocks.

\begin{definition}[Represented operator packets]
	\label{def:fml-pkt}
	Let \(G\) be finite dimensional and suppose that ranks smaller than \(n\) have already been constructed.
	A represented packet from \(G^*\) to side \(i\) is an ordered finite list
	\[ \mathbf{b}=((a_l,\eta_l,R_l))_{l=1}^s, \qquad R_l:G^*\longrightarrow(G_{\eta_l}^i)^*, \qquad \eta_l\in \bigcup_{k<n}\Delta_k^i \quad(1\le l\le s), \]
	where the \(a_l\)'s and the \(R_l\)'s are rational and
	\begin{equation}\label{eq:pkt-ctr}
		\sum_{l=1}^s|a_l|\,\norm{R_l}\le1.
	\end{equation}
	Its value and raw rank support are
	\begin{equation}\label{eq:pkt-val-sup}
		B_{\mathbf{b}}^*\varphi
		=\sum_{l=1}^sa_l\epsilon_{\eta_l}^{i*}(R_l\varphi),
		\qquad
		\supp_{\rm raw}\mathbf{b}
		=\{\operatorname{rank}\eta_l:1\le l\le s\}.
	\end{equation}
	The list, including zero entries and their order, is part of the packet.
	Thus two represented packets can be different even if they induce the same operator.
	Interval restriction retains empty entries in their original positions.
\end{definition}

For a scalar fibre \(G=G^*=\K\), every rational finite ambient functional of raw \(\ell_1\)-norm at most one is represented in this way:
if \(b=\sum_la_l{\epsilon_{\eta_l}^{i*}(\psi_l)}\), take \(R_l(\lambda)=\lambda\psi_l\), and we have \(B_\mathbf{b}^*(\lambda)=\sum_{l=1}^sa_l \epsilon_{\eta_l}^{i*}(R_l \lambda)=\lambda b, \) where \(\mathbf{b}=((a_l,\eta_l,R_l))_{l=1}^s\).
Moreover, \eqref{eq:pkt-ctr} guarantees that \(B_{\mathbf{b}}^*\) is a contraction into the raw ambient \(\ell_1\)-sum. Thus \(B_\mathbf{b}^*(1)=b\) and every such functional is a special case of a represented operator packet.
And when \(G_\eta^i=\K\), a one-entry packet representing the ambient coordinate \(\epsilon_\eta^{i*}(1)\) is denoted by \(\langle\eta\rangle\).

The rank, weight, age, predecessor, and type-zero/type-one fields below are standard BD--AH chain data \cite{ArgyrosHaydon2011,Tarbard2013,Motakis2024}.
We next give every atom a record.
First we assume \(\alpha\) is not a probe which we will define later, and its \emph{core record} is
\begin{equation}\label{eq:atm-rec}
	\mathfrak{r}_0(\alpha)=
	\bigl(i,n,\mathsf{t},h,a,\chi,\xi,k,\mathbf{b},
	\mathsf{sem},o,G_\alpha^i\bigr).
\end{equation}
\begin{center}
	{\small
	\renewcommand{\arraystretch}{1.08}
	\begin{tabular}{@{}c p{0.76\textwidth}@{}}
		\toprule
		Parameter & Meaning \\
		\midrule
		\(i\) & the side containing \(\alpha\) \\
		\(n\) & the rank at which \(\alpha\) is introduced \\
	\(\mathsf{t}\) & the chain type \\
		\(h\) & the root level, which determines the weight \(m_h^{-1}\) \\
	\(a\) & the age of \(\alpha\) in its chain \\
	\(\chi\) & the initial cutoff of the chain \\
		\(\xi\) & the stored predecessor identifier, or \(\varnothing\) at age one \\
		\(k\) & the local cutoff, namely the left endpoint of the current cell \((k,n)\) \\
		\(\mathbf{b}\) & the represented packet entering the correction of \(\alpha\) \\
		\(\mathsf{sem}\) & the tagged semantic record of the requirement \\
		\(o\) & the copy index \\
		\(G_\alpha^i\) & the finite-dimensional fibre assigned to \(\alpha\) \\
		\bottomrule
	\end{tabular}}
\end{center}
Fix once and for all an injective coding \((i,n,s)\mapsto\operatorname{id}(i,n,s)\)
from \(\{0,1\}\times\N_+\times\N\) into \(\N_+\), where \(i\), \(n\), and \(s\) denote the side, rank, and slot, respectively.
At rank \(n\), slot zero on each side is reserved for its probe;
all other atoms created at that rank are assigned positive slots in lexicographic order of side and requirement role.
If \(\alpha\) occupies slot \(s\) on side \(i\) at rank \(n\), set \(\operatorname{id}\alpha=\operatorname{id}(i,n,s)\).
Thus identifiers are assigned deterministically and are never reused.
The predecessor index \(\xi\) contains only the already assigned integer identifier of the predecessor on the same side.
Whenever rank, level, or age is applied to such an identifier, it means the unique old atom carrying that identifier.
After the core record has been formed, let
\begin{equation}\label{eq:nsl-his}
	\mathsf{shist}(\alpha)=
	\begin{cases}
		((\operatorname{id}\alpha,\mathfrak{r}_0(\alpha))),&a=1,\\
		\mathsf{shist}(\xi)^\frown
		((\operatorname{id}\alpha,\mathfrak{r}_0(\alpha))),&a>1,
	\end{cases}
\end{equation}
This is the \emph{structural history} of the same-side correction chain.
To separate it from the history used for coding, define
\begin{equation}\label{eq:two-his-fld}
	\mathsf{thist}(\alpha)=
	\begin{cases}
		\mathsf{shist}(\gamma),&\alpha=\widehat{\gamma}
		\text{ is a protected mirror},\\
		\mathsf{shist}(\alpha),&\alpha\text{ is target or source-only},
	\end{cases}
\end{equation}
The complete immutable record is finally
\begin{equation}\label{eq:cpl-atm-rec}
	\mathfrak{r}(\alpha)=
	\bigl(\mathfrak{r}_0(\alpha),
	\mathsf{shist}(\alpha),
	\mathsf{thist}(\alpha)\bigr).
\end{equation}
We use the notation
\[
	\begin{aligned}
		\operatorname{rank}\alpha&=n,& \operatorname{lev}(\alpha)&=h,&
		\operatorname{age}(\alpha)&=a,\\
		\chi(\alpha)&=\chi,& \operatorname{cut}(\alpha)&=k,&
		\mathbf{b}_\alpha&=\mathbf{b}.
	\end{aligned}
\]
for the corresponding index of \(\mathfrak{r}_0(\alpha)\).
If the predecessor field is nonempty, \(\operatorname{pred}(\alpha)\) denotes the unique old atom carrying the stored identifier \(\xi\).
For atoms \(\alpha,\beta\), write \(\beta\prec\alpha\) if \(\beta=\operatorname{pred}(\alpha)\).
Thus both histories are finite words of identifiers and core snapshots,
and neither contains the full record currently being defined.
The structural history is used for predecessor recursion, evaluation analysis, and interval restriction.
And the target coding history is the immutable comparison key used by the code.
They agree except on a protected mirror.
The fields satisfy the following compatibility conditions:
\begin{enumerate}[label=\textup{(D\arabic*)},leftmargin=*]
	\item \(i\in\{0,1\}\), \(n=\operatorname{rank}\alpha\), and \(\mathsf{t}\) is one of
	\[ \mathsf{generic},\quad \mathsf{inner},\quad \mathsf{outer},\quad \mathsf{reverse}. \]
	\item \(h\ge1\) is the root level, \(m_h^{-1}\) is the root weight, and \(1\le a\le n_h\) is the age.
	The correction kind is \(\mathsf{type~0}\) when \(a=1\) and \(\mathsf{type~1}\) when \(a>1\);
	for a probe atom it is \(\mathsf{terminal}\).
	\item \(\chi\) is the initial cutoff of the chain.
	If \(a=1\), then \(\xi=\varnothing\) and the local cutoff is \(k=\chi\).
	If \(a>1\), then \(\xi\) is the unique predecessor,
	\[ \operatorname{rank}\xi=k<n,\qquad {\operatorname{lev}(\xi)}=h,\qquad \operatorname{age}(\xi)=a-1, \]
	and \(\xi\) has the same root key as \(\alpha\), as defined below.
	A probe atom is never allowed as predecessor.
	\item The packet is supported in the open cell of the new edge:
	\begin{equation}\label{eq:pkt-cel}
		\supp_{\rm raw}\mathbf{b}\subseteq(k,n).
	\end{equation}
	The cell of \(\alpha\) is the labelled interval \(\mathsf{cell}(\alpha)=(k,n)\).
	For a chain \(\alpha_1\prec\cdots\prec\alpha_a\), its cells are therefore successive and pairwise disjoint.
	\item For a non-seed requirement \(\mathsf{Q}\), let \(\mathbf{v}(\mathsf{Q})\) be the ordered list, in the fixed field order, of all vectors of \(V_{\Q}\) occurring in \(\mathsf{Q}\), as specified in the requirement part below.
	Let \(\mathsf{s}(\mathsf{Q})\in \{\mathsf{target},\mathsf{source\mbox{-}only}\}\) be the output sort of the requirement. And
	for \textup{(R5)}, it is \(\mathsf{source\mbox{-}only}\).
	The semantic field is the following tuple (the symbols occurring in it are defined in the requirement part below):
	\begin{equation}\label{eq:sem-rec-tab}
		\mathsf{sem}(\alpha)=
		\begin{cases}
			(\mathsf{R}1,\mathsf{s}(\mathsf{Q}),\mathbf{v}(\mathsf{Q})),
			&\mathsf{Q}\text{ has type \textup{(R1)}},\\
			(\mathsf{R}3,c,\vartheta,\pi,\mathsf{s}(\mathsf{Q}),
			\mathbf{v}(\mathsf{Q})),
			&\mathsf{Q}\text{ has type \textup{(R3)}},\\
			(\mathsf{R}4,c,\vartheta,\pi,\mathfrak{c},\mathsf{s}(\mathsf{Q}),
			\mathbf{v}(\mathsf{Q})),
			&\mathsf{Q}\text{ has type \textup{(R4)}},\\
			(\mathsf{R}5,\mathsf{source\mbox{-}only},\mathbf{v}(\mathsf{Q})),
			&\mathsf{Q}\text{ has type \textup{(R5)}}.
		\end{cases}
	\end{equation}
	A protected mirror receives exactly the semantic tuple of its target partner. A probe has empty semantic field. And a seed requirement creates no atom.
	The copy index \(o\) distinguishes repeated realizations of the same rational requirement.
	\item Each atom \(\alpha\in\Delta_n^i\) indexes a coordinate summand \(G_\alpha^i\) of \(F_n^i\).
	Its intrinsic level is \(h\).
	Hence the sum of the fibres corresponding to atoms at any chosen set of levels is a contractive coordinate summand of the ambient block \(F_n^i\).
\end{enumerate}
The \emph{root key} just used is the part of a chain record which must remain fixed under a type-one continuation.
It is
\[
	\begin{cases}
		(i,\mathsf{generic},h,\chi),&\text{for a generic chain},\\
		(i,\mathsf{inner},h,\chi,\vartheta,\pi),
		&\text{for an inner chain attached to }(\vartheta,\pi),\\
		(i,\mathsf{outer},h,\chi,\vartheta),
		&\text{for an outer chain over the seed }\vartheta,\\
		(0,\mathsf{reverse},h,\chi),&\text{for a reverse chain}.
	\end{cases}
\]
The evolving outer stem and its certificates are not part of the outer root key.
A protected mirror has side entry zero and otherwise inherits the root key of its target.
For generic chains, equality of root keys is supplemented by the sort condition in \textup{(R1)}.
For the two rank-\(n\) probe atoms set \(o=n\).
A probe atom on side \(i\) then has
\[ \mathfrak{r}_0(\alpha)= (i,n,\mathsf{probe},0,0,n,\varnothing,n, \varnothing,\varnothing,o,G), \qquad \mathsf{shist}(\alpha)= ((\operatorname{id}\alpha,\mathfrak{r}_0(\alpha))). \]
For the target probe \(\gamma\), put \(\mathsf{thist}(\gamma)=\mathsf{shist}(\gamma)\); for its paired source probe,
put \(\mathsf{thist}(\widehat{\gamma})=\mathsf{shist}(\gamma)\).
Both probes have weight zero and zero correction.
And no other weight-zero atoms are used.

The correction of a scalar source-only atom is
\begin{equation}\label{eq:src-onl-crr}
	c_\alpha^{0*}(\lambda)=
	\begin{cases}
		\displaystyle
		\frac{\lambda}{m_h}(I-P_k^{0*})B_{\mathbf{b}}^*(1),&a=1,\\[6pt]
		\displaystyle
		\lambda\epsilon_\xi^{0*}(1)
		+\frac{\lambda}{m_h}(I-P_k^{0*})B_{\mathbf{b}}^*(1),&a>1.
	\end{cases}
\end{equation}
The correction of a scalar target atom is
\begin{equation}\label{eq:tgt-sca-crr}
	c_\alpha^{1*}(\lambda)=
	\begin{cases}
		\displaystyle
		\frac{\lambda}{m_h}(I-P_k^{1*})B_{\mathbf{b}}^*(1),&a=1,\\[6pt]
		\displaystyle
		\lambda\epsilon_\xi^{1*}(1)
		+\frac{\lambda}{m_h}(I-P_k^{1*})B_{\mathbf{b}}^*(1),&a>1.
	\end{cases}
\end{equation}
Thus, with \(A^*\) equal to the identity on \(\K\), these are exactly the two forms in Lemma~\ref{lem:blk-bd}.
If the new atom has rank \(n\), the raw packet has ambient rank support below \(n\),
hence its \(d^*\)-support is contained in \(\{1,\ldots,n-1\}\), and triangularity gives
\begin{equation}\label{eq:tal-win-cns}
	(I-P_k^{i*})B_{\mathbf{b}}^*
	=P_{(k,n)}^{i*}B_{\mathbf{b}}^*.
\end{equation}
Thus the \(P_I^{i*}\) in \eqref{eq:tgt-t0}--\eqref{eq:src-lft-t1} is precisely the cell \(I=(k,n)\) of the formal datum.
Thus the two notations do not impose different corrections.

Every target packet has a canonical protected mirror.
More precisely, choose \(r(\gamma)\) so large that \(E_\gamma\) contains the source fibres of the predecessor and of all atoms occurring in \(\mathbf{b}\),
as well as the finitely many vectors of \(V_{\Q}\) recorded in \(\mathsf{sem}\).
If \(R_l\) in a scalar target packet is multiplication by \(r_l\), set
\[ \widehat{R}_l\varphi=r_lR_{\eta_l\gamma}\varphi, \qquad \widehat{\mathbf{b}}=((a_l,\widehat{\eta}_l,\widehat{R}_l))_{l=1}^s, \qquad \widehat{B}_{\mathbf{b}}^*\varphi =\sum_la_l\epsilon_{\widehat{\eta}_l}^{0*} (\widehat{R}_l\varphi). \]
Then the correction on the paired atom is
\begin{equation}\label{eq:fml-prt-crr}
	c_{\widehat{\gamma}}^{0*}\varphi=
	\begin{cases}
		\displaystyle
		\frac{1}{m_h}(I-P_k^{0*})\widehat{B}_{\mathbf{b}}^*\varphi,&a=1,\\[6pt]
		\displaystyle
		\epsilon_{\widehat{\xi}}^{0*}(R_{\xi\gamma}\varphi)
		+\frac{1}{m_h}(I-P_k^{0*})\widehat{B}_{\mathbf{b}}^*\varphi,&a>1.
	\end{cases}
\end{equation}
The protected core record is now clear.
It is a side-zero record whose packet field is \(\widehat{\mathbf{b}}\) and whose predecessor field is \(\operatorname{id}(\widehat{\xi})\) when \(a>1\) (and is empty when \(a=1\)); in the predecessor recursion we write \(R_{\widehat{\xi},\widehat{\gamma}}:=R_{\xi\gamma}\).
Thus
\begin{equation}\label{eq:prt-two-his}
	\mathsf{shist}(\widehat{\gamma})=
	\begin{cases}
		((\operatorname{id}\widehat{\gamma},
		\mathfrak{r}_0(\widehat{\gamma}))),&a=1,\\
		\mathsf{shist}(\widehat{\xi})^\frown
		((\operatorname{id}\widehat{\gamma},
		\mathfrak{r}_0(\widehat{\gamma}))),&a>1,
	\end{cases}
	\qquad
	\mathsf{thist}(\widehat{\gamma})=\mathsf{shist}(\gamma).
\end{equation}
The source atom receives the same type, level, age, cutoffs, semantic record, and copy index as its target partner.
Formula \eqref{eq:fml-prt-crr} and every source evaluation analysis follow \(\mathsf{shist}\);
coding and protected collision comparison follow \(\mathsf{thist}\).
By construction, we have
\[ \sum_l|a_l|\norm{\widehat{R}_l} \le\sum_l|a_l||r_l|\le1. \]
Consequently, \eqref{eq:fml-prt-crr} is again an admissible block BD correction.
Its packet consists of the paired copies of the target packet.

Figure~\ref{fig:protected-mirror} shows how a non-probe target record is copied to its protected mirror.
The upper row is unchanged; predecessor and packet labels are replaced by their source partners, with the restriction maps shown in the figure.
The predecessor row is omitted at age one.
The last row copies the target structural history into \(\mathsf{thist}(\widehat{\gamma})\); the mirror's own structural history still follows the source predecessors.

\begin{figure}[htbp]
	\centering
\begin{tikzpicture}[x=1cm,y=1cm,>=stealth,
	every node/.style={font=\small,align=center,inner sep=3pt}]
	\filldraw[draw=green!40!black,fill=green!4,rounded corners=2pt]
		(0,-0.28) rectangle (4.5,4.12);
	\filldraw[draw=blue!55!black,fill=blue!4,rounded corners=2pt]
		(7.1,-0.28) rectangle (11.6,4.12);
	\node at (2.25,4.68) {Target \(\gamma\)\quad (side \(1\))\\[2pt]scalar fibre \(\K\)};
	\node at (9.35,4.68) {Protected mirror \(\widehat\gamma\)\quad (side \(0\))\\[2pt]vector fibre \(E_\gamma\)};
	\foreach \y in {2.98,1.90,0.82} {
		\draw[black!22] (0,\y) -- (4.5,\y);
		\draw[black!22] (7.1,\y) -- (11.6,\y);
	}
	\node at (2.25,3.55) {\((n,\mathsf t,h,a,\chi,k,\mathsf{sem},o)\)};
	\node at (9.35,3.55) {\((n,\mathsf t,h,a,\chi,k,\mathsf{sem},o)\)};
	\node at (2.25,2.44) {predecessor \(\xi\)};
	\node at (9.35,2.44) {predecessor \(\widehat\xi\)};
	\node at (2.25,1.36) {\((a_\ell,\eta_\ell,R_\ell)\)\\[4pt]\(R_\ell(\lambda)=r_\ell\lambda\)};
	\node at (9.35,1.36) {\((a_\ell,\widehat\eta_\ell,\widehat R_\ell)\)\\[4pt]\(\widehat R_\ell=r_\ell R_{\eta_\ell\gamma}\)};
	\node at (2.25,0.27) {\(\mathsf{shist}(\gamma)\)};
	\node at (9.35,0.27) {\(\mathsf{thist}(\widehat\gamma)\)};
	\draw[->,thick] (4.5,3.55) -- node[above,font=\scriptsize] {copy} (7.1,3.55);
	\draw[->,thick] (4.5,2.44) -- node[above,font=\scriptsize] {pair} (7.1,2.44);
	\draw[->,thick] (4.5,1.36) -- node[above,font=\scriptsize] {pair and restrict} (7.1,1.36);
	\draw[->,thick] (4.5,0.27) -- node[above,font=\scriptsize] {copy} (7.1,0.27);
\end{tikzpicture}
	\caption{Information copied to a protected mirror.}
	\label{fig:protected-mirror}
\end{figure}

For every atom and every \(\varphi\in(G_\alpha^i)^*\), define
\begin{equation}\label{eq:fml-dst}
	d_{\alpha,\varphi}^{i*}
	=\epsilon_\alpha^{i*}(\varphi)-c_\alpha^{i*}(\varphi).
\end{equation}
For each rank \(r\), write elements of \((F_r^i)^*\) as \((\varphi_\alpha)_{\alpha\in\Delta_r^i}\) using the fibre decomposition \eqref{eq:fml-fn}.
The corresponding block map is
\[ d_r^{i*}((\varphi_\alpha)_{\alpha\in\Delta_r^i})=\sum_{\alpha\in\Delta_r^i}d_{\alpha,\varphi_\alpha}^{i*}. \]
Let \(\iota_r^i\) be the canonical ambient injection of \((F_r^i)^*\), and set
\[ c_r^{i*}((\varphi_\alpha)_{\alpha\in\Delta_r^i}) =\sum_{\alpha\in\Delta_r^i}c_\alpha^{i*}(\varphi_\alpha). \]
Summing \eqref{eq:fml-dst} over \(\alpha\in\Delta_r^i\) gives the block BD form of Lemma~\ref{lem:blk-bd}:
\[ d_r^{i*}=\iota_r^i-c_r^{i*}, \qquad c_r^{i*}:(F_r^i)^*\longrightarrow\bigoplus_{s<r}(F_s^i)^*. \]
Once ranks up to \(n\) have been constructed, \(P_k^{i*}\), \(k\le n\),
denotes the algebraic projection onto \(\bigoplus_{r\le k}d_r^{i*}((F_r^i)^*)\) along the later \(d^{i*}\)-blocks.
Thus every correction at rank \(n+1\) uses only objects defined by rank \(n\).

The following finite adjoint construction defines the primal extensions.
For \(N\ge1\), set
\[ \mathcal{E}_{[1,N]}^i= \left(\bigoplus_{r=1}^NF_r^i\right)_{\ell_\infty}, \qquad {\mathcal{E}_{[1,N],*}^{i}}= \left(\bigoplus_{r=1}^N(F_r^i)^*\right)_{\ell_1}. \]
Unit triangularity implies
\[ \bigoplus_{r=1}^n d_r^{i*}((F_r^i)^*) ={\mathcal{E}_{[1,n],*}^{i}} \]
as vector spaces.
Consequently, for \(n\le N\), let \(P_n^{i*}\) be computed in the triangular system through rank \(N\); then
\begin{equation}\label{eq:fin-dua-ret}
	Q_{n,N}^{i*}:{\mathcal{E}_{[1,N],*}^{i}}\longrightarrow
	{\mathcal{E}_{[1,n],*}^{i}},
	\qquad Q_{n,N}^{i*}f=P_n^{i*}f,
\end{equation}
is well-defined.
Actually, if \(L\ge N\ge n\), elimination of the top ambient coordinates shows that the \(d^{i*}\)-expansion of every
\(f\in\mathcal{E}_{[1,N],*}^{i}\) uses no block above \(N\).
Hence
\begin{equation}\label{eq:fin-dua-cns}
	Q_{n,L}^{i*}\big|_{\mathcal{E}_{[1,N],*}^{i}}
	=Q_{n,N}^{i*}\qquad(L\ge N\ge n).
\end{equation}
In particular, \(Q_{n,N}^{i*}\) is independent of all later ranks and \(\norm{Q_{n,N}^{i*}}\le M\).

Define the finite extension by the adjoint relation
\begin{equation}\label{eq:fin-prm-adj}
	i_{n,N}^i=(Q_{n,N}^{i*})^*:
	\mathcal{E}_{[1,n]}^i\longrightarrow\mathcal{E}_{[1,N]}^i;
	\qquad
	\langle f,i_{n,N}^ix\rangle
	=\langle P_n^{i*}f,x\rangle .
\end{equation}
Thus \(i_{n,n}^i\) is the identity and \(\norm{i_{n,N}^i}\le M\).
If \(L\ge N\ge n\), then \eqref{eq:fin-dua-cns}, tested against every \(f\in{\mathcal{E}_{[1,N],*}^{i}}\), gives
\begin{equation}\label{eq:fin-prm-cns}
	(i_{n,L}^ix)|_{[1,N]}=i_{n,N}^ix.
\end{equation}
The compatible family in \eqref{eq:fin-prm-cns} therefore defines a coordinatewise vector \(i_n^ix\).
Moreover,
\[ \sup_{N\ge n}\norm{(i_n^ix)|_{[1,N]}} =\sup_{N\ge n}\norm{i_{n,N}^ix} \le M\norm{x}, \]
so \(i_n^ix\) belongs to the ambient \(\ell_\infty\)-sum.
This adjoint definition gives the coordinate recursion used later:
\begin{align}
	(i_n^ix)|_{[1,n]}&=x,\label{eq:ext-hed}\\
	\varphi\bigl((i_n^ix)_\alpha\bigr)
	&=c_\alpha^{i*}(\varphi)
	\bigl((i_n^ix)|_{[1,\operatorname{rank}\alpha)}\bigr)
	\quad(\operatorname{rank}\alpha>n).
	\label{eq:ext-rec}
\end{align}
Indeed, if \(r=\operatorname{rank}\alpha>n\), then \(P_n^{i*}d_{\alpha,\varphi}^{i*}=0\), and hence
\[ P_n^{i*}\epsilon_\alpha^{i*}(\varphi) =P_n^{i*}c_\alpha^{i*}(\varphi). \]
Since \(c_\alpha^{i*}(\varphi)\in\mathcal{E}_{[1,r-1],*}^i\) and
\((i_n^ix)|_{[1,r]}=i_{n,r}^ix\), applying \eqref{eq:fin-prm-adj} with \(N=r\) gives
\begin{align*}
	\varphi\bigl((i_n^ix)_\alpha\bigr)
	&=\bigl\langle\epsilon_\alpha^{i*}(\varphi),i_{n,r}^ix\bigr\rangle
	=\bigl\langle P_n^{i*}\epsilon_\alpha^{i*}(\varphi),x\bigr\rangle\\
	&=\bigl\langle P_n^{i*}c_\alpha^{i*}(\varphi),x\bigr\rangle
	=\bigl\langle c_\alpha^{i*}(\varphi),i_{n,r}^ix\bigr\rangle
	=c_\alpha^{i*}(\varphi)
	\bigl((i_n^ix)|_{[1,r)}\bigr),
\end{align*}
which is \eqref{eq:ext-rec}.
Conversely, finite-dimensional duality separates the points of every fibre \(G_\alpha^i\);
therefore by induction on \(r\), \eqref{eq:ext-hed}--\eqref{eq:ext-rec} determine every coordinate uniquely.
This also proves directly that the recursive and adjoint definitions agree.

The extensions are nested in the precise sense
\begin{equation}\label{eq:nst-prm-ext}
	i_m^i\bigl((i_n^ix)|_{[1,m]}\bigr)=i_n^ix
	\qquad(n\le m).
\end{equation}
Both sides have the same first \(m\) blocks, and beyond \(m\) they satisfy the same uniquely solvable recursion \eqref{eq:ext-rec}.
Put
\begin{equation}\label{eq:fml-xi-di}
	X_i^{\rm alg}=\bigcup_n i_n^i
	\left(\bigoplus_{r\le n}F_r^i\right),
	\qquad X_i=\overline{X_i^{\rm alg}},
	\qquad
	d_\alpha^i(g)=i_{\operatorname{rank}\alpha}^i
	(0,\ldots,0,g,0,\ldots).
\end{equation}
\begin{lemma}[Atomic block biorthogonality]\label{lem:fml-blk-bio}
	Let \(i\in\{0,1\}\), let \(\alpha\) and \(\beta\) be atoms on side \(i\), and let
	\(g\in G_\alpha^i\) and \(\psi\in(G_\beta^i)^*\). Then
	\begin{equation}\label{eq:fml-blk-bio}
		d_{\beta,\psi}^{i*}\bigl(d_\alpha^i(g)\bigr)
		=\begin{cases}
			\psi(g),&\beta=\alpha,\\
			0,&\beta\ne\alpha.
		\end{cases}
	\end{equation}
\end{lemma}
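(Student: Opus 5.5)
Write \(r=\operatorname{rank}\alpha\) and \(s=\operatorname{rank}\beta\). The plan is to evaluate \(d_{\beta,\psi}^{i*}\) on \(d_\alpha^i(g)=i_r^i(0,\ldots,0,g,0,\ldots)\) through the adjoint relation \eqref{eq:fin-prm-adj}, and then to use the triangular identities satisfied by the projections \(P_n^{i*}\).

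First note that \(d_{\beta,\psi}^{i*}=\epsilon_\beta^{i*}(\psi)-c_\beta^{i*}(\psi)\) lies in \(\mathcal{E}_{[1,s],*}^i\). Take \(N=\max\{r,s\}\). By \eqref{eq:fin-prm-cns}, the pairing is
\[
\langle d_{\beta,\psi}^{i*},i_{r,N}^i y\rangle=\langle P_r^{i*}d_{\beta,\psi}^{i*},y\rangle,
\qquad y=(0,\ldots,0,g)\in\mathcal{E}_{[1,r]}^i .
\]
Here \(d_{\beta,\psi}^{i*}\) is one of the summands of \(d_s^{i*}\) applied to \((\psi\) at \(\beta\), zero elsewhere\()\). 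So it lies in the \(d^{i*}\)-block of rank \(s\). Hence \(P_r^{i*}d_{\beta,\psi}^{i*}=\mathbf{1}_{\{s\le r\}}d_{\beta,\psi}^{i*}\), computed in the finite triangular system through rank \(N\); by \eqref{eq:fin-dua-cns} this does not depend on \(N\). We then split into cases.

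\emph{Case \(s>r\).} The projection kills the functional, so the pairing is \(0\). This is consistent with the claim, because \(\beta\ne\alpha\).

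\emph{Case \(s<r\).} The pairing equals \(\langle d_{\beta,\psi}^{i*},y\rangle\). The functional has ambient support in ranks \(\le s<r\), while \(y\) is supported only in rank \(r\). The pairing is therefore \(0\).

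\emph{Case \(s=r\).} Now \(c_\beta^{i*}(\psi)\) has ambient support below \(r\), so it pairs to zero with \(y\). What remains is \(\epsilon_\beta^{i*}(\psi)(y)\). This reads the \(\beta\)-coordinate of the rank-\(r\) block, which is \(g\) when \(\beta=\alpha\) and \(0\) for any other atom of that rank, by the fibre decomposition \eqref{eq:fml-fn}. This gives \(\psi(g)\) or \(0\), as claimed.

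The only delicate point is the identity \(P_r^{i*}d_{\beta,\psi}^{i*}=\mathbf{1}_{\{s\le r\}}d_{\beta,\psi}^{i*}\) on the finite system. It follows from the definition of \(P_r^{i*}\) as the algebraic projection onto the first \(r\) \(d^{i*}\)-blocks along the later ones. That projection is well defined because unit triangularity gives \(\bigoplus_{t\le N}d_t^{i*}((F_t^i)^*)=\mathcal{E}_{[1,N],*}^i\). Everything else is bookkeeping of supports.
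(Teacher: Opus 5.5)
Your proof is correct and follows the paper's argument essentially verbatim. You pair through the adjoint relation \eqref{eq:fin-prm-adj}, combined with the truncation consistency \eqref{eq:fin-prm-cns} that you cite. You then observe that \(d_{\beta,\psi}^{i*}\) lies in the rank-\(\operatorname{rank}\beta\) \(d^{i*}\)-block, so \(P_{\operatorname{rank}\alpha}^{i*}\) either annihilates it or fixes it, and you finish with the same three rank cases, using that \(c_\beta^{i*}(\psi)\) is supported below \(\operatorname{rank}\beta\) in the equal-rank case.
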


\begin{proof}
	Put \(s=\operatorname{rank}\alpha\) and \(t=\operatorname{rank}\beta\), and let
	\(u\in\mathcal{E}_{[1,s]}^i\) be supported on the \(\alpha\)-coordinate with value \(g\).
	Thus \(d_\alpha^i(g)=i_s^iu\).
	At any finite stage containing both atoms, by \eqref{eq:fin-prm-adj}, we have
	\[ d_{\beta,\psi}^{i*}\bigl(d_\alpha^i(g)\bigr) =\bigl\langle P_s^{i*}d_{\beta,\psi}^{i*},u\bigr\rangle. \]
	If \(t>s\), then \(P_s^{i*}d_{\beta,\psi}^{i*}=0\).
	If \(t<s\), then \(P_s^{i*}d_{\beta,\psi}^{i*}=d_{\beta,\psi}^{i*}\), whose ambient support lies before the support of \(u\).
	Finally, if \(t=s\), then \(c_\beta^{i*}(\psi)\) is supported below rank \(s\), and hence
	\[
		\bigl\langle d_{\beta,\psi}^{i*},u\bigr\rangle
		=\bigl\langle\epsilon_\beta^{i*}(\psi),u\bigr\rangle
		=\begin{cases}
			\psi(g),&\beta=\alpha,\\
			0,&\beta\ne\alpha.
		\end{cases}
	\]
	This therefore proves \eqref{eq:fml-blk-bio}.
\end{proof}

Lemma~\ref{lem:fml-blk-bio} yields the FDD coefficient maps below.
For every atom \(\alpha\) define its FDD coefficient map
\begin{equation}\label{eq:fml-coe-map}
	D_\alpha^i:X_i^{\rm alg}\longrightarrow G_\alpha^i,
	\qquad
	\psi(D_\alpha^ix)=d_{\alpha,\psi}^{i*}(x)
	\quad(\psi\in(G_\alpha^i)^*).
\end{equation}
For a paired target label \(\gamma\), the operator denoted earlier by \(D_\gamma:X_0^{\rm alg}\to E_\gamma\) is precisely \(D_{\widehat{\gamma}}^0\).
For \(x\in X_i^{\rm alg}\), define
\begin{equation}\label{eq:fml-fdd-sup}
	\supp_{\rm FDD}x
	=\{r:\text{there is }\alpha\in\Delta_r^i
	\text{ with }D_\alpha^ix\ne0\},
\end{equation}
and let \(\ran x\) be the smallest integer interval containing this set.
For a finite-dimensional algebraic subspace \(L\subseteq X_i^{\rm alg}\),
put \(\supp_{\rm FDD}L=\bigcup_{x\in L}\supp_{\rm FDD}x\).
Now write
\begin{equation}\label{eq:fml-ki}
	\mathcal{K}_i=
	\{e_{\alpha,\psi}^{i*}
	:=\epsilon_\alpha^{i*}(\psi)|_{X_i}:
	\alpha\in\bigcup_n\Delta_n^i,\
	\psi\in B_{(G_\alpha^i)^*}\}.
\end{equation}
These coordinates are contractive and norm \(X_i\).
By Lemma~\ref{lem:blk-bd}, we have
\begin{equation}\label{eq:fml-ext-bnd}
	\norm{i_n^i}\le M,\qquad
	\norm{P_{[1,n]}^i}\le M,\qquad
	\norm{P_I^i}\le2M.
\end{equation}
Equations~\eqref{eq:fin-prm-adj} and \eqref{eq:fin-prm-cns} identify every finite truncation with the adjoint supplied by Lemma~\ref{lem:blk-bd},
so \eqref{eq:ext-rec} agrees with the usual BD extension.

\subsubsection{Stems and the five requirements}
\label{sec:req-cat}

Split each pool \(\mathcal{C}_c\) into two disjoint infinite recursive sets
\[ \mathcal{C}_c^{\rm seed} \cup \mathcal{C}_c^{\rm code}, \qquad c\in\{00,11,01\}. \]
Let \(\mathscr{A}\) be the fixed countable syntactic alphabet consisting of natural-number identifiers,
side and type symbols, integer level and cutoff fields, rational packet matrices, rational finite vectors and subspaces,
and rational certificate fields.
Put \(\mathscr{W}=\mathscr{A}^{<\omega}\).
A \emph{formal stem} is a well-typed word in \(\mathscr{W}\).
The compatibility conditions imposed during the rank recursion select the \emph{realized stems} from this fixed preconstruction domain.

Before the BD recursion, let us fix recursive injections
\begin{equation}\label{eq:fml-sig}
	\sigma_c:
	\{(q,\pi):q\in\mathcal{C}_c^{\rm seed},\
	\pi\in\mathscr{W}\text{ a formal stem}\}
	\longrightarrow\mathcal{C}_c^{\rm code}
\end{equation}
with pairwise disjoint ranges such that if \(p=\sigma_c(q,\pi)\), then we have
\begin{equation}\label{eq:qnt-cod-grw}
	\begin{aligned}
		p&>\max\{q,R(\pi)\},\\
		m_p&\ge2^{q+|\pi|+10}L_qn_qm_q^4.
	\end{aligned}
\end{equation}
Here \(R(\pi)\) is the maximum of \(0\) and all rank, level, and cutoff fields recorded in \(\pi\), including the seed birth rank.
When the corner is fixed, we write simply \(\sigma(q,\pi)\) for \(\sigma_c(q,\pi)\).
Enumerate each countable domain and assign to every input the least unused member of the appropriate coded pool which satisfies \eqref{eq:qnt-cod-grw}.
This defines all the maps recursively before the BD construction starts.

\begin{definition}[Seeds, realized stems, and certificates]
	\label{def:crt-stm}
	A seed is an administrative record
	\[ \vartheta=(c,q,\chi,o), \qquad c\in\{00,11,01\},\quad q\in\mathcal{C}_c^{\rm seed}. \]
	And it is born at a finite rank \(\chi\).
	A \emph{realized outer stem} over \(\vartheta\) is
	\[ \pi=(\vartheta; (\operatorname{id}\zeta_1,\mathfrak{r}_0(\zeta_1),\mathfrak{c}_1), \ldots, (\operatorname{id}\zeta_a,\mathfrak{r}_0(\zeta_a),\mathfrak{c}_a)), \qquad a\le n_q, \]
	where the \(\zeta_s\)'s form an outer predecessor chain of level \(q\).
	Recursively, if \(\pi_{s-1}\) is the word obtained by stopping before the \(s\)-th displayed triple,
	then \(\mathfrak{c}_s\) is compatible with \(\pi_{s-1}\),
	the correction packet of \(\zeta_s\) is \(\langle\eta_s\rangle\), where \(\eta_s\) is the terminal inner atom named by \(\mathfrak{c}_s\),
	and the predecessor of \(\zeta_s\) is \(\zeta_{s-1}\) when \(s>1\).
	Its endpoint is
	\[
		\rho(\pi)=
		\begin{cases}
			\chi,&a=0,\\
			\operatorname{rank}\zeta_a,&a>0.
		\end{cases}
	\]
	In particular, changing a copy index will change the realized stem.

	Let \(i\to j\) be the corner specified by \(c\).
	Suppose that \(\eta\) is the last atom of an inner chain attached to \(\pi\),
	and that this chain has level \(p=\sigma_c(q,\pi)\) and age \(n_p\).
	A certificate compatible with \((\pi,\eta)\) is a finite rational record
	\[ \mathfrak{c}=(i,j,y,z,e_\eta^{j*},I, \mathscr{M}_{ij}(y),\varepsilon) \]
	such that
	\begin{enumerate}[label={\textup{(\arabic*)}}]
		\item the union of the FDD supports of \(y\), \(z\), and \(\mathscr{M}_{ij}(y)\) is contained in \((\rho(\pi),\operatorname{rank}\eta]\),
		and the finite rank interval \(I\) satisfies
		\[ \supp_{\rm FDD}y\cup\supp_{\rm FDD}z \cup\supp_{\rm FDD}\mathscr{M}_{ij}(y)\subseteq I, \qquad \operatorname{rank}\eta\in I; \]
		\item \(e_\eta^{j*}|_{\mathscr{M}_{ij}(y)}=0\) and \(\operatorname{Re}e_\eta^{j*}(z)>\varepsilon\),
		where \(\varepsilon\in{\Q_+}\);
		\item in the forward case the finite matrix defining \(v\mapsto T_vy\) and the vector \(U_\eta y\) are rational in a common  fibre \(V_r\).
	\end{enumerate}
	This is an instance of \eqref{eq:sep-crt} with \(h=e_\eta^{j*}\) and \(L=\mathscr{M}_{ij}(y)\).
	The extra fields record the support and forward rationality required by the stage construction.
\end{definition}

\begin{figure}[htbp]
	\centering
\begin{tikzpicture}[x=1cm,y=0.8cm,>=stealth,
	every node/.style={font=\small},
	record/.style={draw=blue!55!black,rounded corners=2pt,
		fill=blue!4,align=center,minimum height=1.08cm,inner sep=4pt},
	stemstage/.style={draw=black!65,rounded corners=2pt,
		fill=black!2,align=center,minimum height=1.12cm,inner sep=4pt},
	cert/.style={draw=green!40!black,rounded corners=2pt,
		fill=green!4,align=center,minimum height=1.12cm,inner sep=4pt}]
	\draw[blue!55!black,rounded corners=3pt] (0,1.30) rectangle (12.05,3.30);
	\node[fill=white,inner sep=3pt] at (6.025,3.30)
		{current stem \(\pi_a=(\vartheta;t_1,\ldots,t_a)\)};
	\node[record,minimum width=2.15cm] (seed) at (1.30,2.25)
		{seed record\\\(\vartheta=(c,q,\chi,o)\)};
	\node at (2.80,2.25) {\(\,;\,\)};
	\node[record,minimum width=3.30cm] (first) at (4.70,2.25)
		{\(t_1\)\\\((\operatorname{id}\zeta_1,\mathfrak r_0(\zeta_1),\mathfrak c_1)\)};
	\node at (7.35,2.25) {\(,\ \ldots\ ,\)};
	\node[record,minimum width=3.30cm] (last) at (10.15,2.25)
		{\(t_a\)\\\((\operatorname{id}\zeta_a,\mathfrak r_0(\zeta_a),\mathfrak c_a)\)};

	\node[stemstage,minimum width=3.05cm] (code) at (2.05,-0.05)
		{next inner level\\\(p=\sigma_c(q,\pi_a)\)\\cutoff \(\rho(\pi_a)\)};
	\draw[->] (2.05,1.30) -- node[right,align=left,font=\scriptsize]
		{whole stored word} (code.north);
	\node[stemstage,minimum width=3.30cm] (inner) at (6.15,-0.05)
		{chosen complete inner chain\\\(\nu_1\to\cdots\to\nu_{n_p}=\eta\)\\level \(p\), age \(n_p\)};
	\draw[->] (code.east) -- node[above,font=\scriptsize]{\textup{(R3)}} (inner.west);
	\node[cert,minimum width=3.20cm] (certificate) at (10.20,-0.05)
		{chosen certificate \(\mathfrak c_{a+1}\)\\compatible with \((\pi_a,\eta)\)};
	\draw[->] (inner.east) -- (certificate.west);

	\node[stemstage,minimum width=3.20cm] (outer) at (10.20,-2.12)
		{new outer atom \(\zeta_{a+1}\)\\level \(q\), age \(a+1\)\\packet \(\langle\eta\rangle\)};
	\draw[->] (certificate.south) -- node[right,font=\scriptsize]
		{\textup{(R4)}} (outer.north);
	\draw[->,dashed,blue!55!black]
		(last.east) -- (12.40,2.25) -- (12.40,-2.12) -- (outer.east);
	\node[rotate=90,font=\scriptsize,fill=white,inner sep=2pt]
		at (12.40,0.78) {outer predecessor \(\zeta_a\)};
	\node[record,minimum width=7.20cm] (extended) at (4.05,-2.12)
		{\(\pi_{a+1}=\pi_a{}^\frown t_{a+1}\)\\
		 \(t_{a+1}=(\operatorname{id}\zeta_{a+1},\mathfrak r_0(\zeta_{a+1}),\mathfrak c_{a+1})\)};
	\draw[->] (outer.west) -- (extended.east);
	\node[font=\scriptsize] at (6.10,-3.20)
		{\(\rho(\pi_a)<\operatorname{rank}\eta<\operatorname{rank}\zeta_{a+1}\)};
\end{tikzpicture}
	\caption{A realized outer stem and one legal extension.}
	\label{fig:realized-outer-stem}
\end{figure}

Figure~\ref{fig:realized-outer-stem} shows the stored records and their role in a one-step extension.
The coding history records the support and interval data of its certificates, the initial and local cutoffs,
the corner type, the copy indices, and the full ordered stem of core snapshots.
For every realized stem \(\pi\) over a seed of level \(q\), the pair \((q,\pi)\) is an input of \(\sigma_c\).
And the semantic \(V\)-record is part of the tagged word \(\pi\).

The admissible one-step extension requirements are as follows.
Each requirement contains a numerical lower bound \(N\) for the rank at which it is met and a copy index \(o\).
Its packet and all of its dependencies must already exist when the stage construction acts on it.

\begin{enumerate}[label=\textup{(R\arabic*)},leftmargin=*]
	\item \emph{Generic requirement.}
	\begin{enumerate}[label=\textup{(\alph*)},leftmargin=*]
		\item \emph{Input.} Choose \(i\in\{0,1\}\), \(h\in\mathcal{G}\), an initial cutoff \(\chi\),
		and either an empty chain or a generic chain of level \(h\) and age \(a<n_h\).
		A nonempty chain is target when \(i=1\) and scalar source-only when \(i=0\);
		a protected mirror is continued only with its target partner.
		\item \emph{Cutoff and packet.} In the empty case put \(k=\chi\).
		Otherwise, let \(\xi\) be the last atom, require \(\chi=\chi(\xi)\), and put \(k=\operatorname{rank}\xi\).
		Choose a represented scalar packet \(\mathbf{b}\) on side \(i\) with \(\supp_{\rm raw}\mathbf{b}\subseteq(k,\infty)\).
		\item \emph{Realization.} Add a generic atom of age one in the empty case and age \(a+1\) otherwise.
		A side-one atom has its protected source mirror; a side-zero atom is source-only.
	\end{enumerate}

	\item \emph{Seed requirement.}
	\begin{enumerate}[label=\textup{(\alph*)},leftmargin=*]
		\item \emph{Input.} Choose \(c\) and \(q\in\mathcal{C}_c^{\rm seed}\).
		\item \emph{Realization.} At rank \(n>N\), register the fresh seed \(\vartheta=(c,q,n,o)\).
		No corrected atom is added; the rank-\(n\) probe pair remains.
	\end{enumerate}

	\item \emph{Inner requirement.}
	\begin{enumerate}[label=\textup{(\alph*)},leftmargin=*]
		\item \emph{Input.} Choose a seed \(\vartheta\) and a realized outer stem \(\pi\) over \(\vartheta\), and put
		\(p=\sigma_c(q,\pi)\).
		Choose either the empty inner chain attached to \(\pi\), or an attached inner chain of level \(p\) and age \(a<n_p\).
		\item \emph{Cutoff and packet.} Put \(k=\rho(\pi)\) in the empty case.
		Otherwise, let \(\xi\) be the last inner atom and put \(k=\operatorname{rank}\xi\).
		Choose a represented scalar packet \(\mathbf{b}\) on the output side \(j\) with
		\(\supp_{\rm raw}\mathbf{b}\subseteq(k,\infty)\).
		\item \emph{Realization.} Add an atom of type \(\mathsf{inner}\), level \(p\),
		attachment \((\vartheta,\pi)\), initial cutoff \(\rho(\pi)\), and age one or \(a+1\), respectively.
		If \(j=1\), add its protected source mirror; if \(j=0\), the atom is scalar and source-only.
	\end{enumerate}

	\item \emph{Outer requirement.}
	\begin{enumerate}[label=\textup{(\alph*)},leftmargin=*]
		\item \emph{Input.} Choose a seed \(\vartheta=(c,q,\chi,o)\), a realized stem \(\pi\) over \(\vartheta\) of length \(a<n_q\),
		a complete attached inner chain as in Definition~\ref{def:crt-stm}, and a compatible certificate \(\mathfrak{c}\).
		\item \emph{Cutoff and packet.} Let \(\eta\) be the last inner atom and use the one-entry packet \(\langle\eta\rangle\).
		The local cutoff is \(\rho(\pi)\), and the predecessor is \(\zeta_a\) when \(a>0\).
		\item \emph{Realization.} Add an outer atom of level \(q\), age \(a+1\), and initial cutoff \(\chi\),
		with the tagged \(\mathsf{R}4\)-record from \eqref{eq:sem-rec-tab}.
		It extends the stem to
		\[ \pi^\frown (\operatorname{id}\zeta_{a+1},\mathfrak{r}_0(\zeta_{a+1}),\mathfrak{c}). \]
		If \(j=1\), add its protected source mirror; if \(j=0\), the atom is scalar and source-only.
	\end{enumerate}

	\item \emph{Reverse requirement.}
	\begin{enumerate}[label=\textup{(\alph*)},leftmargin=*]
		\item \emph{Input.} Choose \(r\in\mathcal{R}\), an initial cutoff \(\chi\),
		and either an empty source-only reverse chain or one of age \(a<n_r\).
		\item \emph{Cutoff and packet.} In the empty case put \(k=\chi\).
		Otherwise, let \(\xi\) be the last atom, require \(\chi=\chi(\xi)\), and put \(k=\operatorname{rank}\xi\).
		Choose a represented scalar source packet \(\mathbf{b}\) with \(\supp_{\rm raw}\mathbf{b}\subseteq(k,\infty)\).
		\item \emph{Realization.} Add a source-only reverse atom of level \(r\) and age one or \(a+1\), respectively.
		No target atom has level \(r\).
	\end{enumerate}
\end{enumerate}

For a non-seed requirement \(\mathsf{Q}\), define its required level by
\[
	\operatorname{lev}(\mathsf{Q})=
	\begin{cases}
		h,&\mathsf{Q}\text{ is of type \textup{(R1)}},\\
		p=\sigma_c(q,\pi),&\mathsf{Q}\text{ is of type \textup{(R3)}},\\
		q,&\mathsf{Q}\text{ is of type \textup{(R4)}},\\
		r,&\mathsf{Q}\text{ is of type \textup{(R5)}}.
	\end{cases}
\]
The seed requirement \textup{(R2)} has no BD weight and hence no required level.
It is admissible at rank \(n\) precisely when its named old data have rank below \(n\) and \(n>N\).
A non-seed requirement is \emph{admissible at rank \(n\)} if all named atoms and seed records have rank smaller than \(n\),
its local cutoff satisfies \(0\le k<n\), its raw support is contained in \((k,n)\), the stated age bound holds, \(n>N\),
and \(n\ge\operatorname{lev}(\mathsf{Q})\).

In every continuation requirement the predecessor has the same root key as the new atom and has the sort permitted by the requirement.
Equivalently, its type, level, initial cutoff, and, when present, attachment agree with the prescribed fields,
and the additional target/source-only alternative in \textup{(R1)} is respected.
For a target requirement producing \(\gamma\), let the new source fibre \(E_\gamma=V_s\) be the least \(V_s\) containing the protected-source fibres corresponding to the predecessor and packet entries and all rational \(V\)-data in the semantic record.
Thus the protected maps in \eqref{eq:fml-prt-crr} are determined, rather than chosen afterwards.

For an outer requirement, the \emph{complete strong packet} is the one-entry packet \(\langle\eta\rangle\),
where \(\eta\) is the last atom of the attached inner chain of age \(n_p\).
The associated outer edge spans the rank interval
\[ \bigl(\rho(\pi),\operatorname{rank}\zeta_{a+1}\bigr). \]
The ranks of all atoms in the attached inner chain, the FDD support of the certified vector,
and the raw support of the strong packet are contained in this interval.

\begin{remark}[Order of the seed, inner, and outer requirements]
	\label{rem:crt-req-pat}
	The requirements \textup{(R2)}--\textup{(R4)} have the following dependency order for a fixed seed.
	They need not be realized at consecutive ranks.
	Let \textup{(R2)} produce a seed \(\vartheta\) of level \(q\), and let \(\pi_s\) be the outer stem after \(s\) uses of \textup{(R4)}, where \(\pi_0=(\vartheta;)\).
	Put
	\[ p_s=\sigma_c(q,\pi_s)\qquad(0\le s<n_q). \]
	For each \(0\le s<n_q\), \(n_{p_s}\) uses of \textup{(R3)} build the complete level-\(p_s\) inner chain attached to \(\pi_s\).
	Once its terminal atom and a compatible certificate are available, one use of \textup{(R4)} extends \(\pi_s\) to \(\pi_{s+1}\).
	Thus
	\[
		\begin{aligned}
			\textup{(R2)}
			&\longrightarrow
			\underbrace{\textup{(R3)}\longrightarrow\cdots\longrightarrow\textup{(R3)}}_{n_{p_0}\text{ times}}
			\longrightarrow\textup{(R4)}\\
			&\longrightarrow
			\underbrace{\textup{(R3)}\longrightarrow\cdots\longrightarrow\textup{(R3)}}_{n_{p_1}\text{ times}}
			\longrightarrow\textup{(R4)}
			\longrightarrow\cdots\\
			&\longrightarrow
			\underbrace{\textup{(R3)}\longrightarrow\cdots\longrightarrow\textup{(R3)}}_{n_{p_{n_q-1}}\text{ times}}
			\longrightarrow\textup{(R4)}.
		\end{aligned}
	\]
	Hence \textup{(R2)} is used once, and for each \(0\le s<n_q\), a block of \(n_{p_s}\) uses of \textup{(R3)} is followed by one use of \textup{(R4)}.
	Other requirements may be realized between these dependent steps.
\end{remark}

Figure~\ref{fig:finite-certificate} displays the certificate from
Definition~\ref{def:crt-stm} and its use in \textup{(R4)}.
Here \(\pi\) has outer age \(a<n_q\), and \(\eta\) is the terminal atom of
its complete attached inner chain of level \(p=\sigma_c(q,\pi)\) and age \(n_p\).
In the figure, the coordinate \(f\) annihilates the entire orbit \(L\) and strictly separates \(z\) from \(L\).
The operator \(S\) is not part of the finite rational record.

\begin{figure}[!htbp]
	\centering
\begin{tikzpicture}[x=1cm,y=1cm,>=stealth,
	every node/.style={font=\small,align=center}]
	\draw[draw=blue!55!black,fill=blue!3,rounded corners=3pt]
		(0,1.05) rectangle (12.20,5.85);
	\node[fill=white,inner sep=4pt] at (6.10,5.85)
		{Finite rational certificate};
	\node at (6.10,5.37)
		{\(\mathfrak c=(i,j,y,z,f,I,L,\varepsilon)\)};
	\node at (6.10,4.89)
		{\(f=e_\eta^{j*},\qquad L=\mathscr M_{ij}(y),\qquad
			\varepsilon\in\Q_+\)};

	\draw[draw=green!40!black,fill=green!4,rounded corners=2pt]
		(0.30,2.95) rectangle (5.90,4.38);
	\draw[draw=green!40!black,fill=green!4,rounded corners=2pt]
		(6.30,2.95) rectangle (11.90,4.38);
	\node at (3.10,4.12) {Annihilation};
	\node at (9.10,4.12) {Strict separation};
	\node (orbit) at (1.20,3.48) {\(L\)};
	\node (zero) at (5.00,3.48) {\(\{0\}\)};
	\draw[->,thick,green!40!black] (orbit.east) --
		node[above=2pt,text=black] {\(f\)} (zero.west);
	\node (output) at (7.00,3.48) {\(z\)};
	\node (value) at (10.20,3.48) {\(\operatorname{Re}f(z)>\varepsilon\)};
	\draw[->,thick,green!40!black] (output.east) --
		node[above=2pt,text=black] {\(\operatorname{Re}f\)} (value.west);

	\node at (6.10,2.50)
		{\(\supp_{\rm FDD}y\cup\supp_{\rm FDD}z\cup\supp_{\rm FDD}L
			 \subseteq I\cap(\rho(\pi),\operatorname{rank}\eta]\)};
	\node at (6.10,2.05)
		{\(I\): finite rank interval,\qquad \(\operatorname{rank}\eta\in I\)};
	\node[text width=11.35cm,font=\scriptsize] at (6.10,1.48)
		{Forward corner: the finite matrix for \(v\mapsto T_vy\) and
		 \(U_\eta y\) are rational in a common fibre \(V_r\).};

	\node[draw=black!60,fill=black!2,rounded corners=3pt,
		text width=10.30cm,minimum height=1.05cm,inner sep=5pt] (outer) at (6.10,-0.57)
		{\textup{(R4)} uses the one-entry packet \(\langle\eta\rangle\)\\[3pt]
		 outer level \(q\), age \(a+1\); store \(\mathfrak c\) in the extended stem};
	\draw[->,thick] (6.10,1.05) --
		node[right=4pt,font=\scriptsize] {compatible with \((\pi,\eta)\)} (outer.north);
\end{tikzpicture}
	\caption{A finite certificate for an outer continuation.}
	\label{fig:finite-certificate}
\end{figure}
\FloatBarrier

\begin{lemma}[Admissibility of the five requirement types]
	\label{lem:req-adm}
	Let a requirement from \textup{(R1)}--\textup{(R5)} be admissible at rank \(n\).
	Then every atom which occurs in its correction has rank strictly smaller than \(n\),
	and no correction uses an atom created at rank \(n\).
	More precisely,
	\begin{enumerate}[label={\textup{(\arabic*)}}]
		\item an \textup{(R1)} realization adds either one scalar source-only atom,
		or one scalar target atom together with one protected source atom;
		its predecessor and every entry of its packet are old;
		\item an \textup{(R2)} realization adds no non-probe atom and records the seed with birth rank \(n\);
		\item an \textup{(R3)} realization uses the already constructed last inner predecessor, if any,
		and an old packet in \((\rho(\pi),n)\); the value \(p=\sigma_c(q,\pi)\) was fixed before the rank recursion;
		\item an \textup{(R4)} realization uses the old last outer predecessor, if any,
		and the old terminal atom \(\eta\) of the completed inner chain.
		Thus its one-entry strong packet has raw support \(\{\operatorname{rank}\eta\}\subseteq(\rho(\pi),n)\);
		\item an \textup{(R5)} realization adds exactly one scalar source-only atom, with old predecessor and old packet.
		It never creates a target atom.
	\end{enumerate}
	In the target cases of \textup{(1)}, \textup{(3)}, and \textup{(4)},
	the simultaneous protected mirror depends only on the hatted copies of those same old predecessor and packet atoms.
	Hence the target atom and its new mirror do not depend on one another.
	Thus, at every rank, we have
	\begin{equation}\label{eq:rnk-car-bnd}
		|\Delta_n^1|\le2,
		\qquad |\Delta_n^0|\le2,
		\qquad |\Delta_n^{0,\mathrm{so}}|\le1.
	\end{equation}
	Every nonzero correction added at rank \(n\) is therefore an admissible type-zero or type-one correction of Lemma~\ref{lem:blk-bd} with \(\beta=m_{\operatorname{lev}(\mathsf{Q})}^{-1}\), a contractive packet map and, in the type-one case,
	a contractive predecessor map.
\end{lemma}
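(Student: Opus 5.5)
The lemma is essentially bookkeeping, so I would verify it directly from the admissibility definition and the realization clauses of \textup{(R1)}--\textup{(R5)}, going through the types one at a time. The first step is to record the general consequence of admissibility at rank \(n\). Every named atom and seed record has rank \(<n\), and the local cutoff satisfies \(0\le k<n\). The raw packet support lies in \((k,n)\), so by \eqref{eq:pkt-val-sup} every packet entry \(\eta_l\) has rank in \((k,n)\) and is therefore old. The predecessor \(\xi\) in a continuation is a named atom with \(\operatorname{rank}\xi=k<n\) by \textup{(D3)}. Hence the corrections \eqref{eq:src-onl-crr} and \eqref{eq:tgt-sca-crr} involve only \(\epsilon_\xi^{i*}\), \(P_k^{i*}\) with \(k<n\), and \(\epsilon_{\eta_l}^{i*}\) with \(\operatorname{rank}\eta_l<n\). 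All of these are objects defined by rank \(n-1\), and no atom of rank \(n\) enters.

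Next I would go through the cases. For \textup{(R1)} and \textup{(R5)}, the realization clause adds exactly the atoms listed in \textup{(1)} and \textup{(5)}; for \textup{(R5)} there is no target atom because no target atom has level in \(\mathcal{R}\). For \textup{(R2)}, no corrected atom is created, only the seed \((c,q,n,o)\). For \textup{(R3)}, \(p=\sigma_c(q,\pi)\) is the value of the recursive injection \eqref{eq:fml-sig}, which was fixed before the recursion. The cutoff \(k\) is either \(\rho(\pi)\), an old rank or the seed birth rank, or the rank of the old last inner atom, so the packet support lies in \((\rho(\pi),n)\). For \textup{(R4)}, the packet is \(\langle\eta\rangle\) with \(\eta\) the old terminal inner atom. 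Its rank exceeds \(\rho(\pi)\) by the support condition \textup{(1)} of Definition~\ref{def:crt-stm}, and it is below \(n\) because \(\eta\) is named and admissible. In the target cases the mirror correction \eqref{eq:fml-prt-crr} uses only \(\widehat{\xi}\), the \(\widehat{\eta}_l\), \(R_{\xi\gamma}\), \(R_{\eta_l\gamma}\) and \(P_k^{0*}\). The fibre \(E_\gamma\) is chosen as the least \(V_s\) containing these old fibres, so neither the target atom nor its mirror refers to the other.

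For the cardinality bound \eqref{eq:rnk-car-bnd}, I would combine two facts: each rank carries the probe pair in slot zero, and the scheduler realizes at most one active requirement per rank. Each rank therefore has at most two target atoms (the probe and one requirement atom) and at most two side-zero atoms: the source probe plus either one mirror or one source-only atom. The bound \(|\Delta_n^{0,\mathrm{so}}|\le1\) is the step I expect to need the most care. The probe's mirror is paired rather than source-only, so I must check the slot convention in the probe definition to confirm this.

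Finally, I would match each correction with Lemma~\ref{lem:blk-bd}. By \eqref{eq:tal-win-cns}, \((I-P_k^{i*})B_{\mathbf{b}}^*\) is supported on blocks in \((k,n)\), with \(k<n\). The weight is \(\beta=m_{\operatorname{lev}(\mathsf Q)}^{-1}\le m_1^{-1}<1/2\). \(B_{\mathbf{b}}^*\) (and \(\widehat B_{\mathbf{b}}^*\)) is contractive by \eqref{eq:pkt-ctr} and the estimate \(\sum|a_l|\|\widehat R_l\|\le1\). The predecessor map \(\epsilon_\xi^{i*}\) or \(\epsilon_{\widehat\xi}^{0*}R_{\xi\gamma}\) is contractive into a single block of rank \(k\), so \(j=k\le k\) as Lemma~\ref{lem:blk-bd} requires.
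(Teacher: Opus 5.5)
Your proposal is correct and follows essentially the same case-by-case verification as the paper. Admissibility gives the rank and support bounds, and the mirror uses only hatted old atoms through the choice of $E_\gamma$. The probe pair together with at most one realized requirement per rank gives \eqref{eq:rnk-car-bnd}, and contractivity of the packet and predecessor maps puts each correction in the form of Lemma~\ref{lem:blk-bd}. One point the paper makes explicit that you leave implicit: the sort condition in \textup{(R1)} forces a side-zero predecessor to be scalar and source-only, which is what makes $\lambda\epsilon_\xi^{0*}(1)$ in \eqref{eq:src-onl-crr} a well-defined contraction.
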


\begin{proof}
	Admissibility gives the following rank checks.
	\begin{enumerate}[label=\textup{(R\arabic*)},leftmargin=*]
		\item The packet is supported in \((k,n)\), and the predecessor, when present, has rank \(k<n\).
		Hence \eqref{eq:src-onl-crr} and \eqref{eq:tgt-sca-crr} use only old atoms.
		The sort condition makes every directly named side-zero predecessor scalar and source-only.
		\item A seed requirement is administrative and adds no corrected atom.
		\item The seed, stem, predecessor chain, and packet are named old dependencies,
		while \(p=\sigma_c(q,\pi)\) is fixed before rank \(n\).
		Thus both correction types use only the old packet and, when present, the old predecessor.
		\item The completed inner chain, its terminal atom \(\eta\), the certificate, and the previous outer stem are old.
		Moreover, \(\rho(\pi)<\operatorname{rank}\eta<n\), so \(\langle\eta\rangle\) is a contractive old packet;
		the previous outer atom, when present, is also old.
		\item The source packet is supported in \((k,n)\), and the optional predecessor has rank \(k<n\).
		Hence \eqref{eq:src-onl-crr} uses only old source atoms and adds no target or protected atom.
	\end{enumerate}
	In every target case, \eqref{eq:fml-prt-crr} uses only the already existing protected partners of these old atoms.
	The choice of \(E_\gamma\) makes all restriction maps defined and contractive,
	so a target atom and its simultaneous mirror do not depend on one another.
	The background probe pair and the possible outputs listed above give \eqref{eq:rnk-car-bnd}.
	Finally, each predecessor map is either the identity or a contractive restriction between nested fibres. Then packet contractivity follows from \eqref{eq:pkt-ctr}.
	Lemma~\ref{lem:blk-bd} therefore applies.
\end{proof}

\subsection{Stage recursion and the finite-extension property}
\label{sec:stg-cns}

We now verify extension, persistence, and fairness for this conditional system.
\begin{enumerate}[label=\textup{Step \arabic*.}]
	\item \emph{Coding.} Fix a G\"odel numbering of the finite formal requirements.
	For every rational template, every finite dependency string, and every pair \((N,o)\in\N_+^2\), the corresponding requirement is coded.
	\item \emph{Activation.} A requirement becomes active once its named dependencies are old and all admissibility conditions hold.
	This depends on only finitely many old records and finitely many queries to \(\mathcal{O}_V\), so activity is persistent.
	\item \emph{Selection.} At rank \(n\):
	\begin{enumerate}[label=\textup{(\arabic*)}]
		\item inspect the requirement codes \(s\le n\);
		\item retain those that are unmet and admissible at rank \(n\);
		\item if any remain, act on the least-coded one; otherwise select no requirement.
	\end{enumerate}
	\item \emph{Consequences.} Every active requirement is eventually met.
	Varying \(N\) and \(o\) gives realizations above every cutoff and arbitrarily many copies,
	while later ranks add new realizations but never redefine old records.
\end{enumerate}

\begin{construction}[Rank recursion]\label{con:rnk-rec}
	Suppose that all ranks smaller than \(n\) have been defined.
	\begin{enumerate}[label=\textup{Step \arabic*.}]
		\item \emph{Probe pair.}
		\begin{enumerate}[label=\textup{(\arabic*)}]
			\item Put \(\gamma_n^{\rm pr}\) in \(\Delta_n^1\) and its paired atom in \(\Delta_n^0\).
			\item Set \(E_{\gamma_n^{\rm pr}}=V_n\), assign their identifiers and records by the probe formula,
			and give both probes zero correction.
		\end{enumerate}
		\item \emph{Requirement.}
		\begin{enumerate}[label=\textup{(\arabic*)}]
			\item Among the unmet requirements of G\"odel number at most \(n\) that are admissible at rank \(n\),
			select the least-coded one; if none exists, proceed to Step 3 without adding an extra atom.
			\item If the selected requirement is a seed, register its seed and proceed to Step 3.
			\item For a selected non-seed requirement, first fix the identifier, fibre, and core record of its output atom.
			\item For a target output, form its structural and coding histories by \eqref{eq:nsl-his}--\eqref{eq:two-his-fld},
			and then form the protected mirror record and histories by \eqref{eq:prt-two-his}.
			For a scalar source-only output, form both histories directly from its core record.
		\end{enumerate}
		\item \emph{Rank closure.}
		\begin{enumerate}[label=\textup{(\arabic*)}]
			\item After all same-rank fibres, identifiers, core records, and histories are fixed,
			define \(F_n^i\) by \eqref{eq:fml-fn}.
			\item Define the corrections by \eqref{eq:src-onl-crr}, \eqref{eq:tgt-sca-crr}, or \eqref{eq:fml-prt-crr}.
			\item Define the new \(d^{i*}\)-block by \eqref{eq:fml-dst}.
			\item Corrections use only old ranks.
			A mirror uses the completed target coding history as its only same-rank datum, never a target correction.
		\end{enumerate}
	\end{enumerate}
\end{construction}

The terms \emph{fairness}, \emph{persistence}, and \emph{no injury} refer to classical notions in recursion theory; see \cite[Section~V.2]{Odifreddi1989}, \cite[Chapters~6--7]{Soare2016}, and \cite{Cooper2004}.

\begin{lemma}[Fairness, persistence, and no injury]\label{lem:fai-per}
	Every requirement which becomes active is met after its prescribed cutoff.
	No later stage changes the record, correction, either of the two histories, or code of an old atom.
	Thus the construction has the no-injury property.
\end{lemma}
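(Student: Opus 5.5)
Two claims need proof: every activated requirement is eventually met above its cutoff, and old data are never altered. I would prove the no-injury part first, because fairness uses it. By Construction~\ref{con:rnk-rec}, stage \(n\) only appends \(\Delta_n^0,\Delta_n^1\), their fibres, their records, and their corrections, together with a possible seed registration. Every identifier \(\operatorname{id}(i,n,s)\) contains the current rank \(n\), so no identifier is reused. By Lemma~\ref{lem:req-adm}, every new correction names only atoms of rank \(<n\), and the two histories of a new atom are words built from already fixed core snapshots. Hence, by induction on \(n\), the record, correction, \(\mathsf{shist}\), \(\mathsf{thist}\), and code of each old atom are unchanged at every later stage. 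The triangular structure adds to this: by \eqref{eq:fin-dua-cns} and \eqref{eq:nst-prm-ext}, the projections \(P_k^{i*}\) and the extensions \(i_k^i\) for \(k<n\) do not depend on later ranks. So even the derived analytic objects attached to old atoms are not injured.

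Persistence comes next. Suppose a requirement \(\mathsf Q\) is admissible at rank \(n\) and still unmet. Every clause of admissibility is monotone in \(n\). The named dependencies stay old and stay unchanged, by no injury. The inequalities \(n>N\) and \(n\ge\operatorname{lev}(\mathsf Q)\) remain true. The conditions \(0\le k<n\) and \(\supp_{\rm raw}\mathbf b\subseteq(k,n)\) only get weaker as \(n\) grows. The age bound and the oracle answers concern fixed finite data, so they do not change. Therefore \(\mathsf Q\) stays admissible at every \(n'\ge n\) for as long as it is unmet. One point needs a remark: whether \(\mathsf Q\) counts as "met" must be read from its own code, including its copy index \(o\). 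Realizing a different copy therefore does not discharge \(\mathsf Q\), and continuation requirements, whose predecessor is named explicitly, are unaffected by other branches.

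Fairness is the usual finite-injury-free counting argument. Let \(\mathsf Q\) have code \(e\), and let \(n_0\ge\max\{e,N+1\}\) be a rank at which \(\mathsf Q\) is active. At each stage \(n\ge n_0\) at which \(\mathsf Q\) is unmet, the scheduler acts either on \(\mathsf Q\) or on some unmet admissible requirement of smaller code. Each requirement is met at most once, and once met it is never reconsidered. So the codes \(<e\) can take at most \(e\) stages. Hence \(\mathsf Q\) is met by stage \(n_0+e\), at a rank \(>N\), which is the prescribed cutoff.

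The delicate step is persistence for (R3) and (R4). There one has to check that admissibility never depends on a "latest" object that later stages could replace. Examples would be the last atom of a chain, or whether an inner chain is complete. If these notions were read from the current state, a later realization of the same template with a different copy index could seem to change them. I would handle this by observing that in (R1)--(R5) the predecessor and the stem \(\pi\) are named explicitly by identifier inside \(\mathsf Q\). So the relevant fields are frozen facts about old records, and the argument above goes through.
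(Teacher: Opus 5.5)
Your proof is correct and follows the paper's argument: persistence holds because admissibility consists of fixed facts about finitely many named old records plus rank inequalities that only get easier as \(n\) grows, fairness follows from the least-code counting argument (each smaller code is acted on at most once), and no injury holds because identifiers are fresh and every correction and history uses only data that are already fixed. One aside is loose: \(P_k^{i*}\) and \(i_k^i\) do not literally ``not depend on later ranks'' --- as operators on the whole ambient space they acquire later blocks and coordinates, and only their restrictions to already constructed ranks are stable, by \eqref{eq:fin-dua-cns} and \eqref{eq:fin-prm-cns} --- but the lemma does not use this.
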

\begin{proof}\quad
	\begin{itemize}[leftmargin=*]
		\item \emph{Persistence.} Activity depends only on finitely many old records, strict rank inequalities,
		algebraic compatibility, and fixed finite-dimensional norm relations.
		Once these conditions hold, later ranks cannot destroy them.
		\item \emph{Fairness.} Fix an active requirement of G\"odel number \(s\).
		Once the current rank is at least \(s\), it belongs to the finite search.
		Only finitely many smaller codes can precede it, and each active requirement is acted on at most once.
		Hence it eventually becomes the least active unmet requirement and is met at a rank larger than \(N\).
		\item \emph{No injury.} Every new correction uses only older ranks, every fresh atom has a new identifier,
		and no record, correction, history, or code of an old atom is redefined.
		\item \emph{Cofinality and copies.} For a fixed rational template, distinct pairs \((N,o)\) give distinct requirement codes.
		Applying the fairness argument to each pair gives realizations above every cutoff and arbitrarily many copies.
	\end{itemize}
\end{proof}

A \emph{finite-extension chain} is a finite ordered list
\(\mathscr{S}=(\mathcal{R}_1,\ldots,\mathcal{R}_L)\) of requirement templates such that \(\mathcal{R}_1\) names only old data.
And for \(t<L\), the realized identifiers, ranks, core snapshots, and extended stem produced by \(\mathcal{R}_t\) may be inserted into \(\mathcal{R}_{t+1}\).

Finitely many waiting ranks may intervene.
After \(\mathcal{R}_t\) is realized, we may wait until all entries of the packet for
\(\mathcal{R}_{t+1}\) have been constructed beyond the new predecessor rank.
These entries are then old and have rank below the realization of \(\mathcal{R}_{t+1}\).
Since every finite dependency string is coded, the resulting requirement is included in the scheduling.

\begin{proposition}[Finite-extension realization property]\label{prop:fin-ext}
	The following selections can be made inside the already constructed datum.
	\begin{enumerate}[label={\textup{(\arabic*)}},leftmargin=*]
		\item Every finite admissible rational generic or reverse chain template has a realization after every cutoff.
		Every one-step admissible continuation template of an existing realized chain has a realization after every cutoff.
		\item Seeds of every allowed seed level occur after every cutoff.
		Given a realized outer stem \(\pi\), every finite adaptive sequence of admissible rational packets has a realization as an inner chain of level \(\sigma_c(q,\pi)\), up to age \(n_{\sigma_c(q,\pi)}\).
		\item Given a complete inner chain and a compatible certificate,
		an outer continuation carrying precisely that certificate occurs after every cutoff.
		Iterating this assertion realizes every finite ordered list of compatible certificates, up to outer age \(n_q\).
		\item Every target selection in \textup{(1)}--\textup{(3)} has its protected source mirror at the same ranks and with the corresponding protected-source predecessor chain.
		The resulting chain consists only of protected source atoms.
	\end{enumerate}
	Here adaptive means that the next packet or certificate may be selected after the rank of the preceding realization is known.
\end{proposition}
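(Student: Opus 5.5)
The plan is to derive every clause of Proposition~\ref{prop:fin-ext} from Lemma~\ref{lem:fai-per} and Lemma~\ref{lem:req-adm}, arguing by induction along a finite-extension chain \(\mathscr{S}=(\mathcal{R}_1,\ldots,\mathcal{R}_L)\). The basic one-step claim is the following. Suppose a template names only old data, has a packet whose raw support lies above the local cutoff, and satisfies the age bound. Then for every \(N\) and every copy index \(o\), the coded requirement with these parameters eventually becomes admissible at every rank \(n>\max\{N,\operatorname{lev}(\mathsf Q)\}\) exceeding the ranks of its dependencies. At that point it is active, and by Step~2 of the stage recursion it stays active. By the fairness clause of Lemma~\ref{lem:fai-per} it is then met at some rank above \(N\). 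Choosing \(N\) larger than the given cutoff gives the realization after every cutoff. The no-injury clause shows that the realized atom, its record and its histories are never altered later, so it can be referred to in subsequent templates.

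Next I will iterate. Given a realization of \(\mathcal{R}_t\) at rank \(n_t\), the adaptive choice of the next packet or certificate is made only after \(n_t\) is known. Because packets in the templates are rational and have finite support, I may wait finitely many ranks until every entry lies in \((n_t,\infty)\) and is old. The resulting dependency string is a coded requirement, so the one-step claim applies to it. This gives item~(1) for generic and reverse chains: the age bounds \(a<n_h\) and \(a<n_r\) limit the number of steps, and the root-key and sort conditions are preserved because the predecessor field names the atom just realized. For item~(2), seeds are obtained by applying fairness to \textup{(R2)} codes with arbitrary \(N\). Inner chains are obtained by iterating \textup{(R3)} with the fixed level \(p=\sigma_c(q,\pi)\), which was defined before the recursion started, so that no circularity arises. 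Admissibility requires \(n\ge p\), and this holds at all large ranks.

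Item~(3) follows the pattern of Remark~\ref{rem:crt-req-pat}. Once the inner chain has reached age \(n_p\) and its terminal atom \(\eta\) is old, any certificate compatible with \((\pi,\eta)\) is a finite rational record. Its support lies in \((\rho(\pi),\operatorname{rank}\eta]\), so the \textup{(R4)} template carrying it is admissible at every large rank. Fairness then realizes that template, and the stem extends to \(\pi^\frown(\operatorname{id}\zeta_{a+1},\mathfrak r_0(\zeta_{a+1}),\mathfrak c)\). This extended stem is a new realized stem, and so it determines a new input \((q,\pi_{a+1})\) for \(\sigma_c\). Iterating up to age \(n_q\) realizes every finite ordered list of compatible certificates. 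Item~(4) is immediate from Step~2(4) of Construction~\ref{con:rnk-rec} and Lemma~\ref{lem:req-adm}: every target output is created at the same rank as its mirror \(\widehat\gamma\), and the mirror's predecessor field is \(\operatorname{id}(\widehat\xi)\). Hence the mirrors of a realized target chain form a chain of protected source atoms.

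I expect the main obstacle to be checking that the adapted requirements are actually among the coded requirements and are \emph{persistently} admissible. Two points need attention. First, the analytic side conditions, namely the packet contractivity \eqref{eq:pkt-ctr} and the rationality and separation conditions in the certificate, are finite statements, either decided through \(\mathcal O_V\) or purely algebraic, and they do not depend on later ranks. Second, the finite dual retractions satisfy the consistency relation \eqref{eq:fin-dua-cns}, so the quantities \(e_\eta^{j*}(z)\) and \(\mathscr M_{ij}(y)\) appearing in a certificate are unchanged when later ranks are added. I would establish both points before invoking fairness.
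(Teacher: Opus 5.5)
Your proposal is correct and follows essentially the same route as the paper. Like the paper, you induct along a finite-extension chain, realize each layer above a chosen bound \(N\) through the fairness and no-injury properties of Lemma~\ref{lem:fai-per}, apply \textup{(R4)} once the inner chain is complete and its terminal atom and certificate are old, and read off the protected mirrors from Construction~\ref{con:rnk-rec}; your extra checks that the certificate and norm conditions persist under later ranks only spell out what the paper compresses into the persistence clause of that lemma.
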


\begin{proof}
	We argue by induction on the length of a finite-extension chain.
	At the first layer, insert the old identifiers and choose the bound \(N\) and an unused copy index.
	The resulting requirement is active at all sufficiently large ranks,
	so Lemma~\ref{lem:fai-per} realizes it above \(N\).

	Suppose that the first \(t<L\) layers have been realized.
	By the chain condition, all data named by layer \(t+1\) are old and its packet is supported after the local cutoff.
	Choosing \(N\) beyond their ranks and an unused copy index gives an \textup{(R1)}, \textup{(R3)}, or \textup{(R5)} requirement
	that becomes active and is realized by Lemma~\ref{lem:fai-per}.
	Together with the seed case of the first-layer argument, this proves \textup{(1)} and \textup{(2)}.
	For an outer step, completion of the inner chain makes its last atom \(\eta\), its certificate, and the previous stem old.
	Hence \textup{(R4)} applies with packet \(\langle\eta\rangle\), and Lemma~\ref{lem:fai-per} realizes it, extending the stem.
	Iteration with the next level \(\sigma_c(q,\pi)\) proves \textup{(3)}.
	Finally, Construction~\ref{con:rnk-rec} adds the protected source mirror at the same rank in every target layer, proving \textup{(4)}.
\end{proof}

From the operator point of view, Proposition~\ref{prop:fin-ext} provides a countable supply of finite rational realizations.
If an operator violates one of the local estimates used later,
finite-dimensional separation first gives rational packet data with small annihilation error.
Part~II selects a complete inner chain already present in the datum and uses a root perturbation to turn these data into a compatible terminal-coordinate certificate.
It then chooses one of the cofinally pre-existing outer continuations guaranteed by Proposition~\ref{prop:fin-ext}\textup{(3)} that carries this certificate. The datum and the spaces remain fixed throughout Part~II.

\subsection{Evaluation analyses, interval restriction, and common stems}
\label{sec:eva-rst}

Let \(\alpha=\alpha_a\) be a nonzero-weight atom, and read its same-side predecessor chain from \(\mathsf{shist}(\alpha)\).
Thus let
\[ \alpha_1\prec\alpha_2\prec\cdots\prec\alpha_a=\alpha \]
be its predecessor chain, and put \(p_s=\operatorname{rank}\alpha_s\) and \(p_0=\chi(\alpha)\).
If the final fibre is scalar, let \(\varphi_a=\lambda\in B_{\K}\) and set
\(\varphi_s=\varphi_a\) for every \(1\le s<a\).
For a protected vector evaluation, start with \(\varphi_a\in B_{(G_\alpha^0)^*}\) and define recursively
\[ \varphi_{s-1}=R_{\alpha_{s-1},\alpha_s}\varphi_s \qquad(2\le s\le a). \]
At every predecessor step the age drops by one and the rank drops strictly,
and the age-one record has empty predecessor index.
So this recursion is finite.
In a protected chain the symbols \(\alpha_s\) are the hatted same-side atoms stored in \(\mathsf{shist}\),
while the displayed \(R\)'s are exactly the restriction maps stored in their core records.
Let
\[ b_{s,\varphi}^{i*} =B_{\mathbf{b}_{\alpha_s}}^*\varphi_s. \]
In a scalar source-only chain, this means the functional represented by the stored packet.

The next identity is the usual Bourgain--Delbaen evaluation analysis \cite{ArgyrosHaydon2011,Tarbard2013,Motakis2024}, written for scalarized vector-fibre evaluations.
\begin{lemma}[Evaluation analysis]
	\label{lem:uni-eva}
	For every scalarized evaluation \(e_{\alpha,\varphi}^{i*}=\epsilon_\alpha^{i*}(\varphi)|_{X_i}\) of level \(h>0\),
	\begin{equation}\label{eq:ful-vec-ea}
		e_{\alpha,\varphi}^{i*}
		=\sum_{s=1}^a d_{\alpha_s,\varphi_s}^{i*}
		+\frac{1}{m_h}\sum_{s=1}^a
		(I-P_{p_{s-1}}^{i*})b_{s,\varphi}^{i*}.
	\end{equation}
	Here \(a\le n_h\), \(\supp_{\rm raw}\mathbf{b}_{\alpha_s}\subseteq(p_{s-1},p_s)\), and
	\[ \varphi\longmapsto\varphi_s,\qquad \varphi\longmapsto b_{s,\varphi}^{i*} \]
	are both contractions.
\end{lemma}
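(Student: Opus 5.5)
We will prove \eqref{eq:ful-vec-ea} by induction on the age \(a\), unfolding the definition \eqref{eq:fml-dst} of the \(d^*\)-functional one predecessor step at a time. The base identity is
\[ e_{\alpha,\varphi}^{i*}=\epsilon_\alpha^{i*}(\varphi)|_{X_i}=d_{\alpha,\varphi}^{i*}+c_\alpha^{i*}(\varphi)|_{X_i}. \]
We then substitute the correction formula of the relevant type: \eqref{eq:src-onl-crr} for a scalar source-only atom, \eqref{eq:tgt-sca-crr} for a scalar target atom, or \eqref{eq:fml-prt-crr} for a protected mirror. In each case \(\operatorname{cut}(\alpha)=k=p_{a-1}\), since for \(a>1\) condition \textup{(D3)} gives \(k=\operatorname{rank}\xi=p_{a-1}\), and for \(a=1\) it gives \(k=\chi=p_0\). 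Hence the packet term is exactly
\[ \frac1{m_h}(I-P_{p_{a-1}}^{i*})B_{\mathbf b_{\alpha}}^*\varphi_a=\frac1{m_h}(I-P_{p_{a-1}}^{i*})b_{a,\varphi}^{i*}. \]
Here for a scalar fibre we use \(B^*_{\mathbf b}(\lambda)=\lambda B^*_{\mathbf b}(1)\). For a mirror we use the hatted packet, with \(\widehat{B}_{\mathbf b}^*\) equal to the packet map stored in the mirror's core record.

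\textbf{Age one.} If \(a=1\), there is no predecessor term, and this already gives \eqref{eq:ful-vec-ea}.

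\textbf{Age \(a>1\).} The remaining predecessor term is \(\epsilon_{\xi}^{i*}(\varphi_{a-1})\), where \(\xi=\alpha_{a-1}\).
\begin{itemize}
\item In the scalar cases it is \(\lambda\epsilon_\xi^{i*}(1)\), and \(\varphi_{a-1}=\lambda\).
\item In the protected case it is \(\epsilon_{\widehat\xi}^{0*}(R_{\xi\gamma}\varphi_a)\), and \(\varphi_{a-1}=R_{\alpha_{a-1},\alpha_a}\varphi_a\) by the recursion defining \(\varphi_s\).
\end{itemize}
Restricted to \(X_i\), this term is the scalarized evaluation \(e_{\alpha_{a-1},\varphi_{a-1}}^{i*}\) of an atom of the same level \(h\) (again by \textup{(D3)}). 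Its predecessor chain is \(\alpha_1\prec\cdots\prec\alpha_{a-1}\), read from \(\mathsf{shist}(\alpha_{a-1})\), which is an initial segment of \(\mathsf{shist}(\alpha)\) by \eqref{eq:nsl-his} and \eqref{eq:prt-two-his}. It has the same initial cutoff \(\chi\), and its functionals are the \(\varphi_s\) obtained by the same restriction recursion. The inductive hypothesis applied to it, added to the two new terms, yields \eqref{eq:ful-vec-ea}.

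\textbf{Remaining assertions.}
\begin{itemize}
\item The bound \(a\le n_h\) is part of \textup{(D2)}.
\item The inclusion \(\supp_{\rm raw}\mathbf b_{\alpha_s}\subseteq(p_{s-1},p_s)\) is \eqref{eq:pkt-cel} applied to \(\alpha_s\), using \(\operatorname{cut}(\alpha_s)=p_{s-1}\).
\item The map \(\varphi\mapsto\varphi_s\) is the identity in the scalar case. In the protected case it is a composition of restrictions between nested fibres, and each such restriction is contractive by Hahn--Banach (Subsection~\ref{sec:coe-fib}).
\item The map \(\varphi\mapsto b_{s,\varphi}^{i*}\) is \(B^*_{\mathbf b_{\alpha_s}}\) composed with \(\varphi\mapsto\varphi_s\). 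The first factor is contractive into the ambient \(\ell_1\)-sum by \eqref{eq:pkt-ctr}, and for mirrors by the estimate \(\sum_l|a_l|\|\widehat R_l\|\le1\) noted after \eqref{eq:fml-prt-crr}.
\end{itemize}

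\textbf{Main obstacle.} We expect the only delicate point to be bookkeeping in the protected case. The predecessor map in the mirror's correction must coincide with the restriction \(R_{\alpha_{a-1},\alpha_a}\) used in the recursion for \(\varphi_s\); this holds by the convention \(R_{\widehat\xi,\widehat\gamma}:=R_{\xi\gamma}\). Also, the chain must be read from \(\mathsf{shist}\) and not from \(\mathsf{thist}\), so that the predecessors are the hatted source atoms. A second point is that the identities hold first on \(X_i^{\rm alg}\) and extend to \(X_i\) by continuity, since every term is a bounded functional: the \(P^{i*}\) are bounded by Lemma~\ref{lem:blk-bd}.
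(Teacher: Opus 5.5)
Your proposal is correct and follows the paper's proof. The paper derives the one-edge identity \(e_{\alpha_s,\varphi_s}^{i*}=d_{\alpha_s,\varphi_s}^{i*}+\mathbf{1}_{\{s>1\}}e_{\alpha_{s-1},\varphi_{s-1}}^{i*}+m_h^{-1}(I-P_{p_{s-1}}^{i*})b_{s,\varphi}^{i*}\) from \eqref{eq:fml-dst} and the correction formulas, unfolds it along the predecessor chain (your induction on age is this same telescoping), and gets contractivity from the nested restrictions and \eqref{eq:pkt-ctr}. The only inessential point is your extension from \(X_i^{\rm alg}\) to \(X_i\) by continuity: the identity already holds exactly in the ambient predual before it is restricted to \(X_i\).
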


\begin{proof}
	For each \(1\le s\le a\), \eqref{eq:fml-dst} and the correction formula in the core record of \(\alpha_s\) give the exact one-edge identity
	\[
		\begin{aligned}
			e_{\alpha_s,\varphi_s}^{i*}
			&=d_{\alpha_s,\varphi_s}^{i*}
			+c_{\alpha_s}^{i*}(\varphi_s)|_{X_i}\\
			&=d_{\alpha_s,\varphi_s}^{i*}
			+\mathbf{1}_{\{s>1\}}e_{\alpha_{s-1},\varphi_{s-1}}^{i*}
			+\frac{1}{m_h}(I-P_{p_{s-1}}^{i*})b_{s,\varphi}^{i*}.
		\end{aligned}
	\]
	For a scalar chain this is \eqref{eq:src-onl-crr} or \eqref{eq:tgt-sca-crr}
	with the same scalar \(\lambda\) on each edge.
	For a protected chain it is \eqref{eq:fml-prt-crr} and the predecessor scalarization is precisely \(R_{\alpha_{s-1},\alpha_s}\varphi_s=\varphi_{s-1}\).
	The cases \(a\le2\) follow directly from the preceding identity.
	\begin{align*}
			a=1:\quad e_{\alpha,\varphi}^{i*}
			&=d_{\alpha_1,\varphi_1}^{i*}
			+\frac{1}{m_h}(I-P_{p_0}^{i*})b_{1,\varphi}^{i*},\\
			a=2:\quad e_{\alpha,\varphi}^{i*}
			&=d_{\alpha_2,\varphi_2}^{i*}
			+\frac{1}{m_h}(I-P_{p_1}^{i*})b_{2,\varphi}^{i*}
			+e_{\alpha_1,\varphi_1}^{i*}\\
			&=\sum_{s=1}^2 d_{\alpha_s,\varphi_s}^{i*}
			+\frac{1}{m_h}\sum_{s=1}^2(I-P_{p_{s-1}}^{i*})b_{s,\varphi}^{i*}.
	\end{align*}
	For \(a>2\), unfolding again therefore gives
	\[
		\begin{aligned}
			e_{\alpha,\varphi}^{i*}
			&=d_{\alpha_a,\varphi_a}^{i*}
			+\frac{1}{m_h}(I-P_{p_{a-1}}^{i*})b_{a,\varphi}^{i*}
			+e_{\alpha_{a-1},\varphi_{a-1}}^{i*}\\
			&=\sum_{s=a-1}^a d_{\alpha_s,\varphi_s}^{i*}
			+\frac{1}{m_h}\sum_{s=a-1}^a(I-P_{p_{s-1}}^{i*})b_{s,\varphi}^{i*}
			+e_{\alpha_{a-2},\varphi_{a-2}}^{i*}\\
			&=\cdots=\sum_{s=1}^a d_{\alpha_s,\varphi_s}^{i*}
			+\frac{1}{m_h}\sum_{s=1}^a(I-P_{p_{s-1}}^{i*})b_{s,\varphi}^{i*}.
		\end{aligned}
	\]

	The maps \(\varphi\mapsto\varphi_s\) are contractions because the predecessor maps are identities in scalar chains and contractive restrictions in protected chains.
	Hence \eqref{eq:pkt-ctr} makes each map \(\varphi\mapsto b_{s,\varphi}^{i*}\) a contraction.
	For fixed \(\varphi_s\in B_{(G_{\alpha_s}^i)^*}\), define \(\psi_l\in B_{(G_{\eta_l}^i)^*}\) by
	\[
		\psi_l:=
		\begin{cases}
			\frac{R_l\varphi_s}{\norm{R_l\varphi_s}},&R_l\varphi_s\ne0,\\
			0,&R_l\varphi_s=0,
		\end{cases}.
	\]
	Then
	\[ b_{s,\varphi}^{i*} =\sum_l a_l\norm{R_l\varphi_s}e_{\eta_l,\psi_l}^{i*}, \qquad \sum_l|a_l|\norm{R_l\varphi_s}\le1. \]
	Thus every scalarized packet belongs to the absolutely convex hull of the scalarized ambient coordinates in its packet cell.
\end{proof}

\begin{remark}[Uniqueness]
	When \(\varphi\ne0\), the highest nonzero block of \(e_{\alpha,\varphi}^{i*}\) determines \((\alpha,\varphi)\),
	and the core records determine the whole stored analysis.
	When \(\varphi=0\), uniqueness refers to the stored analysis of the labelled evaluation \((\alpha,0)\), since the zero functional alone does not determine \(\alpha\).
\end{remark}

In \eqref{eq:ful-vec-ea}, the \(s\)-th correction and its raw support are \(m_h^{-1}(I-P_{p_{s-1}}^{i*})b_{s,\varphi}^{i*}\) and \(\supp_{\rm raw}\mathbf{b}_{\alpha_s}\subseteq(p_{s-1},p_s)\).
Before applying the finite-stage extension, by \eqref{eq:fml-fn}, we know that for \(L\subseteq\N\) and \(x=(x_\beta)_{\beta\in\Delta_n^i}\in F_n^i\),
\[ \left\|\bigl(\mathbf{1}_{\{\operatorname{lev}(\beta)\in L\}}x_\beta\bigr)_{\beta\in\Delta_n^i}\right\|_{F_n^i} \le\norm{x}_{F_n^i}. \]

For an FDD interval \(I\), apply \(P_I^{i*}\) to \eqref{eq:ful-vec-ea},
\[ P_I^{i*}e_{\alpha,\varphi}^{i*} =\sum_{s=1}^aP_I^{i*}d_{\alpha_s,\varphi_s}^{i*} +\frac{1}{m_h}\sum_{s=1}^aP_I^{i*}(I-P_{p_{s-1}}^{i*})b_{s,\varphi}^{i*}, \]
and expand \(P_I^{i*}b_{s,\varphi}^{i*} =\sum_l a_l\norm{R_l\varphi_s}P_I^{i*}e_{\eta_l,\psi_l}^{i*}\).
An entry with empty restricted FDD support remains as \(\varnothing\) in the stored list, although it contributes nothing to the sum.
The atom identifiers, \(\mathsf{shist}\), \(\mathsf{thist}\), and copy indices remain unchanged.
For an inner or outer history, so do the realized stems and inner codes of retained terms.
Apply the second formula recursively to every coordinate evaluation in a packet.
At a probe, we have
\[
	e_{\eta,\psi}^{i*}\longmapsto
	\begin{cases}
		d_{\eta,\psi}^{i*},&\operatorname{rank}\eta\in I,\\
		\varnothing,&\operatorname{rank}\eta\notin I.
	\end{cases}
\]
This is the \emph{evaluation-restriction recursion} for \(P_I^{i*}e_{\alpha,\varphi}^{i*}\).
It terminates because every recursive packet entry has smaller rank than its parent.

For the chain above, put
\[ e_s=e_{\alpha_s,\varphi_s}^{i*},\qquad e_0=0, \qquad t_s=\frac{1}{m_h}(I-P_{p_{s-1}}^{i*})b_{s,\varphi}^{i*}, \qquad t_{a+1}=0. \]
For an integer \(w\), let \(r_w\) be determined by
\[ p_{r_w}\le w<p_{r_w+1}, \]
with the conventions \(r_w=0\) for \(w<p_1\) and \(r_w=a\) for \(w\ge p_a\).
Subtracting the evaluation analysis of \(e_{r_w}\) from that of \(e_a\),
and using the successive packet cells, gives
\begin{equation}\label{eq:rst-tel}
	\begin{aligned}
		P_{(w,\infty)}^{i*}e_a
		&=\sum_{s=r_w+1}^a d_{\alpha_s,\varphi_s}^{i*}
		+\sum_{s=r_w+2}^a t_s
		+P_{(w,\infty)}^{i*}t_{r_w+1}\\
		&=e_a-e_{r_w}+(P_{(w,\infty)}^{i*}-I)t_{r_w+1}.
	\end{aligned}
\end{equation}

\begin{lemma}[Interval restriction formula]\label{lem:fml-rst}
	If \(u<v\), then
	\begin{equation}\label{eq:two-bnd-rst}
		P_{(u,v]}^{i*}e_a=e_{r_v}-e_{r_u}+(P_{(u,\infty)}^{i*}-I)t_{r_u+1}-(P_{(v,\infty)}^{i*}-I)t_{r_v+1}.
\end{equation}
\end{lemma}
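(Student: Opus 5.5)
The plan is to derive \eqref{eq:two-bnd-rst} directly from the one-sided telescoping identity \eqref{eq:rst-tel} by subtraction. Since \(u<v\), the FDD projections satisfy
\[
	P_{(u,v]}^{i*}=P_{(u,\infty)}^{i*}-P_{(v,\infty)}^{i*},
\]
which holds because \(P_{(w,\infty)}^{i*}=I-P_w^{i*}\) and \(P_{(u,v]}^{i*}=P_v^{i*}-P_u^{i*}\), the latter being the interval projection supplied by Lemma~\ref{lem:blk-bd} together with the composition rule \eqref{eq:bd-prj-cmp}. Applying this to \(e_a\) reduces the claim to two instances of \eqref{eq:rst-tel}.

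Concretely, I would first record \eqref{eq:rst-tel} with \(w=u\) and with \(w=v\):
\[
	P_{(u,\infty)}^{i*}e_a=e_a-e_{r_u}+(P_{(u,\infty)}^{i*}-I)t_{r_u+1},\qquad
	P_{(v,\infty)}^{i*}e_a=e_a-e_{r_v}+(P_{(v,\infty)}^{i*}-I)t_{r_v+1}.
\]
Subtracting the second from the first cancels \(e_a\) and leaves exactly
\(e_{r_v}-e_{r_u}+(P_{(u,\infty)}^{i*}-I)t_{r_u+1}-(P_{(v,\infty)}^{i*}-I)t_{r_v+1}\), which is \eqref{eq:two-bnd-rst}. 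The conventions \(e_0=0\) and \(t_{a+1}=0\) ensure the formula is meaningful in the boundary cases \(r_u=0\) (for \(u<p_1\)) and \(r_v=a\) (for \(v\ge p_a\)). In the latter case the last term vanishes, and if \(r_u=a\) then \(r_v=a\) as well, so everything cancels consistently.

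The only point needing care is that \eqref{eq:rst-tel} itself is valid for every integer \(w\), including the case where \(w\) equals some \(p_s\) or lies inside a packet cell \((p_{s-1},p_s)\). I would briefly recheck this from the evaluation analysis \eqref{eq:ful-vec-ea}. Applying \(I-P_w^{i*}\) kills \(d_{\alpha_s,\varphi_s}^{i*}\) for \(p_s\le w\) and keeps it for \(p_s>w\), since that block has \(d^*\)-rank \(p_s\). The term \(t_s\) has \(d^*\)-support in \((p_{s-1},p_s)\) by \eqref{eq:tal-win-cns}, so it is killed for \(s\le r_w\) and kept for \(s\ge r_w+2\), and only \(t_{r_w+1}\) straddles \(w\). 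Comparing with the analysis of \(e_{r_w}\) (the initial segment of the same chain, with the same \(\varphi_s\), because the predecessor scalarizations are determined top-down) gives the second line of \eqref{eq:rst-tel}. This bookkeeping of which cell contains \(w\) is the main, though mild, obstacle; once it is checked, the lemma is the subtraction above.
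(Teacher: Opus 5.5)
Your proposal is correct and is essentially the paper's own argument: the paper's proof consists of subtracting \eqref{eq:rst-tel} at \(v\) from the same identity at \(u\), implicitly using \(P_{(u,v]}^{i*}=P_{(u,\infty)}^{i*}-P_{(v,\infty)}^{i*}\). Your recheck of \eqref{eq:rst-tel} from the evaluation analysis and the cell supports \eqref{eq:tal-win-cns} is sound, and it spells out the derivation that the paper states briefly just before the lemma.
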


\begin{proof}
	Subtracting \eqref{eq:rst-tel} at \(v\) from the identity at \(u\) gives \eqref{eq:two-bnd-rst}.
\end{proof}

\begin{remark}[Interval restriction]
	\label{rem:int-rst}
	For a scalarized coordinate evaluation \(Q\) on side \(i\), interval restriction is \(QP_I^i\).
	A packet term \(FP_K^i\) becomes \(FP_{K\cap I}^i\).
	Thus the stored history, predecessors, labels, and \(\sigma_c\)-codes are unchanged.
	Formula~\eqref{eq:two-bnd-rst} shows that only the first and last packet cells meeting \(I\) can be cut at its endpoints.
	Every intervening cell is retained completely, and no interior predecessor fragment is created.
	Hence at most two strong-packet terms in an outer analysis are partially retained.
\end{remark}

\begin{lemma}[Common-stem comparison]
	\label{lem:com-stm}
	Let \(\mathcal{H}_1\) and \(\mathcal{H}_2\) be outer coding histories with the same corner \(c\) and seed level \(q\).
	Each history is read from \(\mathsf{thist}\), which equals \(\mathsf{shist}\) except for a protected mirror.
	Then the following hold.
	\begin{enumerate}[label={\textup{(\arabic*)}},leftmargin=*]
		\item \emph{Different seeds.}
		If their seed records differ, every inner level in one history differs from every inner level in the other.

		\item \emph{Agreement on the common stem.}
		Suppose their seed records agree, and let \(\pi\) be their longest common realized stem.
		Their labelled strong packets agree along \(\pi\).

		\item \emph{The first packets after the common stem.}
		If both histories continue beyond \(\pi\), let \(\mathcal B_1\) and \(\mathcal B_2\) be their respective first packets after \(\pi\).
		Both have inner level
		\[ h_*:=\sigma_c(q,\pi). \]
		Their labelled packets need not agree.

		\item \emph{No other matching levels beyond the common stem.}
		Under the assumption in \textup{(2)}, if a packet after \(\pi\) in \(\mathcal H_1\) and a packet after \(\pi\) in \(\mathcal H_2\) have the same inner level,
		they must be the pair \((\mathcal B_1,\mathcal B_2)\) in \textup{(3)}.
		If either history ends at \(\pi\), this pair is absent.
	\end{enumerate}
\end{lemma}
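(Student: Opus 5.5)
The argument is entirely combinatorial. It uses only the injectivity and disjoint-range properties of the coding maps \(\sigma_c\) from \eqref{eq:fml-sig}, together with the way a realized outer stem is built in Definition~\ref{def:crt-stm} and \textup{(R4)}. The key observation is this: in an outer coding history over a seed \(\vartheta\), the inner level of the \(s\)-th strong packet \(\langle\eta_s\rangle\) is, by \textup{(R3)}--\textup{(R4)}, exactly \(\sigma_c(q,\pi_{s-1})\), where \(\pi_{s-1}\) is the realized stem obtained by stopping before the \(s\)-th triple. First I will check that \(\pi_{s-1}\) is read off from \(\mathsf{thist}\). For a target or source-only outer atom this holds because \(\mathsf{thist}=\mathsf{shist}\) and the core snapshots store the certificates and the attachment \((\vartheta,\pi)\). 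For a protected mirror it holds because \(\mathsf{thist}(\widehat\gamma)=\mathsf{shist}(\gamma)\) by \eqref{eq:prt-two-his}. Every claim then reduces to comparing the arguments \((q,\pi)\) fed into \(\sigma_c\).

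For \textup{(1)}, I note that every realized stem \(\pi\) contains its seed record \(\vartheta\) as its first field. Two histories with different seeds therefore produce stems \(\pi\ne\pi'\) at every stage. Injectivity of \(\sigma_c\) on pairs \((q,\pi)\) with the same \(q\) then gives \(\sigma_c(q,\pi)\ne\sigma_c(q,\pi')\). For \textup{(2)}, let \(\pi\) be the longest common realized stem. Each triple \((\operatorname{id}\zeta_s,\mathfrak r_0(\zeta_s),\mathfrak c_s)\) of \(\pi\) records the packet field \(\langle\eta_s\rangle\) inside \(\mathfrak r_0(\zeta_s)\), and the terminal atom \(\eta_s\) is also named in \(\mathfrak c_s\). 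Agreement of the stored words therefore forces agreement of the labelled strong packets, including identifiers. For \textup{(3)}, the first packet after \(\pi\) in either history is attached to the same stem \(\pi\), and its level is \(\sigma_c(q,\pi)=h_*\) in both. The packets themselves may differ, because distinct terminal atoms or certificates are allowed; that difference is exactly what ends the common stem.

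For \textup{(4)}, take a packet after \(\pi\) in \(\mathcal H_1\), say the \(s\)-th with \(s>|\pi|\), so that its level is \(\sigma_c(q,\pi^1_{s-1})\). Take similarly a packet in \(\mathcal H_2\) with level \(\sigma_c(q,\pi^2_{t-1})\). Equal levels give \(\pi^1_{s-1}=\pi^2_{t-1}\) by injectivity. Since both are stems extending \(\pi\), equality of the words forces \(s=t\). If \(s-1>|\pi|\), this would give a common realized stem strictly longer than \(\pi\), which contradicts maximality. So \(s-1=|\pi|\), and the pair is \((\mathcal B_1,\mathcal B_2)\). If one history ends at \(\pi\), it has no packet after \(\pi\) and the pair is absent. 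The main point to check carefully is that one and the same \(\sigma_c\) is used, with its injective domain of formal stems. The cross-corner case is excluded by the hypothesis that the corners agree, and in any case the ranges of different \(\sigma_c\) are disjoint. I also need that a stem word determines its length, which holds since it is a finite list of triples. I do not expect any analytic step.
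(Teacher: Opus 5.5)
Your proof is correct and follows the paper's argument: both reduce every claim to the injectivity of \(\pi\mapsto\sigma_c(q,\pi)\) for fixed \(c,q\), the presence of the seed record in every stem word (for (1)), and the maximality of the common stem (for (4)). Your explicit check that the preceding stems can be read from \(\mathsf{thist}\), including the protected-mirror case, is a detail the paper leaves implicit.
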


\begin{proof}
	A packet following a realized stem \(\tau\) has inner level \(\sigma_c(q,\tau)\).
	Since \(c\) and \(q\) are fixed and \(\sigma_c\) is injective,
	\[ \sigma_c(q,\tau_1)=\sigma_c(q,\tau_2) \quad\Longrightarrow\quad \tau_1=\tau_2. \]
	We apply this observation to the four assertions.
	\begin{enumerate}[label={\textup{(\arabic*)}},leftmargin=*]
		\item If the seed records differ, every stem word from one history differs from every stem word from the other, because each word retains its seed record.
		Thus the corresponding inner levels differ.

		\item Common stored stems carry the same labelled strong packets, so the packets agree along \(\pi\).

		\item Each first packet after \(\pi\) uses precisely \(\pi\) as its preceding stem.
		Its inner level is therefore \(\sigma_c(q,\pi)=h_*\).

		\item For any pair of packets after \(\pi\) other than the first pair, their preceding stems are distinct:
		either one is \(\pi\) and the other strictly extends it, or both strictly extend \(\pi\) along different histories.
		In the latter case, equality would give a common realized stem longer than \(\pi\).
		Injectivity now gives distinct inner levels.
	\end{enumerate}
\end{proof}

\begin{figure}[htbp]
	\centering
\begin{tikzpicture}[x=1cm,y=1cm,>=stealth,
	every node/.style={font=\small},
	packet/.style={draw=blue!55!black,rounded corners=2pt,
		fill=blue!4,align=center,minimum width=2.35cm,minimum height=1.05cm},
	tail/.style={align=center,minimum width=2.25cm}]
	\coordinate (seed) at (0.3,0);
	\coordinate (split) at (3.9,0);
	\draw[thick] (seed) -- (split);
	\fill (seed) circle (1.5pt);
	\fill (split) circle (2pt);
	\node[below=5pt] at (seed) {\(\vartheta\)};
	\node[below=5pt] at (split) {\(\pi\)};
	\node[align=center] at (2.1,0.65) {common realized stem\\same labelled packets};
	\node[packet] (first1) at (6.6,1.2) {\(\mathcal B_1\)\\inner level \(h_*\)};
	\node[packet] (first2) at (6.6,-1.2) {\(\mathcal B_2\)\\inner level \(h_*\)};
	\draw[->] (split) -- (first1.west);
	\draw[->] (split) -- (first2.west);
	\node[tail] (tail1) at (10.1,1.2) {later packets of \(\mathcal H_1\)\\inner levels in \(L_1\)};
	\node[tail] (tail2) at (10.1,-1.2) {later packets of \(\mathcal H_2\)\\inner levels in \(L_2\)};
	\draw[->] (first1.east) -- (tail1.west);
	\draw[->] (first2.east) -- (tail2.west);
	\draw[dashed,blue!55!black] (first1.south) -- (first2.north);
	\node[fill=white,inner sep=3pt] at (6.6,0) {\(h_*=\sigma_c(q,\pi)\)};
	\node[align=center] at (10.1,0) {\(L_1\cap L_2=\varnothing\)\\\(h_*\notin L_1\cup L_2\)};
\end{tikzpicture}
	\caption{Common-stem comparison.}
	\label{fig:common-stem-comparison}
\end{figure}

\subsection{Probes, shielding, and cofinal scheduling}
\label{sec:fre-prb}

Let
\[ \mathcal{P}=\{\gamma_n^{\rm pr}:n\in\N_+\}. \]
By Construction~\ref{con:rnk-rec}, \(E_{\gamma_n^{\rm pr}}=V_n\), both corrections are zero,
and no member of \(\mathcal{P}\) can be a type-one predecessor.
For \(\gamma\in\mathcal{P}\), define
\[ j_\gamma:E_\gamma\longrightarrow X_0,\qquad j_\gamma q=d_{\widehat{\gamma}}^0(q),\qquad z_\gamma=d_\gamma^1(1),\qquad \phi_\gamma=d_{\gamma,1}^{1*}. \]
Let \(\beta\) be a probe atom on side \(i\), let \(g\in G_\beta^i\), and put \(x=d_\beta^i(g)\). Since a probe is not a predecessor, the evaluation analysis, Lemma~\ref{lem:fml-blk-bio}, the disjointness of the packet cells, and packet contractivity give
\[ |e_{\alpha,\varphi}^{i*}(x)| \le\frac{\norm{x}}{m_h} \qquad\bigl(\operatorname{lev}(\alpha)=h>0,\ \varphi\in B_{(G_\alpha^i)^*}\bigr). \]
Every level-zero atom is a zero-correction probe. Since the scalarized ambient coordinates norm \(X_i\), Lemma~\ref{lem:fml-blk-bio} gives
\[ \norm{g}\le\norm{x} \le\max\left\{\norm{g},\frac{\norm{x}}{m_1}\right\}. \]
Thus \(\norm{d_\beta^i(g)}=\norm{g}\). Moreover, \(\phi_\gamma=e_{\gamma,1}^{1*}\) is contractive and \(\phi_\gamma(z_\gamma)=1\). Hence
\begin{equation}\label{eq:ter-par}
	\norm{j_\gamma q}=\norm{q},\qquad
	\norm{z_\gamma}=\norm{\phi_\gamma}=1,
	\qquad \phi_\gamma(z_\gamma)=1
	\quad(q\in E_\gamma).
\end{equation}
Equation~\eqref{eq:def-tv} gives \(T_vj_\gamma q=v(q)z_\gamma\), and
applying \(\phi_\gamma\) recovers \(v(q)\).
This operator test explains the term \emph{probe}.
Figure~\ref{fig:probe-readout} shows this exact scalar readout.
The lower panel records the scalar functional associated with a general operator.

\begin{figure}[htbp]
	\centering
\begin{tikzpicture}[x=1cm,y=1cm,>=stealth,
	every node/.style={font=\small,align=center},
	probe/.style={draw=blue!55!black,fill=blue!4,rounded corners=3pt,
		minimum height=0.85cm,inner sep=5pt},
	reading/.style={draw=green!45!black,fill=green!4,rounded corners=3pt,
		minimum height=0.85cm,inner sep=5pt}]
	\node[probe,minimum width=1.7cm] (fibre) at (0.9,0)
		{\(u\in E_\gamma\)};
	\node[probe,minimum width=2cm] (source) at (3.6,0)
		{\(j_\gamma u\in X_0\)};
	\node[reading,minimum width=2.3cm] (target) at (7,0)
		{\(v(u)z_\gamma\in X_1\)};
	\node[reading,minimum width=1.6cm] (scalar) at (10.1,0)
		{\(v(u)\in\K\)};

	\node[above=5pt] at (fibre.north) {Vector fibre};
	\node[above=5pt] at (source.north) {Source probe \(\widehat\gamma\)};
	\node[above=5pt] at (target.north) {Target probe \(\gamma\)};
	\node[above=5pt] at (scalar.north) {Scalar value};

	\draw[->,thick,blue!55!black] (fibre.east) --
		node[above=3pt,text=black] {\(j_\gamma\)} (source.west);
	\draw[->,thick] (source.east) --
		node[above=3pt] {\(T_v\)} (target.west);
	\draw[->,thick,green!45!black] (target.east) --
		node[above=3pt,text=black] {\(\phi_\gamma\)} (scalar.west);
	\node[below=5pt] at (target.south) {\(\phi_\gamma(z_\gamma)=1\)};

	\node[draw=black!35,fill=black!2,rounded corners=3pt,
		text width=10.35cm,minimum height=1cm,inner sep=5pt] at (5.5,-1.75)
		{For a general \(S\in\Bcal(X_0,X_1)\):\\[3pt]
		 \(a_\gamma:=\phi_\gamma Sj_\gamma\in E_\gamma^*,\qquad
			 a_\gamma(u)=\phi_\gamma(Sj_\gamma u)\).};
\end{tikzpicture}
	\caption{Coefficient readout by a paired probe.}
	\label{fig:probe-readout}
\end{figure}

We use \(\supp_{\rm FDD}\) and \(\ran\) as defined in \eqref{eq:fml-fdd-sup}.
For nonzero finitely supported vectors, write \(x<y\) when \(\max\ran x<\min\ran y\).
For an atom \(\alpha\), write
\[ x<\alpha\iff\max\ran x<\operatorname{rank}\alpha, \qquad \alpha<x\iff\operatorname{rank}\alpha<\min\ran x, \]
and write \(\alpha<\beta\) when \(\operatorname{rank}\alpha<\operatorname{rank}\beta\).

\begin{definition}[Rapidly increasing sequences]
	\label{def:fml-ris}
	A block sequence \((x_k)\) in \(X_i\) is a \(C\)-RIS if \(\norm{x_k}\le C\) and there are strictly increasing positive integers \((j_k)\) such that
	\begin{equation}\label{eq:fml-ris-def}
		j_{k+1}>\max\ran x_k,
		\qquad
		|e_{\alpha,\psi}^{i*}(x_k)|\le\frac{C}{m_h}
	\end{equation}
	whenever \(0<{\operatorname{lev}(\alpha)}=h<j_k\) and \(\psi\in B_{(G_\alpha^i)^*}\).
\end{definition}
A block sequence consists of nonzero vectors, and zero terms are omitted whenever a family is regarded as a block sequence or an RIS.
Level-zero probes are treated separately in the following proposition.

\begin{proposition}[Probe shielding and RIS scheduling]\label{prop:fml-prb}\quad
	\begin{enumerate}[label={\textup{(P\arabic*)}},leftmargin=*]
		\item For every \(r,N\in\N_+\) there is \(\gamma\in\mathcal{P}\) with
		\[ \operatorname{rank}\gamma>N,\qquad E_\gamma\supseteq V_r. \]

		\item For every \(h>0\) and \(\gamma\in\mathcal{P}\),
		\begin{equation}\label{eq:fml-prb-shd}
			\begin{aligned}
				\sup_{\substack{\eta\ \text{target on side }1\\
				\operatorname{lev}(\eta)=h}}
				|e_\eta^{1*}(z_\gamma)|
				&\le\frac{M}{m_h},\\
				\sup_{\substack{\alpha\ \text{ on side }0,\ {\operatorname{lev}(\alpha)}=h\\
				\psi\in B_{(G_\alpha^0)^*},\
				q\in B_{E_\gamma}}}
				|e_{\alpha,\psi}^{0*}(j_\gamma q)|
				&\le\frac{M}{m_h},\\
				\sup_{\substack{\eta\ \text{target on side }1,\ \operatorname{lev}(\eta)=h\\
				q\in B_{E_\gamma}}}
				\norm{U_\eta j_\gamma q}_V
				&\le\frac{M}{m_h}.
			\end{aligned}
		\end{equation}
		In addition, the following scheduling assertions hold.
		\begin{enumerate}[label=\textup{(\alph*)},leftmargin=*]
			\item given arbitrary lower bounds \(N_k\), probes \(\gamma_k\) and strictly increasing RIS indices \(s_k\) can be chosen so that
			\[ N_k<s_k<\operatorname{rank}\gamma_k<s_{k+1}. \]
			If \(q_k\in B_{E_{\gamma_k}}\),
			\((j_{\gamma_k}q_k)\) and \((z_{\gamma_k})\) are \(M\)-RISs;
			\item given arbitrary lower bounds \(N_k\), pairs of probes and strictly increasing RIS indices can be chosen so that
			\[ N_k<s_k<\operatorname{rank}\gamma_k <\operatorname{rank}\delta_k<s_{k+1}. \]
			If \(q_k\in B_{E_{\gamma_k}\cap E_{\delta_k}}=B_{E_{\gamma_k}}\),
			both component sequences are \(M\)-RISs and \((j_{\gamma_k}q_k+j_{\delta_k}q_k)\) is a \(2M\)-RIS;
			\item if \((x_k)\subseteq X_0\) is a \(C\)-RIS and \(N_k\) are arbitrary lower bounds,
			we may pass to a subsequence and choose
			\[ x_k<\gamma_k^{\rm pr}<x_{k+1},\qquad \gamma_k^{\rm pr}\in\mathcal{P}, \]
			with \(\operatorname{rank}\gamma_k^{\rm pr}>N_k\), so that uniformly for \(q_k\in B_{E_{\gamma_k^{\rm pr}}}\),
			the sequence \((x_k+j_{\gamma_k^{\rm pr}}q_k)\) is a \((C+M)\)-RIS.
		\end{enumerate}
	\end{enumerate}
\end{proposition}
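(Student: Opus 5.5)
We treat the three parts of Proposition~\ref{prop:fml-prb} in order. The second and third parts rest on one estimate: the action of a positive-level evaluation on a probe vector.

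\emph{(P1).} Construction~\ref{con:rnk-rec} adds a probe pair at every rank \(n\) with \(E_{\gamma_n^{\rm pr}}=V_n\). Given \(r,N\), take \(n>\max\{r,N\}\). Then \(\operatorname{rank}\gamma_n^{\rm pr}=n>N\) and \(V_n\supseteq V_r\).

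\emph{(P2), shielding.} Let \(\beta\) be a probe of rank \(n\) and \(x=d_\beta^i(g)\). Expand a level-\(h\) evaluation by Lemma~\ref{lem:uni-eva}. By Lemma~\ref{lem:fml-blk-bio}, the terms \(d_{\alpha_s,\varphi_s}^{i*}(x)\) vanish unless \(\alpha_s=\beta\). That never happens, since \(\alpha_s\) has positive level and a probe is never a predecessor. The packet cells \((p_{s-1},p_s)\) are disjoint, so \(x\), whose FDD support is \(\{n\}\), can meet at most one cell. For that \(s\), the term \((I-P_{p_{s-1}}^{i*})b_{s,\varphi}^{i*}\) evaluated at \(x\) is bounded by \(\|I-P_{p_{s-1}}\|\,\|x\|\). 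To get the constant \(M\) rather than \(1+M\), I would evaluate \(b_{s,\varphi}^{i*}\) directly. Indeed \(P_{p_{s-1}}x=0\) when \(n>p_{s-1}\), so the term equals \(b_{s,\varphi}^{i*}(x)\). This is a contractive combination of coordinates of \(x\), of norm at most \(\|x\|\le M\|g\|\) by \eqref{eq:fml-ext-bnd}. Combined with \(\|d_\beta^i(g)\|=\|g\|\) from \eqref{eq:ter-par}, this gives the first two lines of \eqref{eq:fml-prb-shd}, with bound \(M/m_h\) or better.

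For the third line, fix \(w\in B_{V^*}\). By Lemma~\ref{lem:int}, \(w(U_\eta j_\gamma q)=e_\eta^{1*}(T_wj_\gamma q)=w(q)\,e_\eta^{1*}(z_\gamma)\). The first line bounds this by \(M/m_h\). Taking the supremum over \(w\) gives the norm bound.

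\emph{(P2a)--(c), scheduling.} Given \(N_k\), choose \(s_k>\max\{N_k,\operatorname{rank}\gamma_{k-1}\}\). Then choose the probe \(\gamma_k=\gamma_n^{\rm pr}\) with \(n>s_k\), and set \(s_{k+1}>n\). The RIS condition \eqref{eq:fml-ris-def} for levels \(h<s_k\) is exactly the shielding bound, with the norm bound \(1\le M\) from \eqref{eq:ter-par}. For (b), use two probes in consecutive gaps, with \(q_k\in B_{E_{\gamma_k}}\subseteq E_{\delta_k}\) since \(V_{\operatorname{rank}\gamma_k}\subseteq V_{\operatorname{rank}\delta_k}\). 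The sum then has norm at most \(2\), and each evaluation is bounded by \(2M/m_h\). For (c), pass to a subsequence of the \(C\)-RIS \((x_k)\) so that gaps exist, and insert \(\gamma_k^{\rm pr}\) at a rank strictly between \(\max\ran x_k\) and \(\min\ran x_{k+1}\), above \(N_k\). The ranks are consecutive integers, but passing to a subsequence ensures a free rank in between. Then keep the RIS indices \(j_k\) of the subsequence, adjusted so that \(j_{k+1}\) exceeds the probe rank. The evaluations are additive, giving the \((C+M)/m_h\) bound, and the norm of each term is at most \(C+1\le C+M\).

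The main obstacle is the shielding estimate. The issue is whether the projection \(I-P_{p_{s-1}}\) costs an extra factor. The observation that \(P_{p_{s-1}}\) kills the probe vector when its rank lies in the cell should settle this. If the probe rank is at most \(p_{s-1}\), that cell does not see the probe at all, since the raw support lies in \((p_{s-1},p_s)\).
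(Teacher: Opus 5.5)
Your proposal is essentially the paper's proof. (P1) is identical. Your shielding estimate uses the same ingredients as the calculation the paper places just before \eqref{eq:ter-par}: the evaluation analysis, biorthogonality against the positive-level chain atoms, disjoint cells, and packet contractivity.

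Your phrase ``FDD support \(\{n\}\) meets at most one cell'' is correct, because by \eqref{eq:tal-win-cns} the \(s\)-th correction is \(P^{i*}_{(p_{s-1},p_s)}b^{i*}_{s,\varphi}\). It would be wrong if read with the ambient support of \(d_\beta^i(g)\), which is not confined to rank \(n\).

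For the third line of \eqref{eq:fml-prb-shd} you take a slightly different route. The paper identifies \(U_\eta\) with the source coordinate \(e^0_{\widehat\eta}\) and quotes the second line. You instead combine Lemma~\ref{lem:int} with \(T_wj_\gamma q=w(q)z_\gamma\). This even gives \(U_\eta j_\gamma q=e_\eta^{1*}(z_\gamma)\,q\), so the third line follows from the first. Both routes are fine.

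One step in (c) needs repair as written. You cannot ``adjust'' the RIS index \(j_{k+1}\) upward once the subsequence is fixed. The bound \(|e^{0*}_{\alpha,\psi}(x_{k+1})|\le C/m_h\) is only known for levels \(h\) below the original index. A subsequence chosen merely to create free ranks need not satisfy \(j_{l_{k+1}}>r_k\). For example, if \(j_{l_k+1}=\max\operatorname{ran}x_{l_k}+1\), the probe is forced to a rank \(\ge j_{l_k+1}\).

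To fix this, first choose the probe rank \(r_k>\max\{N_k,\max\operatorname{ran}x_{l_k}\}\). Only then pick \(l_{k+1}\) with \(\min\operatorname{ran}x_{l_{k+1}}>r_k\) and original index \(j_{l_{k+1}}>r_k\); this is the paper's condition \(u_{l_{k+1}}>r_k\). Such a choice is always possible because the indices increase to infinity. The \((C+M)\)-RIS bounds then follow exactly as you say.
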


\begin{proof}
	For \textup{(P1)}, given a fibre \(V_r\) and a rank cutoff \(N\), choose \(n>\max\{N,r\}\).
	The rank-\(n\) probe satisfies \(E_{\gamma_n^{\rm pr}}=V_n\supset V_r\), so the required fibres occur cofinally.

	For \textup{(P2)}, the calculation preceding \eqref{eq:ter-par} gives
	\[ \bigl|e_{\alpha,\varphi}^{i*}(d_\beta^i(g))\bigr| \le\frac{\norm{g}}{m_h} \qquad\bigl(\operatorname{lev}(\alpha)=h>0,\ \varphi\in B_{(G_\alpha^i)^*}\bigr). \]
	By applying the preceding estimate to the cases \((i,\beta,g)=(1,\gamma,1)\) and \((0,\widehat{\gamma},q)\), and then taking the relevant suprema, we have the first two inequalities of \eqref{eq:fml-prb-shd}. Moreover,
	\[
		\sup_{\substack{\eta\ \mathrm{target},\ \operatorname{lev}(\eta)=h\\q\in B_{E_\gamma}}}
		\norm{U_\eta j_\gamma q}_V
		=\sup_{\substack{\eta\ \mathrm{target},\ \operatorname{lev}(\eta)=h\\
			q\in B_{E_\gamma},\ \varphi\in B_{E_\eta^*}}}
		\bigl|e_{\widehat{\eta},\varphi}^{0*}(j_\gamma q)\bigr|
		\le\frac{M}{m_h}.
	\]
	This proves \eqref{eq:fml-prb-shd}.

	For \,\textup{(a)}, choose recursively
	\[ r_k=\operatorname{rank}\gamma_k,\qquad N_k<s_k<r_k<s_{k+1}. \]
	For \(q_k\in B_{E_{\gamma_k}}\), \(0<h<s_k\), and scalarizations \(e_\eta^{1*}\), \(e_{\alpha,\psi}^{0*}\) of level \(h\), we have
	\[
		\begin{aligned}
			\supp_{\rm FDD}z_{\gamma_k}&=\{r_k\},&
			\supp_{\rm FDD}j_{\gamma_k}q_k&\subseteq\{r_k\},\\
			\norm{z_{\gamma_k}}&\le M,&
			\norm{j_{\gamma_k}q_k}&\le M,\\
			|e_\eta^{1*}(z_{\gamma_k})|&\le\frac{M}{m_h},&
			|e_{\alpha,\psi}^{0*}(j_{\gamma_k}q_k)|&\le\frac{M}{m_h}.
		\end{aligned}
	\]
	These are the two \(M\)-RIS assertions.

	For \,\textup{(b)}, choose recursively
	\[ r_k=\operatorname{rank}\gamma_k,\qquad t_k=\operatorname{rank}\delta_k,\qquad N_k<s_k<r_k<t_k<s_{k+1}. \]
	If \(q_k\in B_{E_{\gamma_k}}\) and
	\(y_k=j_{\gamma_k}q_k+j_{\delta_k}q_k\), then every source scalarization
	\(e_{\alpha,\psi}^{0*}\) of level \(0<h<s_k\) satisfies
	\[ \supp_{\rm FDD}y_k\subseteq\{r_k,t_k\},\qquad \norm{y_k}\le2M,\qquad |e_{\alpha,\psi}^{0*}(y_k)|\le\frac{2M}{m_h}. \]
	Thus the component sequences are \(M\)-RISs and \((y_k)\) is a \(2M\)-RIS.

	For \,\textup{(c)}, let \((u_l)\) be RIS indices for the original \(C\)-RIS.
	Choose \((l_k)\) and probes \((\gamma_k^{\rm pr})\) so that, with \(r_k=\operatorname{rank}\gamma_k^{\rm pr}\),
	\[ x_{l_k}<\gamma_k^{\rm pr}<x_{l_{k+1}},\qquad r_k>N_k,\qquad u_{l_{k+1}}>r_k. \]
	For \(q_k\in B_{E_{\gamma_k^{\rm pr}}}\), put \(y_k=x_{l_k}+j_{\gamma_k^{\rm pr}}q_k\).
	Then every source scalarization \(e_{\alpha,\psi}^{0*}\) of level \(0<h<u_{l_k}\) satisfies
	\[
		\begin{aligned}
			y_k&<y_{k+1},&
			\max\ran y_k&\le r_k<u_{l_{k+1}},\\
			\norm{y_k}&\le C+M,&
			|e_{\alpha,\psi}^{0*}(y_k)|&\le\frac{C+M}{m_h}.
		\end{aligned}
	\]
	Hence \((y_k)\) is a \((C+M)\)-RIS and the estimates hold uniformly in \((q_k)\).
\end{proof}

The following theorem records the construction properties used in Part~II.
Shrinking, the paired-mean estimate, and the operator-orbit estimates will also be established in Part~II.

\begin{theorem}[Construction and analytic input]\label{thm:dat-cns}
	Construction~\ref{con:rnk-rec} is a well-defined two-sorted block BD datum and defines separable spaces \(X_0,X_1\) with FDDs.
	\begin{enumerate}[label={\textup{(\arabic*)}},leftmargin=28pt]
		\item Each \(F_n^i\) is finite dimensional, and
		\[ \sup_n\norm{P_n^{i*}}\le M=(1-2/m_1)^{-1} \qquad(i=0,1). \]

		\item The target and protected-source identities \eqref{eq:tgt-ter}--\eqref{eq:src-lft-t1} hold, and for every target atom \(\gamma\), \(x\in X_0^{\rm alg}\), and \(v\in V^*\),
		\[ e_\gamma^{1*}(T_vx)=v(U_\gamma x),\qquad \norm{T_v}\le\norm{v}. \]

		\item Every nonzero-weight evaluation has the scalarized analysis \eqref{eq:ful-vec-ea}.
		Generic, inner, outer, reverse, and protected evaluations exhaust these cases.

		\item Every active finite rational requirement is met, and every fixed admissible template has cofinal realizations.
		Interval restriction preserves packet codes and can cut only the first and last packet cells meeting the interval.
		Outer histories with different seeds have disjoint sets of inner levels, while histories with the same seed agree along their common realized stem and can share a later level only immediately after that stem.
		The probes occur cofinally with fibres containing every prescribed \(V_r\) and satisfy \eqref{eq:fml-prb-shd} together with the RIS scheduling conclusions of Proposition~\ref{prop:fml-prb}.
	\end{enumerate}
\end{theorem}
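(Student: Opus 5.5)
The proof is an assembly of results already established in Part~I, so I will verify well-definedness first and then read off items (1)--(4) from the corresponding lemmas.

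\emph{Well-definedness.} I argue by induction on the rank \(n\). Suppose all ranks below \(n\) are fixed. Construction~\ref{con:rnk-rec} adds the probe pair and then selects at most one least-coded unmet requirement that is admissible at rank \(n\). There are only finitely many codes \(s\le n\), and admissibility is decided from finitely many old records together with finitely many queries to \(\mathcal O_V\) (Remark~\ref{rem:orc-rec}). Hence the selection is determined.

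Lemma~\ref{lem:req-adm} shows that every new correction uses only atoms of rank \(<n\). In the target case the protected mirror uses only hatted old atoms and the restrictions \(R_{\eta\gamma}\), which are defined because \(E_\gamma\) was taken to be the least \(V_s\) containing all the relevant fibres. By \eqref{eq:rnk-car-bnd}, \(F_n^i\) is a finite \(\ell_\infty\)-sum of at most two or three fibres, each equal to \(\K\) or some \(V_r\), so it is finite dimensional. The corrections have exactly the type-zero or type-one form of Lemma~\ref{lem:blk-bd}, with \(\beta=m_h^{-1}\le m_1^{-1}<1/2\) and contractive packet and predecessor maps. Lemma~\ref{lem:fai-per} (no injury) ensures that later ranks never alter earlier ones. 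The system is therefore a unit block-triangular system on each side.

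\emph{Item (1) and the spaces.} Lemma~\ref{lem:blk-bd} with \(\theta=m_1^{-1}\) gives \(\sup_n\|P_n^{i*}\|\le M\). It also gives the Schauder decomposition of \(\mathcal E_{i,*}\), and hence the FDD of \(X_i=\overline{X_i^{\rm alg}}\). The coordinate description \eqref{eq:ext-hed}--\eqref{eq:ext-rec} together with \eqref{eq:fin-prm-cns} identifies this \(X_i\) with the space of Lemma~\ref{lem:blk-bd}. Separability follows because \(X_i\) is the closed span of countably many finite-dimensional blocks \(d_\alpha^i(G_\alpha^i)\).

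\emph{Items (2) and (3).} For (2), write the target correction \eqref{eq:tgt-sca-crr} and the mirror correction \eqref{eq:fml-prt-crr} using \eqref{eq:tal-win-cns} with cell \(I=(k,n)\). These are precisely \eqref{eq:tgt-ter}--\eqref{eq:src-lft-t1}, with probes terminal. The scalarization identity and the bound \(\|T_v\|\le\|v\|\) then follow from Lemma~\ref{lem:int}. For (3), Lemma~\ref{lem:uni-eva} applies to every atom of positive level. Inspecting (D1) and (R1)--(R5) shows that the possible chain types are generic, inner, outer and reverse, together with protected mirrors of target atoms. The only level-zero atoms are the probes.

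\emph{Item (4).} Fairness and cofinality are Lemma~\ref{lem:fai-per} and Proposition~\ref{prop:fin-ext}. The interval-restriction statement is Lemma~\ref{lem:fml-rst} together with Remark~\ref{rem:int-rst}. The seed and common-stem statements are Lemma~\ref{lem:com-stm}. The probe statements are Proposition~\ref{prop:fml-prb}.

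\emph{Main obstacle.} I expect the delicate point to be confirming that the mirror of a target atom created at rank \(n\) does not depend on that same-rank target atom. Otherwise the block at rank \(n\) would not be unit lower triangular. I would settle this by pointing out that the mirror reads only \(\mathsf{thist}\), the already completed target coding history, which is a record rather than a correction, as stated in Step~3(4) of Construction~\ref{con:rnk-rec}. Everything else is direct citation of the earlier results.
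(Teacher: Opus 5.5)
Your proposal is correct and matches the paper's proof, which is likewise a direct assembly of earlier results: Lemma~\ref{lem:blk-bd} with rank induction gives well-definedness and (1); the mirror formula \eqref{eq:fml-prt-crr} together with Lemma~\ref{lem:int} gives (2); Lemma~\ref{lem:uni-eva} gives (3); and Lemma~\ref{lem:fai-per}, Proposition~\ref{prop:fin-ext}, Lemma~\ref{lem:fml-rst}, Remark~\ref{rem:int-rst}, Lemma~\ref{lem:com-stm}, and Proposition~\ref{prop:fml-prb} give (4). One minor point: a target atom and its mirror lie on different sides, so the same-rank independence you flag cannot affect unit triangularity on either side; it is a non-circularity issue for the simultaneous definition, and Lemma~\ref{lem:req-adm} settles it.
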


\begin{proof}
	At every rank, seed requirements add no corrected atoms, probe corrections are zero, and every nonzero correction satisfies
	\[ \norm{A^*}\le1,\qquad \norm{B^*}\le1,\qquad 0\le\beta\le m_1^{-1}<\frac12. \]
	By Lemma~\ref{lem:blk-bd} and rank induction, we have a well-defined construction and \textup{(1)}.

	Whenever \(\eta_l\) or \(\xi\) occurs in the correction of a target atom \(\gamma\),
	\[ E_{\eta_l}\subseteq E_\gamma,\qquad R_{\eta_l\gamma}=J_{\eta_l\gamma}^*,\qquad E_\xi\subseteq E_\gamma,\quad R_{\xi\gamma}=J_{\xi\gamma}^*\quad(\xi\ne\varnothing). \]
	By \eqref{eq:fml-prt-crr}, \eqref{eq:prt-two-his}, and Lemma~\ref{lem:int}, we have \textup{(2)}.
	By Lemma~\ref{lem:uni-eva}, we have \textup{(3)}.

	By Lemma~\ref{lem:fai-per} and Proposition~\ref{prop:fin-ext}, we have the requirement and cofinal-realization assertions.
	By Lemma~\ref{lem:fml-rst}, Remark~\ref{rem:int-rst}, and Lemma~\ref{lem:com-stm}, we have the interval-restriction and common-stem assertions.
	By Proposition~\ref{prop:fml-prb}, we have the probe assertions, and hence \textup{(4)} follows.
\end{proof}

\subsection{Analytic input}\label{sec:ana-inp}

\subsubsection{Components of the analytic input}

The following consequences of Theorem~\ref{thm:dat-cns} are denoted by
\hyperref[itm:ana-g1]{\textup{(G1)}}--\hyperref[itm:ana-g7]{\textup{(G7)}}
and used throughout Part~II.

\begin{enumerate}[label=\textup{(G\arabic*)},leftmargin=*]
	\item\label{itm:ana-g1} \emph{Generic realization.} For every \(h\in\mathcal{G}\), every finite admissible rational generic chain template of level \(h\) has cofinally many realizations on either side.
	If \(\alpha_1\prec\cdots\prec\alpha_a\), \(p_0=\chi\), \(p_s=\operatorname{rank}\alpha_s\), and
	\(\mathbf{b}_s=((a_{s,l},\eta_{s,l},R_{s,l}))_{l=1}^{t_s}\) is the packet in the \(s\)-th cell, then
	\[ a\le n_h,\qquad \supp_{\rm raw}\mathbf{b}_s\subseteq(p_{s-1},p_s),\qquad \sum_{l=1}^{t_s}|a_{s,l}|\norm{R_{s,l}}\le1 \quad(1\le s\le a). \]
	This follows from Proposition~\ref{prop:fin-ext}{\textup{(1)}}, \eqref{eq:pkt-ctr}, and \eqref{eq:pkt-cel}.

	\item\label{itm:ana-g2} \emph{Protected lifts.} Every target chain has a simultaneous protected source lift
	\[ \gamma_1\prec\cdots\prec\gamma_a \longmapsto \widehat{\gamma}_1\prec\cdots\prec\widehat{\gamma}_a, \qquad \mathsf{thist}(\widehat{\gamma}_s)=\mathsf{shist}(\gamma_s) \quad(1\le s\le a). \]
	Its corrections are \eqref{eq:src-lft-ter}--\eqref{eq:src-lft-t1} and \eqref{eq:fml-prt-crr}, while \eqref{eq:prt-two-his} and Proposition~\ref{prop:fin-ext}\textup{(4)} give the history and simultaneous realization.

	\item\label{itm:ana-g3} \emph{Certified realization.} For each corner \(c\in\{00,11,01\}\) and seed level \(q\in\mathcal{C}_c^{\rm seed}\), every compatible ordered list of at most \(n_q\) separation certificates extends to a level-\(q\) outer history in its own weight pool.
	After a seed level \(q\) and a realized outer stem \(\pi\), the next inner level satisfies
	\begin{equation}\label{eq:syn-cod-grw}
		p=\sigma_c(q,\pi),\qquad
		p>\max\{q,R(\pi)\},\qquad
		m_p\ge2^{q+|\pi|+10}L_qn_qm_q^4.
	\end{equation}
	Here \(\sigma_c\) is injective, and admissible level-\(p\) inner continuations occur cofinally after every cutoff up to age \(n_p\).
	Let \(i\to j\) be the corner specified by \(c\). If \(\zeta_1\prec\cdots\prec\zeta_a\) is the outer chain on side \(j\) and \(\eta_s\) is the terminal atom of its \(s\)-th completed inner chain, let \(\langle\eta_s\rangle\) be the complete strong packet and put \(f_s=e_{\eta_s}^{j*}\), its induced functional on \(X_j\). The restrictions of the ambient corrections to \(X_j\) satisfy
	\[
		\begin{aligned}
			r_0&=\chi,& r_s&=\operatorname{rank}\zeta_s &&(1\le s\le a\le n_q),\\
			{\left.c_{\zeta_1}^{j*}(1)\right|_{X_j}}
			&={\frac{1}{m_q}f_1P_{(r_0,\infty)}^j},&
			{\left.c_{\zeta_s}^{j*}(1)\right|_{X_j}}
			&={e_{\zeta_{s-1}}^{j*}+\frac{1}{m_q}f_sP_{(r_{s-1},\infty)}^j} &&(2\le s\le a).
		\end{aligned}
	\]
	The complete strong packets are \(\langle\eta_s\rangle\), and interval restriction preserves their codes. The forward corner also has the protected lift from \hyperref[itm:ana-g2]{\textup{(G2)}}.
	These assertions follow from \eqref{eq:fml-sig}--\eqref{eq:qnt-cod-grw}, Lemma~\ref{lem:req-adm}{\textup{(4)}}, Proposition~\ref{prop:fin-ext}{\textup{(2)--(4)}}, and Remark~\ref{rem:int-rst}.

	\item\label{itm:ana-g4} \emph{Reverse weights.} For every \(r\in\mathcal{R}\), every rational source-only chain \(\xi_1\prec\cdots\prec\xi_a\), \(a\le n_r\), occurs cofinally, while no target chain has level \(r\).
	With \(p_0=\chi\) and \(p_s=\operatorname{rank}\xi_s\), let \(\mathbf{h}_s\) be the contractive represented source packet stored at \(\xi_s\), and put \(h_s=\left.B_{\mathbf{h}_s}^*(1)\right|_{X_0}\). The restrictions of its ambient corrections to \(X_0\) are
	\[ \left.c_{\xi_1}^{0*}(1)\right|_{X_0}=\frac{1}{m_r}h_1P_{(p_0,\infty)}^0,\qquad \left.c_{\xi_s}^{0*}(1)\right|_{X_0}=e_{\xi_{s-1}}^{0*}+\frac{1}{m_r}h_sP_{(p_{s-1},\infty)}^0 \quad(2\le s\le a), \]
	where \(\supp_{\rm raw}\mathbf{h}_s\subseteq(p_{s-1},p_s)\).
	This follows from Proposition~\ref{prop:fin-ext}{\textup{(1)}}, Lemma~\ref{lem:req-adm}{\textup{(5)}}, and \eqref{eq:ful-vec-ea}.

	\item\label{itm:ana-g5} \emph{Interval restriction.} If \(K_s=(p_{s-1},p_s)\) are the cells of a chain, restriction to an FDD interval \(I\) acts by
	\[ FP_{K_s}^i\longmapsto FP_{K_s\cap I}^i, \qquad \#\{s :  K_s\cap I\notin\{\varnothing,K_s\}\}\le2. \]
	Thus only the first and last cells meeting \(I\) may be partial.
	The stored history, predecessors, packet codes, and certificates are unchanged, and every certificate support remains in its original cell.
	These claims follow from Lemma~\ref{lem:fml-rst}, Remark~\ref{rem:int-rst}, and \eqref{eq:two-bnd-rst}.

	\item\label{itm:ana-g6} \emph{Probe terminals.} Probes have level zero and are never predecessors.
	For every \(r,N\in\N_+\), there is \(\gamma\in\mathcal{P}\) such that
	\[
		\begin{gathered}
			\operatorname{rank}\gamma>N,\qquad E_\gamma\supseteq V_r,\\
			T_vj_\gamma q=v(q)z_\gamma,\qquad
			\norm{j_\gamma q}=\norm{q},\qquad
			\norm{z_\gamma}=\norm{\phi_\gamma}=1,\\
			\phi_\gamma(z_\gamma)=1
			\quad(q\in E_\gamma,\ v\in V^*).
		\end{gathered}
	\]
	Step~1 of Construction~\ref{con:rnk-rec} and Proposition~\ref{prop:fml-prb} give cofinal probes, the shielding bounds \eqref{eq:fml-prb-shd}, and the one-probe, two-probe, and probe-added RIS selections.

	\item\label{itm:ana-g7} \emph{Weight-preserving predecessor chains.} Every type-one predecessor edge, including a protected edge, preserves the root level and hence the root weight. If \(\xi\) is the immediate predecessor of \(\alpha\), then \(\mathsf{shist}(\xi)\) is an initial segment of \(\mathsf{shist}(\alpha)\).
	For \(x=(x_\alpha)_{\alpha\in\Delta_n^i}\in F_n^i\) and \(A\subseteq\Delta_n^i\), the block structure gives
	\[ \norm{\bigl(\mathbf{1}_{\{\alpha\in A\}}x_\alpha\bigr)_{\alpha\in\Delta_n^i}}_{F_n^i} \le\norm{x}_{F_n^i}. \]
	Source-only terminal pieces retain the chain weight, while probes have formal level zero.
	The predecessor and history rules are \eqref{eq:atm-rec}, \eqref{eq:nsl-his}, \eqref{eq:fml-prt-crr}, and \eqref{eq:prt-two-his}, and the contractive coordinate deletion follows from \eqref{eq:fml-fn}.
\end{enumerate}

A realized inner or outer code records its stem, corner, vectors, certificates, orbit, semantic records, intervals, cutoffs, and copy index.
Packet translation preserves the semantic \(V\)-data but receives a new code.
The seed, coded, generic, and reverse weight pools are pairwise disjoint.

\subsubsection{Scalarized evaluation grammar}

Each atom \(\gamma\) has a level \(\operatorname{lev}(\gamma)\in\N\).
Level zero means weight zero, level \(h>0\) means weight \(m_h^{-1}\), and a protected atom has the level of its target.
For \(i\in\{0,1\}\), let
\[
	\begin{gathered}
		e_{\gamma,\varphi}^{i*}=\epsilon_\gamma^{i*}(\varphi)|_{X_i},\\
		\mathcal{K}_i
		=\left\{e_{\gamma,\varphi}^{i*}\,\middle|\,
		n\in\N_+,\ \gamma\in\Delta_n^i,
		\varphi\in B_{(G_\gamma^i)^*}\right\}.
	\end{gathered}
\]
Elements of the ambient predual act on \(X_i\subseteq\mathcal E_i\) via the canonical embedding.
When \(G_\gamma^i=\K\), the scalarization condition is \(|\varphi|\le1\).
The construction and Lemma~\ref{lem:uni-eva} give the following properties, denoted by
\hyperref[itm:ana-ea1]{\textup{(EA1)}}--\hyperref[itm:ana-ea4]{\textup{(EA4)}}.

\begin{enumerate}[label=\textup{(EA\arabic*)},leftmargin=32pt]
	\item\label{itm:ana-ea1} Level-zero atoms are probes.
	If \(\operatorname{lev}(\gamma)=h>0\) and \(\xi_1\prec\cdots\prec\xi_a=\gamma\), then
	\begin{equation}\label{eq:syn-vec-ea}
		e_{\gamma,\varphi}^{i*}
		=\sum_{s=1}^{a}d_{\xi_s,\varphi_s}^{i*}
		+\frac{1}{m_h}\sum_{s=1}^{a}
		b_{s,\varphi}^{i*}P_{(p_{s-1},\infty)}^i.
	\end{equation}
	We call the terms in the first sum of \eqref{eq:syn-vec-ea} the BD parts of the analysis.
	Here
	\[ a\le n_h,\qquad p_0=\chi,\qquad p_s=\operatorname{rank}\xi_s,\qquad {\supp_{\rm raw}\mathbf{b}_{\xi_s}}\subseteq(p_{s-1},p_s), \]
	and each \(b_{s,\varphi}^{i*}\) is an absolutely convex combination of members of \(\mathcal{K}_i\) in the same cell.
	Moreover,
	\[ \norm{\varphi_s}\le\norm{\varphi},\qquad \norm{b_{s,\varphi}^{i*}}\le\norm{\varphi} \quad(1\le s\le a). \]

	\item\label{itm:ana-ea2} Along every positive-level predecessor chain,
	\[ \operatorname{lev}(\xi_s)=h\le\operatorname{rank}\xi_s, \qquad \operatorname{age}(\xi_s)=s \quad(1\le s\le a\le n_h). \]
	Every generic, inner, outer, and reverse history has this form.

	\item\label{itm:ana-ea3} Protected lifts are stable under scalarization.
	For a target packet
	\(\mathbf{b}=((a_l,\eta_l,R_l))_{l=1}^s\), write
	\(R_l(\lambda)=\lambda r_l\) and
	\(\widehat R_l=r_lR_{\eta_l\gamma}\) as in \eqref{eq:fml-prt-crr}.
	For \(\varphi\in B_{E_\gamma^*}\), packet contractivity gives
	\[ \norm{\sum_{l=1}^s a_l \epsilon_{\widehat{\eta}_l}^{0*}(\widehat R_l\varphi)}_{\ell_1} \le\sum_{l=1}^s|a_l|\norm{\widehat R_l}\norm{\varphi} \le\norm{\varphi}\le1. \]
	Hence every lifted packet is an absolutely convex combination of scalarized source evaluations in the same cell.

	\item\label{itm:ana-ea4} For \(x=(x_\alpha)_{\alpha\in\Delta_n^i}\in F_n^i\) and \(L\subseteq\N\), coordinate deletion by level satisfies
	\[ \norm{\bigl(\mathbf{1}_{\{\operatorname{lev}(\alpha)\in L\}}x_\alpha\bigr)_{\alpha\in\Delta_n^i}}_{F_n^i} \le\norm{x}_{F_n^i}. \]
	Thus every scalarized evaluation either comes from a level-zero probe or has a positive-level predecessor chain and satisfies \eqref{eq:syn-vec-ea}.
\end{enumerate}

Equation~\eqref{eq:syn-vec-ea} is obtained by iterating \eqref{eq:blk-t0}--\eqref{eq:blk-t1} and telescoping.
Together with \eqref{eq:fml-fn}, this shows that Lemma~\ref{lem:blk-bd} applies to generic, inner, outer, reverse, and protected chains.

\part{Analytic estimates and operator classification}

Part~II uses the construction theorem and the analytic input from Part~I. It does not use the order in which the stage construction enumerates the requirements.
We first prove the basic inequality and the estimates for RIS averages.
And then we use exact pairs and dependent sequences for the self and forward corners,
and the reserved missing levels for the reverse corner.
The last section passes from the local orbit estimates to the global operator classification.

\section{Rapidly increasing sequences (RIS) and the basic inequality}

The first analytic step is a basic inequality for scalarized evaluation analyses.
We use the RIS and basic-inequality method for BD spaces,
in the form developed in the AH construction and adapted in subsequent work
\cite{ArgyrosHaydon2011,Tarbard2012,Tarbard2013,Zisimopoulou2014,MotakisPuglisiZisimopoulou2016,ManoussakisPelczarSwietek2017,ArgyrosMotakis2019,MotakisPuglisiTolias2020,Motakis2024,MotakisPelczar2025}.
The RIS method, the basic inequality, and tree estimates for auxiliary
averages are well-established tools.
For their background in recursive norming, weighted averages, and mixed
Tsirelson spaces, see
\cite{Schlumprecht1991,GowersMaurey1993,GowersMaurey1997,ArgyrosDeliyanni1997,ArgyrosDeliyanniKutzarovaManoussakis1998}.
A general HI method is developed in \cite{ArgyrosTolias2004}.
For the BD and AH variants and their subsequent development, see also
\cite{Tarbard2013,Zisimopoulou2014,ManoussakisPelczarSwietek2017,MotakisPuglisiTolias2020,MotakisPelczar2025}.

On the protected source side, the packets are scalarizations of finite-dimensional vector fibres.
The estimates below are uniform in the fibre dimensions and are used for RIS averages, shrinking,
and the compactness test.

\subsection{Auxiliary norming set}

Recall the constants \(\kappa,C_{\rm tr},B\) from \eqref{eq:cst-led}.
The FDD interval projections have norm at most \(\kappa\).
Let \((e_k)_{k\geq1}\) be the canonical unit vectors in \(c_{00}(\N_+)\),
and let \((e_k^*)_{k\geq1}\) be the associated coordinate functionals.
For nonzero \(f,g\in c_{00}(\N_+)^*\), write \(f<g\) when
\(\max\operatorname{supp}f<\min\operatorname{supp}g\).
Let \(W=W[(\mathcal{A}_{3n_h},m_h^{-1})_{h\geq1}]\) be the smallest subset of \(c_{00}(\N_+)^*\) containing every \(\zeta e_k^*\),
where \(|\zeta|=1\), and closed under
\[ f_1<\cdots<f_d,\quad d\leq3n_h \quad\Longrightarrow\quad \frac{1}{m_h}\sum_{r=1}^{d}f_r\in W. \]
If the outermost defining operation of \(g\in W\) is an \((\mathcal{A}_{3n_h},m_h^{-1})\)-operation, we call \(h\) the root level of \(g\).
The completion of \(c_{00}(\N_+)\) under the norm induced by \(W\) is the corresponding mixed Tsirelson space
\(T[(\mathcal{A}_{3n_h},m_h^{-1})_{h\geq1}]\) in the sense of Argyros and Deliyanni \cite{ArgyrosDeliyanni1997}.
We denote its norm by \(\|\cdot\|_T\) and use it as the auxiliary norm in the basic inequality and the estimates for RIS averages.

\subsection{Truncation estimates}

We use the RIS terminology of Definition~\ref{def:fml-ris}.

\begin{lemma}[Two truncation estimates]\label{lem:vec-ris-trn}
	Let \((x_k)\) be a \(C\)-RIS with RIS indices \((j_k)\), let \(s\in\N\),
	and let \(f=e_{\gamma,\varphi}^{i*}\in\mathcal{K}_i\) have level \(h>0\).
	Then
	\[
		|f(P_{(s,\infty)}x_k)|\leq
		\begin{cases}
			C_{\rm tr}C/m_h,&h<j_k,\\
			\kappa C/m_h,&h\geq j_{k+1}.
		\end{cases}
	\]
\end{lemma}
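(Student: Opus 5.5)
We treat the two regimes separately, following the standard Argyros--Haydon truncation argument \cite[Lemma~5.2]{ArgyrosHaydon2011} adapted to scalarized vector fibres. Throughout, \(f=e^{i*}_{\gamma,\varphi}\) with \(\operatorname{lev}(\gamma)=h>0\), and we use the evaluation analysis \eqref{eq:syn-vec-ea} from \hyperref[itm:ana-ea1]{\textup{(EA1)}}, with chain \(\xi_1\prec\cdots\prec\xi_a=\gamma\), cutoffs \(p_0=\chi\), \(p_s=\operatorname{rank}\xi_s\), and BD parts \(d^{i*}_{\xi_s,\varphi_s}\).

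\emph{Case \(h\ge j_{k+1}\).} This is the easy case. We apply the interval restriction formula, Lemma~\ref{lem:fml-rst}, to \(P^{i*}_{(s,\infty)}f\), or rather directly to \(fP_{(s,\infty)}\) evaluated on \(x_k\) (using \eqref{eq:rst-tel}), and bound \(|f(P_{(s,\infty)}x_k)|\) by the full analysis. The BD parts and packets are then evaluated on \(x_k\). Since \(\max\ran x_k<j_{k+1}\le h\le\operatorname{rank}\xi_1\) by \hyperref[itm:ana-ea2]{\textup{(EA2)}}, every BD part \(d^{i*}_{\xi_s,\varphi_s}\) is supported at rank \(\ge h>\max\ran x_k\) and kills \(x_k\). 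Only the weighted packet sum survives, and its cells are disjoint, so at most the packets whose cells meet \(\ran x_k\) contribute. Grouping them, the surviving part is \(m_h^{-1}\sum_s b_{s,\varphi}P_{K_s\cap(s,\infty)}\), which can be rewritten as \(m_h^{-1}\) times a contractive functional composed with a single interval projection. Hence the bound is \(\kappa\norm{x_k}/m_h\le\kappa C/m_h\). The key point to verify is that, when no BD part acts, \(f P_{(s,\infty)}\) restricted to \(\ran x_k\) equals \(m_h^{-1}g P_J\) for some \(g\) in the absolutely convex hull of \(\mathcal K_i\) and some interval \(J\). We expect this to follow from \eqref{eq:rst-tel} and contractivity of the packets.

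\emph{Case \(h<j_k\).} Write \(fP_{(s,\infty)}=f-fP_{[1,s]}\). The RIS condition in Definition~\ref{def:fml-ris} gives \(|f(x_k)|\le C/m_h\) directly. For \(fP_{[1,s]}\) (equivalently \(P^{i*}_{[1,s]}f\)) we use \eqref{eq:two-bnd-rst} with \(u=0\), \(v=s\):
\[
P^{i*}_{[1,s]}f=e_{r_s}+(I-P^{i*}_{(s,\infty)})t_{r_s+1}.
\]
Here \(e_{r_s}=e^{i*}_{\xi_{r_s},\varphi_{r_s}}\) is again a level-\(h\) scalarized evaluation, so the RIS bound gives \(|e_{r_s}(x_k)|\le C/m_h\). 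The remaining term is \(m_h^{-1}\) times a contractive packet with one interval projection (together with a projection \(P_{p_{r_s}}\)), so it contributes at most \((1+\kappa)\cdot\) something \(\cdot C/m_h\); we bound it by \(2\kappa C/m_h\) via \(\norm{P_I}\le\kappa\). The total is at most \((2+2\kappa)C/m_h=C_{\rm tr}C/m_h\).

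The main obstacle is the bookkeeping in the second case: one must check that the truncated tail packet \(t_{r_s+1}\) produces only one projection of norm \(\le\kappa\) (not a product), and that the scalarization \(\varphi_{r_s}\) stays in the unit ball so that the RIS hypothesis applies to \(e_{r_s}\). Both points follow from the contractivity statements in \hyperref[itm:ana-ea1]{\textup{(EA1)}}.
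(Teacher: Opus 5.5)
Your proof is correct and is essentially the paper's argument. In the case \(h\ge j_{k+1}\) it coincides with the paper, except for the justification of the single interval projection: this does not come from packet contractivity but from your own inequality \(p_1=\operatorname{rank}\xi_1\ge h>\max\operatorname{ran}x_k\), which puts every BD part and every cell \((p_{r-1},p_r)\) with \(r\ge2\) beyond \(x_k\), so only \(m_h^{-1}b_{1,\varphi}^{i*}P^i_{(p_0,p_1)\cap(s,\infty)}\) acts. In the case \(h<j_k\) you apply the one-sided identity \eqref{eq:rst-tel} at \(w=s\) and bound \(f(x_k)\) directly, whereas the paper cuts at \(\nu=\max\operatorname{ran}x_k\) and uses the two-sided formula \eqref{eq:two-bnd-rst}, which brings in \(e_{r_\nu}\) and a second boundary term; your version has only one boundary term and even gives \((2+\kappa)C/m_h\le C_{\rm tr}C/m_h\).
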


\begin{proof}
	Write \(f=e_a\) and use the notation \(p_r,e_r,t_r,r_w\) attached to
	\eqref{eq:rst-tel}--\eqref{eq:two-bnd-rst} for its stored chain.
	By \eqref{eq:tal-win-cns} and \hyperref[itm:ana-ea1]{\textup{(EA1)}},
	\(t_r=m_h^{-1}P_{(p_{r-1},p_r)}^{i*}b_{r,\varphi}^{i*}\), where \(\norm{b_{r,\varphi}^{i*}}\leq1\).
	Put \(\nu=\max\operatorname{ran}x_k\).
	The assertion is immediate when \(s\geq\nu\), so we assume that \(s<\nu\).
	And then \(P_{(s,\infty)}^ix_k=P_{(s,\nu]}^ix_k\).

	Suppose first that \(h<j_k\).
	Every nonzero \(e_{r_w}\) is an inherited level-\(h\) evaluation with scalarization in the appropriate dual unit ball.
	Hence the RIS condition gives
	\[ |e_{r_s}(x_k)|\leq\frac{C}{m_h} \quad\text{and}\quad |e_{r_\nu}(x_k)|\leq\frac{C}{m_h}, \]
	where \(e_0=0\).
	For \(w\in\{s,\nu\}\) and \(r_w<a\), the formula for \(t_{r_w+1}\) gives
	\begin{align*}
		\left|\bigl((P_{(w,\infty)}^{i*}-I)t_{r_w+1}\bigr)(x_k)\right|
		&=\frac{1}{m_h}\left|
		b_{r_w+1,\varphi}^{i*}
		\bigl(P_{(p_{r_w},p_{r_w+1})\cap(0,w]}^ix_k\bigr)\right|\\
		&\leq\frac{1}{m_h}
		\norm{b_{r_w+1,\varphi}^{i*}}
		\norm{P_{(p_{r_w},p_{r_w+1})\cap(0,w]}^i}
		\norm{x_k}
		\leq\frac{\kappa C}{m_h}.
	\end{align*}
	If \(r_w=a\), the same boundary term is zero.
	The displayed identity also covers \(r_w=0\) and \(w<p_0\), when the intersection is empty.
	Applying \eqref{eq:two-bnd-rst} now yields
	\begin{align*}
		\quad|f(P_{(s,\infty)}^ix_k)|&=|(P_{(s,\nu]}^{i*}e_a)(x_k)|\\
		&\leq |e_{r_\nu}(x_k)|+|e_{r_s}(x_k)|
		+\sum_{w\in\{s,\nu\}}
		\left|\bigl((P_{(w,\infty)}^{i*}-I)t_{r_w+1}\bigr)(x_k)\right|\\
		&\leq\frac{2C+2\kappa C}{m_h}
		=\frac{C_{\rm tr}C}{m_h}.
	\end{align*}

	Suppose now that \(h\geq j_{k+1}\).
	Since \(\alpha_1\) is the start atom of the level-\(h\) chain,
	the admissibility condition preceding Lemma~\ref{lem:req-adm} gives
	\(p_1=\operatorname{rank}\alpha_1\geq h\).
	The RIS range condition gives \(j_{k+1}>\nu\).
	Hence \(p_1\geq h\geq j_{k+1}>\nu\).
	Thus every \(d_{\alpha_r,\varphi_r}^{i*}\) is supported after \(x_k\),
	and for \(r\geq2\), the cell \((p_{r-1},p_r)\) is also supported after \(x_k\).
	Hence only the first packet can act, and the product of the two coordinate projections is the projection onto their intersection.
	\begin{align*}
		|f(P_{(s,\infty)}^ix_k)|
		&=\frac{1}{m_h}\left|
		b_{1,\varphi}^{i*}
		\bigl(P_{(p_0,p_1)\cap(s,\nu]}^ix_k\bigr)\right|\leq\frac{\kappa C}{m_h}.
	\end{align*}
\end{proof}

\subsection{Basic inequality}

\begin{lemma}[General vector-block basic inequality]
	\label{lem:vec-bas-ine}
	Let \(I\) be a finite integer interval and let \((x_k)_{k\in I}\) be the corresponding finite part of a \(C\)-RIS in either space,
	let \((\lambda_k)_{k\in I}\) be scalars, let \(s\in\N\), and let \(f\in\mathcal{K}_i\).
	There are \(k_0\in I\) and \(g\in W\cup\{0\}\) such that
	\begin{equation}\label{eq:gen-vec-bi}
		\left|f\left(P_{(s,\infty)}
		\sum_{k\in I}\lambda_kx_k\right)\right|
		\leq BC\left(\,|\lambda_{k_0}|+
		g\left(\sum_{k\in I}|\lambda_k|e_k\right)\right).
	\end{equation}
	The functional \(g\) has nonnegative coefficients.
	If \(g\ne0\), then \(f\) has a positive level \(h\), \(g\) has root level \(h\), and \(\operatorname{supp}g\subseteq\{k\in I:k>k_0\}\).
	In particular, \(B=4\kappa+4\) is independent of the dimensions and norms of the vector fibres.
\end{lemma}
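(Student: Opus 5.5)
We argue by induction on the rank of the atom underlying \(f\); this is the standard Argyros--Haydon basic inequality, adapted to scalarized vector fibres. If \(f\) is a probe (level zero), its analysis is the single BD part \(d^{i*}_{\gamma,\varphi}\), which is supported at one rank. Since the \(x_k\) are successive blocks, it meets at most one \(x_{k_0}\). Its norm is at most \(2M\), so the left side is bounded by \(2M\kappa C|\lambda_{k_0}|\le BC|\lambda_{k_0}|\), and we may take \(g=0\). The interval projection has norm at most \(\kappa\), which accounts for the factor \(\kappa\) in the bound.

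For \(f=e^{i*}_{\gamma,\varphi}\) of level \(h>0\) we use the analysis \eqref{eq:syn-vec-ea} from \hyperref[itm:ana-ea1]{\textup{(EA1)}}, together with its interval restriction \eqref{eq:two-bnd-rst}.

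First, we dispose of a single index. Let \(k_0\) be the least \(k\) for which some level-\(h\) ingredient with \(h\ge j_{k+1}\) could act nontrivially. If instead \(h<j_{k}\) for all relevant \(k\), Lemma~\ref{lem:vec-ris-trn} gives \(|f(P_{(s,\infty)}x_k)|\le C_{\rm tr}C/m_h\) for every \(k\). We then take \(g=m_h^{-1}\sum_k e_k^*\). This is a legitimate element of \(W\) once we have checked that at most \(3n_h\) indices are involved; otherwise we split off the surplus by the standard argument that a single \(x_k\) dominated by \(C\) is charged to \(|\lambda_{k_0}|\).

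In the main case, with \(j_{k_0}\le h<j_{k_0+1}\), the vector \(x_{k_0}\) is charged to \(BC|\lambda_{k_0}|\), using the trivial bound \(|f(P x)|\le\kappa\|x\|\). The vectors \(x_k\) with \(k<k_0\) satisfy \(h\ge j_{k+1}\). By the second case of Lemma~\ref{lem:vec-ris-trn}, each is acted on only through one packet with weight \(\kappa C/m_h\). These contributions can be absorbed into the \(g\)-term, or shown to vanish because of \(p_1\ge h>\max\operatorname{ran} x_k\). Such indices are in fact absent beyond \(k_0\), which is consistent with the requirement \(\operatorname{supp} g\subseteq\{k>k_0\}\).

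The remaining vectors \(x_k\), \(k>k_0\), are distributed among the \(a\le n_h\) cells \((p_{s-1},p_s)\) and the BD parts \(d_{\xi_s,\varphi_s}\). We sort each \(k>k_0\) into one of three groups.
\begin{enumerate}
\item The first group consists of indices with \(\operatorname{ran}x_k\) inside a single open cell. Then only \(m_h^{-1}b_{s,\varphi}P\) acts, and \(b_{s,\varphi}\) is an absolutely convex combination of elements of \(\mathcal{K}_i\) supported in that cell. Applying the induction hypothesis to each constituent, which has smaller rank, yields functionals \(g_{s,l}\) and indices \(k_{s,l}\). The absolutely convex combination of the resulting bounds is dominated by a single pair \((k_s,g_s)\): we take the maximum over \(l\), as in the scalar proof, and here the use of nonnegative coefficients is essential. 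We then replace the combination by \(e^*_{k_s}+g_s\).
\item The second group consists of indices whose range contains some rank \(p_s\). There are at most \(n_h\) of them. Each contributes through the BD part, with norm at most \(2M\), and through the two adjacent packets. Each is bounded by \(\kappa C/m_h\) times a fixed multiple, and is recorded as a term \(e_k^*\).
\item The third group consists of indices falling in no cell. They contribute zero.
\end{enumerate}
Collecting the terms, we set \(g=m_h^{-1}\bigl(\sum_{s}(e^*_{k_s}+g_s)+\sum_{\text{crossing }k}e_k^*\bigr)\). After reordering, this is a successive family of at most \(3n_h\) elements of \(W\): each cell gives at most two, and each crossing index gives one. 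Hence \(g\in W\) with root level \(h\), and the constant \(B=4\kappa+4\) covers all the prefactors. The vector-fibre dimensions never enter, because only contractions \(\varphi\mapsto\varphi_s\) and \(\varphi\mapsto b_{s,\varphi}\) are used.

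The main obstacle is the bookkeeping in the first group. For a single cell we must show that the maximum over the constituents of \(b_{s,\varphi}\) really yields one pair \((k_s,g_s)\) with successive supports across cells, so that the count \(3n_h\) is respected. We must also check that the boundary cells cut by \(P_{(s,\infty)}\) are handled by \hyperref[itm:ana-g5]{\textup{(G5)}}, which allows only two partial cells. For this we would follow the Argyros--Haydon tree argument verbatim, using the scalarization of \hyperref[itm:ana-ea1]{\textup{(EA1)}}.
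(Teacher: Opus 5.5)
There is a genuine gap in your treatment of the blocks before the threshold index, and repairing it requires a different choice of \(k_0\). You take \(k_0\) to be the threshold index, \(j_{k_0}\le h<j_{k_0+1}\). You then claim that the blocks \(x_k\), \(k<k_0\), either contribute nothing or can be absorbed into \(g\). Neither claim holds.

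The inequality \(p_1\ge h\ge j_{k+1}>\max\operatorname{ran}x_k\) only removes the BD parts and the packets \(b_{r,\varphi}^{i*}\) with \(r\ge2\). The first packet \(m_h^{-1}b_{1,\varphi}^{i*}P_{(p_0,\infty)}\) still acts on every such \(x_k\) whose range meets \((p_0,p_1)\), and the initial cutoff \(p_0=\chi\) may lie far to the left. These blocks cannot go into \(g\) either, because the statement requires \(\operatorname{supp}g\subseteq\{k>k_0\}\). That support condition is exactly what your cell recursion needs in order to place \(e_{k_r}^*\) before \(g_r\).

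The missing step, which is what the paper does, runs as follows. Let \(l\) be the threshold index. Bound each early block by the second case of Lemma~\ref{lem:vec-ris-trn}: \(|f(P_{(s,\infty)}x_k)|\le\kappa C/m_h\le\kappa C/m_{j_{k+1}}\). The indices \(j_{k+1}\) are distinct, so \(\sum_rm_r^{-1}<1\) gives \(\sum_{k<l}|\lambda_k|\,|f(P_{(s,\infty)}x_k)|\le\kappa C\max_{k<l}|\lambda_k|\). Now choose \(k_0\) to maximize \(|\lambda_k|\) over \(k\le l\), rather than \(k_0=l\). Together with the trivial bound \(\kappa C|\lambda_l|\) for the threshold block, this gives \(2\kappa C|\lambda_{k_0}|\le BC|\lambda_{k_0}|\). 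Every index entering \(g\) then exceeds \(l\ge k_0\).

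Your separate case in which \(h<j_k\) for all \(k\) is also incorrect as written. The functional \(m_h^{-1}\sum_ke_k^*\) may involve far more than \(3n_h\) indices, and nothing lets you charge the surplus to a single \(|\lambda_{k_0}|\). Blocks lying inside one cell must go through the induction on packet constituents, exactly as in your first group. This case is simply the main case with \(l=\min I\).

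In your second group, the \(O(C/m_h)\) bound for a block containing some \(p_r\) does not come from the BD part. That part alone can be of size \(2MC\), and one block may contain several \(p_r\). The bound comes from the first case of Lemma~\ref{lem:vec-ris-trn}, namely the RIS condition applied to the inherited evaluations in \eqref{eq:two-bnd-rst}. This requires \(h<j_k\), which holds precisely because \(k>l\).

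With these corrections, and with the induction in each cell applied using cutoff \(s\vee p_{r-1}\), your cell decomposition goes through as in the paper. The count of successive terms, at most \(a\) crossing indices plus two per cell, then stays within \(3n_h\).
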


\begin{proof}
	We induct on the rank of the ambient coordinate defining \(f\).
	A level-zero evaluation is one FDD coefficient functional.
	It meets at most one block \(x_k\), so \eqref{eq:gen-vec-bi} holds with \(g=0\).

	Let \(f\) have level \(h>0\), and set
	\[ l=\max\bigl(\{\min I\}\cup\{k\in I:j_k\leq h\}\bigr). \]
	Thus \(h<j_k\) for every \(k>l\).
	And whenever there is an index \(k<l\), we have \(h\geq j_{k+1}\).
	This convention also covers the cases \(h<j_{\min I}\) and \(h\geq j_{\max I}\).
	For \(k<l\), the second estimate in Lemma~\ref{lem:vec-ris-trn} and the strict increase of the \(j_k\)'s give
	\[ \sum_{k<l}|\lambda_k|\,|f(P_{(s,\infty)}x_k)| \leq \kappa C\sum_{k<l}\frac{|\lambda_k|}{m_{j_k}} \leq \kappa C\Big(\sum_{r\geq1}m_r^{-1}\Big) \max_{k<l}|\lambda_k|\leq \kappa C\max_{k<l}|\lambda_k|. \]

	For \(k=l\), the interval-projection bound gives
	\[ |\lambda_l|\,|f(P_{(s,\infty)}x_l)| \leq |\lambda_l|\,\|f\|\,\|P_{(s,\infty)}\|\,\|x_l\| \leq \kappa C|\lambda_l|. \]
	Choose
	\[ k_0\in I\cap(-\infty,l] \quad\text{with}\quad |\lambda_{k_0}|=\max_{k\in I,\,k\le l}|\lambda_k|. \]
	The early sum and the \(l\)-th term are then bounded separately by \(\kappa C|\lambda_{k_0}|\),
	so
	\begin{equation}\label{eq:gen-vec-bi-initial}
		\sum_{k\le l}|\lambda_k|\,|f(P_{(s,\infty)}x_k)|
		\leq 2\kappa C|\lambda_{k_0}|\leq BC|\lambda_{k_0}|.
	\end{equation}
	Notice that this choice also guarantees that every index used in the auxiliary tail below is strictly larger than \(k_0\).

	Put \(I'=\{k\in I:k>l\}\), and use the analysis \eqref{eq:ful-vec-ea}.
	Let \(I'_0\) be the set of those \(k\in I'\) whose range contains one of \(p_1,\ldots,p_a\).
	Then \(|I'_0|\leq a\), because the ranges of the \(x_k\)'s are disjoint and each \(p_r\) can belong to only one of them.
	After deleting these boundary blocks, every remaining tail interval
	\(I_k^{\rm tail}=\operatorname{ran}x_k\cap(s,\infty)\) meets at most one open cell
	\((p_{r-1},p_r)\) on which \(f\) can act.
	Put
	\[ I'_r=\{k\in I'\setminus I'_0:I_k^{\rm tail}\cap(p_{r-1},p_r)\ne\varnothing\}. \]
	After empty sets are omitted, the \(I'_r\)'s are successive intervals of indices.
	A block outside \(I'_0\cup\bigcup_r I'_r\) is disjoint from the support of \(f\) and contributes nothing.
	The blocks in \(I'_0\) are estimated as whole blocks by the first truncation estimate,
	while on \(I'_r\) all BD parts vanish and only the \(r\)-th packet can act.
	Therefore
	\begin{align*}
		\left|f\left(P_{(s,\infty)}
		\sum_{k\in I'}\lambda_kx_k\right)\right|  \leq \frac{C_{\rm tr}C}{m_h}\sum_{k\in I'_0}|\lambda_k|+\frac{1}{m_h}\sum_{r=1}^{a}
		\left|b_{r,\varphi}^{i*}
		P_{(s\vee p_{r-1},\infty)}
		\sum_{k\in I'_r}\lambda_kx_k\right|.
	\end{align*}
	Fix a nonempty \(I'_r\), and put
	\(z_r=P_{(s\vee p_{r-1},\infty)} \sum_{k\in I'_r}\lambda_kx_k.
	\)
	By the scalarization in the proof of Lemma~\ref{lem:uni-eva}, the packet has a finite expansion
	\[ b_{r,\varphi}^{i*} =\sum_{\ell\in L_r}c_\ell e_{\eta_\ell,\psi_\ell}^{i*}, \qquad c_\ell=a_\ell\norm{R_\ell\varphi_r}, \qquad \sum_{\ell\in L_r}|c_\ell|\leq1, \]
	where \(\psi_\ell\in B_{(G_{\eta_\ell}^i)^*}\) and
	\(p_{r-1}<\operatorname{rank}\eta_\ell<p_r\).
	If \(b_{r,\varphi}^{i*}=0\), the required packet estimate holds directly with
	\(k_r=\min I'_r\) and \(g_r=0\).
	Otherwise \(L_r\) is nonempty, and we choose \(\ell_r\in L_r\) such that
	\[ |e_{\eta_{\ell_r},\psi_{\ell_r}}^{i*}(z_r)| =\max_{\ell\in L_r}|e_{\eta_\ell,\psi_\ell}^{i*}(z_r)|, \qquad f_r=e_{\eta_{\ell_r},\psi_{\ell_r}}^{i*}\in\mathcal K_i. \]
	Then
	\[ |b_{r,\varphi}^{i*}(z_r)| \leq\sum_{\ell\in L_r}|c_\ell|\, |e_{\eta_\ell,\psi_\ell}^{i*}(z_r)| \leq |f_r(z_r)|. \]
	Since \(f=e_{\alpha,\varphi}^{i*}\), we have
	\(
		\operatorname{rank}\eta_{\ell_r}<p_r\leq p_a
		=\operatorname{rank}\alpha.
	\)
	Thus the ambient coordinate defining \(f_r\) has strictly smaller rank,
	and we may apply the inductive hypothesis on \(I'_r\) with cutoff \(s\vee p_{r-1}\).
	We obtain \(k_r\in I'_r\) and \(g_r\in W\cup\{0\}\) with
	\[ |b_{r,\varphi}^{i*}(z_r)|\leq |f_r(z_r)| \leq BC\left(|\lambda_{k_r}|+ g_r\left(\sum_{k\in I'_r}|\lambda_k|e_k\right)\right). \]
	Put \(\mathcal R=\{r\in\{1,\ldots,a\}:I'_r\ne\varnothing\}\).
	By the successivity of the \(I'_r\)'s and the inclusion
	\(\operatorname{supp}g_r\subseteq\{k\in I'_r:k>k_r\}\), we can list the functionals
	\(e_k^*\) for \(k\in I'_0\), the functionals \(e_{k_r}^*\) for \(r\in\mathcal R\),
	and the nonzero \(g_r\) for \(r\in\mathcal R\).
	The bounds \(|I'_0|\leq a\) and \(|\mathcal R|\leq a\) give
	\[ d\leq |I'_0|+2|\mathcal R| \leq a+2a \leq3n_h. \]
	Therefore
	\[ g=\frac{1}{m_h}\left( \sum_{k\in I'_0}e_k^*+ \sum_{r \in \mathcal{R}}(e_{k_r}^*+g_r)\right) \in W\cup\{0\} \]
	is supported after \(k_0\), and has root level \(h\) when it is nonzero.
	Let \(v=\sum_{k\in I}|\lambda_k|e_k\).
	The preceding estimates give
	\begin{align}\label{eq:gen-vec-bi-tail}
		\left|f\left(P_{(s,\infty)}\sum_{k\in I'}\lambda_k x_k\right)\right|\leq
		\frac{C_{\rm tr}C}{m_h}\sum_{k\in I'_0}|\lambda_k|
		+\frac{BC}{m_h}\sum_{r\in \mathcal{R}}
		\bigl(|\lambda_{k_r}|+g_r(v)\bigr) \leq BC g(v).
	\end{align}
	Adding \eqref{eq:gen-vec-bi-initial} and \eqref{eq:gen-vec-bi-tail}
	gives \eqref{eq:gen-vec-bi}.
	The induction uses only contractive restriction maps in the vector fibres.
	Hence the constant \(B\) does not depend on their dimensions.
\end{proof}

\subsection{Auxiliary and special averages}

Recall that \(A_1=1\) and \(A_h=4\max_{r<h}n_r\) for \(h\geq2\).

\begin{lemma}[Vanishing auxiliary averages]\label{lem:aux-avg}
	If
	\[ u_N=N^{-1}\sum_{k=1}^{N}e_k, \]
	then \(\|u_N\|_T\to 0\).
\end{lemma}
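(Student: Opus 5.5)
To show \(\|u_N\|_T\to0\), I will bound \(g(u_N)\) uniformly for every \(g\in W\). The natural route is the standard mixed-Tsirelson estimate of Argyros--Deliyanni type, organized by induction on the tree of \(g\).

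Fix \(\varepsilon>0\) and choose a level \(j\) with \(m_j^{-1}<\varepsilon\). For \(N\) large compared with \(n_1,\dots,n_j\) and \(m_j\), I will prove \(g(u_N)\le 2/m_j\) for all \(g\in W\). It suffices to treat \(g\) with nonnegative coefficients, since the norm of \(u_N\) is attained on such functionals: the vector \(u_N\) is nonnegative, and replacing each \(\zeta e_k^*\) by \(e_k^*\) does not decrease the value.

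The core of the argument is a case split on the root level \(h\) of \(g=m_h^{-1}\sum_{r\le d}f_r\).

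If \(h\ge j\), then each \(f_r\) has norm at most one on \(\ell_\infty\). Moreover the supports of the \(f_r\) are successive, so \(\sum_r f_r(u_N)\le\|u_N\|_{\ell_1}\cdot 1=1\) after grouping. Hence \(g(u_N)\le m_h^{-1}\le m_j^{-1}\).

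If \(h<j\), then \(d\le 3n_h\le 3\max_{r<j}n_r\), which is at most \(A_j\). I will then peel off the tree. Write \(g\) as an iterated average and follow the tree down to depth \(D\). Along any branch that uses only levels \(<j\), each node contributes a factor \(m_1^{-1}\) and branches into at most \(A_j\) pieces.

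Within that subtree, I count two kinds of leaves.

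First, the subtree has at most \(1+A_j+\dots+A_j^{D}\) leaves that are singletons \(e_k^*\). Each contributes at most \(1/N\) times the product of the weights above it.

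Second, after depth \(D\), the remaining nodes of levels \(<j\) carry total weight factor at most \(m_1^{-D}\). I take \(D\) around \(D_j\), where \(m_1^{-(D_j+2)}\le m_j^{-2}\) by Lemma~\ref{lem:par-led}.

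Any node of level \(\ge j\) met along the way contributes at most \(m_j^{-1}\) times the \(\ell_1\)-mass of its support. This is by the first case, applied to that node in place of the root. Since the supports are disjoint, these masses add to at most one.

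Combining the three contributions gives a bound of roughly
\[
g(u_N)\le \frac{S_j}{N}+m_1^{-(D_j+2)}+m_j^{-1}\le \frac{S_j}{N}+2m_j^{-1}.
\]
Choosing \(N\ge S_jm_j\) makes this at most \(3/m_j\), which proves the claim.

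The main obstacle is the bookkeeping in this combination step. I must ensure that the three types of contributions (singleton leaves, deep low-level nodes, high-level nodes) really partition the support of \(u_N\) with total mass at most one. This is the standard "tree analysis" of a functional in \(W\), and I will carry it out by induction on the depth of the defining tree. The inductive statement is that, for \(g\) whose tree has low-level depth at most \(t\),
\[
g(u)\le m_j^{-1}\|u\|_{\ell_1}+\frac{\#\{\text{leaves}\}}{N}+m_1^{-t}\|u\|_{\ell_1},
\]
for nonnegative \(u\) with entries at most \(1/N\).
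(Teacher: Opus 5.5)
Your proposal is correct and follows the paper's own argument: stop each branch of a tree analysis of \(g\) at a coordinate leaf, at the first node of level above a threshold (your \(j-1\), the paper's \(h_0\)), or after a fixed number of low-level nodes, which gives \(|g(u_N)|\le A_{h_0+1}^d/N+m_{h_0+1}^{-1}+m_1^{-d}\). The only differences are cosmetic. The paper keeps \(h_0,d\) free and takes a \(\limsup\), whereas you set the depth to \(D_j+2\) and take \(N\ge S_jm_j\). In your case \(h\ge j\), the fact actually used is that every functional in \(W\) has coefficients of modulus at most one, not a norm bound ``on \(\ell_\infty\)''.
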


\begin{proof}
	Fix \(h_0,d\in\N_+\).
	Let \(g\in W\) be arbitrary and fix a tree analysis of \(g\).
	Every weighted node \(\alpha\) of level \(h\) has at most \(3n_h\) successors.
	For each \(q\in\{1,\ldots,N\}\cap\operatorname{supp}g\), the successivity of the successors gives
	a unique branch ending at a coordinate leaf \(\zeta_qe_q^*\), where \(|\zeta_q|=1\).
	If its weighted levels are \(h_1,\ldots,h_r\), then
	\[ |g(e_q)|=\prod_{\ell=1}^{r}\frac{1}{m_{h_\ell}} \qquad \left(\text{denote }\prod_{\ell=1}^{0}\frac{1}{m_{h_\ell}}=1\right). \]

	Stop each branch when it first reaches a coordinate leaf, a node of level greater than \(h_0\),
	or its \(d\)-th node of level at most \(h_0\).
	Let \(\mathcal L\), \(\mathcal H\), and \(\mathcal D\) be the three classes of indices
	\(q\in\{1,\ldots,N\}\cap\operatorname{supp}g\) determined by these stopping events.
	Since \(3n_h\leq A_{h_0+1}\) for \(h\leq h_0\), we have
	\[
		\begin{aligned}
			|\mathcal L|\leq\sum_{j=0}^{d-1}A_{h_0+1}^{j}\leq A_{h_0+1}^{d},\qquad
			\max\{|\mathcal H|,|\mathcal D|\}
			\leq\bigl|\{1,\ldots,N\}\cap\operatorname{supp}g\bigr|\leq N.
		\end{aligned}
	\]
	Moreover, we obtain
	\[
		|g(e_q)|\leq
		\begin{cases}
			1,&q\in \mathcal L,\\
			m_{h_0+1}^{-1},&q\in \mathcal H,\\
			m_1^{-d},&q\in \mathcal D.
		\end{cases}
	\]
	Hence the arbitrary \(g\in W\) satisfies
	\[ |g(u_N)| =\frac{1}{N}\left|\sum_{q=1}^N g(e_q)\right| \leq\frac{A_{h_0+1}^{d}}{N} +\frac{1}{m_{h_0+1}} +\frac{1}{m_1^{d}}. \]
	The right-hand side is independent of \(g\), so taking the supremum over \(W\) gives
	\[ \|u_N\|_T =\sup_{g\in W}|g(u_N)| \leq\frac{A_{h_0+1}^{d}}{N} +\frac{1}{m_{h_0+1}} +\frac{1}{m_1^{d}}. \]
	Therefore
	\[ \limsup_{N\to\infty}\|u_N\|_T \leq\frac{1}{m_{h_0+1}}+\frac{1}{m_1^d}. \]
	Since \(h_0\) and \(d\) are arbitrary, we know \(\|u_N\|_T\to 0\) as \(N \to \infty\).
\end{proof}
We next give a quantitative estimate for auxiliary special averages.

The stopping-event formulation of the proof below was prompted by a suggestion
from a generative-AI system.
Subsequent literature review showed that the underlying counting and
tree-truncation techniques are well established; see
\cite[Lemma~2.4 and Proposition~2.5]{ArgyrosHaydon2011},
\cite{ManoussakisPelczarSwietek2017},
and \cite[Proposition~11.7]{MotakisPelczar2025}.
Here we adapt these techniques to our parameters and explicitly record
the first stopping event on each branch.
For further details, see the \hyperref[sec:ai-assistance]{AI assistance statement}
at the end of the paper.

\begin{lemma}[Auxiliary special averages]
	\label{lem:qnt-aux-avg}
	Let
	\[ u_j=\frac{1}{n_j}\sum_{k=1}^{n_j}e_k. \]
	Then
	\begin{equation}\label{eq:qnt-aux-avg}
		\|u_j\|_T\leq \frac{2}{m_j}\qquad(j\geq1).
	\end{equation}
\end{lemma}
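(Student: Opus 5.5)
I would prove \eqref{eq:qnt-aux-avg} by repeating the stopping-event argument of Lemma~\ref{lem:aux-avg}, with the free parameters fixed by the construction: \(h_0=j-1\) and \(d=D_j+2\). The lower bound \(u_j\ge0\) is trivial, and each \(g\in W\) has coefficients of modulus given by branch products. So it suffices to bound \(|g(u_j)|\) for an arbitrary \(g\in W\) with a fixed tree analysis.

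Let \(\alpha\) be the root of \(g\), of level \(h\), and split into cases according to \(h\).

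\emph{Case \(h\ge j\).} Then the factor \(m_h^{-1}\le m_j^{-1}\) is pulled out. Inside, the successor functionals are disjointly supported members of \(W\) with sup norm at most one, so \(|g(u_j)|\le m_h^{-1}\norm{u_j}_{\ell_1}=m_h^{-1}\le m_j^{-1}\).

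\emph{Case \(h<j\).} Run the branch-stopping procedure with \(h_0=j-1\): each branch through \(\{1,\dots,n_j\}\cap\operatorname{supp}g\) stops at the first coordinate leaf, or at the first node of level \(\ge j\), or at its \((D_j+2)\)-th node of level \(\le j-1\). This gives three classes \(\mathcal L,\mathcal H,\mathcal D\).
\begin{itemize}
\item \emph{Leaves \(\mathcal L\).} Each node of level \(<j\) has at most \(3n_{h}\le A_j\) successors. Hence \(|\mathcal L|\le 1+A_j+\cdots+A_j^{D_j+1}\le S_j\), and the contribution is at most \(S_j/n_j\le 2^{-j-8}m_j^{-5}\) by \eqref{eq:led-rat}.
\item \emph{\(\mathcal D\).} Each coefficient has modulus at most \(m_1^{-(D_j+2)}\le m_j^{-2}\) by \eqref{eq:led-dpt-tal}, so the averaged contribution is at most \(m_j^{-2}\).
\end{itemize}

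The main obstacle is \(\mathcal H\). The crude bound \(m_j^{-1}\) per index is already of size \(m_j^{-1}\). Since the root has level \(<j\), at least one factor \(m_1^{-1}\le 2^{-8}\) precedes the first stopping node, so the contribution is at most \(m_1^{-1}m_j^{-1}\).

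To make this rigorous without double counting, I would group the indices of \(\mathcal H\) by their stopping node \(\beta\). These nodes are incomparable in the tree and hence have disjointly supported functionals \(g_\beta\). For each of them, the first case gives \(|g_\beta(u_j|_{\operatorname{supp}g_\beta})|\le m_j^{-1}n_j^{-1}|\operatorname{supp}g_\beta\cap[1,n_j]|\). Multiplying by the ancestral weight, which is at most \(m_1^{-1}\), and summing over the disjoint supports gives at most \(m_1^{-1}m_j^{-1}\).

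Collecting the three contributions,
\[
|g(u_j)|\le 2^{-j-8}m_j^{-5}+m_j^{-2}+m_1^{-1}m_j^{-1}\le \frac{2}{m_j},
\]
which holds easily since \(m_j\ge m_1\ge 2^8\). Together with the case \(h\ge j\), taking the supremum over \(g\in W\) proves \eqref{eq:qnt-aux-avg}.
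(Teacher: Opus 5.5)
Your argument is correct and is essentially the paper's proof: the same stopping-event tree analysis with leaf, depth, and high-level classes \(\mathcal L,\mathcal D,\mathcal H\), bounded using \eqref{eq:led-rat} and \eqref{eq:led-dpt-tal}. The paper differs only in minor ways. It uses the depth cutoff \(D_j\), treats \(j=1\) separately, and bounds \(\mathcal H\) by the crude per-index estimate \(m_j^{-1}\) without splitting by root level, so your extra factor \(m_1^{-1}\) on \(\mathcal H\) is a harmless refinement that the constant \(2\) does not require (and a coordinate root, which you assign to neither case, trivially contributes \(1/n_j\)).
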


\begin{proof}
	First let \(j\geq2\).
	Let \(g\in W\) be arbitrary and fix a tree analysis of \(g\).
	As in the proof of Lemma~\ref{lem:aux-avg}, each
	\(q\in\{1,\ldots,n_j\}\cap\operatorname{supp}g\) determines a unique branch
	ending at a coordinate leaf \(\zeta_qe_q^*\), where \(|\zeta_q|=1\).

	Stop each branch when it first reaches a node of level \(h\geq j\),
	its \(D_j\)-th node of level \(h<j\), or a coordinate leaf.
	Let \(\mathcal H\), \(\mathcal D\), and \(\mathcal L\) be the corresponding classes of coordinate indices.
	Since every node of level \(h<j\) has at most \(A_j\) successors, put \(R_j^{\rm aux}=\sum_{s=0}^{D_j-1}A_j^s.\)
	Then
	\[ |\mathcal L|\leq R_j^{\rm aux}, \qquad \max\{|\mathcal H|,|\mathcal D|\} \leq\bigl|\{1,\ldots,n_j\}\cap\operatorname{supp}g\bigr| \leq n_j. \]
	The same branch-coefficient argument gives
	\[
		|g(e_q)|\leq
		\begin{cases}
			m_j^{-1},&q\in\mathcal H,\\
			m_1^{-D_j},&q\in\mathcal D,\\
			1,&q\in\mathcal L.
		\end{cases}
	\]
	Hence the arbitrary \(g\in W\) satisfies
	\[
		\begin{aligned}
			|g(u_j)| \leq\frac{|\mathcal L|}{n_j}
				+\frac{|\mathcal H|}{n_jm_j}
				+\frac{|\mathcal D|}{n_jm_1^{D_j}}\leq\frac{R_j^{\rm aux}}{n_j}
				+\frac{1}{m_j}
				+\frac{1}{m_1^{D_j}}.
		\end{aligned}
	\]
	The definition of \(D_j\), \eqref{eq:par-grw}, \eqref{eq:n-grw},
	and \eqref{eq:led-elm} give
	\[
		\begin{aligned}
			m_1^{-D_j}\leq2^{-D_j}\leq m_j^{-2},\quad
			R_j^{\rm aux}\leq\frac{L_j}{m_j},
			\quad n_j\geq2^{j+8}L_jm_j^4,\quad
			\frac{R_j^{\rm aux}}{n_j}
			\leq\frac{L_j/m_j}{2^{j+8}L_jm_j^4}
			=\frac{1}{2^{j+8}m_j^5}.
		\end{aligned}
	\]
	Therefore
	\[ |g(u_j)| \leq\frac{1}{m_j} \left(1+\frac{1}{m_j}+\frac{1}{2^{j+8}m_j^4}\right) <\frac{2}{m_j}, \]
	where the last inequality follows from \(m_j\geq m_1\geq2^8\).
	The right-hand side is independent of \(g\), so taking the supremum over \(W\) gives
	\[ \|u_j\|_T=\sup_{g\in W}|g(u_j)|\leq\frac{2}{m_j} \qquad(j\geq2). \]

	Now let \(j=1\) and let \(g\in W\) be arbitrary.
	A coordinate root and a weighted root give, respectively,
	\[
		|g(u_1)|\leq
		\begin{cases}
			n_1^{-1},&g=\zeta e_q^*,\\
			m_1^{-1},&g\text{ has a weighted root}.
		\end{cases}
	\]
	Since \(n_1\geq m_1\), taking the supremum over \(g\in W\) gives
	\[ \|u_1\|_T\leq m_1^{-1}\leq\frac{2}{m_1}. \]
\end{proof}

\begin{corollary}[Bounds for RIS special averages]
	\label{cor:ris-spc-avg}
	Let \(j\in\N_+\), and let
	\((x_k)_{k=1}^{n_j}\subseteq X_i ( i=0,1)\) be a \(C\)-RIS. Put
	\[ x=\frac{m_j}{n_j}\sum_{k=1}^{n_j}x_k. \]
	For every \(\theta\in\K\) with \(|\theta|\leq2\), we have
	\begin{equation}\label{eq:ris-avg-nrm}
		\|\theta x\|\leq3BC|\theta|\leq6BC.
	\end{equation}
	And for every \(\alpha\in\bigcup_{n\geq1}\Delta_n^i\),
	\begin{equation}\label{eq:ris-avg-dst}
		\|D_\alpha^i(\theta x)\|
		=\sup_{\varphi\in B_{(G_\alpha^i)^*}}
		|d_{\alpha,\varphi}^{i*}(\theta x)|
		\leq\frac{\kappa C|\theta|}{m_j}
		\leq\frac{2\kappa C}{m_j}.
	\end{equation}
\end{corollary}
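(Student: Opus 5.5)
The plan is to derive \eqref{eq:ris-avg-nrm} from the basic inequality, Lemma~\ref{lem:vec-bas-ine}, together with the auxiliary estimate of Lemma~\ref{lem:qnt-aux-avg}. Derive \eqref{eq:ris-avg-dst} from block biorthogonality and the FDD bound.

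For the norm, the ambient coordinates in \(\mathcal{K}_i\) norm \(X_i\). Hence it suffices to bound \(|f(x)|\) for each \(f\in\mathcal{K}_i\). Apply Lemma~\ref{lem:vec-bas-ine} with \(I=\{1,\ldots,n_j\}\), \(\lambda_k=m_j/n_j\), and \(s=0\); here \(P_{(0,\infty)}\) is the identity on the span of the \(x_k\). This gives \(k_0\) and \(g\in W\cup\{0\}\) with
\[ |f(x)|\le BC\Bigl(\frac{m_j}{n_j}+g\Bigl(\frac{m_j}{n_j}\sum_{k=1}^{n_j}e_k\Bigr)\Bigr) \le BC\Bigl(\frac{m_j}{n_j}+m_j\|u_j\|_T\Bigr). \]
Lemma~\ref{lem:qnt-aux-avg} gives \(m_j\|u_j\|_T\le2\), and \(n_j\ge m_j^2\ge m_j\) gives \(m_j/n_j\le1\). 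Hence \(|f(x)|\le3BC\). Taking the supremum over \(f\) yields \(\|x\|\le3BC\), and multiplying by \(\theta\) with \(|\theta|\le2\) gives \eqref{eq:ris-avg-nrm}. The support of \(g\) is not needed here, only positivity of its coefficients and \(g\in W\).

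For the coefficient bound, fix an atom \(\alpha\) of rank \(n\). By Lemma~\ref{lem:fml-blk-bio}, \(d_{\alpha,\varphi}^{i*}\) vanishes on every block not supported at rank \(n\). Since the \(x_k\) are successive, at most one \(x_k\) has rank \(n\) in its range. Hence \(d_{\alpha,\varphi}^{i*}(x)=\frac{m_j}{n_j}d_{\alpha,\varphi}^{i*}(x_k)\) for that single \(k\), or zero. Next, \(d_{\alpha,\varphi}^{i*}\) is the restriction to \(X_i\) of the rank-\(n\) part \(Q_n^{i*}\epsilon_\alpha^{i*}(\varphi)\). By Lemma~\ref{lem:blk-bd}, \(\|d_{\alpha,\varphi}^{i*}\|\le\|P_{[n,n]}\|\le2M=\kappa\) for \(\varphi\) in the unit ball. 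Therefore \(|d_{\alpha,\varphi}^{i*}(\theta x)|\le|\theta|\frac{m_j}{n_j}\kappa C\). Finally \(m_j/n_j\le m_j^{-1}\), since \(n_j\ge m_j^2\) by \eqref{eq:led-elm}. This gives \(\kappa C|\theta|/m_j\le2\kappa C/m_j\).

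The main point requiring care is the first step. The basic inequality must apply with \(s=0\) to a finite RIS of length \(n_j\) while producing a functional in \(W\) on exactly the index set \(\{1,\ldots,n_j\}\), so that Lemma~\ref{lem:qnt-aux-avg} applies. Since \(g\) has nonnegative coefficients and \(W\) is norming for \(\|\cdot\|_T\), the estimate \(g(u_j)\le\|u_j\|_T\) follows directly.
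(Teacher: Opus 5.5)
Your proposal is correct and follows the paper's proof step for step. The norm bound comes from the basic inequality with \(s=0\) and \(\lambda_k=m_j/n_j\), together with Lemma~\ref{lem:qnt-aux-avg} and \(n_j\ge m_j^2\). The coefficient bound comes from \(\|d_{\alpha,\varphi}^{i*}\|\le\kappa\), the rank-\(\operatorname{rank}\alpha\) part of \(e_{\alpha,\varphi}^{i*}\), combined with successivity of the blocks (nonnegativity of \(g\) is not actually needed, since \(g(u_j)\le|g(u_j)|\le\|u_j\|_T\) for every \(g\in W\)).
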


\begin{proof}
	By homogeneity, it is enough to prove both estimates for \(\theta=1\).
	Apply the general basic inequality with \(s=0\) and \(\lambda_k=m_j/n_j\).
	For every scalarized ambient coordinate evaluation \(f\), it supplies \(k_0\) and \(g\in W\cup\{0\}\) such that
	\[ |f(x)| \leq BC\left(\frac{m_j}{n_j} +m_jg(u_j)\right) \leq BC\left(\frac{m_j}{n_j}+2\right). \]
	By \eqref{eq:led-elm}, \(n_j\geq m_j^2\), so the last expression is at most \(3BC\).
	The set \(\mathcal{K}_i\) defined in \eqref{eq:fml-ki} norms \(X_i\). Thus
	\[ \|x\|=\sup_{f\in\mathcal{K}_i}|f(x)|\leq3BC. \]
	For \(\alpha\in\bigcup_{n\geq1}\Delta_n^i\) and
	\(\varphi\in B_{(G_\alpha^i)^*}\),
	\(
		d_{\alpha,\varphi}^{i*}
		=P_{\{\operatorname{rank}\alpha\}}^{i*}e_{\alpha,\varphi}^{i*}
	\)
	gives \(\|d_{\alpha,\varphi}^{i*}\|\leq\kappa\).
	Since the \(x_k\)'s are successive, at most one of their FDD ranges contains
	\(\operatorname{rank}\alpha\). Therefore
	\[ |d_{\alpha,\varphi}^{i*}(x)| \leq\frac{m_j}{n_j}\,\kappa C \leq\frac{\kappa C}{m_j}. \]
	Taking the supremum over \(\varphi\) proves \eqref{eq:ris-avg-dst}.
\end{proof}

\begin{corollary}[RIS are weakly null]
	\label{cor:ris-wek-nul}
	Every RIS in either \(X_i\) is weakly null.
\end{corollary}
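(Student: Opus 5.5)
The plan is the standard Argyros--Haydon argument. We show that every RIS is weakly null by showing that every subsequence has averages tending to zero in norm. Then the Banach--Saks-type criterion yields weak nullity: if some \(x^*\) had \(|x^*(x_k)|\ge\delta\) along a subsequence, we could pass to a further subsequence on which \(\operatorname{Re}(\zeta x^*(x_k))\ge\delta/2\) for a fixed unimodular \(\zeta\), which is impossible if averages are norm small.

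So let \((x_k)\) be a \(C\)-RIS in \(X_i\). Every subsequence of a \(C\)-RIS is again a \(C\)-RIS: we keep the indices \(j_k\) along the subsequence, and the conditions \(j_{k+1}>\max\operatorname{ran}x_k\) and the level bounds are inherited because \(j\) is increasing. It therefore suffices to prove \(\|N^{-1}\sum_{k=1}^Nx_k\|\to0\) for an arbitrary \(C\)-RIS.

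For this, apply the general basic inequality, Lemma~\ref{lem:vec-bas-ine}, with \(s=0\), \(I=\{1,\ldots,N\}\) and \(\lambda_k=1/N\). For each \(f\in\mathcal{K}_i\) this gives
\[
|f(N^{-1}\textstyle\sum_k x_k)|\le BC\bigl(N^{-1}+g(u_N)\bigr)\le BC\bigl(N^{-1}+\|u_N\|_T\bigr),
\]
with \(g\in W\cup\{0\}\) and \(u_N\) as in Lemma~\ref{lem:aux-avg}. Since \(\mathcal K_i\) norms \(X_i\), we may take the supremum over \(f\), and Lemma~\ref{lem:aux-avg} then gives \(\|N^{-1}\sum_{k\le N}x_k\|\to0\).

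To conclude, suppose \(x^*\in X_i^*\) and \(|x^*(x_k)|\ge\delta\) for infinitely many \(k\). Pigeonholing the phases gives a subsequence \((y_k)\), still a \(C\)-RIS, with \(\operatorname{Re}\zeta x^*(y_k)\ge\delta/2\). Then \(\|x^*\|\,\|N^{-1}\sum y_k\|\ge\delta/2\) for all \(N\), contradicting the previous step. The only point needing care is that the subsequence inherits the RIS property, including the convention of omitting zero terms. I do not expect a genuine obstacle: the main estimate is already contained in Lemmas~\ref{lem:vec-bas-ine} and~\ref{lem:aux-avg}.
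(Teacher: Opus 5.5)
Your proposal is correct and follows essentially the same route as the paper: the basic inequality with \(s=0\) and \(\lambda_k=1/N\), combined with \(\|u_N\|_T\to0\), forces norm-small averages along every subsequence, which contradicts a functional bounded away from zero. The only difference is cosmetic. You pigeonhole the phases to a single unimodular \(\zeta\), whereas the paper rotates each term \(x_{k_r}\) by its own unimodular \(\theta_r\), which also preserves the \(C\)-RIS property; both devices work equally well.
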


\begin{proof}
	Let \((x_k)\subseteq X_i\) be a \(C\)-RIS with RIS indices \((j_k)\).
	Suppose that \((x_k)\) is not weakly null.
	Then there are \(x^*\in X_i^*\), \(\varepsilon>0\), and a subsequence
	\((x_{k_r})\) such that
	\[ |x^*(x_{k_r})|\geq\varepsilon\qquad(r\in\N_+). \]
	Choose \(\theta_r\in\K\) with \(|\theta_r|=1\) such that \(x^*(\theta_rx_{k_r})=|x^*(x_{k_r})|\),
	and put \(y_r=\theta_rx_{k_r}\).
	Then \((y_r)\) is a \(C\)-RIS with RIS indices \((j_{k_r})\).
	For \(N\in\N_+\), put
	\[ z_N=\frac{1}{N}\sum_{r=1}^{N}y_r \qquad\text{and}\qquad u_N=\frac{1}{N}\sum_{r=1}^{N}e_r. \]
	By Lemma~\ref{lem:vec-bas-ine} with \(s=0\) and \(\lambda_r=1/N\),
	for every \(f\in\mathcal{K}_i\) there is \(g\in W\cup\{0\}\) such that
	\[ |f(z_N)| \leq BC\left(\frac{1}{N}+g(u_N)\right) \leq BC\left(\frac{1}{N}+\|u_N\|_T\right). \]
	By taking the supremum over \(f\in\mathcal{K}_i\) and using
	Lemma~\ref{lem:aux-avg}, we have
	\[ \|z_N\| \leq BC\left(\frac{1}{N}+\|u_N\|_T\right) \longrightarrow 0 \quad(N \to \infty). \]
	On the other hand, we have
	\[ \varepsilon \leq\frac{1}{N}\sum_{r=1}^{N}|x^*(x_{k_r})| =|x^*(z_N)| \leq\|x^*\|\,\|z_N\| \longrightarrow 0,  \quad(N \to \infty) \]
	which is a contradiction.
\end{proof}

\subsection{Shrinking and a compactness test}

The local-weight estimate below allows us to pass from RISs to arbitrary block sequences.
We then obtain shrinking of the FDDs and a compactness test.
For nonzero \(x\in X_i^{\rm alg}\), let \(q=\max\operatorname{ran}x\) and write \(x=i_q^i(u)\), where \(u=(u_\alpha)_{\alpha\in\bigcup_{r=1}^{q}\Delta_r^i}\in\bigoplus_{r\leq q}F_r^i\).
For \(\alpha\in\bigcup_{r=1}^{q}\Delta_r^i\) and
\(\psi\in B_{(G_\alpha^i)^*}\), the scalarized evaluation functional satisfies
\(e_{\alpha,\psi}^{i*}(x)=\psi(u_\alpha)\).
Define its local support by \(\supp_{\rm loc}x=\{\alpha\in\bigcup_{r=1}^{q}\Delta_r^i:u_\alpha\ne0\}\) and
\(\supp_{\rm loc}0=\varnothing\).

\begin{lemma}[Local-weight estimate]\label{lem:vec-loc-wgt}
	If \(x\in X_i^{\rm alg}\), \(f\in\mathcal{K}_i\) has level \(h>0\),
	and \(\operatorname{lev}(\alpha)\ne h\) for every
	\(\alpha\in\supp_{\rm loc}x\), then
	\[ |f(x)|\leq \kappa m_h^{-1}\|x\|. \]
\end{lemma}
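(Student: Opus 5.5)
Write $f=e_{\gamma,\varphi}^{i*}$ with predecessor chain $\gamma_1\prec\cdots\prec\gamma_a=\gamma$ and cells $(p_{s-1},p_s)$. We use the evaluation analysis \eqref{eq:syn-vec-ea}. Apply both sides to $x=i_q^i(u)$.

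\emph{Step 1: the BD parts.} The first guess is that the BD parts contribute nothing: a term $d_{\gamma_s,\varphi_s}^{i*}(x)$ would vanish because $\gamma_s$ has level $h$ and so lies outside $\supp_{\rm loc}x$. That is not literally right, since $d^{i*}_{\gamma_s}(x)$ is not just $\varphi_s(u_{\gamma_s})$ — its correction term also acts. So I would telescope instead, using the one-edge identity from the proof of Lemma~\ref{lem:uni-eva}. For $p_s\le q$,
\[
e_{\gamma_s,\varphi_s}^{i*}=e_{\gamma_{s-1},\varphi_{s-1}}^{i*}+d^{i*}_{\gamma_s,\varphi_s}+t_s .
\]
Evaluated at $x$, the left side equals $\varphi_s(u_{\gamma_s})=0$ by the level hypothesis. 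Likewise $e_{\gamma_{s-1},\varphi_{s-1}}^{i*}(x)=0$. For atoms of rank $>q$, \eqref{eq:ext-rec} gives $\varphi(x_\alpha)=c_\alpha^{i*}(\varphi)(x)$.

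\emph{Step 2: reduce to one cell.} Let $r$ be maximal with $p_r\le q$. If $r<a$, then by Step~1 and the recursion \eqref{eq:ext-rec},
\[
f(x)=e^{i*}_{\gamma_{r+1},\varphi_{r+1}}(x)=c^{i*}_{\gamma_{r+1}}(\varphi_{r+1})(x).
\]
The same holds up the chain, since predecessor terms propagate values that are zero at ranks $\le q$ and give the same packet term at higher ranks. Evaluate the chain's later cells on $x$. Every cell beyond $(p_r,p_{r+1})$ lies entirely above $q$, where $x$ has no FDD support, and $(I-P^{i*}_{p_{s-1}})$ kills it. The predecessor contribution is $e_{\gamma_r}(x)=0$.

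Hence
\[
f(x)=m_h^{-1}\,b^{i*}_{r+1,\varphi}\bigl(P^{i}_{(p_r,\infty)}x\bigr)
\]
(with $r=a$ meaning $f(x)=\varphi(u_\gamma)=0$). Since $\|b^{i*}_{r+1,\varphi}\|\le1$ and $\|P^i_{(p_r,\infty)}x\|=\|P^i_{(p_r,q]}x\|\le\kappa\|x\|$ by \eqref{eq:fml-ext-bnd}, this gives $|f(x)|\le\kappa m_h^{-1}\|x\|$.

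\emph{Main obstacle.} The delicate step is the bookkeeping in Step~2: proving the identity
\[
e^{i*}_{\gamma_s,\varphi_s}(x)=e^{i*}_{\gamma_{s-1},\varphi_{s-1}}(x)+m_h^{-1}b^{i*}_{s,\varphi}\bigl(P_{(p_{s-1},\infty)}x\bigr)
\]
for ranks above $q$ directly from \eqref{eq:ext-rec} and \eqref{eq:tal-win-cns}, and checking that all cells beyond the one straddling $q$ vanish. I would do this by induction on $s$ from $r+1$ to $a$.
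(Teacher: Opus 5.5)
Your proof is correct and follows the paper's argument: take $r$ maximal with $p_r\le q$, use the level hypothesis to get $e_r(x)=0$ (and $f(x)=0$ if $r=a$), and reduce $f(x)$ to the single straddling packet term $m_h^{-1}b_{r+1,\varphi}^{i*}(P_{(p_r,\infty)}^ix)$, which is at most $\kappa m_h^{-1}\|x\|$ in modulus. The only difference is that the paper obtains $f(x)=e_r(x)+t_{r+1}(x)$ in one line from the restriction telescope \eqref{eq:rst-tel} at $w=q$, using $P_{(q,\infty)}^ix=0$, rather than by your induction up the chain via \eqref{eq:ext-rec}; your Step~1 telescoping below $q$ is not needed.
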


\begin{proof}
	The assertion is immediate for \(x=0\).
	Assume that \(x\ne0\), and put
	\[ q=\max\operatorname{ran}x, \qquad x=i_q^i(u), \qquad u\in\bigoplus_{n\leq q}F_n^i. \]
	Write \(f=e_{\gamma,\varphi}^{i*}\), and let
	\[ \xi_1\prec\cdots\prec\xi_a=\gamma, \qquad p_0=\chi(\gamma), \qquad p_s=\operatorname{rank}\xi_s, \qquad e_0=0, \qquad e_s=e_{\xi_s,\varphi_s}^{i*}. \]
	By \eqref{eq:syn-vec-ea}, the BD evaluation analysis of \(f\) is
	\[ f=e_a =\sum_{s=1}^{a}d_{\xi_s,\varphi_s}^{i*} +\frac{1}{m_h}\sum_{s=1}^{a} b_{s,\varphi}^{i*}P_{(p_{s-1},\infty)}^i. \]
	Use \(t_s\) and \(r_w\) from \eqref{eq:rst-tel}, and put \(r=r_q\).
	By \hyperref[itm:ana-g7]{\textup{(G7)}} and the hypothesis, we have
	\[ \operatorname{lev}(\xi_s)=h \quad(1\leq s\leq a), \qquad \operatorname{lev}(\alpha)\ne h \quad(\alpha\in\supp_{\rm loc}x). \]
	If \(1\leq s\leq a\) and \(p_s\leq q\), then
	\(\xi_s\notin\supp_{\rm loc}x\), so \(u_{\xi_s}=0\).
	This gives
	\[ e_s(x)=\varphi_s(u_{\xi_s})=0. \]
	The convention \(e_0=0\) gives \(e_0(x)=0\).
	If \(r=a\), then \(f(x)=e_a(x)=0\).
	We may therefore assume that \(r<a\).
	Since \(P_{(q,\infty)}^ix=0\), \eqref{eq:rst-tel} gives
	\begin{align*}
		0 =\bigl(P_{(q,\infty)}^{i*}e_a\bigr)(x)=f(x)-e_r(x)
		+\bigl((P_{(q,\infty)}^{i*}-I)t_{r+1}\bigr)(x)=f(x)-t_{r+1}(x).
	\end{align*}
	By \hyperref[itm:ana-ea1]{\textup{(EA1)}}, \eqref{eq:fml-ext-bnd},
	and \eqref{eq:cst-led}, we have
	\begin{align*}
		|f(x)|
		=|t_{r+1}(x)|=\frac{1}{m_h}
		\left|b_{r+1,\varphi}^{i*}
		\bigl(P_{(p_r,\infty)}^ix\bigr)\right|\leq\frac{1+M}{m_h}\|x\|\leq\frac{\kappa}{m_h}\|x\|.
	\end{align*}
\end{proof}

The following is the vector-valued form of the RIS reduction used in the AH construction.

\begin{proposition}[RIS reduction and compactness test]
	\label{prop:ris-red}
	Let \(Y\) be a Banach space and \(R:X_i\to Y\) be bounded.
	If \(\|Rx_k\|\to 0\) for every RIS \((x_k)\), then
	\(\|Rx_k\|\to 0\) for every bounded block sequence.
	Thus the FDDs of \(X_0\) and \(X_1\) are shrinking.
	Under the same assumption, \(R\) is compact.
\end{proposition}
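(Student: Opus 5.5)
The proof follows the Argyros--Haydon reduction from RISs to block sequences, with Lemma~\ref{lem:vec-loc-wgt} playing the role of the local-weight estimate. There are three steps.

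\emph{Step 1: block sequences.} Suppose \((x_k)\) is a bounded block sequence with \(\|Rx_k\|\ge\delta>0\) along a subsequence. By density of \(X_i^{\rm alg}\) and the FDD projection bounds \eqref{eq:fml-ext-bnd}, we may assume each \(x_k\) lies in \(X_i^{\rm alg}\). The goal is to produce an RIS \((y_k)\) with \(\|Ry_k\|\) bounded below, contradicting the hypothesis.

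First we reduce to the case where \((x_k)\) is itself an RIS after passing to a subsequence. Fix a target level \(j_k\). For levels \(h<j_k\), Lemma~\ref{lem:vec-loc-wgt} shows that a level-\(h\) evaluation can be large on \(x_k\) only through local-support atoms of level exactly \(h\).

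We therefore split \(x_k=x_k'+x_k''\) by deleting, at the local level, the coordinates of level \(<j_k\). Coordinate deletion is contractive on each block \(F_n^i\) by \hyperref[itm:ana-ea4]{\textup{(EA4)}}. The contractive local deletion is then re-extended with \(i_q^i\), and the bound \(\|i_q^i\|\le M\) controls the norm.

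For the remainder \(x_k''\): its local support uses only finitely many levels \(h<j_k\), each appearing with local coordinates of norm \(\le\|x_k\|\). If \(\|Rx_k''\|\) stays large for many \(k\), we pass to a further subsequence on which all \(x_k''\) use a fixed finite set of levels. We then average \(n_l\) of them, for a level \(l\) outside that set. The basic inequality (Lemma~\ref{lem:vec-bas-ine}), combined with Lemma~\ref{lem:vec-loc-wgt}, shows these averages have small norm, while after a sign/scalar alignment as in Corollary~\ref{cor:ris-wek-nul} the \(R\)-images of the averages are not small. I expect this to be the main obstacle.

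The alternative I would try first, following \cite[Proposition~5.11]{ArgyrosHaydon2011}, avoids decomposition: show directly that some subsequence of \((x_k)\) has averages \(m_l n_l^{-1}\sum x_k\) forming an RIS. Lemma~\ref{lem:vec-loc-wgt} bounds each level-\(h\) evaluation of such an average by \(\kappa\|x\|/m_h\), up to the contribution of the at most one block carrying level-\(h\) coordinates. Averaging kills that single block's contribution once \(n_l\gg m_h\). If \(\|Rx_k\|\ge\delta\), we choose the \(\theta_k\) (Hahn--Banach) to align a functional \(y^*\) on \(Y\) with \(y^*(R\theta_kx_k)\ge\delta/2\), after a weak\(^*\) cluster argument. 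Then the normalized averages \(y_l=n_l^{-1}\sum\theta_kx_k\) form a \(C\)-RIS with \(\|Ry_l\|\ge\delta/2\), a contradiction.

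\emph{Step 2: shrinking.} Apply Step~1 to \(R=x^*\) for \(x^*\in X_i^*\). By Corollary~\ref{cor:ris-wek-nul}, every RIS is weakly null, so \(x^*(x_k)\to0\) on RISs. Hence \(x^*(x_k)\to0\) on all bounded block sequences, which is the standard characterization of a shrinking FDD: \(\|x^*P_{(n,\infty)}\|\to0\), since otherwise one extracts normalized blocks on which \(x^*\) stays large.

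\emph{Step 3: compactness.} Since the FDD is shrinking, a bounded sequence has a subsequence that, after subtracting a weak limit, is a small perturbation of a bounded block sequence (gliding hump). Step~1 then gives convergence of its \(R\)-images, so \(R\) is compact. Equivalently, \(\|R-RP_{[1,n]}\|\to0\): otherwise choose norm-one \(z_n\) with \(\|R(I-P_{[1,n]})z_n\|\ge\delta\), truncate to disjoint finite blocks using the shrinking property, and contradict Step~1.
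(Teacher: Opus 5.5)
Your Step~1 has a genuine gap. It sits in the step that both of your variants rest on: aligning the images \(Rx_k\) by a single \(y^*\in Y^*\), so that averages keep \(\norm{R(\cdot)}\) bounded below. The sign trick of Corollary~\ref{cor:ris-wek-nul} works only because the operator there is one scalar functional. You derive the alignment from \(\norm{Rx_k}\ge\delta\) alone, via a weak\(^*\) cluster point of norming functionals, and that implication is false. A cluster point \(y^*\) says nothing about \(y^*(Rx_k)\), because the vectors move with \(k\). Concretely, for \(R=I_{X_i}\) and any normalized block sequence, which is weakly null because the FDD is shrinking, no such \(y^*\) exists. Without alignment, averaging destroys the lower bound (take \(Rx_k=\delta e_k\) in \(c_0\) or \(\ell_2\)). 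So averages of the \(x_k\), RIS or not, give no contradiction, and this vector-valued case is exactly what Step~3 needs. Your second variant also asserts without justification that at most one block carries level-\(h\) local coordinates. In general every \(x_k\) can have local support at every level, and without an RIS hypothesis the basic inequality gives no bound on \(\norm{m_ln_l^{-1}\sum_kx_k}\).

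The missing idea is that the low-level part, which you call the main obstacle, is harmless, while the essential device, a diagonal choice of level thresholds, is absent. Let \(s_k=\max\ran x_k\) and \(x_k=i^i_{s_k}(u_k)\). For fixed \(N\), split \(u_k\) into its coordinates at atoms of level \(\le N\) and of level \(>N\). This gives \(x_k=y_k^N+z_k^N\), with norms at most \(M\norm{x_k}\) and FDD supports in \(\ran x_k\).
\begin{itemize}
\item For fixed \(N\), \((y_k^N)_k\) is \emph{already} an RIS, with indices \(j_k>N\) and \(j_{k+1}>s_k\). For \(h\le N\) the trivial bound \(|f(y_k^N)|\le\norm{y_k^N}\le m_N\norm{y_k^N}/m_h\) suffices, and for \(N<h<j_k\) Lemma~\ref{lem:vec-loc-wgt} applies. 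So the hypothesis gives \(\norm{Ry_k^N}\to0\) directly, with no averaging.
\item Now choose \(N_t,k_t\) recursively with \(N_{t+1}>\max\{N_t,s_{k_t}\}\) and \(\norm{Ry_{k_t}^{N_t}}<2^{-t}\). The parts \(z_{k_t}^{N_t}\) live only at levels \(\ge N_t+1\), so Lemma~\ref{lem:vec-loc-wgt} makes them an RIS with indices \(N_t+1\). Hence \(Rx_{k_t}\to0\).
\end{itemize}
Your first variant splits at a threshold that moves with \(k\), yet treats the low parts as using a fixed finite set of levels. Coupling \(N_t\) with \(k_t\) is what reconciles these two requirements.

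Step~2 is the paper's argument. In Step~3 you cannot subtract a weak limit, since bounded sequences need not have weakly convergent subsequences here. Your alternative via \(\norm{R(I-P_{[1,n]})}\to0\), truncating tail vectors to successive blocks, is the paper's proof. It needs only the block-sequence statement and the FDD, not shrinking.
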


\begin{proof}
	Let \((x_k)\) be a bounded block sequence with \(\sup_k\norm{x_k}\le C_x\), and put \(s_k=\max\operatorname{ran}x_k\),
	and write \(x_k=i_{s_k}^i(u_k)\).
	For \(N\in\N_+\), write \(u_k=v_k^N+w_k^N\), where
	\(v_k^N\) is supported on atoms of level at most \(N\) and \(w_k^N\) is supported on atoms of level greater than \(N\).
	Put \(y_k^N=i_{s_k}^i(v_k^N)\) and \(z_k^N=i_{s_k}^i(w_k^N)\).
	By \hyperref[itm:ana-ea4]{\textup{(EA4)}} and the block extension estimate, we have
	\[ \|y_k^N\|,\ \|z_k^N\|\leq M\|x_k\|. \]
	Also, \(\supp_{\rm FDD}y_k^N\cup\supp_{\rm FDD}z_k^N\subseteq\operatorname{ran}x_k\).
	For fixed \(N\), Lemma~\ref{lem:vec-loc-wgt} shows that \((y_k^N)_k\) is a RIS.
	Indeed, choose RIS indices \(j_k\) increasing beyond the preceding block ranges and with \(j_k>N\).
	Let \(f=e_{\gamma,\varphi}^{i*}\in\mathcal{K}_i\) have level \(0<h<j_k\).
	If \(N<h<j_k\), the local support of \(y_k^N\) contains no level-\(h\) summand, and Lemma~\ref{lem:vec-loc-wgt} gives
	\[ |f(y_k^N)|\le\frac{\kappa\|y_k^N\|}{m_h}. \]
	If \(h\le N\), the trivial bound \(|f(y_k^N)|\le\|y_k^N\|\) is at most \(m_N\|y_k^N\|/m_h\).
	Thus the constant \( C_N=MC_x\max\{1,\kappa,m_N\} \)
	satisfies all RIS inequalities.
	Thus, for every fixed \(N\), the hypothesis on \(R\) gives \(\|Ry_k^N\|\to 0\). We may therefore choose recursively \(N_t,k_t\) so that
	\[ N_{t+1}>\max\{N_t,s_{k_t}\},\qquad k_{t+1}>k_t,\qquad \|Ry_{k_t}^{N_t}\|<2^{-t}.\]
	For the sequence \((z_{k_t}^{N_t})_t\), put \(j_t=N_t+1\). Then
	\[ j_{t+1}=N_{t+1}+1>s_{k_t}\geq\max\operatorname{ran}z_{k_t}^{N_t}, \qquad \operatorname{lev}(\alpha)\geq j_t \quad\bigl(\forall \alpha\in\supp_{\rm loc}z_{k_t}^{N_t}\bigr). \]
	By Lemma~\ref{lem:vec-loc-wgt}, we have
	\[ \left|e_{\gamma,\varphi}^{i*}\bigl(z_{k_t}^{N_t}\bigr)\right| \leq\frac{\kappa\|z_{k_t}^{N_t}\|}{m_h} \leq\frac{\kappa MC_x}{m_h} \]
	whenever \(0<\operatorname{lev}(\gamma)=h<j_t\).
	Thus \((z_{k_t}^{N_t})_t\) is a \(\kappa MC_x\)-RIS.
	Hence \(\|Rz_{k_t}^{N_t}\|\to 0\), and \(Rx_{k_t}=Ry_{k_t}^{N_t}+Rz_{k_t}^{N_t}\to 0\).
	Applying this argument to every subsequence, we have that \(Rx_k\to 0\) as \(k \to \infty\).

	By Corollary~\ref{cor:ris-wek-nul}, every scalar functional kills every RIS.
	The first part therefore says that every bounded block sequence is weakly null,
	which is equivalent to shrinking of an FDD.

	Finally, suppose that \(\|R(I_{X_i}-P_{[1,N]})\|\not\to 0\).
	Choose \(\delta_0>0\) and arbitrarily large \(N\) with \(\|R(I_{X_i}-P_{[1,N]})\|>\delta_0\).
	Since \(\|I_{X_i}-P_{[1,N]}\|\le1+M\le\kappa\), applying the tail projection to a unit vector and then normalizing its nonzero image gives a vector in the tail range on which \(R\) has norm at least \(\delta_0/(2\kappa)\).
	For each such \(N\), we can therefore choose
	\[ u\in\operatorname{ran}(I_{X_i}-P_{[1,N]}),\qquad \norm{u}\le1,\qquad \norm{Ru}>\frac{\delta_0}{2\kappa}. \]
	Starting with \(q_0=0\), make such a choice after \(q_{k-1}\), and then choose \(q_k>q_{k-1}\) so that
	\[ \widetilde{x}_k=P_{(q_{k-1},q_k]}u_k \quad\text{with}\quad \norm{R(u_k-\widetilde{x}_k)}<\frac{\delta_0}{4\kappa}. \]
	The interval projection bound gives \(\|\widetilde{x}_k\|\le\kappa\), the \(\widetilde{x}_k\)'s are successive, and \(\|R\widetilde{x}_k\|>(\delta_0/4\kappa)\).
	This contradicts the first part of the proposition.
	Therefore
	\[ \norm{R(I_{X_i}-P_{[1,N]})}\longrightarrow0. \]
	Thus \(R\) is compact.
\end{proof}

\begin{lemma}[Isometry of the protected module]
	\label{lem:tv-ess-low}
	For every \(v\in V^*\),
	\begin{equation}\label{eq:tv-ess-low}
		\norm{[T_v]}=\norm{v}.
	\end{equation}
\end{lemma}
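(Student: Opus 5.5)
The upper bound is immediate: \(\norm{[T_v]}\le\norm{T_v}\le\norm{v}\) by Lemma~\ref{lem:int}. So the whole task is the lower bound \(\norm{T_v+K}\ge\norm{v}-\varepsilon\) for every compact \(K\in\Kcal(X_0,X_1)\) and every \(\varepsilon>0\). I would obtain it from the probes in \hyperref[itm:ana-g6]{\textup{(G6)}}, which give exact isometric read-outs of \(v\), combined with the fact that compact operators vanish asymptotically on block sequences.

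Fix \(v\ne0\), \(\varepsilon>0\) and \(K\) compact. By density of \(\bigcup_rV_r\), choose \(r\) and \(q\in V_r\) with \(\norm{q}=1\) and \(\operatorname{Re}v(q)>\norm{v}-\varepsilon\). Using \hyperref[itm:ana-g6]{\textup{(G6)}} (equivalently Proposition~\ref{prop:fml-prb}\textup{(P1)}), choose probes \(\gamma_k\in\mathcal P\) with \(\operatorname{rank}\gamma_k\to\infty\) and \(E_{\gamma_k}\supseteq V_r\). Set \(x_k=j_{\gamma_k}q\). Then \(\norm{x_k}=\norm{q}=1\), each \(x_k\) is supported in the single FDD rank \(\operatorname{rank}\gamma_k\), and
\[
T_vx_k=v(q)z_{\gamma_k},\qquad \phi_{\gamma_k}(T_vx_k)=v(q),\qquad \norm{\phi_{\gamma_k}}=1 .
\]

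The sequence \((x_k)\) is a bounded block sequence. Since the FDD of \(X_0\) is shrinking (Proposition~\ref{prop:ris-red}), \((x_k)\) is weakly null; alternatively, Proposition~\ref{prop:fml-prb}\textup{(a)} makes it an \(M\)-RIS, and Corollary~\ref{cor:ris-wek-nul} applies. A compact operator maps weakly null sequences to norm-null ones, so \(\norm{Kx_k}\to0\). Therefore
\[
\norm{T_v+K}\ge\abs{\phi_{\gamma_k}\bigl((T_v+K)x_k\bigr)}\ge\abs{v(q)}-\norm{Kx_k}\longrightarrow\abs{v(q)}>\norm{v}-\varepsilon .
\]
Taking the infimum over \(K\) and letting \(\varepsilon\to0\) gives \(\norm{[T_v]}\ge\norm{v}\).

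The step needing the most care is the passage from compactness of \(K\) to \(\norm{Kx_k}\to0\). This requires weak nullity of \((x_k)\), which rests on Proposition~\ref{prop:ris-red} (or on the RIS scheduling together with Corollary~\ref{cor:ris-wek-nul}). Everything else is exact, because the probes have zero correction and the identities \(\norm{j_\gamma q}=\norm{q}\) and \(\norm{\phi_\gamma}=1\) are isometric by \eqref{eq:ter-par}. No constant \(M\) enters, so the conclusion is a genuine isometry.
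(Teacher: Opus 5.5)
Your proposal is correct and follows essentially the same route as the paper. Both arguments read off \(v(q)\) exactly through a probe, \(\phi_{\gamma_k}(T_vj_{\gamma_k}q)=v(q)\) with \(\norm{\phi_{\gamma_k}}=1\) and \(\norm{j_{\gamma_k}q}=\norm{q}\). Both then use that \((j_{\gamma_k}q)_k\) is an RIS, hence weakly null, so \(\norm{Kj_{\gamma_k}q}\to0\) for every compact \(K\), and take the upper bound \(\norm{[T_v]}\le\norm{v}\) from Lemma~\ref{lem:int}.
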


\begin{proof}
	If \(V=\{0\}\), the assertion is immediate. Otherwise, fix \(q\in V_{\Q}\cap B_V\setminus\{0\}\).
	By \hyperref[prop:fml-prb]{Proposition~\ref*{prop:fml-prb}~\textup{(P1)}}, choose successively separated probes \(\gamma_k\) whose fibres contain \(q\).
	The uniform shielding estimate in \hyperref[prop:fml-prb]{Proposition~\ref*{prop:fml-prb}~\textup{(P2)}} makes \((j_{\gamma_k}q)\) a RIS,
	so it is weakly null by Corollary~\ref{cor:ris-wek-nul}.
	Hence every compact \(K:X_0\to X_1\) satisfies \(\norm{Kj_{\gamma_k}q}\to 0\).
	By the probe identities in \hyperref[itm:ana-g6]{\textup{(G6)}}, we have
	\(\phi_{\gamma_k}(T_vj_{\gamma_k}q)=v(q)\). Hence
	\begin{align*}
		\left|v(q)-\phi_{\gamma_k}(Kj_{\gamma_k}q)\right|
		=\left|\phi_{\gamma_k}\bigl((T_v-K)j_{\gamma_k}q\bigr)\right|
		\leq\norm{T_v-K}\norm{q}.
	\end{align*}
	Letting \(k\to\infty\) and taking the supremum over \(q\in V_{\Q}\cap B_V\) gives \(\norm{T_v-K}\ge\norm{v}\). Taking the infimum over compact \(K\) and using Lemma~\ref{lem:int} gives \eqref{eq:tv-ess-low}.
\end{proof}

\section{Exact pairs and dependent sequences}

The exact-pair and dependent-sequence scheme used in this section follows the AH construction \cite{ArgyrosHaydon2011}.
Rooted auxiliary-average estimates and off-weight (equivalently, off-level)
estimates are well-established tools in mixed Tsirelson and Argyros--Haydon
constructions.
For auxiliary averages, see \cite[Proposition~2.5]{ArgyrosHaydon2011}
and compare \cite[Proposition~11.7]{MotakisPelczar2025}.
For off-weight estimates, see
\cite[Definition~6.1 and the following remark]{ArgyrosHaydon2011}
and \cite[Definition~6.7]{Motakis2024}.
The broader exact-pair methods and their subsequent developments are treated in
\cite{ArgyrosTolias2004,Tarbard2013,Zisimopoulou2014,ManoussakisPelczarSwietek2017,MotakisPuglisiTolias2020}.
The proofs below give the versions needed for our parameters, interval
restrictions, and root perturbations.

This section turns a persistent failure of a  local orbit estimate into a dependent-sequence contradiction.
If an operator stays away from its orbit on a RIS,
finite-dimensional separation and rational approximation first give rational packet data with small annihilation error.
Operator-dependent exactification uses chains already present in the fixed datum to turn these data into an exact pair and a compatible terminal-coordinate certificate.
The finite-extension property then lets us select such pairs successively along a coded history.
On the resulting dependent sequence, the local operator lower bounds add up,
while the basic inequality gives a lower-order upper bound for the corresponding vector average.
This contradiction proves the required local orbit estimate.

Readers who wish to see the main dependent-sequence argument before the technical construction may go directly to Proposition~\ref{prop:crt-loc-orb}.

The annihilation error below treats the two self corners and the forward corner uniformly.

\subsection{\texorpdfstring{Annihilation and root perturbations}{Annihilation and root perturbations}}

Recall from Definition~\ref{def:fml-pkt} and \eqref{eq:pkt-val-sup}
that a represented operator packet \(\mathbf b\) determines an operator
\(B_{\mathbf b}^*:G^*\longrightarrow\mathcal E_{i,*}\), together with a raw rank support.
For each \(\varphi\in G^*\), \(B_{\mathbf b}^*\varphi\) is a scalar-valued ambient functional, acting on \(X_i\) via the canonical embedding.

As stipulated before \eqref{eq:sep-crt}, \(\norm{b}_{\ell_1}\) denotes the block \(\ell_1\)-sum norm. For a finite ambient functional
\(b=(b_n)_n\in\mathcal E_{i,*}\), with \(b_n=(\psi_\alpha)_{\alpha\in\Delta_n^i}\in(F_n^i)^*\), we have
\[ \norm{b}_{\ell_1} =\sum_n\norm{b_n}_{(F_n^i)^*} =\sum_n\sum_{\alpha\in\Delta_n^i}\norm{\psi_\alpha}_{(G_\alpha^i)^*}. \]
For a packet representation, entries at the same atom are combined when computing this norm.
In particular, a scalar packet \(\mathbf b=((a_l,\eta_l,R_l))_l\) satisfies
\(\norm{B_{\mathbf b}^*(1)}_{\ell_1}\le\sum_l|a_l|\norm{R_l}\).

For a rational finite ambient functional \(b\) with \(\norm{b}_{\ell_1}\le1\), a represented scalar packet \(\mathbf b\), with \(G=\K\), can be chosen so that \(b=B_{\mathbf b}^*(1)\).
Its entries may refer to atoms with vector fibres.
Whenever this fixed representation is admissible for the prescribed chain extension, Proposition~\ref{prop:fin-ext} guarantees cofinally many pre-existing realizations carrying it.

For simplicity of notation, we also use \(b\) to refer to its fixed represented scalar packet \(\mathbf b\).
In a general evaluation analysis, \(b\) denotes \(B_{\mathbf b}^*\varphi\) with the inherited scalarization \(\varphi\) which need not be rational.
In both cases, \(\supp_{\rm raw}b\) means \(\supp_{\rm raw}\mathbf b\) and depends on this stored representation.
Recall that \(U_h\) is defined by Equation \eqref{eq:uh}.

For a corner \(c=ij\in\{00,11,01\}\), define the \emph{annihilation error}
\begin{equation}\label{eq:ann-dfc}
	\Delta_c(h,x)=
	\begin{cases}
		h(x),&c=00\text{ or }c=11,\\
		U_hx,&c=01.
	\end{cases}
\end{equation}

\begin{lemma}[Error identity and continuity]
	\label{lem:dfc-cnt}
	For \(c=ij\in\{00,11,01\}\),
	\begin{equation}\label{eq:dfc-ann}
		h|_{\mathscr{M}_{ij}(x)}=0
		\quad\Longleftrightarrow\quad
		\Delta_c(h,x)=0.
	\end{equation}
	If \(h,k\) belong to a common finite ambient dual block, then
	\begin{equation}\label{eq:dfc-lip}
		\norm{\Delta_c(h,x)-\Delta_c(k,y)}
		\le \norm{h-k}_{\ell_1}\norm{x}
		+\norm{k}_{\ell_1}\norm{x-y}.
	\end{equation}
\end{lemma}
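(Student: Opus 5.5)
The plan is to treat the two self corners and the forward corner separately, since \(\Delta_c\) is scalar-valued in the first case and \(V\)-valued in the second. In each case the equivalence \eqref{eq:dfc-ann} comes from the explicit description \eqref{eq:fou-orb} of the orbit. The Lipschitz estimate \eqref{eq:dfc-lip} then follows from bilinearity of \((h,x)\mapsto\Delta_c(h,x)\), combined with a single norm bound \(\norm{\Delta_c(h,x)}\le\norm{h}_{\ell_1}\norm{x}\).

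For \(c=00\) or \(c=11\) we have \(\mathscr M_{ii}(x)=\K x\). Since \(h\) is linear, \(h|_{\K x}=0\) holds exactly when \(h(x)=0\), that is, when \(\Delta_c(h,x)=0\).

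For \(c=01\), identity \eqref{eq:h-int} gives \(h(T_wx)=w(U_hx)\) for every \(w\in V^*\). Hence \(h\) vanishes on \(\mathscr M_{01}(x)=\{T_wx:w\in V^*\}\) exactly when \(w(U_hx)=0\) for all \(w\in V^*\). By Hahn--Banach this is equivalent to \(U_hx=0\). This proves \eqref{eq:dfc-ann} in all three cases.

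For the estimate, write
\[
\Delta_c(h,x)-\Delta_c(k,y)=\Delta_c(h-k,x)+\Delta_c(k,x-y),
\]
which is valid because \(\Delta_c\) is linear in each argument. This is clear for \(h(x)\). For \(U_hx=\sum_\eta a_\eta U_\eta x\), both sides sit in a common fibre \(V_r\); here we use that \(h\) and \(k\) lie in a common finite ambient block. It therefore suffices to show \(\norm{\Delta_c(g,z)}\le\norm{g}_{\ell_1}\norm{z}\) for a finite ambient functional \(g\).

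In the self case this is immediate. The scalarized coordinates \(e^{i*}_{\alpha,\psi}\) are contractive on \(X_i\), so the \(\ell_1\)-sum pairing gives \(|g(z)|\le\norm{g}_{\ell_1}\norm{z}\).

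In the forward case \(g=\sum_\eta a_\eta e^{1*}_\eta\) is a combination of scalar target coordinates, so \(\norm{g}_{\ell_1}=\sum_\eta|a_\eta|\) after combining equal atoms. Then
\[
\norm{U_gz}_V\le\sum_\eta|a_\eta|\,\norm{U_\eta z}_V\le\norm{g}_{\ell_1}\norm{z}
\]
by \eqref{eq:u-bnd}. Applying this bound to both summands of the decomposition yields \eqref{eq:dfc-lip}.

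The main obstacle is a bookkeeping point in the forward case. For \(U_h\) to be well defined and linear in \(h\), we must view the vectors \(U_\eta x\) in one common fibre, and \(\norm{h}_{\ell_1}\) must be computed with coinciding atoms merged. Once this convention (stated before \eqref{eq:uh}) is fixed, the rest is routine. If \(h\) were allowed to involve source vector fibres, one would instead use \(\norm{U_\eta z}\le\norm{z}\) together with the scalarization argument in the proof of Lemma~\ref{lem:uni-eva}.
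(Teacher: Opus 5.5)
Your proof is correct and follows the paper's route. The equivalence comes from \eqref{eq:h-int} together with the fact that \(V^*\) separates points of \(V\), and the estimate comes from bilinearity plus the bound \(\norm{\Delta_c(g,z)}\le\norm{g}_{\ell_1}\norm{z}\). The only cosmetic difference is how you get this bound in the forward corner: the paper uses duality, \(\norm{U_hx}=\sup_{v\in B_{V^*}}|h(T_vx)|\le\norm{h}_{\ell_1}\norm{x}\) via \(\norm{T_v}\le\norm{v}\), whereas you sum \eqref{eq:u-bnd} termwise, which amounts to the same thing since \(\norm{T_v}\le\norm{v}\) is itself derived from \eqref{eq:u-bnd}.
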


\begin{proof}
	Only the forward corner needs comment.
	Equation \eqref{eq:h-int} gives
	\[ h(T_vx)=v(U_hx)\qquad(v\in V^*). \]
	Since \(V^*\) separates points of \(V\), this proves \eqref{eq:dfc-ann}.
	Moreover \( \norm{U_hx} =\sup_{v\in B_{V^*}}|h(T_vx)| \le\norm{h}_{\ell_1}\norm{x}, \)
	which gives \eqref{eq:dfc-lip}. The other cases are its scalar specialization.
\end{proof}

\begin{lemma}[Rational approximate annihilator]
	\label{lem:rat-app-ann}
	Let \(c=ij\in\{00,11,01\}\), let \(S:X_i\to X_j\) be bounded, and let \((x_k)\) be a RIS such that
	\begin{equation}\label{eq:unf-orb-sep}
		\dist(Sx_k,\mathscr{M}_{ij}(x_k))\ge\varepsilon
		\qquad(k\ge1).
	\end{equation}
	For every cutoff \(a\), \(\delta>0\), and positive sequence \((\eta_k)\), there are
	\(k\), a rational block vector \(u\), and a rational packet \(b\), with \(u\) and \(b\) supported after \(a\), such that
	\begin{equation}\label{eq:app-ann-con}
		\begin{gathered}
			\ran u=\ran x_k,
			\quad \norm{u-x_k}<\eta_k,
			\quad \norm{b}_{\ell_1}\le1,\quad
			\operatorname{Re}b(Su)>\frac{\varepsilon}{2},
			\quad \norm{\Delta_c(b,u)}<\delta.
		\end{gathered}
	\end{equation}
\end{lemma}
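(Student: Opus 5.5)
The plan is to fix one block \(x_k\) far out, separate \(Sx_k\) from the finite-dimensional orbit \(L=\mathscr{M}_{ij}(x_k)\) by a norm-one functional vanishing on \(L\), approximate that functional by a finite rational ambient functional of \(\ell_1\)-norm at most one, and then perturb \(x_k\) to a rational vector \(u\), controlling the errors with Lemma~\ref{lem:dfc-cnt}. The main obstacle is the forward corner, where approximate annihilation must hold uniformly over \(\{T_vx_k:\norm{v}\le1\}\) rather than at a single vector.

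\emph{Choice of \(k\) and separation.} By Corollary~\ref{cor:ris-wek-nul}, \((x_k)\) is weakly null, so \((Sx_k)\) is weakly null. Since \(P^j_{[1,a]}\) has finite-dimensional range, \(\norm{P^j_{[1,a]}Sx_k}\to0\). Choose \(k\) with \(\min\ran x_k>a\) and \(\norm{P^j_{[1,a]}Sx_k}<\tau\), where \(\tau\) is a small parameter depending on \(\varepsilon,\delta\).

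The orbit \(L\) is finite dimensional by \eqref{eq:fou-orb}, and it is supported in \(\ran x_k\), hence after \(a\). This holds trivially in the self corners. In the forward corner, \(T_v\) commutes with the interval projections (\(P^1_IT_v=T_vP^0_I\)), so each \(T_vx_k\) has FDD support in \(\ran x_k\).

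By \eqref{eq:unf-orb-sep} and Hahn--Banach there is \(x^*\in B_{X_j^*}\) with \(x^*|_L=0\) and \(\operatorname{Re}x^*(Sx_k)\ge\varepsilon\).

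\emph{Approximation by an ambient functional.} The set \(\mathcal K_j\) is balanced and norms \(X_j\). By the bipolar theorem, \(B_{X_j^*}\) is the weak\(^*\) closure of the absolutely convex hull of \(\mathcal K_j\), and every element of that hull is the restriction of a finite ambient functional with block \(\ell_1\)-norm at most one.

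For the forward corner, the set \(\{T_vx_k:v\in B_{V^*}\}\) is a bounded subset of the finite-dimensional space \(L\), since \(\norm{T_v}\le\norm{v}\) by Lemma~\ref{lem:int}. Fix a basis \(y_1,\dots,y_m\) of \(L\); coefficient functionals on \(L\) are bounded. Approximating \(x^*\) weak\(^*\) at the finitely many vectors \(y_1,\dots,y_m,Sx_k\) then gives a finite ambient \(b'\) with \(\norm{b'}_{\ell_1}\le1\) and the following two properties:
\begin{itemize}
\item \(\sup_{y\in L,\ \norm{y}\le \norm{x_k}}|b'(y)|<\tau\);
\item \(\operatorname{Re}b'(Sx_k)>\varepsilon-\tau\).
\end{itemize}
In the forward corner the first property gives \(\norm{U_{b'}x_k}=\sup_{\norm{v}\le1}|b'(T_vx_k)|<\tau\), using \eqref{eq:h-int}. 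In the self corners it gives \(|b'(x_k)|<\tau\). By \eqref{eq:ann-dfc}, both statements say \(\norm{\Delta_c(b',x_k)}<\tau\).

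\emph{Support after \(a\) and rationality.} Delete from \(b'\) all ambient components of rank at most \(a\). By unit triangularity, an ambient coordinate of rank at most \(a\) lies in the span of the \(d^{*}\)-blocks of rank at most \(a\). It therefore vanishes on vectors supported after \(a\), in particular on \(L\) and on \(x_k\), and on \(Sx_k\) it sees only \(P^j_{[1,a]}Sx_k\). So the truncation does not increase the \(\ell_1\)-norm, leaves \(\Delta_c(\cdot,x_k)\) unchanged, and changes \(b'(Sx_k)\) by less than \(\tau\).

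Next replace each fibre component by a rational functional. The fibres are \(\K\) or \(V_r\) with rational bases, so rational functionals are dense in the dual. Scale by a factor slightly below \(1\) to restore \(\norm{b}_{\ell_1}\le1\). The result is a rational \(b\), represented by a rational scalar packet, with \(\norm{b-b'}_{\ell_1}<\tau\).

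Finally choose a rational \(u\) with \(\ran u=\ran x_k\) and \(\norm{u-x_k}<\min\{\eta_k,\tau\}\). This is possible because \(x_k\in X_j^{\rm alg}\) is a finite combination of \(d_\alpha(g)\) with \(g\) in finite-dimensional rational fibres, and a rational perturbation of these coefficients keeps the same extreme ranks.

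\emph{Error bookkeeping.} Lemma~\ref{lem:dfc-cnt}, specifically \eqref{eq:dfc-lip}, gives
\[
\norm{\Delta_c(b,u)}\le \norm{b-b'}_{\ell_1}\norm{x_k}+\norm{\Delta_c(b',x_k)}+\norm{u-x_k}\le \tau(C+2),
\]
where \(C\) bounds \(\norm{x_k}\). For the separation,
\[
\operatorname{Re}b(Su)\ge \varepsilon-3\tau-\tau\norm{S}-\tau\norm{S}\norm{x_k}.
\]
Taking \(\tau\) small relative to \(\delta\), \(\varepsilon\), \(\norm{S}\) and \(C\) yields \eqref{eq:app-ann-con}.
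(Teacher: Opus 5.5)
Your proof is correct and follows essentially the same route as the paper: Hahn--Banach separation from the finite-dimensional orbit, bipolar approximation of the separating functional by finite ambient functionals of \(\ell_1\)-norm at most one, deletion of the ranks \(\le a\), and rationalization controlled by \eqref{eq:dfc-lip}. The differences are cosmetic. You separate \(Sx_k\) itself rather than the truncation \(z=P^j_{(a,r]}Sx_k\), and you obtain uniform annihilation on \(\{T_vx_k:\norm{v}\le1\}\) through a basis of \(L\) instead of upgrading weak convergence in the finite-dimensional span of the \(D_\gamma x_k\). There is also one harmless slip: \(x_k\) lies in \(X_i^{\rm alg}\), not \(X_j^{\rm alg}\).
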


\begin{proof}
	Since the FDDs are shrinking, \((Sx_k)\) is weakly null.
	We can choose \(k\) and \(r>\max\operatorname{ran}x_k\) such that
	\[ \min\operatorname{ran}x_k>a, \qquad \norm{P_{(0,a]}^jSx_k}+\norm{P_{(r,\infty)}^jSx_k}<\frac{\varepsilon}{8}. \]
	Put \(z=P_{(a,r]}^jSx_k\).
	By \eqref{eq:fou-orb} and \(P_{(a,r]}^1T_v=T_vP_{(a,r]}^0\), we have
	\[ \mathscr{M}_{ij}(x_k)\subseteq P_{(a,r]}^jX_j, \qquad \norm{Sx_k-z}<\frac{\varepsilon}{8}, \qquad \dist(z,\mathscr{M}_{ij}(x_k))>\frac{7\varepsilon}{8}. \]
	By the Hahn--Banach theorem, we choose \(x^*\in B_{X_j^*}\) such that
	\[ x^*|_{\mathscr{M}_{ij}(x_k)}=0, \qquad \operatorname{Re}x^*(z)>\frac{3\varepsilon}{4}. \]
	Since the scalarized ambient coordinates norm \(X_j\), the bipolar theorem \cite{Conway1990} gives finite ambient packets \((b_\alpha)\) with
	\[ \norm{b_\alpha}_{\ell_1}\le1, \qquad b_\alpha\xrightarrow{w^*}x^*. \]
	For \(i=j\), this gives \(b_\alpha(x_k)\to 0\).
	For \(ij=01\), by \eqref{eq:def-tv}, all \(U_{b_\alpha}x_k\) lie in the finite-dimensional space
	\(\operatorname{span}\{D_\gamma x_k:D_\gamma x_k\ne0\}\), and \eqref{eq:h-int} gives \(
	v(U_{b_\alpha}x_k)=b_\alpha(T_vx_k)\longrightarrow x^*(T_vx_k)=0\), for every \(v\in V^*\).
	Thus \(\norm{\Delta_c(b_\alpha,x_k)}\to 0\).
	Since \(z\) and \(\mathscr{M}_{ij}(x_k)\) are supported in \((a,r]\), deleting the raw ranks at most \(a\) preserves
	both \(b_\alpha(z)\) and \(\Delta_c(b_\alpha,x_k)\).
	Thus we may choose a finite packet \(b^0\) so that
	\[ \supp_{\rm raw}b^0\subseteq(a,\infty), \qquad \norm{b^0}_{\ell_1}\le1, \qquad \operatorname{Re}b^0(z)>\frac{5\varepsilon}{8}, \qquad \norm{\Delta_c(b^0,x_k)}<\frac{\delta}{4}. \]
	It follows that
	\[ \operatorname{Re}b^0(Sx_k) \ge \operatorname{Re}b^0(z)-\norm{Sx_k-z} >\frac{\varepsilon}{2}. \]
	Then there is \(\theta\) with \(0<\theta<1\) so that the packet \(b^1=(1-\theta)b^0\) satisfies
	\[ \norm{b^1}_{\ell_1}<1, \qquad \operatorname{Re}b^1(Sx_k)>\frac{\varepsilon}{2}, \qquad \norm{\Delta_c(b^1,x_k)}<\frac{\delta}{2}. \]
	By rational density in the same finite blocks and \eqref{eq:dfc-lip}, we choose rational \(u\) and \(b\) with
	\[
		\begin{gathered}
			\ran u=\ran x_k,
			\qquad \norm{u-x_k}<\eta_k,
			\qquad \supp_{\rm raw}b\subseteq(a,\infty),\\
			\norm{b}_{\ell_1}\le1,
			\qquad \operatorname{Re}b(Su)>\frac{\varepsilon}{2},
			\qquad \norm{\Delta_c(b,u)}<\delta.
		\end{gathered}
	\]
\end{proof}

\begin{corollary}[Simultaneous approximate-annihilator selection]
	\label{cor:sim-app-ann}
	Assume the hypotheses of Lemma~\ref{lem:rat-app-ann}.
	Let \(A:X_i\to F\) be bounded, where \(F\) is finite dimensional, and let
	\(\lambda_1,\ldots,\lambda_r\in X_i^*\).
	For every cutoff \(a\), positive \(\beta_0,\ldots,\beta_r,\tau\), and positive sequence \((\eta_k)\),
	there are \(k\), a rational block vector \(u\), and a rational packet \(b\) such that
	\[
		\begin{gathered}
			\ran u=\ran x_k,
			\quad \min\operatorname{ran}u>a,
			\quad \supp_{\rm raw}b\subseteq(a,\infty),
			\quad \norm{u-x_k}<\eta_k,\\
			\norm{Au}<\beta_0,
			\quad |\lambda_l(u)|<\beta_l\quad(1\le l\le r),
			\quad \norm{b}_{\ell_1}\le1,
			\quad \operatorname{Re}b(Su)>\frac{\varepsilon}{2},
			\quad \norm{\Delta_c(b,u)}<\tau.
		\end{gathered}
	\]
\end{corollary}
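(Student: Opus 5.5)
The plan is to rerun the proof of Lemma~\ref{lem:rat-app-ann} on a suitable subsequence of \((x_k)\), so that the extra smallness conditions for \(A\) and for \(\lambda_1,\ldots,\lambda_r\) hold before the rational approximation. The separation hypothesis \eqref{eq:unf-orb-sep} is inherited by every subsequence, and every subsequence of a RIS is again a RIS. So we may pass to subsequences freely.

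\emph{First step.} By Corollary~\ref{cor:ris-wek-nul}, the RIS \((x_k)\) is weakly null. Since \(F\) is finite dimensional, \(A\) is compact, and hence \(\norm{Ax_k}\to0\). Each \(\lambda_l\) is a fixed functional in \(X_i^*\), so also \(\lambda_l(x_k)\to0\). Since \(\min\operatorname{ran}x_k\to\infty\) for a block sequence, we can choose \(k_1\) so that, for every \(k\ge k_1\),
\[ \min\operatorname{ran}x_k>a,\qquad \norm{Ax_k}<\beta_0/2,\qquad |\lambda_l(x_k)|<\beta_l/2\quad(1\le l\le r). \]

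\emph{Second step.} Apply Lemma~\ref{lem:rat-app-ann} to the tail \((x_k)_{k\ge k_1}\), with cutoff \(a\), with \(\delta=\tau\), and with the modified tolerances
\[ \eta_k'=\min\Bigl\{\eta_k,\ \frac{\beta_0}{2(\norm{A}+1)},\ \frac{\beta_l}{2(\norm{\lambda_l}+1)}\ (1\le l\le r)\Bigr\}. \]
This yields an index \(k\ge k_1\), a rational block vector \(u\), and a rational packet \(b\) such that \(\ran u=\ran x_k\), \(\norm{u-x_k}<\eta_k'\le\eta_k\), and \(\supp_{\rm raw}b\subseteq(a,\infty)\). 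They also satisfy the three remaining conclusions of \eqref{eq:app-ann-con}: \(\norm{b}_{\ell_1}\le1\), \(\operatorname{Re}b(Su)>\varepsilon/2\), and \(\norm{\Delta_c(b,u)}<\tau\). Since \(\ran u=\ran x_k\) and \(k\ge k_1\), we get \(\min\operatorname{ran}u>a\).

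\emph{Third step.} The triangle inequality gives the two new estimates:
\[ \norm{Au}\le\norm{Ax_k}+\norm{A}\,\norm{u-x_k}<\beta_0, \]
and likewise \(|\lambda_l(u)|<\beta_l\) for each \(l\).

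The only point needing care is that Lemma~\ref{lem:rat-app-ann} selects the index \(k\) itself. This is handled in the first step: the smallness conditions are imposed on the whole tail beyond \(k_1\), and the lemma is applied only to that tail, so whatever \(k\) it selects already satisfies them.
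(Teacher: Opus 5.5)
Your proof is correct and matches the paper's argument: use weak nullity of the RIS (Corollary~\ref{cor:ris-wek-nul}) and finite-dimensionality of \(F\) to pass to a tail where \(\norm{Ax_k}<\beta_0/2\) and \(|\lambda_l(x_k)|<\beta_l/2\). Then apply Lemma~\ref{lem:rat-app-ann} on that tail with the shrunken radii \(\eta_k'\), and conclude by the triangle inequality. Your additional tail condition \(\min\operatorname{ran}x_k>a\) is harmless, since the lemma's own proof already selects \(k\) with \(\min\operatorname{ran}x_k>a\).
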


\begin{proof}
	By Corollary~\ref{cor:ris-wek-nul} and the finite dimensionality of \(F\),
	\[ \norm{Ax_k}\rightarrow0,\qquad \lambda_l(x_k)\rightarrow0\quad(1\le l\le r). \]
	Pass to a tail on which \(\norm{Ax_k}<\beta_0/2\) and
	\(|\lambda_l(x_k)|<\beta_l/2\) for \(1\le l\le r\), and put
	\[ \eta_k'=\min\left\{\eta_k, \frac{\beta_0}{2\max\{1,\norm A\}}, \min_{1\le l\le r}\frac{\beta_l}{2\max\{1,\norm{\lambda_l}\}}\right\}, \]
	where the last minimum is omitted when \(r=0\).
	By Lemma~\ref{lem:rat-app-ann}, choose \(k,u,b\) for this tail with cutoff \(a\), error bound \(\tau\), and radius \(\eta_k'\). Then
	\[ \norm{Au}\le\norm{Ax_k}+\norm A\eta_k'<\beta_0, \qquad |\lambda_l(u)|\le|\lambda_l(x_k)|+\norm{\lambda_l}\eta_k'<\beta_l \quad(1\le l\le r). \]
	The remaining assertions follow from \eqref{eq:app-ann-con}.
\end{proof}

\begin{definition}[Root-perturbed exact vector]
	\label{def:roo-exa}
	Fix a corner \(c=ij\in\{00,11,01\}\).
	Let \(u_1<\cdots<u_{n_p}\) belong to \(X_i^{\rm alg}\), and let
	\(\xi_1\prec\cdots\prec\xi_{n_p}=\eta\) be a complete level-\(p\) inner chain on side \(j\).
	Let \(\nu_0\) be its initial cutoff and put \(\nu_t=\operatorname{rank}\xi_t\) for \(1\le t\le n_p\).
	Assume
	\[ \nu_{t-1}<\min\operatorname{ran}u_t \le\max\operatorname{ran}u_t<\nu_t \qquad(1\le t\le n_p). \]
	Put \(f=e_\eta^{j*}\),
	and put
	\[ y^0=\frac{m_p}{n_p}\sum_{t=1}^{n_p}u_t. \]
	The root-perturbed vector certified by \(f\) is
	\begin{equation}\label{eq:roo-crr}
		y=
		\begin{cases}
			y^0-f(y^0)d_\eta^i,&i=j,\\
			y^0-d_{\widehat{\eta}}^0(U_fy^0),&ij=01.
		\end{cases}
	\end{equation}
\end{definition}

Throughout this section, \(y^0\) denotes the unperturbed vector and \(y\) the corresponding root-perturbed vector. The same convention applies to the sequence.

\begin{lemma}[Admissibility of the root perturbation]
	\label{lem:roo-crr-adm}
	Let \(C>0\), and let \(c,f,y^0,y\) be as in Definition~\ref{def:roo-exa}.
	If \(\norm{\Delta_c(f,y^0)}\le C/m_p\), then
	\[ \supp_{\rm FDD}(y-y^0)\subseteq\{\operatorname{rank}\eta\}, \qquad \norm{y-y^0}\le\frac{MC}{m_p}, \qquad \Delta_c(f,y)=0. \]
	The perturbation leaves all earlier \(d^*\)-coefficients and the inner-chain history unchanged.

	Suppose that the hypothesis holds termwise for a \(C_1\)-RIS \((y_p^0)_p\) with the level sequence \((p)\) as RIS indices,
	and let \(\eta_p\) be the corresponding terminal atom.
	For consecutive RIS indices \(p<p'\), assume \(\operatorname{rank}\eta_p<\min\bigl\{p',\min\operatorname{ran}y_{p'}^0\bigr\}\).
	Then the perturbed sequence \((y_p)_p\) is a \((C_1+MC)\)-RIS with RIS indices \((p)\).
\end{lemma}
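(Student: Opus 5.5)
The plan is to treat the single-vector statement first, separately for the self corners \(i=j\) and the forward corner \(ij=01\), and then deduce the RIS statement by combining the support and norm information with the shielding-type estimates already available for atoms of the form \(d_\eta^i(g)\).

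\emph{Support and norm in the self case.} For \(i=j\), the correction \(y-y^0=-f(y^0)d_\eta^i(1)\) has FDD support \(\{\operatorname{rank}\eta\}\) by Lemma~\ref{lem:fml-blk-bio}. Its norm is at most \(|f(y^0)|\,\norm{d_\eta^i(1)}\). Here \(\norm{d_\eta^i(1)}\le\norm{i^i_{\operatorname{rank}\eta}}\le M\) by \eqref{eq:fml-ext-bnd}, and \(|f(y^0)|=|\Delta_c(f,y^0)|\le C/m_p\), which gives the bound \(MC/m_p\).

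\emph{Exactness in the self case.} To see \(f(y)=0\), evaluate \(f=e^{j*}_\eta\) on \(d_\eta^i(1)\). The ambient coordinate of \(i^i_{\operatorname{rank}\eta}(0,\dots,0,1)\) at \(\eta\) equals \(1\) by \eqref{eq:ext-hed}, so \(f(d^i_\eta(1))=1\) and hence \(f(y)=f(y^0)-f(y^0)=0\).

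\emph{The forward case.} For \(ij=01\), put \(q=U_fy^0\in E_\eta\). Then \(\norm{q}\le C/m_p\), and \(d_{\widehat\eta}^0(q)\) is supported at the rank of \(\eta\) with norm at most \(M\norm q\). Since \(f=e^{1*}_\eta\), we have \(U_f=U_\eta=e^0_{\widehat\eta}\). By \eqref{eq:ext-hed}, the \(\widehat\eta\)-ambient coordinate of \(d^0_{\widehat\eta}(q)\) is \(q\). Therefore \(U_fy=q-q=0\), and so \(\Delta_c(f,y)=0\).

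\emph{Unchanged coefficients and history.} Biorthogonality (Lemma~\ref{lem:fml-blk-bio}) shows that \(d^*_\beta\)-coefficients of \(y\) and \(y^0\) agree for every \(\beta\ne\eta\) (resp. \(\beta\ne\widehat\eta\)). In particular all earlier coefficients are unchanged. The inner-chain history is part of the fixed datum and is not touched at all.

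\emph{The RIS statement.} I would check the two defining conditions of Definition~\ref{def:fml-ris} for \(y_p=y_p^0+w_p\), where \(w_p=y_p-y_p^0\).

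First, the range condition. Since \(\max\operatorname{ran}y_p\le\max\{\max\operatorname{ran}y_p^0,\operatorname{rank}\eta_p\}\), we need the next index \(p'\) to exceed \(\operatorname{rank}\eta_p\); this is exactly the stated hypothesis. The same hypothesis, \(\operatorname{rank}\eta_p<\min\operatorname{ran}y^0_{p'}\), keeps the sequence a block sequence.

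Second, the norm condition: \(\norm{y_p}\le C_1+MC/m_p\le C_1+MC\).

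Third, the level condition. Fix an evaluation of level \(0<h<p\). We have \(|e^{i*}_{\alpha,\psi}(y^0_p)|\le C_1/m_h\) by hypothesis. It remains to bound the perturbation term, \(|e^{i*}_{\alpha,\psi}(w_p)|\).

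\emph{Main obstacle.} The term \(|e^{i*}_{\alpha,\psi}(w_p)|\) is the main obstacle, because \(\eta_p\) is not a probe, so the shielding estimate \eqref{eq:fml-prb-shd} does not apply verbatim. I would try the cheap bound first: \(|e(w_p)|\le\norm{w_p}\le MC/m_p\le MC/m_h\), since \(h<p\) gives \(m_h<m_p\). This suffices, and it avoids any analysis of the chain through \(\eta_p\). Adding the two bounds gives the \((C_1+MC)\)-RIS estimate with indices \((p)\).
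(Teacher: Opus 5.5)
Your proposal is correct and follows essentially the same route as the paper. The head identity of \(d^i_\eta\) (resp.\ \(d^0_{\widehat\eta}\)) gives exactness, biorthogonality gives the support and the unchanged coefficients, \(\norm{i^i_{\operatorname{rank}\eta}}\le M\) gives the norm bound, and the level condition uses exactly the cheap estimate \(\norm{y_p-y_p^0}\le MC/m_p\le MC/m_h\). The only point you leave implicit is that the perturbation sits at \(\operatorname{rank}\eta_p>\max\operatorname{ran}y_p^0\) (by the placement in Definition~\ref{def:roo-exa}); this gives \(y_p\ne0\) and \(\min\operatorname{ran}y_{p'}=\min\operatorname{ran}y_{p'}^0\), which your block-sequence claim needs.
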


\begin{proof}
	By Lemma~\ref{lem:fml-blk-bio} and the protected terminal identity, we have
	\[ e_\eta^{i*}(d_\eta^i)=1\quad(i=j), \qquad U_\eta d_{\widehat{\eta}}^0(q)=q\quad(ij=01). \]
	By \eqref{eq:fml-xi-di} and \eqref{eq:fml-ext-bnd}, for every atom \(\alpha\) on side \(i\) and \(g\in G_\alpha^i\),
	\[ \norm{d_\alpha^i(g)} =\norm{i_{\operatorname{rank}\alpha}^i(0,\ldots,g,\ldots,0)} \le M\norm{g}. \]
	Thus \eqref{eq:roo-crr} gives
	\[ \norm{y-y^0}\le M\norm{\Delta_c(f,y^0)}\le\frac{MC}{m_p}. \]
	The identities above and Lemma~\ref{lem:fml-blk-bio} give the support and annihilation assertions
	and show that all earlier \(d^*\)-coefficients are unchanged.
	Since the perturbation is supported at \(\operatorname{rank}\eta_p>\max\operatorname{ran}y_p^0\), we have \(y_p\ne0\) and \(\min\operatorname{ran}y_p=\min\operatorname{ran}y_p^0\).
	For consecutive RIS indices \(p<p'\),
	\[
		\begin{aligned}
			\max\operatorname{ran}y_p
			&\le\operatorname{rank}\eta_p<\min\operatorname{ran}y_{p'}^0
			 =\min\operatorname{ran}y_{p'},\\
			\max\operatorname{ran}y_p
			&\le\operatorname{rank}\eta_p<p'.
		\end{aligned}
	\]
	Since scalarized coordinates are contractive, for \(e_{\gamma,\varphi}^{i*}\in\mathcal K_i\) of level \(0<h<p\), we have
	\[ \bigl|e_{\gamma,\varphi}^{i*}(y_p)\bigr| \le\frac{C_1}{m_h}+\frac{MC}{m_p} \le\frac{C_1+MC}{m_h}, \qquad \norm{y_p}\le C_1+\frac{MC}{m_p}\le C_1+MC. \]
	By construction, the inner chain is fixed before the perturbation, so its history is unchanged.
\end{proof}

Recall that \(A_{\rm ex}=3B+M\) from \eqref{eq:cst-led}.

\begin{lemma}[Operator-dependent exactification]
	\label{lem:opr-exf}
	Let \(c=ij\in\{00,11,01\}\), let \(S\in\Bcal(X_i,X_j)\), and let \((x_k)\subseteq X_i\) be a \(C_0\)-RIS
	with RIS indices \((j_k)\), where \(C_0\ge1\). Suppose that, for some \(\varepsilon>0\),
	\[ \dist(Sx_k,\mathscr{M}_{ij}(x_k))\ge\varepsilon\qquad(k\ge1). \]
	Let \(q\in\mathcal{C}_c^{\rm seed}\), let \(\vartheta=(c,q,\chi,o)\) be a seed,
	and let \(\pi\) be a realized outer stem over \(\vartheta\) whose outer chain has age \(a<n_q\).
	Put \(p=\sigma_c(q,\pi)\) and \(C=2C_0\).
	For every cutoff \(R\in\N_+\), we can choose after \(R\) a complete level-\(p\) inner chain with terminal atom \(\eta\)
	and the corresponding rational root-perturbed vector \(y\). Put \(f=e_\eta^{j*}\). Then
	\begin{align}
		\norm{y}\le A_{\rm ex}C,&\qquad
		\sup_{\substack{\gamma\ \text{ an atom on side }i\\
		\varphi\in B_{(G_\gamma^i)^*}}}
		|d_{\gamma,\varphi}^{i*}(y)|
		\le \frac{A_{\rm ex}C}{m_p},
		\label{eq:roo-exa-est}\\
		\Delta_c(f,y)=0,&\qquad
		\operatorname{Re}f(Sy)\ge\frac{\varepsilon}{4}.
		\label{eq:roo-exa-sep}
	\end{align}
	Consequently, the construction  gives a rational certificate \(\mathfrak{c}\) compatible with \((\pi,\eta)\)
	and the pairs \((y,f)\) can be chosen successively.
\end{lemma}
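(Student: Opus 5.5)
The plan is to build the \(n_p\) blocks \(u_1<\cdots<u_{n_p}\) and the inner chain \(\xi_1\prec\cdots\prec\xi_{n_p}=\eta\) together, by an adaptive finite-extension chain. At each step \(t\) we choose the next block \(u_t\) and packet \(b_t\), and then select a pre-existing realization of the \(t\)-th inner atom carrying \(b_t\). This is possible by Proposition~\ref{prop:fin-ext}\textup{(2)}, since \(p=\sigma_c(q,\pi)\) has cofinal inner realizations after every cutoff. The initial cutoff of the chain is \(\rho(\pi)\), but we start only after \(R\).

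\textbf{Step \(t\).} Let \(\nu_{t-1}\) be the rank of \(\xi_{t-1}\) (or the chosen start). Apply Corollary~\ref{cor:sim-app-ann} with cutoff \(a>\nu_{t-1}\), \(\tau=C/(2n_p)\) and small radii \(\eta_k\). Its finite-dimensional map \(A\) and functionals \(\lambda_l\) are chosen to kill the interaction of \(u_t\) with the already fixed earlier atoms: the coordinates \(e_{\xi_s}^{j*}S\) for \(s<t\), the earlier packets composed with \(S\), and the earlier protected maps. This yields \(u_t\) and a rational \(b_t\) with
\[
\norm{b_t}_{\ell_1}\le1,\qquad \operatorname{Re}b_t(Su_t)>\varepsilon/2,\qquad \norm{\Delta_c(b_t,u_t)}<\tau .
\]
By shrinking and weak nullness (Corollary~\ref{cor:ris-wek-nul}), \(Su_t\) is essentially supported in a finite interval. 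We truncate \(b_t\) so that its raw support lies in \((\nu_{t-1},\infty)\) and below some \(r_t\). We then pick \(\xi_t\) of rank \(\nu_t>\max\{r_t,\max\operatorname{ran}u_t\}\) with packet \(b_t\). Because \(\norm{u_t-x_{k_t}}\) is small, the \(u_t\) form a \(2C_0\)-RIS. We also take the RIS indices \(j_{k_t}\) larger than \(p\) and increasing past \(\nu_{t-1}\), so that Lemma~\ref{lem:vec-ris-trn} is available.

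\textbf{Estimates.} Put \(y^0=\frac{m_p}{n_p}\sum_t u_t\). Corollary~\ref{cor:ris-spc-avg} gives \(\norm{y^0}\le 3BC\) and \(|d^*_{\gamma,\varphi}(y^0)|\le\kappa C/m_p\). The annihilation error satisfies
\[
\Delta_c(f,y^0)=\frac{m_p}{n_p}\sum_t\Delta_c(f,u_t).
\]
By the evaluation analysis \eqref{eq:ful-vec-ea}, \(f\) restricted to the cell of \(u_t\) equals \(m_p^{-1}b_t\). The BD parts vanish on \(u_t\), since \(\nu_t\notin\operatorname{ran}u_t\). So \(\Delta_c(f,u_t)=m_p^{-1}\Delta_c(b_t,u_t)\), up to cross terms that the \(A,\lambda_l\) choices make small. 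Hence \(\norm{\Delta_c(f,y^0)}\le\tau\le C/m_p\). Lemma~\ref{lem:roo-crr-adm} then gives \(\Delta_c(f,y)=0\) and \(\norm{y-y^0}\le MC/m_p\). This yields \eqref{eq:roo-exa-est} with \(A_{\rm ex}=3B+M\), using \(\kappa\le B\).

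For the lower bound,
\[
f(Sy)=f(Sy^0)-f(S(y-y^0)),
\]
and the second term is at most \(\norm{S}MC/m_p\). We need \(m_p\) large relative to \(\norm S\), which holds for far-out stems; alternatively, we include \(f\circ S\circ d_\eta\) in the preparatory smallness. For the main term, \(f(Su_t)=m_p^{-1}b_t(Su_t)+(\text{BD and other-cell parts})\). The truncation of \(Su_t\) to the cell \((\nu_{t-1},\nu_t)\), together with the selection of \(\lambda_l\) at step \(t+1\), makes the contributions of \(Su_t\) to the later cells and of \(Su_{t'}\) (\(t'>t\)) to earlier cells each less than \(\varepsilon/(8n_p)\). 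Summing gives \(\operatorname{Re}f(Sy^0)\ge \varepsilon/2-\varepsilon/8\ge\varepsilon/4+\text{slack}\).

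Finally, we take \(I=\ran y\cup\{\operatorname{rank}\eta\}\), use \(z=Sy\) truncated and rationalized, and set \(\varepsilon/4\) as the rational margin. Condition \textup{(3)} of Definition~\ref{def:crt-stm} holds because \(y\) is rational in a common \(V_r\). This gives the certificate. Successive choice then follows by raising \(R\).

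The main obstacle is controlling the cross-interactions \(e^{j*}_{\xi_s}(Su_{t})\) for \(s\ne t\). These are handled via the predecessor terms, which make \(f\) see all earlier BD parts. The fix I would try first is to encode all earlier \(e^{j*}_{\xi_s}\circ S\) into the functionals \(\lambda_l\) at each step, and to push each \(\nu_t\) far enough out that the tail of \(Su_t\) is negligible.
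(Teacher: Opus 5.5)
Your construction is the paper's. You choose \((u_t,b_t)\) adaptively by Corollary~\ref{cor:sim-app-ann}, cut off the tails at \(r_t\), and take a pre-existing inner realization \(\xi_t\) beyond \(r_t\) carrying \(b_t\). You then use the identity \(\Delta_c(f,y^0)=n_p^{-1}\sum_t\Delta_c(b_t,u_t)\), which in fact holds exactly by supports and Lemma~\ref{lem:fml-blk-bio}, so there are no cross terms to control there. Finally you apply Lemma~\ref{lem:roo-crr-adm} with Corollary~\ref{cor:ris-spc-avg}.

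The lower bound, however, fails as written. With \(\tau=C/(2n_p)\) you only get \(|f(S(y-y^0))|\le M\norm{S}\tau=M\norm{S}C/(2n_p)\). For the given \(p\) this need not be below \(\varepsilon/8\), since \(\varepsilon\) may be far smaller than \(\norm{S}C\). Neither of your remedies is available. The stem \(\pi\), and hence \(p=\sigma_c(q,\pi)\), is fixed in the statement, so you cannot take \(m_p\) large relative to \(\norm{S}\). And \(f\circ S\circ d_\eta\) cannot enter the preparatory smallness conditions, because \(\eta\) is selected only after every \(u_t\) has been fixed. The fix is to choose \(\tau<\min\{C/m_p,\ \varepsilon/(16M(1+\norm{S}))\}\) at the outset, as Corollary~\ref{cor:sim-app-ann} permits; then \(|f(S(y-y^0))|<\varepsilon/16\).

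Second, you must specify the finite-rank operator as \(A=P^j_{[1,\nu_{t-1}]}S\). The analysis of \(f\) yields the diagonal term \(m_p^{-1}b_tP^j_{(\nu_{t-1},\infty)}(Su_t)\), not \(m_p^{-1}b_t(Su_t)\). The head \(b_tP^j_{[1,\nu_{t-1}]}Su_t\) need not vanish even though \(\supp_{\rm raw}b_t\subseteq(\nu_{t-1},\infty)\), because ambient coordinates above \(\nu_{t-1}\) see earlier FDD coefficients through the BD extension. Since \(b_t\) is produced together with \(u_t\), no prescribed functional \(\lambda_l\) can control this term; it is bounded only through \(\norm{P^j_{[1,\nu_{t-1}]}Su_t}\).

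Also, the BD cross terms \(d^{j*}_{\xi_s}(Su_t)\) enter \(f(Sy^0)\) with the factor \(m_p/n_p\), so their thresholds must be of order \(\varepsilon/(m_pn_p)\). Controlling \(e^{j*}_{\xi_s}S\) and \(b_sP^j_{(\nu_{s-1},\infty)}S\) together is equivalent to controlling \(d^{j*}_{\xi_s}S\), as the paper does. With these adjustments your argument coincides with the paper's proof.
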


\begin{proof}
	Recall from Definition~\ref{def:crt-stm} that \(\rho(\pi)\) is the endpoint rank of the realized outer stem \(\pi\), namely the seed birth rank when the stem is empty and the rank of its last outer atom otherwise.
	Fix \(R\in\N_+\). After passing to a tail and relabelling, we may assume for every \(k\) that
	\[ j_k\ge2, \qquad \min\operatorname{ran}x_k>\max\{R,\rho(\pi)\}. \]
	If \(u_k\) has the same FDD range as \(x_k\) and
	\(\|u_k-x_k\|\le C_0/m_{j_k-1}\), then, for \(0<h<j_k\), we have
	\[ \|u_k\|\le C_0+\frac{C_0}{m_{j_k-1}}\le C, \qquad |e_{\gamma,\varphi}^{i*}(u_k)| \le\frac{C_0}{m_h}+\frac{C_0}{m_{j_k-1}} \le\frac{C}{m_h}. \]
	The equality of the FDD ranges preserves the block order and
	\(j_{k+1}>\max\operatorname{ran}u_k\). Thus \((u_k)\) is a \(C\)-RIS.

	Choose
	\[ 0<\tau<\min\left\{ \frac{C}{m_p}, \frac{\varepsilon}{16M(1+\norm{S})} \right\}. \]
	Fix
	\[ \alpha_d=\frac{\varepsilon}{32m_pn_p},\qquad \alpha_\times=\frac{\varepsilon}{16n_p},\qquad \alpha_0=\frac{\varepsilon}{32},\qquad \delta_{\rm tail}=\frac{1}{\kappa}\min\{\alpha_d,\alpha_\times\}=\frac{\varepsilon}{32\kappa m_pn_p}. \]
	Put \(\nu_0=\rho(\pi)\), the initial cutoff stored by the attached inner chain.
	This is the lower endpoint used in its first type-zero correction.
	Then we recursively select actual occurrences from the fixed datum.
	Suppose that \((u_s,b_s,\xi_s,\nu_s)_{s<t}\), with \(\nu_s=\operatorname{rank}\xi_s\) have been selected.
	Put \(a_t=\max\{R,\nu_{t-1}\}\). Apply Corollary~\ref{cor:sim-app-ann}
	with the operator \(P_{[1,\nu_{t-1}]}^jS\), the functionals
	\(d_{\xi_s}^{j*}S\) and \(b_sP_{(\nu_{s-1},\infty)}^jS\) for \(s<t\),
	and the respective bounds \(\alpha_0,\alpha_d,\alpha_\times\).
	Use error bound \(\tau\) and \(\eta_k=C_0/m_{j_k-1}\). This gives \(u_t\) and a rational packet \(b_t\) such that
	\[
		\begin{gathered}
			|d_{\xi_s}^{j*}(Su_t)|<\alpha_d \quad (s<t), \qquad
			|b_sP_{(\nu_{s-1},\infty)}^j(Su_t)|<\alpha_\times\quad(s<t),\qquad \norm{P_{[1,\nu_{t-1}]}^jSu_t}<\alpha_0 \\
			\norm{b_t}_{\ell_1}\le1,\qquad \supp_{\rm raw}b_t\subseteq(a_t,\infty),\\
			\norm{\Delta_c(b_t,u_t)}<\tau,\qquad
			\operatorname{Re}b_t(Su_t)>\frac{\varepsilon}{2}.
		\end{gathered}
	\]
	Since the FDD tails of \(Su_1,\ldots,Su_t\) tend to zero, we can choose \(r_t\) so that
	\begin{equation}\label{eq:old-out-tal}
		\begin{gathered}
			r_t>\max\bigl\{\max\operatorname{supp}_{\rm raw}b_t,
			\max\operatorname{ran}u_t\bigr\},\quad
			\max_{1\le s\le t}\norm{P_{(r_t,\infty)}^jSu_s}<\delta_{\rm tail}.
		\end{gathered}
	\end{equation}
	Choose one of the cofinally pre-existing realizations guaranteed by Proposition~\ref{prop:fin-ext} as the next occurrence \(\xi_t\) of the prescribed level-\(p\) inner chain after rank \(r_t\), carrying the chosen representation of \(b_t\).
	When \(t<n_p\), choose \(u_{t+1}\) only after \(\xi_t\) has been selected.
	The fixed datum already contains this occurrence, and \(S\) is used only to select it.

	The terminal coordinate \(f=e_\eta^{j*}\) has the unique analysis
	\begin{equation}\label{eq:inn-exa-ana}
		f=\sum_{s=1}^{n_p}d_{\xi_s}^{j*}
		+\frac{1}{m_p}\sum_{s=1}^{n_p}
		b_sP_{(\nu_{s-1},\infty)}^j.
	\end{equation}
	The recursive order gives
	\[
		\begin{aligned}
			\supp_{\rm FDD}u_t\subseteq(\nu_{t-1},\nu_t),\quad
			\max\operatorname{supp}_{\rm raw}b_s<\min\operatorname{ran}u_t
			(s<t),\quad
			\max\operatorname{ran}u_t<\nu_{s-1}
			(s>t).
		\end{aligned}
	\]
	By Lemma~\ref{lem:fml-blk-bio}, the BD part and every off-diagonal packet term vanish on the corresponding \(u_t\).
	For \(c=01\), the same cancellations, \eqref{eq:h-int}, and \(P_I^1T_v=T_vP_I^0\) give, for every \(v\in B_{V^*}\),
	\[ v(U_fy^0)=f(T_vy^0) =\frac{1}{n_p}\sum_{t=1}^{n_p}b_t(T_vu_t) =\frac{1}{n_p}\sum_{t=1}^{n_p}v(U_{b_t}u_t). \]
	Taking the supremum over \(v\in B_{V^*}\) gives the same estimate as in the self corners.
	This gives
	\[ \norm{\Delta_c(f,y^0)} \le\frac{1}{n_p}\sum_{t=1}^{n_p}\norm{\Delta_c(b_t,u_t)}<\tau. \]

	By Lemma~\ref{lem:fml-blk-bio}, the selection inequalities, and \eqref{eq:old-out-tal}, we have
	\begin{align*}
		\left|\frac{m_p}{n_p}\sum_{s,t=1}^{n_p}d_{\xi_s}^{j*}(Su_t)\right|
		&\le\frac{m_p}{n_p}\left(
			\sum_{s<t}|d_{\xi_s}^{j*}(Su_t)|
			+\sum_{s\ge t}|d_{\xi_s}^{j*}(P_{(r_t,\infty)}^jSu_t)|\right)\\
		&<\frac{m_p}{n_p}\left(\sum_{s<t}\alpha_d+\sum_{s\ge t}\kappa\delta_{\rm tail}\right)
		\le m_pn_p\alpha_d=\frac{\varepsilon}{32}.
	\end{align*}
	Similarly, since \(\nu_{s-1}\ge\nu_t>r_t\) for \(s>t\), we have
	\begin{align*}
		\left|\frac{1}{n_p}\sum_{s\ne t}b_sP_{(\nu_{s-1},\infty)}^j(Su_t)\right|
		&\le\frac{1}{n_p}\left(
			\sum_{s<t}|b_sP_{(\nu_{s-1},\infty)}^j(Su_t)|
			+\sum_{s>t}|b_sP_{(\nu_{s-1},\infty)}^jP_{(r_t,\infty)}^j(Su_t)|\right)\\
		&<\frac{1}{n_p}\left(\sum_{s<t}\alpha_\times+\sum_{s>t}\kappa\delta_{\rm tail}\right)
		\le(n_p-1)\alpha_\times<\frac{\varepsilon}{16}.
	\end{align*}
	Finally, \(P_{(\nu_{t-1},\infty)}^j-I_{X_j}=-P_{[1,\nu_{t-1}]}^j\), so
	\begin{align*}
		\left|\frac{1}{n_p}\sum_t b_t(P_{(\nu_{t-1},\infty)}^j-I_{X_j})Su_t\right|
		&=\left|\frac{1}{n_p}\sum_t b_tP_{[1,\nu_{t-1}]}^jSu_t\right|\\
		&\le\frac{1}{n_p}\sum_t\norm{P_{[1,\nu_{t-1}]}^jSu_t}<\alpha_0=\frac{\varepsilon}{32}.
	\end{align*}
	By \eqref{eq:inn-exa-ana}, the definition of \(y^0\), and these three estimates, we have
	\begin{align*}
		&\left|f(Sy^0)-\frac{1}{n_p}\sum_{t=1}^{n_p}b_t(Su_t)\right|\\
		=&\left|\frac{m_p}{n_p}\sum_{s,t=1}^{n_p}d_{\xi_s}^{j*}(Su_t)
		+\frac{1}{n_p}\sum_{s\ne t}b_sP_{(\nu_{s-1},\infty)}^j(Su_t)
		+\frac{1}{n_p}\sum_{t=1}^{n_p}b_t\bigl(P_{(\nu_{t-1},\infty)}^j-I_{X_j}\bigr)Su_t\right|\\
		\le&\left|\frac{m_p}{n_p}\sum_{s,t=1}^{n_p}d_{\xi_s}^{j*}(Su_t)\right|
		+\left|\frac{1}{n_p}\sum_{s\ne t}b_sP_{(\nu_{s-1},\infty)}^j(Su_t)\right|+\left|\frac{1}{n_p}\sum_{t=1}^{n_p}b_t\bigl(P_{(\nu_{t-1},\infty)}^j-I_{X_j}\bigr)Su_t\right|\\
		<&\frac{\varepsilon}{32}+\frac{\varepsilon}{16}+\frac{\varepsilon}{32}
		=\frac{\varepsilon}{8}.
	\end{align*}
	It follows that
	\[ \operatorname{Re}f(Sy^0) >\frac{1}{n_p}\sum_{t=1}^{n_p}\operatorname{Re}b_t(Su_t)-\frac{\varepsilon}{8} >\frac{\varepsilon}{2}-\frac{\varepsilon}{8} =\frac{3\varepsilon}{8}. \]

	Lemma~\ref{lem:roo-crr-adm} gives
	\(\Delta_c(f,y)=0\) and \(\norm{y-y^0}<M\tau\). Since \(\norm{f}\le1\), we have
	\[ |f(S(y-y^0))| \le\norm{f}\norm{S}\norm{y-y^0} <M\norm{S}\tau <\frac{\varepsilon}{16}. \]
	It follows that
	\[ \operatorname{Re}f(Sy) >\frac{3\varepsilon}{8}-\frac{\varepsilon}{16} =\frac{5\varepsilon}{16} >\frac{\varepsilon}{4}. \]
	This proves \eqref{eq:roo-exa-sep}. By Corollary~\ref{cor:ris-spc-avg}, Lemma~\ref{lem:roo-crr-adm}, and \(A_{\rm ex}=3B+M\), we have
	\begin{align*}
		\norm{y}
		\le3BC+\frac{MC}{m_p}
		\le A_{\rm ex}C,\quad
		\sup_{\substack{\gamma\ \text{ an atom on side }i\\
		\varphi\in B_{(G_\gamma^i)^*}}}
		|d_{\gamma,\varphi}^{i*}(y)|
		\le\frac{\kappa C}{m_p}+\frac{C}{m_p}
		\le\frac{A_{\rm ex}C}{m_p}.
	\end{align*}
	This proves \eqref{eq:roo-exa-est}. By \eqref{eq:inn-exa-ana}, the cell order,
	and \(P_J^1T_v=T_vP_J^0\), where \(J=(\rho(\pi),\operatorname{rank}\eta]\), we have
	\[ P_J^{j*}f=f, \qquad \supp_{\rm FDD}y\cup\supp_{\rm FDD}\mathscr{M}_{ij}(y)\subseteq J, \qquad f(Sy)=f(P_J^jSy). \]
	Choose a rational finite vector \(z\) such that
	\[ \supp_{\rm FDD}z\subseteq J, \qquad \norm{z-P_J^jSy}<\frac{\varepsilon}{8}. \]
	Then \(\operatorname{Re}f(z)>\varepsilon/8\).
	Choose \(\varepsilon_{\rm cert}\in\Q_+\) with \(\varepsilon_{\rm cert}<\varepsilon/8\),
	and choose a finite output interval \(I\subseteq(\rho(\pi),\operatorname{rank}\eta]\) such that
	\[ \supp_{\rm FDD}y\cup\supp_{\rm FDD}z \cup\supp_{\rm FDD}\mathscr{M}_{ij}(y)\subseteq I, \qquad \operatorname{rank}\eta\in I. \]
	Since \(f=e_\eta^{j*}\) is the ambient coordinate indexed by \(\eta\),
	the last condition also gives \(f\in\mathcal{E}_{j,*}(I)\).
	By \(\Delta_c(f,y)=0\) and Lemma~\ref{lem:dfc-cnt}, \(f|_{\mathscr{M}_{ij}(y)}=0\).
	Hence
	\[ \mathfrak{c}=(i,j,y,z,f,I,\mathscr{M}_{ij}(y),\varepsilon_{\rm cert}) \]
	is a finite rational separation certificate in the sense of \eqref{eq:sep-crt} for the next outer stem.
	It is worth noting that the operator \(S\)  is not part of its code.
	In the forward case \(\Delta_c(f,y^0)=U_fy^0\) already belongs to the fixed terminal fibre \(E_\eta\),
	and it belongs to its rational structure because every packet, restriction map, and vector used above is rational.
	Moreover, rationality of \(y\) makes the finite matrix of \(v\mapsto T_vy\) rational in the protected block coordinates.
	Hence \(\mathscr{M}_{01}(y)\) is a rational finite-dimensional output subspace.
	In a self corner, the orbit is \(\K y\), so it is rational as well.
	Repeating the construction with each new cutoff \(R\) larger than the preceding output interval gives successive pairs \((y,f)\).
\end{proof}

\begin{remark}[Quantifier pattern and the fixed datum]
	\label{rem:opr-exf-qnt}
A specific feature of Lemma~\ref{lem:opr-exf} is the uniform,
certificate-compatible order of choices described below.
It uses a single oracle-relative program which is uniform.

	Let \(V^*\) be a dual space with separable predual \(V\).
	By Remark~\ref{rem:orc-rec}, there is a single program \(\mathfrak M\), independent of \(V^*\) and \(V\).
	Let \(\mathcal D(V)\) be the fixed datum produced by \(\mathfrak M^{\mathcal O_{V}}\).
	For fixed \(c,q,\pi\) in \(\mathcal D(V)\), the choices in the lemma have the order
	\[
		\begin{aligned}
			\exists\mathfrak M \forall V \forall V^* \Bigl((V)^*=V^*\Rightarrow
			\exists\mathcal D(V) \bigl(\forall S \forall R
			\exists(u_1,b_1)\exists\xi_1\in\mathcal D(V)  \cdots
			\exists(u_{n_p},b_{n_p}) \exists\xi_{n_p}\in\mathcal D(V)\bigr)\Bigr),
		\end{aligned}
	\]
	and they satisfy the conclusions of Lemma~\ref{lem:opr-exf}.
	At step \(t\), the rational pair \((u_t,b_t)\) is chosen first.
	We then choose one of the cofinally pre-existing realizations guaranteed by Proposition~\ref{prop:fin-ext} as \(\xi_t\).
	The next pair is chosen only after \(\xi_t\) has been selected.
	Thus \(S\) selects occurrences but never extends the datum.
	The certificate code contains no \(S\), and the last paragraph of the proof shows that the forward orbit \(\mathscr M_{01}(y)\) is rational.
\end{remark}

The next lemma follows directly from Lemma~\ref{lem:opr-exf} and Proposition~\ref{prop:ris-red}.

\begin{lemma}[Weakly null exact-pair sequence]
	\label{lem:exa-par-res}
	Let \(c=ij\in\{00,11,01\}\), let \(S\in\Bcal(X_i,X_j)\), and let \((x_k)\subseteq X_i\) be a \(C_0\)-RIS,
	where \(C_0\ge1\), with RIS indices \((j_k)\), such that, for some \(\varepsilon>0\),
	\[ \dist(Sx_k,\mathscr{M}_{ij}(x_k))\ge\varepsilon\qquad(k\ge1). \]
	Let \(q\in\mathcal{C}_c^{\rm seed}\), let \(\vartheta=(c,q,\chi,o)\) be a seed,
	and let \(\pi\) be a realized outer stem over \(\vartheta\) whose outer chain has age \(a<n_q\).
	Put \(p=\sigma_c(q,\pi)\) and \(C=2C_0\).
	After every cutoff there is a sequence of pairs \((y_r,f_r)_{r}\) satisfying Lemma~\ref{lem:opr-exf},
	with their complete inner chains and the supports of the \(y_r\)'s successive.
	\((y_r)\) is uniformly bounded by \(A_{\rm ex}C\),
	and both \((y_r)\) and \((Sy_r)\) are weakly null.
\end{lemma}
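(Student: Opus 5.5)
We obtain the sequence by iterating Lemma~\ref{lem:opr-exf} with increasing cutoffs, and then deduce weak nullity from Proposition~\ref{prop:ris-red}.

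\emph{Construction.} Fix an initial cutoff \(R_0\). Apply Lemma~\ref{lem:opr-exf} with cutoff \(R_0\) to obtain a complete level-\(p\) inner chain with terminal atom \(\eta_1\), a rational root-perturbed vector \(y_1\), and \(f_1=e_{\eta_1}^{j*}\). Having chosen \((y_s,f_s)_{s<r}\), let
\[ R_r>\max\bigl\{\operatorname{rank}\eta_{r-1},\max\operatorname{ran}y_{r-1}\bigr\} \]
and apply the lemma again with cutoff \(R_r\). The lemma selects the inner chain and the blocks \(u_t\) after the cutoff, and by Lemma~\ref{lem:roo-crr-adm} the perturbation is supported at \(\operatorname{rank}\eta_r\). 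Hence both the inner chains and the FDD supports of the \(y_r\) are successive. The bound \(\norm{y_r}\le A_{\rm ex}C\) is \eqref{eq:roo-exa-est}.

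\emph{Weak nullity of \((y_r)\).} The sequence \((y_r)\) is a bounded block sequence. By Proposition~\ref{prop:ris-red} the FDD of \(X_i\) is shrinking, so every bounded block sequence is weakly null. (Apply the first part of that proposition to each functional \(x^*\in X_i^*\) as the operator \(R\), using Corollary~\ref{cor:ris-wek-nul}.) Therefore \((y_r)\) is weakly null.

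\emph{Weak nullity of \((Sy_r)\).} Since \(S\) is bounded, it is weak-to-weak continuous, so \((Sy_r)\) is weakly null as well.

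The only point that needs care is that the successive applications of Lemma~\ref{lem:opr-exf} really produce disjoint, ordered supports, including the rank-\(\eta\) perturbation. This holds because the lemma chooses everything after the cutoff and the perturbation sits at the terminal rank, which lies below the next cutoff. All other steps are direct citations.
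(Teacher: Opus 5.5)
Your proposal is correct and takes the paper's route: the paper obtains the sequence by repeating Lemma~\ref{lem:opr-exf} with each new cutoff beyond the preceding output interval, as noted at the end of that lemma's proof. It then gets weak nullity of \((y_r)\) from the shrinking FDD of Proposition~\ref{prop:ris-red}, and of \((Sy_r)\) from the boundedness of \(S\).
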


\subsection{Collisions and dependent sequences}

Dependent sequences play the same role as in the AH method.
They produce a small-norm average while the image under the operator has large norm \cite{ArgyrosHaydon2011}.
Here an outer history contains many coded inner levels.
The estimates below control collisions between these levels.

\subsubsection{Quantitative collision estimate}

For an outer level \(q\), recall \(S_q\) and \(\Lambda_q\) from \eqref{eq:par-led-def}.
Since \(L_q\ge m_qS_q\) and \(m_q\ge2^8\), \eqref{eq:qnt-cod-grw} implies that every inner level
\(k=\sigma_c(q,\pi)\) in a level-\(q\) history of corner \(c\) satisfies
\begin{equation}\label{eq:cod-lam}
	m_k\ge\Lambda_q.
\end{equation}

By the root-level convention for the auxiliary norming set,
the following estimate applies when the root level \(h\) differs from the averaging level \(k\).

\begin{lemma}[Rooted auxiliary averages]
	\label{lem:rtd-aux-avg}
	Let \(h,k\in\N_+\) with \(h\ne k\), let \(g\in W\) have root level \(h\), and put \(u_k=n_k^{-1}\sum_{s=1}^{n_k}e_s\).
	Then
	\[
		|g(u_k)|\le
		\begin{cases}
			2/(m_hm_k),&h<k,\\
			1/m_h,&h>k.
		\end{cases}
	\]
	The same estimates hold after translating the support of \(u_k\).
\end{lemma}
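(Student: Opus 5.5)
The plan is to split according to whether \(h>k\) or \(h<k\), and in each case look at the defining decomposition of \(g\) at the root. Write \(g=m_h^{-1}\sum_{r=1}^d f_r\) with \(f_1<\cdots<f_d\), \(d\le3n_h\), and each \(f_r\in W\). Translation invariance is immediate: \(W\) is closed under translating supports, because the operations only use successivity. So we may assume \(u_k\) is supported on \(\{1,\ldots,n_k\}\).

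Case \(h>k\). Each \(f_r\in W\) satisfies \(\|f_r\|_{\ell_\infty}\le1\). Since the \(f_r\) are successive, their supports are disjoint, so \(\sum_r|f_r|\) is pointwise at most \(1\) on coordinates. Every coordinate of \(u_k\) is nonnegative, hence
\[
|g(u_k)|\le m_h^{-1}\sum_r|f_r|(u_k)\le m_h^{-1}\|u_k\|_{\ell_1}=m_h^{-1}.
\]
This case needs nothing about \(n_h\) versus \(n_k\).

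Case \(h<k\). Here \(d\le3n_h\). I would split the \(f_r\) according to their own root levels, treating a coordinate functional \(\zeta e_q^*\) as contributing at most \(1/n_k\).

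First, the \(f_r\) of root level exactly \(k\) or larger are bounded by Lemma~\ref{lem:qnt-aux-avg}. A root-level-\(k\) piece restricted to a subinterval still has \(|f_r(u_k|_{\operatorname{supp} f_r})|\le \|u_k\|_T\)-type bound \(2/m_k\) after scaling. More cleanly, I would apply the stopping-time argument of Lemma~\ref{lem:qnt-aux-avg} directly to \(g\): stop each branch at its first node of level \(\ge k\), at its \(D_k\)-th node of level \(<k\), or at a leaf.

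Then I count contributions:
\begin{itemize}
\item Leaves: because the root already has level \(h<k\), there are at most \(R_k^{\rm aux}\le L_k/m_k\) of them. After the root weight they contribute at most \(m_h^{-1}R^{\rm aux}_k/n_k\le m_h^{-1}\cdot 2^{-k-8}m_k^{-5}\).
\item Level-\(\ge k\) stops: each such index carries the coefficient \(m_h^{-1}m_k^{-1}\) at most. Since the root is a level-\(h\) node with \(h\ne k\), the stopped node lies strictly below the root, so the root weight \(m_h^{-1}\) is also picked up. Together these give at most \(m_h^{-1}m_k^{-1}\).
\item Depth stops: these give at most \(m_h^{-1}m_1^{-(D_k-1)}\). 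I need \(m_1^{-(D_k-1)}\) to be at most a small multiple of \(m_k^{-1}\), which follows from \(m_1^{-D_k}\le m_k^{-2}\) in \eqref{eq:led-dpt-tal}.
\end{itemize}

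Summing and using \(m_k\ge2^8\) gives \(|g(u_k)|\le m_h^{-1}m_k^{-1}(1+o(1))\le2/(m_hm_k)\).

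The main obstacle is bookkeeping the root. The root counts as one of the \(D_k\) low-level nodes, so I would use depth \(D_k+1\) in the stopping rule. The leaf count then becomes \(\sum_{s\le D_k}A_k^s\le S_k\), and \eqref{eq:led-rat} still gives \(S_k/n_k\le2^{-k-8}m_k^{-5}\), which keeps the estimate intact.
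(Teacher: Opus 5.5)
Your proposal is correct and is essentially the paper's proof: the case \(h>k\) is the same disjoint-support estimate carrying the root weight, and for \(h<k\) your final stopping rule (depth \(D_k+1\) with the root counted) is exactly the paper's device. The paper runs the stopping argument of Lemma~\ref{lem:qnt-aux-avg} on each child subtree \(f_r\), uses \(d\le3n_h\le A_k\), and retains the root factor \(m_h^{-1}\).

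With that choice the depth term is directly \(m_h^{-1}m_1^{-D_k}\le m_h^{-1}m_k^{-2}\). So you can drop both the abandoned piecewise use of Lemma~\ref{lem:qnt-aux-avg} and the \(m_1^{-(D_k-1)}\) bound; the latter would additionally need \(k\ge2\), hence \(m_1\le m_k^{1/5}\).
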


\begin{proof}
	Suppose first that \(h<k\). Write
	\[ g=\frac{1}{m_h}\sum_{r=1}^df_r,\qquad f_1<\cdots<f_d,\qquad d\leq3n_h\leq A_k. \]
	Apply the stopping argument from the proof of Lemma~\ref{lem:qnt-aux-avg}
	to each subtree rooted at \(f_r\), counting the \(D_k\) low-level nodes within that subtree.
	Let \(\mathcal H,\mathcal D,\mathcal L\) be the unions of the corresponding classes of coordinate indices in \(\{1,\ldots,n_k\}\).
	Since the supports of the \(f_r\)'s are disjoint and every node of level less than \(k\) has at most \(A_k\) successors,
	\[ \max\{|\mathcal H|,|\mathcal D|\}\leq n_k, \qquad |\mathcal L|\leq d\sum_{s=0}^{D_k-1}A_k^s \leq\sum_{s=1}^{D_k}A_k^s\leq\frac{L_k}{m_k}. \]
	Retaining the root factor \(m_h^{-1}\), the branch-coefficient estimates give
	\begin{align*}
		|g(u_k)|
		&\leq\frac{1}{m_h}\left(
			\frac{|\mathcal H|}{n_km_k}
			+\frac{|\mathcal D|}{n_km_1^{D_k}}
			+\frac{|\mathcal L|}{n_k}\right)\\
		&\leq\frac{1}{m_h}\left(
			\frac{1}{m_k}+\frac{1}{m_1^{D_k}}+\frac{L_k}{m_kn_k}\right)\\
		&\leq\frac{1}{m_h}\left(
			\frac{1}{m_k}+\frac{1}{m_k^2}+\frac{1}{2^{k+8}m_k^5}\right)\\
		&=\frac{1}{m_hm_k}\left(1+\frac{1}{m_k}+\frac{1}{2^{k+8}m_k^4}\right)
		<\frac{2}{m_hm_k}.
	\end{align*}
	Suppose now that \(h>k\).
	The coordinate coefficients of the \(f_r\)'s have modulus at most one, and their supports are disjoint. Hence
	\[ |g(u_k)| \le\frac{1}{m_hn_k} \sum_{r=1}^d|\operatorname{supp}f_r\cap\{1,\ldots,n_k\}| \le\frac{1}{m_h}. \]
	The same calculation applies to every translate of \(u_k\).
\end{proof}

\begin{lemma}[Off-weight estimate for root-perturbed exact vectors]
	\label{lem:off-wgt-exa}
	Let \(C\ge1\), \(k\in\N_+\), and
	\[ y=y^0+r,\qquad y^0=\frac{m_k}{n_k}\sum_{s=1}^{n_k}u_s, \]
	where \((u_s)_{s=1}^{n_k}\) is a \(C\)-RIS and \(\norm{r}\le C/m_k\).
	If \(f\) is a scalarized coordinate evaluation of level \(h\in\N_+\) with \(h\ne k\), then,
	for every FDD interval \(I\), we have
	\begin{equation}\label{eq:off-wgt-exa}
		|f(P_Iy)|\le \frac{C_{\rm off}C}{m_{h \wedge k}}.
	\end{equation}
\end{lemma}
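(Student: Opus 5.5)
We split \(y=y^0+r\) and treat the perturbation first. Since \(\norm{f}\le1\) and \(\norm{P_I}\le\kappa\), we get \(|f(P_Ir)|\le\kappa C/m_k\le\kappa C/m_{h\wedge k}\). It therefore suffices to bound \(|f(P_Iy^0)|\) by \((C_{\rm off}-\kappa)C/m_{h\wedge k}\); the constant \(C_{\rm off}=8B\kappa\) leaves ample room.

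\emph{Main term.} Write \(I=(s,t]\), so that \(P_I=P_{(s,\infty)}-P_{(t,\infty)}\), and treat the two tail restrictions separately. For each \(w\in\{s,t\}\) we apply the basic inequality, Lemma~\ref{lem:vec-bas-ine}, to the \(C\)-RIS \((u_s)_{s=1}^{n_k}\) with coefficients \(\lambda_s=m_k/n_k\) and cutoff \(w\). This yields \(k_0\) and \(g\in W\cup\{0\}\) such that
\[ |f(P_{(w,\infty)}y^0)|\le BC\Bigl(\frac{m_k}{n_k}+m_k\,g(u_k)\Bigr). \]
Here \(u_k\) denotes the (possibly translated) auxiliary average. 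If \(g\neq0\), then \(g\) has root level \(h\), which is the level of \(f\), and \(h\ne k\). The first term is harmless, because \(n_k\ge m_k^2\) gives \(m_k/n_k\le 1/m_k\le 1/m_{h\wedge k}\).

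For the second term we invoke Lemma~\ref{lem:rtd-aux-avg}. If \(h<k\), it gives \(m_kg(u_k)\le 2/m_h=2/m_{h\wedge k}\). If \(h>k\), it gives only \(m_kg(u_k)\le m_k/m_h\), and since \(m_h\ge m_{k+1}\ge m_k^5\), this is at most \(m_k^{-4}\le 1/m_k=1/m_{h\wedge k}\). In both cases \(|f(P_{(w,\infty)}y^0)|\le 3BC/m_{h\wedge k}\). Summing over the two endpoints and adding the perturbation bound gives
\[ |f(P_Iy)|\le (6B+\kappa)C/m_{h\wedge k}\le 8B\kappa C/m_{h\wedge k}, \]
because \(\kappa\ge 2\) and \(B\ge 1\).

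\emph{Main obstacle.} The delicate point is checking the hypotheses of the rooted estimate. We must confirm that the functional \(g\) produced by the basic inequality really has root level exactly \(h\), which Lemma~\ref{lem:vec-bas-ine} asserts whenever \(g\ne0\). We also need the support of \(g\) to lie in a translate of \(\{1,\dots,n_k\}\), so that Lemma~\ref{lem:rtd-aux-avg} applies with the translation remark; this holds because \(g(\sum|\lambda_s|e_s)=m_kg(u_k)\). If the parameters did not give \(m_k/m_h\le 1/m_k\) in the case \(h>k\), we would instead restrict to \(f\) of level \(h>k\) and use the cruder growth \(m_{h}\ge m_k^5\). Both of these checks appear routine.
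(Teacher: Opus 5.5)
Your proof is correct and follows the paper's argument essentially step for step. You bound the perturbation by \(\kappa C/m_k\) and apply Lemma~\ref{lem:vec-bas-ine} to the two tails \(P_{(s,\infty)}\) and \(P_{(t,\infty)}\) with coefficients \(m_k/n_k\). You then invoke Lemma~\ref{lem:rtd-aux-avg} separately for \(h<k\) and \(h>k\), and close with \(6B+\kappa\le 8B\kappa=C_{\rm off}\), exactly as the paper does.
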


\begin{proof}
	Write \(I=(a,b]\), with the evident modifications for an initial or final interval.
	Since \(P_I=P_{(a,\infty)}-P_{(b,\infty)}\), apply
	Lemma~\ref{lem:vec-bas-ine} to both tails,
	with every coefficient equal to \(m_k/n_k\).
	Consequently
	\[ |f(P_Iy^0)| \le2BC\left(\frac{m_k}{n_k} +m_k\sup\{|g(u_k)|:g\in W \text{ has root level }h\}\right). \]
	The estimate is also valid when one of the tails is absent.
	If \(h<k\), Lemma~\ref{lem:rtd-aux-avg} gives
	\[ |f(P_Iy^0)|\le2BC\left(\frac{m_k}{n_k}+\frac{2}{m_h}\right)
	\le2BC\left(m_k^{-1}+2m_h^{-1}\right)\le\frac{6BC}{m_h}. \]
	If \(h>k\), the same lemma gives
	\[ |f(P_Iy^0)|\le2BC\left(\frac{m_k}{n_k}+\frac{m_k}{m_h}\right)
	\le2BC\left(m_k^{-1}+m_k^{-4}\right)\le\frac{4BC}{m_k}. \]
	Finally, \(\|P_I\|\le\kappa\), so we have
	\[ |f(P_Ir)|\le\kappa\norm{r}\le\kappa C/m_k. \]
	If \(h<k\), then \(m_k^{-1}\le m_h^{-1}\).
	If \(h>k\), we already have the desired inequality.
	Since \(B,\kappa\ge1\), in both cases \(6B+\kappa\le8B\kappa=C_{\rm off}\).
	This proves \eqref{eq:off-wgt-exa}.
\end{proof}

Recall from Requirement~\hyperref[sec:req-cat]{\textup{(R4)}} that an outer continuation of a stem \(\pi\) in corner \(c=ab\)
uses the one-entry strong packet \(\langle\eta\rangle\), where \(\eta\) is the terminal atom of the complete inner chain attached to \(\pi\), and requires a compatible certificate.
In the notation of Definition~\ref{def:fml-pkt},
\[ \langle\eta\rangle=((1,\eta,\operatorname{id}_{\K})), \qquad B_{\langle\eta\rangle}^*(1)=\epsilon_\eta^{b*}(1). \]
Writing \(f=e_\eta^{b*}\), the associated outer packet functional is \(fP_{(\rho(\pi),\infty)}^b\).

\begin{definition}[Dependent sequence]
	\label{def:zer-dep}
	Let \(c=ab\in\{00,11,01\}\), let \(q\in\mathcal{C}_c^{\rm seed}\), and let \(C\ge1\).
	A \(C\)-bounded \(q\)-dependent sequence in \(X_a\) is a successive family of root-perturbed exact pairs \((y_i,f_i)_{i=1}^{n_q}\) satisfying the following requirements.
	\begin{enumerate}
		\item \emph{History and placement.}
		There are a seed \(\vartheta=(c,q,\chi,o)\) and successive realized outer stems \((\pi_i)_{i=0}^{n_q}\) over \(\vartheta\).
		Put
		\[ \rho_i=\rho(\pi_i)\quad(0\le i\le n_q), \qquad k_i=\sigma_c(q,\pi_{i-1})\quad(1\le i\le n_q). \]
		For \(1\le i\le n_q\), the pair \((y_i,f_i)\) has inner level \(k_i\) and lies in the corresponding cell, so
		\[ \rho_{i-1}<\min\operatorname{ran}y_i \le\max\operatorname{ran}y_i<\rho_i, \]
		and the matching outer packet is \(f_iP_{(\rho_{i-1},\infty)}^b\).

		\item \emph{Exact-vector structure.}
		For \(1\le i\le n_q\),
		\[ y_i=y_i^0+r_i, \qquad y_i^0=\frac{m_{k_i}}{n_{k_i}}\sum_{t=1}^{n_{k_i}}u_{i,t}, \qquad \norm{r_i}\le\frac{C}{m_{k_i}}, \]
		where \(r_i\) is the root perturbation from Definition~\ref{def:roo-exa} and \((u_{i,t})_{t=1}^{n_{k_i}}\) is a \(C\)-RIS.

		\item \emph{Uniform bounds.}
		For \(1\le i\le n_q\),
		\[
			\norm{y_i}\le C,
			\qquad
			\sup_{\substack{\gamma\ \text{ an atom on side }a\\
			\varphi\in B_{(G_\gamma^a)^*}}}
			|d_{\gamma,\varphi}^{a*}(y_i)|
			\le \frac{C}{m_{k_i}}.
		\]

		\item \emph{Annihilation.}
		\[ \Delta_c(f_i,y_i)=0\qquad(1\le i\le n_q). \]
	\end{enumerate}
\end{definition}

Figure~\ref{fig:inner-outer-averages} separates the two averaging scales for the corner \(c=ab\).
Each inner average is perturbed before the outer average is formed.
The inner level \(k_s\) is determined by the preceding stem; the outer level \(q\) is fixed.

\begin{figure}[htbp]
	\centering
\begin{tikzpicture}[x=1cm,y=1cm,>=stealth,
	every node/.style={font=\small},
	innerbox/.style={draw=blue!55!black,rounded corners=2pt,
		fill=white,align=center,minimum height=1.20cm,inner sep=4pt},
	outerbox/.style={draw=green!40!black,rounded corners=2pt,
		fill=white,align=center,minimum height=1.05cm,inner sep=4pt}]
	\draw[draw=blue!55!black,fill=blue!4,rounded corners=3pt]
		(0,2.70) rectangle (12.20,5.10);
	\node at (6.10,4.82)
		{Inner level \(k_s=\sigma_c(q,\pi_{s-1})\)};

	\node[innerbox,text width=2.20cm] (ris) at (1.50,3.55)
		{RIS block\\[2pt]
		 \(u_{s,1},\ldots,u_{s,n_{k_s}}\)};
	\node[innerbox,text width=3.35cm] (average) at (5.50,3.55)
		{Unperturbed vector\\[2pt]
		 \(y_s^0=\frac{m_{k_s}}{n_{k_s}}
			 \sum_{t=1}^{n_{k_s}}u_{s,t}\)};
	\node[innerbox,text width=3.35cm] (pair) at (10.15,3.55)
		{\(y_s=y_s^0+r_s\), \quad \(f_s=e_{\eta_s}^{b*}\)\\[3pt]
		 \(\eta_s\): inner-chain terminal\\[2pt]
		 exact pair \((y_s,f_s)\)};
	\draw[->] (ris.east) -- (average.west);
	\draw[->] (average.east) -- (pair.west);
	\node[font=\scriptsize] at (3.38,4.45) {inner average};
	\node[font=\scriptsize] at (7.82,4.45) {root perturbation};

	\draw[->,black!65] (6.10,2.70) -- (6.10,1.80);
	\node[anchor=west,font=\scriptsize,inner sep=3pt] at (6.18,2.25)
		{repeat for \(s=1,\ldots,n_q\)};

	\draw[draw=green!40!black,fill=green!4,rounded corners=3pt]
		(0,-0.30) rectangle (12.20,1.80);
	\node at (6.10,1.48)
		{Outer level \(q\): successive pairs with coded inner levels \(k_s\)};
	\node[outerbox,text width=6.05cm] (pairs) at (3.50,0.48)
		{\((y_1,f_1),\ \ldots,\ (y_s,f_s),\ \ldots,\
			 (y_{n_q},f_{n_q})\)\\[3pt]
		 \(n_q\) pairs in successive outer cells};
	\node[outerbox,text width=3.30cm] (outeraverage) at (10.15,0.48)
		{Outer average\\[2pt]
		 \(\bar y=\frac{1}{n_q}\sum_{s=1}^{n_q}y_s\)};
	\draw[->] (pairs.east) -- node[above,font=\scriptsize]
		{average} (outeraverage.west);
\end{tikzpicture}
	\caption{Inner and outer averages.}
	\label{fig:inner-outer-averages}
\end{figure}

The stems \((\pi_i)_{i=0}^{n_q}\) have strictly increasing lengths, so they are distinct.
By injectivity of \(\sigma_c\), the levels \((k_i)_{i=1}^{n_q}\) are pairwise distinct.

Every \(C\)-bounded dependent sequence is a \(C_{\rm off}C\)-RIS with RIS indices \((k_i)\).
Indeed the next realized stem contains the outer predecessor rank \(\rho_i\), so \eqref{eq:qnt-cod-grw} gives
\[ k_{i+1}>\rho_i>\max\operatorname{ran}y_i\qquad(1\le i<n_q). \]
If \(0<\operatorname{lev}(\gamma)=h<k_i\) and
\(\varphi\in B_{(G_\gamma^a)^*}\), then Lemma~\ref{lem:off-wgt-exa} gives
\[ |e_{\gamma,\varphi}^{a*}(y_i)|\le\frac{C_{\rm off}C}{m_h}. \]

The common-initial-segment comparison in \cite[Lemma~6.5]{ArgyrosHaydon2011}
is used below with explicit interval restrictions to count the surviving
same-level incidences.

\begin{lemma}[Stem collision]
	\label{lem:crt-stm-col}
	Let \(c=ab\in\{00,11,01\}\), let \(q\in\mathcal{C}_c^{\rm seed}\), let \(C\ge1\),
	and let \((y_i,f_i)_{i=1}^{n_q}\) be a \(C\)-bounded \(q\)-dependent sequence in \(X_a\) for the corner \(c\).
	Let \(Q_0\) be a scalarized outer coordinate evaluation on side \(a\),
	of the same corner \(c\) and outer level \(q\).
	Let \(J_0\) be an FDD interval and put \(I=\bigl\{i\in\{1,\ldots,n_q\}:\operatorname{ran}y_i\subseteq J_0\bigr\}.\)
	Assume that \(I\ne\varnothing\).
	Then
	\begin{equation}\label{eq:q-col-bnd}
		\left|Q_0P_{J_0}^a\left(\frac{1}{n_q}\sum_{i\in I}y_i\right)\right|
		\le
		\frac{3\kappa C}{n_qm_q}
		+\frac{\kappa C}{\Lambda_q}
		+\frac{3C_{\rm off}C}{m_q\Lambda_q}.
	\end{equation}
\end{lemma}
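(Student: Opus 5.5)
Write \(Q_0=e_{\zeta,\varphi}^{a*}\) with outer predecessor chain \(\zeta_1\prec\cdots\prec\zeta_d=\zeta\), \(d\le n_q\). By \hyperref[itm:ana-g3]{\textup{(G3)}} and \eqref{eq:syn-vec-ea}, its analysis is
\[
Q_0=\sum_{s=1}^{d}d_{\zeta_s,\varphi_s}^{a*}+\frac{1}{m_q}\sum_{s=1}^{d}g_sP_{(r_{s-1},\infty)}^a,
\]
where each \(g_s\) is a scalarization of the strong packet \(\langle\eta'_s\rangle\). Thus \(g_s\) is a contractive multiple of the terminal coordinate of the \(s\)-th inner chain of the history \(\mathcal H_1\) of \(Q_0\), and that coordinate has inner level \(\sigma_c(q,\tau_{s-1})\) for the stem \(\tau_{s-1}\) of \(\mathcal H_1\). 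The dependent sequence supplies a second history \(\mathcal H_2\) with stems \(\pi_i\). Replacing \(Q_0P_{J_0}^a\) by \(P_{J_0}^{a*}Q_0\), we use Lemma~\ref{lem:fml-rst} and Remark~\ref{rem:int-rst}. They show that restriction to \(J_0\) keeps all interior cells whole and cuts at most the first and last packet cells. These two boundary terms carry the factor \(m_q^{-1}\), and we bound each of them by brute force, \(\kappa\) times the norm of the restricted vector. We aim for these and the boundary BD/predecessor terms \(e_{r_u},e_{r_v}\) to produce the \(3\kappa C/(n_qm_q)\) term. The idea is that each \(y_i\) is met by such a boundary object only through \(O(1)\) indices \(i\), so together they cost at most \(3\cdot\kappa C/(n_q m_q)\)-type contributions. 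For the BD parts \(d_{\zeta_s,\varphi_s}^{a*}\) we use the small \(d^*\)-coefficients from Definition~\ref{def:zer-dep}(3): each \(y_i\) is hit by at most one rank \(r_s\), giving at most \(C/m_{k_i}\le C/\Lambda_q\) per hit, by \eqref{eq:cod-lam}.

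Next we handle the interior packet terms \(m_q^{-1}g_s(P\,y_i)\), for which we classify the pairs \((s,i)\). Each cell of \(\mathcal H_1\) meets the ranges of several \(y_i\), but \(g_s\) is a single coordinate of level \(\ell_s=\sigma_c(q,\tau_{s-1})\), while \(y_i\) is an exact vector of level \(k_i\). If \(\ell_s\ne k_i\), Lemma~\ref{lem:off-wgt-exa} gives \(|g_sP_Jy_i|\le C_{\rm off}C/m_{\ell_s\wedge k_i}\). Both levels are coded levels above \(q\), so \(m_{\ell_s\wedge k_i}\ge\Lambda_q\) by \eqref{eq:cod-lam}. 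Summing over the at most \(n_q\) indices \(i\), with the factor \(1/n_q\) and the factor \(m_q^{-1}\) in front, yields at most \(C_{\rm off}C/(m_q\Lambda_q)\) per relevant \(s\). The cell structure should make each \(y_i\) interact with essentially one cell, so only boundedly many (we expect three) aggregated contributions of this kind survive. This is the \(3C_{\rm off}C/(m_q\Lambda_q)\) term.

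The remaining case, \(\ell_s=k_i\), is the main obstacle, and here we use Lemma~\ref{lem:com-stm}. If the seeds differ, the inner levels of the two histories are disjoint, so this case never occurs. If the seeds agree, let \(\pi\) be the longest common realized stem. Along \(\pi\) the strong packets agree, so \(g_s\) is a multiple of \(f_i\) itself and the cells coincide. Then \(f_iP(y_i)\), and by \(\Delta_c(f_i,y_i)=0\) in the self corners this vanishes. In the forward corner the side is \(a=0\) and \(f_i\) is the protected-mirror coordinate. Here we need to argue that the source scalarization pairs with \(U_{f}y_i=0\), which again vanishes, and we must check this carefully since \(\Delta_{01}\) refers to the target \(f_i\). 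Beyond \(\pi\), Lemma~\ref{lem:com-stm}(4) leaves only the single first-packet pair \((\mathcal B_1,\mathcal B_2)\). For that pair we use only the trivial estimate \(m_q^{-1}\kappa\norm{y_i}/n_q\le \kappa C/(n_qm_q)\), which is absorbed into the first term. We also have to reconcile the exact constants; the \(\kappa C/\Lambda_q\) term is meant to absorb the BD-part hits. The care needed is in verifying that the interval cut cannot create a spurious partial incidence of matching level, which is exactly what Remark~\ref{rem:int-rst} guarantees, since codes and cells are preserved.
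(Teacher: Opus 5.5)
Your classification of the incidences \((s,i)\) follows the paper. BD hits are bounded by Definition~\ref{def:zer-dep}(3) and \eqref{eq:cod-lam}, and off-level pairs by Lemma~\ref{lem:off-wgt-exa}. Matching-level pairs are handled by Lemma~\ref{lem:com-stm}: they are annihilated along the common stem, and one exceptional pair remains after it. Your forward-corner check is also right: there \(F_s=e^{0*}_{\widehat{\eta_s},\psi_s}\), and \(F_s(y_i)=\psi_s(U_{f_i}y_i)=0\) when \(\eta_s=\eta_i\), with histories read from \(\mathsf{thist}\).

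However, your handling of \(J_0\) fails. Passing to \(P^{a*}_{J_0}Q_0\) through \eqref{eq:two-bnd-rst} produces \(e_{r_v}-e_{r_u}\) and two partially cut packet terms. None of these is \(O(\kappa C/(n_qm_q))\) by brute force. The terms \(e_{r_u}\) and \(e_{r_v}\) are full outer evaluations of predecessors and carry the entire interior analysis. A cut cell \(K_s\cap J_0\) may contain the ranges of up to \(|I|\) blocks, for instance when the history of \(Q_0\) has another seed and one long cell. Then \(m_q^{-1}\kappa\norm{P_{K_s\cap J_0}\bar y_I}\) is only \(O(\kappa C/m_q)\), which is useless for Lemma~\ref{lem:pai-bi}; your assertion that each boundary object meets only \(O(1)\) blocks is false.

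The missing observation is that \(\ran y_i\subseteq J_0\) for \(i\in I\), so \(P^a_{J_0}y_i=y_i\) and \(Q_0P^a_{J_0}\bar y_I=Q_0\bar y_I\). The restriction is therefore vacuous, and every cell, cut or not, is treated by the same classification using \(L_{s,i}=K_s\cap\ran y_i\). Since the \(k_i\) are pairwise distinct, a cell contains at most one block of its own level. This, not the shape of the boundary, is why only boundedly many pairs (at most three in the paper's count) need the trivial bound \(\kappa C\).

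Your off-level count is also wrong as stated. Up to \(d\) cells can each meet some block, so summing your per-cell bound \(C_{\rm off}C/(m_q\Lambda_q)\) gives \(d\,C_{\rm off}C/(m_q\Lambda_q)\), not three such terms. The constant \(3\) comes from the fact that \((K_s)_{s\le d}\) and \((\ran y_i)_{i\in I}\) are two successive families of intervals. Hence at most \(d+|I|-1\le 2n_q-1<3n_q\) pairs \((s,i)\) satisfy \(K_s\cap\ran y_i\ne\varnothing\). Multiplying by \(C_{\rm off}C/\Lambda_q\) and \(1/(n_qm_q)\) gives the last term of \eqref{eq:q-col-bnd}.

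The same correction applies to the BD parts. The right count is that each outer rank \(\operatorname{rank}\alpha_s\) lies in at most one \(\ran y_i\), while one \(y_i\) may contain several such ranks. This gives at most \(d\le n_q\) nonzero incidences and hence the term \(\kappa C/\Lambda_q\).
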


\begin{proof}
	If \(Q_0=e_{\alpha,\varphi}^{a*}\), with \(\varphi\in B_{(G_\alpha^a)^*}\),
	recall from \eqref{eq:nsl-his}--\eqref{eq:two-his-fld} that its outer coding history
	\(\mathcal H=\mathsf{thist}(\alpha)\). And when \(\alpha=\widehat\gamma\) is a protected mirror, we have \(\mathcal H=\mathsf{shist}(\gamma)\).
	Let \(d\le n_q\) be the length of this history, equivalently the outer age of \(\alpha\).
	For \(1\le s\le d\), let \(K_s\) be its \(s\)-th packet cell, let \(\tau_{s-1}\) be the realized stem before this cell, and let \(\eta_s\) be the terminal atom of its strong packet.
	For \(c=00,11\), let \(\lambda=\varphi(1)\).
	Put
	\[
		h_s=\operatorname{lev}(\eta_s)=\sigma_c(q,\tau_{s-1}),
		\qquad
		F_s=
		\begin{cases}
			\lambda e_{\eta_s}^{a*},&c=00,11,\\
			e_{\widehat{\eta_s},\psi_s}^{0*},&c=01,
		\end{cases}
		\qquad
		|\lambda|\le1,\quad \psi_s\in B_{E_{\eta_s}^*}.
	\]
	Thus \(\|F_s\|\le1\).
	By Lemma~\ref{lem:uni-eva} and Remark~\ref{rem:int-rst}, the restricted evaluation analysis has the form
	\[ Q_0P_{J_0}^a =\sum_{s=1}^d d_{\alpha_s,\varphi_s}^{a*}P_{J_0}^a +\frac{1}{m_q}\sum_{s=1}^d F_sP_{K_s\cap J_0}^a. \]
	Put
	\[ \bar y_I=\frac{1}{n_q}\sum_{i\in I}y_i, \qquad L_{s,i}=K_s\cap\operatorname{ran}y_i, \]
	and
	\(
		\mathcal{B}
		=\bigl\{(s,i)\in\{1,\ldots,d\}\times I: d_{\alpha_s,\varphi_s}^{a*}(y_i)\ne0\bigr\}\),
		\[ \mathcal{A}=\bigl\{(s,i)\in\{1,\ldots,d\}\times I: F_s(P_{L_{s,i}}^ay_i)\ne0\bigr\},
		\qquad
		\mathcal{A}_{=}
		=\bigl\{(s,i)\in\mathcal{A} :  h_s=k_i\bigr\}.
	\]
	Since \(P_{J_0}^ay_i=y_i\) for \(i\in I\), we have
	\begin{align*}
		&|Q_0P_{J_0}^a(\bar y_I)|
		=\frac{1}{n_q}\left|\sum_{i\in I}Q_0(y_i)\right|\\
		\le&\frac{1}{n_q}
		\sum_{(s,i)\in\mathcal{B}} |d_{\alpha_s,\varphi_s}^{a*}(y_i)|\quad+\frac{1}{n_qm_q}\sum_{(s,i)\in\mathcal{A}_{=}} |F_s(P_{L_{s,i}}^ay_i)|+\frac{1}{n_qm_q}
		\sum_{(s,i)\in\mathcal{A}\setminus\mathcal{A}_{=}} |F_s(P_{L_{s,i}}^ay_i)|.
	\end{align*}
	Since \((\operatorname{ran}y_i)_{i\in I}\) are successive,
	\[ \#\bigl\{i\in I : \operatorname{rank}\alpha_s\in\operatorname{ran}y_i\bigr\}\le1 \quad(1\le s\le d), \qquad |\mathcal{B}|\le d\le n_q. \]
	By Definition~\ref{def:zer-dep} and \eqref{eq:cod-lam}, we have
	\[ |d_{\alpha_s,\varphi_s}^{a*}(y_i)| \le\frac{C}{m_{k_i}} \le\frac{C}{\Lambda_q} \le\frac{\kappa C}{\Lambda_q} \quad((s,i)\in\mathcal{B}). \]
	Hence
	\[ \frac{1}{n_q} \sum_{(s,i)\in\mathcal{B}} |d_{\alpha_s,\varphi_s}^{a*}(y_i)| \le\frac{\kappa C}{\Lambda_q}. \]

	Let \(\mathcal{D}\) be the history determined by \((\pi_i)_{i=0}^{n_q}\) in Definition~\ref{def:zer-dep}.
	Suppose first that the seed records of \(\mathcal{H}\) and \(\mathcal{D}\) agree, and let \(\tau\) be their longest common realized stem.
	By Lemma~\ref{lem:com-stm}, the labelled packets agree along \(\tau\). For every pair \((s,i)\) with \(h_s=k_i\) whose two packets occur on \(\tau\) and for which \(K_s\cap J_0=K_s\), the placement in Definition~\ref{def:zer-dep} and \(\Delta_c(f_i,y_i)=0\) give
	\[
		P_{L_{s,i}}^ay_i=y_i,
		\qquad
		F_s(P_{L_{s,i}}^ay_i)=
		\begin{cases}
			\lambda f_i(y_i),&c=00,11,\\
			\psi_s(U_{f_i}y_i),&c=01,
		\end{cases}
		=0.
	\]
	After \(\tau\), Lemma~\ref{lem:com-stm} allows only the two packets immediately following \(\tau\) to have the same level, and these form at most one pair \((s,i)\).
	If the seed records differ, Lemma~\ref{lem:com-stm} gives \(\mathcal{A}_{=}=\varnothing\).
	In both cases, we have
	\[ \#\bigl\{(s,i)\in\mathcal{A}_{=} :  K_s\cap J_0=K_s\bigr\}\le1. \]
	By Remark~\ref{rem:int-rst} and the pairwise distinctness of \((k_i)\),
	\[ \#\bigl\{s : \varnothing\ne K_s\cap J_0\ne K_s\bigr\}\le2, \qquad \#\bigl\{i\in I :  h_s=k_i\bigr\}\le1. \]
	Therefore
	\[
		\begin{aligned}
			|\mathcal{A}_{=}|
			&\le
			\#\bigl\{(s,i)\in\mathcal{A}_{=} :  K_s\cap J_0=K_s\bigr\}+
			\sum_{\substack{1\le s\le d\\
			\varnothing\ne K_s\cap J_0\ne K_s}}
			\#\bigl\{i\in I : (s,i)\in\mathcal{A}_{=}\bigr\} \le1+2=3.
		\end{aligned}
	\]
	Since \(\|P_{L_{s,i}}^a\|\le\kappa\) and \(\|y_i\|\le C\), we have
	\[ |F_s(P_{L_{s,i}}^ay_i)| \le\|F_s\|\|P_{L_{s,i}}^a\|\|y_i\| \le\kappa C \quad((s,i)\in\mathcal{A}_{=}). \]
	Hence
	\[ \frac{1}{n_qm_q} \sum_{(s,i)\in\mathcal{A}_{=}} |F_s(P_{L_{s,i}}^ay_i)| \le\frac{3\kappa C}{n_qm_q}. \]
	By \eqref{eq:cod-lam} and Lemma~\ref{lem:off-wgt-exa},
	\[ |F_s(P_{L_{s,i}}^ay_i)| \le\frac{C_{\rm off}C}{m_{\min\{h_s,k_i\}}} \le\frac{C_{\rm off}C}{\Lambda_q} \qquad((s,i)\in\mathcal{A}\setminus\mathcal{A}_{=}). \]
	The two families \((K_s)_{s=1}^d\) and \((\operatorname{ran}y_i)_{i\in I}\) are successive.
	Hence
	\[ |\mathcal{A}\setminus\mathcal{A}_{=}| \le\#\bigl\{(s,i)\in\{1,\ldots,d\}\times I :  K_s\cap\operatorname{ran}y_i\ne\varnothing\bigr\} \le d+|I|-1 \le2n_q-1<3n_q. \]
	Hence
	\[ \frac{1}{n_qm_q} \sum_{(s,i)\in\mathcal{A}\setminus\mathcal{A}_{=}} |F_s(P_{L_{s,i}}^ay_i)| \le\frac{3C_{\rm off}C}{m_q\Lambda_q}. \]
	Thus \eqref{eq:q-col-bnd} follows.
\end{proof}

The preceding lemma handles a terminal at outer level \(q\).
For a coordinate with lower root level, we stop its evaluation tree at level \(q\) and keep the carrier intervals.

A generative-AI system supplied the carrier-labelled stopping-tree mechanism
and the associated counting proof used in
Lemmas~\ref{lem:car-dom-tre}--\ref{lem:pai-bi}.
We rewrote the proof in our notation, verified it in the setting of our
construction, and checked its technical antecedents in the literature.
This review confirmed that the underlying tree and counting techniques are
well established; the relevant sources are cited below.
For further details, see the \hyperref[sec:ai-assistance]{AI assistance statement}
at the end of the paper.

For trees, branches and asymptotic games, see Odell and Schlumprecht~\cite{OdellSchlumprecht2002}.
Interval-labelled tree analyses of Bourgain--Delbaen evaluations appear in \cite[Section~2]{ManoussakisPelczarSwietek2017}.
For weight-threshold stopping and depth truncation in BD evaluations, see \cite[Section~3.2]{ManoussakisPelczarSwietek2017}.
For a related decomposition of auxiliary averages by node weights and branch lengths, see \cite[Proposition~11.7]{MotakisPelczar2025}.
Here the tree is selected for the fixed vector \(w\).
Each branch stops at zero contribution or at the first occurrence of a BD part, a coordinate of level zero or at least \(q\), or the depth cutoff \(D_q+2\).
Retaining the carrier intervals gives disjoint carriers at incomparable nodes, so the terminal contributions can be grouped by these stopping events.
For related estimates on repeated averages using tree analyses and a refined basic inequality, compare \cite[Propositions~11.8 and~13.6, Corollary~13.7]{MotakisPelczar2025}.

\begin{lemma}[Carrier-labelled tree]
	\label{lem:car-dom-tre}
	Let \(F\) be a scalarized coordinate evaluation, let \(J_\varnothing\) be an FDD interval, let \(w\) have finite FDD support, and fix \(q\in\N_+\).
	There are a finite rooted tree \(\mathcal{T}\), nonempty carrier intervals \(J_t\subseteq J_\varnothing\), coefficients \(0\le a_t\le1\), and terminal functionals \(\Phi_t\), where \(t\in\mathcal{T}_{\rm term}\), such that
	\begin{equation}\label{eq:car-dom}
		|F(P_{J_\varnothing}w)|
		\le \sum_{t\in\mathcal{T}_{\rm term}}
		a_t|\Phi_t(P_{J_t}w)|.
	\end{equation}
	Carriers of incomparable nodes are successive and disjoint.
	\begin{equation}\label{eq:car-tre-siz}
		|\mathcal{T}|\le1+A_q+\cdots+A_q^{D_q+2}=S_q.
	\end{equation}
	If \((y_i)\) is a finite successive block sequence, at most \(2|\mathcal{T}_{\rm term}|\) block ranges contain an endpoint of a terminal carrier.
	After these blocks are removed, if a remaining block range meets \(J_t\) for \(t\in\mathcal{T}_{\rm term}\), then it is contained in \(J_t\).
\end{lemma}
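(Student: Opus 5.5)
We build \(\mathcal{T}\) recursively from the evaluation analysis of \(F\), in the same way the basic inequality (Lemma~\ref{lem:vec-bas-ine}) unfolds evaluations. The root \(\varnothing\) carries \(J_\varnothing\), the functional \(F\), and \(a_\varnothing=1\).

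At a node \(t\) we have a scalarized coordinate evaluation \(G_t=e_{\gamma,\varphi}^{i*}\), a carrier \(J_t\), and a coefficient \(a_t\). We declare \(t\) terminal, with \(\Phi_t=G_t\), in any of the following cases:
\begin{itemize}
\item \(G_t(P_{J_t}w)=0\);
\item \(\operatorname{lev}(\gamma)=0\) or \(\operatorname{lev}(\gamma)\ge q\);
\item \(t\) has depth \(D_q+2\).
\end{itemize}
Otherwise \(h=\operatorname{lev}(\gamma)\in(0,q)\). We then use \eqref{eq:syn-vec-ea} together with the restriction formula \eqref{eq:two-bnd-rst} to write
\[
G_tP_{J_t}=\sum_{s} d_{\xi_s,\varphi_s}^{i*}P_{J_t}+\frac{1}{m_h}\sum_s b_{s,\varphi}^{i*}P_{K_s\cap J_t}.
\]
Each BD part \(d_{\xi_s,\varphi_s}^{i*}\) with \(\operatorname{rank}\xi_s\in J_t\) becomes a terminal child with carrier \(\{\operatorname{rank}\xi_s\}\), functional \(d^{i*}\), and coefficient \(a_t\). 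We would rather have \(\Phi_t\) be a scalarized coordinate, so we may instead write \(d^{i*}=P_{\{r\}}^{i*}e^{i*}\) and absorb the norm constant; since the statement only asks for terminal functionals, keeping the \(d^{i*}\) itself is fine.

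For the packet part, we argue as in the proof of Lemma~\ref{lem:vec-bas-ine}: expand \(b_{s,\varphi}^{i*}\) into an absolutely convex combination of scalarized coordinates and pick the single coordinate \(f_s\) that maximizes \(|f_s(P_{K_s\cap J_t}w)|\). This gives one child with carrier \(K_s\cap J_t\), whenever that set is nonempty, and coefficient \(a_t/m_h\le a_t\). Iterating the triangle inequality down the tree yields \eqref{eq:car-dom}.

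Two properties of this construction then follow.
\begin{itemize}
\item \emph{Disjointness.} Children of a single node have pairwise disjoint, successive carriers, because the cells \(K_s\) are successive and each BD rank \(p_s\) lies outside every open cell. Every carrier is contained in its parent's carrier, so by induction carriers of incomparable nodes are disjoint.
\item \emph{Size.} An internal node has level \(h<q\), so \(a\le n_h\). Hence it has at most \(2n_h\le A_q\) children, namely at most \(a\) BD children and \(a\) packet children. With depth at most \(D_q+2\), we get \(|\mathcal{T}|\le\sum_{l=0}^{D_q+2}A_q^l=S_q\), which is \eqref{eq:car-tre-siz}. Ensuring the BD count fits under \(A_q=4\max_{r<q}n_r\) is the step to double-check.
\end{itemize}

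For the final assertion, each terminal carrier is an interval and has two endpoints. Since the block ranges are successive, each endpoint lies in at most one block range, so at most \(2|\mathcal{T}_{\rm term}|\) blocks contain an endpoint. Now take a remaining block whose range meets \(J_t\) and contains neither endpoint of \(J_t\). Because it is an interval, it must lie strictly inside \(J_t\).

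The main obstacle is the bookkeeping that shows the restricted analysis really produces child carriers inside \(J_t\). This needs the restriction to be interpreted through \eqref{eq:two-bnd-rst} (via Remark~\ref{rem:int-rst}), so that partial cells cut at the endpoints of \(J_t\) are handled as \(K_s\cap J_t\) without creating new fragments. It also needs a check that the predecessor evaluations \(e_{r_u},e_{r_v}\) telescope back into BD parts and full packets supported in \(J_t\), rather than introducing extra terms.
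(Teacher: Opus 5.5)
Your proposal is correct and is essentially the paper's proof. It uses the same stopping rules (zero value, level \(0\) or \(\ge q\), depth \(D_q+2\)), the restricted \textup{(EA1)} expansion with singleton BD carriers, and one maximizing packet constituent on \(K_s\cap J_t\) with coefficient \(a_t/m_h\); disjointness then follows from nesting of carriers, the size bound from the branching bound \(2n_h\le 2\max_{r<q}n_r\le A_q\) (vacuous for \(q=1\)), and the last assertion from the same endpoint count.

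The obstacle you flag does not arise. Multiplying \eqref{eq:syn-vec-ea} directly by \(P^i_{J_t}\), and using \(d_{\xi_s,\varphi_s}^{i*}=d_{\xi_s,\varphi_s}^{i*}P^i_{\{p_s\}}\) together with \(b_{s,\varphi}^{i*}P^i_{(p_{s-1},\infty)}=b_{s,\varphi}^{i*}P^i_{K_s}\) (raw support in \(K_s\), cf.\ \eqref{eq:tal-win-cns}), already gives your displayed decomposition without \eqref{eq:two-bnd-rst} or any telescoping of predecessor evaluations. Note also that the paper sets \(a_t=0\) at zero-value stops; the later paired mean estimate uses this for terminals of level strictly between \(0\) and \(q\).
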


\begin{proof}
	By \hyperref[itm:ana-ea1]{\textup{(EA1)}}, every positive-level coordinate satisfies \eqref{eq:syn-vec-ea}.
	Every packet in the analysis of \(e_{\gamma,\varphi}^*\) is a finite absolutely convex combination of scalarized coordinates \(e_{\delta,\psi}^*\).
	Each constituent satisfies \(\operatorname{rank}\delta<\operatorname{rank}\gamma\).
	Hence the same formula can be applied recursively.
	Since \(w\) has finite FDD support, choose a finite nonempty FDD interval
	\(J\subseteq J_\varnothing\) such that \(P_Jw=P_{J_\varnothing}w\), and replace \(J_\varnothing\) by \(J\).
	Give the root carrier \(J_\varnothing\), initial coefficient \(a_\varnothing=1\), and functional \(\Phi_\varnothing=F\).
	At every node \(t\), including the root, if \(a_t|\Phi_t(P_{J_t}w)|=0\), set \(a_t=0\) and stop.
	Otherwise, stop each branch when it first reaches a BD part or a coordinate of level zero or at least \(q\).
	If neither occurs, stop the branch at depth \(D_q+2\).
	Every stopped functional is retained as a terminal functional.

	Let \(t\) be a coordinate node at which the recursion has not stopped.
	Write \(\Phi_t=e_{\gamma_t,\varphi_t}^*\) and put \(\ell_t=\operatorname{lev}(\gamma_t)\).
	Then \(0<\ell_t<q\).
	Let \(d_t\le n_{\ell_t}\) be the age of its evaluation analysis.
	For \(1\le r\le d_t\), let \(p_{t,r}=\operatorname{rank}\xi_{t,r}\) and let \(K_{t,r}\) be the corresponding packet cell.
	Put
	\[ J_{t,r}^{\rm BD}=J_t\cap\{p_{t,r}\}, \qquad J_{t,r}^{\rm pkt}=J_t\cap K_{t,r}. \]
	By \eqref{eq:syn-vec-ea},
	\[ \Phi_tP_{J_t} =\sum_{r=1}^{d_t}d_{\xi_{t,r},\varphi_{t,r}}^*P_{J_{t,r}^{\rm BD}} +\frac{1}{m_{\ell_t}}\sum_{r=1}^{d_t}b_{t,r}P_{J_{t,r}^{\rm pkt}}. \]
	If the represented packet for \(b_{t,r}\) is empty, its term is zero and gives no child.
	For every other nonempty packet carrier, write
	\( b_{t,r}=\sum_{l=1}^{s_{t,r}}c_{t,r,l}f_{t,r,l}\), where
	\( \sum_{l=1}^{s_{t,r}}|c_{t,r,l}|\le1\).
	For such a carrier, choose \(l(t,r)\) such that
	\(
		|f_{t,r,l(t,r)}(P_{J_{t,r}^{\rm pkt}}w)|
		=\max_{1\le l\le s_{t,r}}|f_{t,r,l}(P_{J_{t,r}^{\rm pkt}}w)|.
	\)
	Hence
	\begin{equation}\label{eq:num-pkt-dom}
		\begin{aligned}
			|b_{t,r}(P_{J_{t,r}^{\rm pkt}}w)|
			&\le\sum_{l=1}^{s_{t,r}}|c_{t,r,l}|
			|f_{t,r,l}(P_{J_{t,r}^{\rm pkt}}w)|\\
			&\le\left(\sum_{l=1}^{s_{t,r}}|c_{t,r,l}|\right)
			|f_{t,r,l(t,r)}(P_{J_{t,r}^{\rm pkt}}w)|
			\le |f_{t,r,l(t,r)}(P_{J_{t,r}^{\rm pkt}}w)|.
		\end{aligned}
	\end{equation}
	For every nonempty BD carrier and every packet carrier retained above, initialize the corresponding child \(u\) by
	\[
		(\Phi_u,J_u,a_u)=
		\begin{cases}
			(d_{\xi_{t,r},\varphi_{t,r}}^*,J_{t,r}^{\rm BD},a_t),
			&u\text{ is a BD child},\\
			(f_{t,r,l(t,r)},J_{t,r}^{\rm pkt},a_t/m_{\ell_t}),
			&u\text{ is a packet child}.
		\end{cases}
	\]
	Apply the same stopping rules to every child.
	The evaluation analysis and \eqref{eq:num-pkt-dom} give
	\[ a_t|\Phi_t(P_{J_t}w)| \le\sum_{u\in\operatorname{succ}(t)}a_u|\Phi_u(P_{J_u}w)|. \]
	Iterating this inequality from the root to the terminal nodes gives \eqref{eq:car-dom}.

	The packet cells and the singleton BD carriers are successive and disjoint.
	Hence, for incomparable nodes \(u\) and \(v\), we have
	\(J_u\cap J_v=\varnothing\).
	Moreover, for every nonterminal coordinate node \(t\),
	\(
		|\operatorname{succ}(t)|
		\le2d_t
		\le2n_{\ell_t}
		\le A_q.
	\)
	Thus \(\mathcal{T}\) has depth at most \(D_q+2\), and
	\[ |\mathcal{T}| \le\sum_{r=0}^{D_q+2}A_q^r =S_q, \]
	which is \eqref{eq:car-tre-siz}.

	For the last assertion, write the successive block sequence as \((y_i)_{i=1}^N\) and put
	\(
		I_{\rm end}
		=\left\{i:\operatorname{ran}y_i\cap\{\min J_t,\max J_t\}\ne\varnothing
		\text{ for some }t\in\mathcal{T}_{\rm term}\right\}.
	\)
	Since the block ranges are successive,
	\[ |I_{\rm end}| \le\sum_{t\in\mathcal{T}_{\rm term}} \sum_{\rho\in\{\min J_t,\max J_t\}} \#\{i:\rho\in\operatorname{ran}y_i\} \le2|\mathcal{T}_{\rm term}|. \]
	For \(t\in\mathcal{T}_{\rm term}\), put
	\(
		I_t=\{i\notin I_{\rm end}:\operatorname{ran}y_i\cap J_t\ne\varnothing\}.
	\)
	By the interval property, for \(i\in I_t\) we have
	\(
		\operatorname{ran}y_i\subseteq J_t.
	\)
	If \(\max J_t<\min J_u\) and \(i\in I_t\cap I_u\), then
	\(
		\max J_t,\ \min J_u\in\operatorname{ran}y_i,
	\)
	which contradicts \(i\notin I_{\rm end}\).
	Therefore
	\(
		I_t\cap I_u=\varnothing
	\), if \(t \ne u\).
\end{proof}

Figure~\ref{fig:stopping-tree} summarizes the stopping rules, with representative children.
At each expansion, one constituent is selected from each retained packet for the fixed vector \(w\).
Packet carriers are intersections with the parent carrier. The root has depth zero.
Zero-contribution nodes are stopped with coefficient zero.

\begin{figure}[htbp]
	\centering
\begin{tikzpicture}[x=1cm,y=1cm,>=stealth,
	every node/.style={font=\small,align=center},
	coord/.style={draw=blue!55!black,fill=blue!4,rounded corners=2pt,
		minimum height=0.95cm,inner sep=4pt},
	terminal/.style={draw=green!40!black,fill=green!4,rounded corners=2pt,
		minimum height=1cm,inner sep=4pt}]
	\node[coord,text width=3.55cm] (root) at (6.05,5.70)
		{Root: \((F,J_\varnothing,1)\)\\[2pt]
		 root level \(0<h<q\)};
	\node[terminal,text width=2.65cm] (bd) at (1.60,3.75)
		{BD part \(d^*\)\\[2pt]
		 \(J_\varnothing\cap\{p_r\}\)\\[2pt]
		 stop};
	\node[terminal,text width=2.75cm] (level) at (5.50,3.75)
		{Coordinate \(f\)\\[2pt]
		 level \(0\) or \(\ge q\)\\[2pt]
		 stop};
	\node[coord,text width=3.55cm] (low) at (10.00,3.75)
		{Coordinate \(f\)\\[2pt]
		 level \(0<\ell<q\)\\[2pt]
		 continue below the depth limit};
	\draw[->,thick] (root.south west) --
		node[above left,font=\scriptsize] {BD: coefficient \(1\)} (bd.north);
	\draw[->,thick] (root.south) --
		node[left,font=\scriptsize] {packet: \(1/m_h\)} (level.north);
	\draw[->,thick] (root.south east) --
		node[above right,font=\scriptsize] {packet: \(1/m_h\)} (low.north);

	\node[terminal,text width=3.55cm] (deep) at (10.00,1.10)
		{Depth \(D_q+2\): stop\\[3pt]
		 terminal level \(0<h_t<q\)\\[2pt]
		 \(a_t\le m_1^{-(D_q+2)}\)};
	\draw[->] (low.south) -- node[midway,fill=white,inner sep=1pt] {\(\vdots\)} (deep.north);

	\node[draw=black!35,fill=black!2,rounded corners=2pt,
		text width=6.20cm,inner sep=6pt] at (3.60,1.65)
		{Stop each branch at its first stopping event.\\[4pt]
		 Each packet edge multiplies the coefficient by
		 \(1/m_{\ell}\), where \(\ell\) is the parent level.\\[4pt]
		 Incomparable nodes have disjoint carriers.};
\end{tikzpicture}
	\caption{A carrier-labelled stopping tree.}
	\label{fig:stopping-tree}
\end{figure}

The separation of boundary blocks from blocks contained in successive
intervals already appears in Gowers and Maurey~\cite{GowersMaurey1997}.
Summing the depth remainder over disjoint intervals also appears in
\cite[Section~3.2]{ManoussakisPelczarSwietek2017}.
The proof below applies these estimates to the terminal carriers of the stopped tree.
In the notation of the proof, the bad set consists of terminal--block incidences \((t,i)\):
each carrier contributes at most two boundary incidences, giving
\mbox{\(|\mathcal E_\partial|\le2|\mathcal T_{\rm term}|\le2S_q\)}.
For the contained blocks, disjointness gives \(\sum_t|I_t|\le n_q\).
Thus the boundary estimate includes \(S_q\), while the depth contribution
is bounded by \(Cm_1^{-(D_q+2)}\) without an additional factor \(S_q\).

\begin{lemma}[Paired mean inequality]\label{lem:pai-bi}
	Let \(c=ab\in\{00,11,01\}\).
	Then every \(C\)-bounded \(q\)-dependent sequence in \(X_a\) satisfies
	\begin{equation}\label{eq:zer-dep}
		\norm{\frac{1}{n_q}\sum_{i=1}^{n_q}y_i}
		\le \frac{K_{\rm dep}C}{m_q^2}.
	\end{equation}
\end{lemma}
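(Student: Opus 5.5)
Since the coordinates in \(\mathcal K_a\) norm \(X_a\), it suffices to fix a scalarized coordinate evaluation \(F\) on side \(a\) and bound \(|F(\bar y)|\) for the outer average \(\bar y=n_q^{-1}\sum_{i=1}^{n_q}y_i\). We apply Lemma~\ref{lem:car-dom-tre} to \(F\), with \(w=\bar y\), \(J_\varnothing\) an interval containing the support of \(\bar y\), and stopping level \(q\). This yields
\[
|F(\bar y)|\le\sum_{t\in\mathcal T_{\rm term}}a_t|\Phi_t(P_{J_t}\bar y)|,\qquad |\mathcal T|\le S_q .
\]
Next we sort the terminal--block incidences \((t,i)\) into two kinds. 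The boundary set \(\mathcal E_\partial\) consists of incidences where \(\operatorname{ran}y_i\) contains an endpoint of \(J_t\); the contained incidences are those with \(\operatorname{ran}y_i\subseteq J_t\). By the last assertion of Lemma~\ref{lem:car-dom-tre}, \(|\mathcal E_\partial|\le2S_q\). The same lemma shows that the contained sets \(I_t\) are pairwise disjoint, so \(\sum_t|I_t|\le n_q\). Each boundary incidence contributes at most \(\kappa C/n_q\), and \(S_q/n_q\le2^{-q-8}m_q^{-5}\) by \eqref{eq:led-rat}. Hence the boundary part is far below \(m_q^{-2}\).

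For the contained incidences we split according to the type of the terminal \(t\).

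\emph{BD part terminals.} The carrier is a singleton, so it meets at most one block. By Definition~\ref{def:zer-dep}(3) and \eqref{eq:cod-lam}, the contribution is at most \(C/(n_q\Lambda_q)\).

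\emph{Terminals of level zero.} These are probes, handled in the same way: a single FDD coefficient acting on one block, bounded by \(C/m_{k_i}\le C/\Lambda_q\).

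\emph{Depth-cutoff terminals.} Here \(a_t\le m_1^{-(D_q+2)}\le m_q^{-2}\). Since the \(I_t\) are disjoint, summing \(|\Phi_t(P_{J_t}\sum_{i\in I_t}y_i)|/n_q\) gives a contribution of order \(\kappa C m_q^{-2}\), without an extra factor \(S_q\).

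\emph{Level-\(\ge q\) terminals with level \(\ne q\), or of a different corner.} The off-weight estimate Lemma~\ref{lem:off-wgt-exa} gives \(|\Phi_t(P y_i)|\le C_{\rm off}C/m_{\min(h,k_i)}\). If \(h>q\) this bound is \(\le C_{\rm off}C/m_{q+1}\le C_{\rm off}C/m_q^5\). The case where the level is \(h=q\) but the terminal is a non-outer atom (generic, reverse, another corner) cannot occur, because the weight pools are disjoint. Terminals of outer level \(q\) but a different corner have histories whose inner levels lie in a disjoint coded pool. Their restricted analyses can therefore be bounded exactly as in Lemma~\ref{lem:crt-stm-col} with \(\mathcal A_{=}=\varnothing\).

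\emph{Level-\(q\) outer terminals of corner \(c\).} For each such \(t\), with \(\Phi_t=Q_0\) and \(J_0=J_t\), we apply Lemma~\ref{lem:crt-stm-col} to the contained blocks \(I_t\). Each application gives at most \(3\kappa C/(n_qm_q)+\kappa C/\Lambda_q+3C_{\rm off}C/(m_q\Lambda_q)\). This must be multiplied by \(a_t\), and the coefficients carry the weights from the root. The key point is a multiplicative factor: when \(F\) itself has root level \(q\), the tree is just the root, and Lemma~\ref{lem:crt-stm-col} is used directly on the whole sum. Otherwise, at least one packet edge of level \(<q\) precedes the terminal, so \(a_t\le m_1^{-1}\).

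This last step is the main obstacle. Summing the per-terminal bounds over up to \(S_q\) terminals multiplies the term \(\kappa C/\Lambda_q\) by \(S_q\), which is harmless because \(S_q/\Lambda_q\le2^{-q-12}m_q^{-4}\) by \eqref{eq:led-rat}. The term \(3\kappa C/(n_qm_q)\) summed over \(S_q\) terminals is likewise harmless by \eqref{eq:led-rat}. The delicate point is to keep the factor \(1/m_q\), so that \(a_t\) times the sum stays of order \(m_q^{-2}\). I would try to obtain this by rewriting \(\Phi_t(P_{J_t}\cdot)\) directly through its analysis: the \(1/m_q\) packet coefficients with incidences in \(\mathcal A_=\) number at most \(3\) per terminal, while off-level incidences give \(C_{\rm off}C/\Lambda_q\).

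Adding up all the contributions, with the constants absorbed into \(K_{\rm dep}=32\kappa(C_{\rm off}+1)\), should yield \eqref{eq:zer-dep}.
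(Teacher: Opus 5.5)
Your architecture is the paper's. You normalize by \(\mathcal K_a\) and run the carrier-labelled stopping tree of Lemma~\ref{lem:car-dom-tre}. You split incidences into boundary ones (at most \(2S_q\)) and contained ones (disjoint \(I_t\), with \(\sum_t|I_t|\le n_q\)), and then treat BD and level-zero terminals, depth-cutoff terminals, level-\(q\) outer terminals (via Lemma~\ref{lem:crt-stm-col}) and level-\(>q\) terminals separately. Applying the tree lemma uniformly, rather than first splitting on the root level as the paper does, is harmless: a root of level \(0\) or \(\ge q\) is itself the only terminal, with carrier \(J_\varnothing\) and no boundary incidences.

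There is, however, one step that fails as written. For a terminal with \(h_t>q\), you apply Lemma~\ref{lem:off-wgt-exa} to every contained \(y_i\), but that lemma requires \(h_t\ne k_i\). Since every inner level satisfies \(k_i>q\), the case \(h_t=k_i\) genuinely occurs. Inner chains of level \(k_i\) attached to \(\pi_{i-1}\) carry arbitrary admissible packets, so a level-\(k_i\) coordinate can appear as a packet constituent in the tree, and it need not be small on \(y_i\). You need the extra count the paper uses. By pairwise distinctness of the \(k_i\), each terminal has at most one \(i\in I_t\) with \(k_i=h_t\), so there are at most \(|\mathcal T_{\rm term}|\le S_q\) such incidences. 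Bounding each trivially by \(\kappa C\) gives a contribution \(\kappa CS_q/n_q\), which is negligible by \eqref{eq:led-rat}. The same correction is needed when the root itself has level \(>q\).

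Conversely, the ``delicate point'' you flag for level-\(q\) terminals is not an obstacle. Every term of the bound in Lemma~\ref{lem:crt-stm-col} carries a factor \(1/n_q\) or \(1/\Lambda_q\). Summing over at most \(S_q\) terminals using only \(a_t\le1\) therefore gives
\(\frac{3\kappa CS_q}{n_qm_q}+\frac{\kappa CS_q}{\Lambda_q}+\frac{3C_{\rm off}CS_q}{m_q\Lambda_q}\),
which is already far below \(m_q^{-2}\). No factor \(a_t\le m_1^{-1}\) and no re-analysis of \(\Phi_t\) is required. Likewise, the case of outer level \(q\) from a different corner is vacuous, because \(q\in\mathcal C_c^{\rm seed}\) and the seed pools of the three corners are disjoint. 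Once the missing \(h_t=k_i\) term is added, your contributions are essentially the same eight terms as in the paper and fit under \(K_{\rm dep}C/m_q^2\).
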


\begin{proof}
	Put \(\bar y=n_q^{-1}\sum_{i=1}^{n_q}y_i\), let \(F=e_{\gamma,\varphi}^{a*}\in\mathcal{K}_a\), and put \(h=\operatorname{lev}(\gamma)\).
	Choose an FDD interval \(J_\varnothing\) containing every \(\operatorname{ran}y_i\), so \(P_{J_\varnothing}^a\bar y=\bar y\).

	We first consider the root level of \(F\).
	If \(h=0\), then \hyperref[itm:ana-ea1]{\textup{(EA1)}} gives
	\(F=d_{\gamma,\varphi}^{a*}\), and this coordinate meets at most one \(y_i\).
	Hence \[|F(\bar y)|\le \frac{\kappa C}{n_q} \le  \frac{\kappa C}{m_q^2}\le \frac{K_{\rm dep}C}{m_q^2}.\]
	If \(h>q\), the pairwise distinctness of \((k_i)\), \eqref{eq:qnt-cod-grw}, and Lemma~\ref{lem:off-wgt-exa} give
	\[ \#\{i:k_i=h\}\le1,\qquad |F(y_i)|\le\frac{C_{\rm off}C}{m_{h\wedge k_i}}\le\frac{C_{\rm off}C}{m_{q+1}}\quad(k_i\ne h). \]
	By \eqref{eq:led-elm} and \eqref{eq:led-dpt-tal}, we have
	\[ |F(\bar y)|\le\frac{1}{n_q}\left(\kappa C+\frac{n_qC_{\rm off}C}{m_{q+1}}\right)=\frac{\kappa C}{n_q}+\frac{C_{\rm off}C}{m_{q+1}}\le\frac{K_{\rm dep}C}{m_q^2}. \]
	If \(h=q\), disjointness of the weight pools makes \(F\) a scalarized outer evaluation for the corner \(c\).
	By \eqref{eq:q-col-bnd} on \(J_\varnothing\), \eqref{eq:led-rat}, and \eqref{eq:led-dpt-tal},
	\[ |F(\bar y)|\le\frac{3\kappa C}{n_qm_q}+\frac{\kappa C}{\Lambda_q}+\frac{3C_{\rm off}C}{m_q\Lambda_q}\le(4\kappa+3C_{\rm off})\frac{C}{m_q^2}\le\frac{K_{\rm dep}C}{m_q^2}. \]

	Now suppose that \(0<h<q\), and apply Lemma~\ref{lem:car-dom-tre} to
	\(F\), \(J_\varnothing\), and \(\bar y\).
	For every coordinate terminal \(t\), write \(\Phi_t=e_{\gamma_t,\varphi_t}^{a*}\) and \(h_t=\operatorname{lev}(\gamma_t)\).
	We separate blocks crossing a terminal endpoint from blocks contained in one carrier.
	Define \[\mathcal E_\partial=\{(t,i)\in\mathcal T_{\rm term}\times\{1,\ldots,n_q\}:\operatorname{ran}y_i\cap J_t\ne\varnothing,\ \operatorname{ran}y_i\nsubseteq J_t\}.\]
	For every terminal \(t\), put \(I_t=\{1\le i\le n_q:\operatorname{ran}y_i\subseteq J_t\}\) and write \(\mathbf{1}_{I_t}\bar y=n_q^{-1}\sum_{i\in I_t}y_i\).
	By Lemma~\ref{lem:car-dom-tre}, the terminal carriers are successive and disjoint and \(|\mathcal T_{\rm term}|\le S_q\).
	Every \((t,i)\in\mathcal E_\partial\) contains an endpoint of \(J_t\).
	Hence
	\[
			|\mathcal E_\partial|\le\sum_{t\in\mathcal T_{\rm term}}\sum_{r\in\{\min J_t,\max J_t\}}\#\{i:r\in\operatorname{ran}y_i\}\le2|\mathcal T_{\rm term}|\le2S_q,\]
	and
	\[I_t\cap I_u=\varnothing\quad(t\ne u),\qquad \sum_{t\in\mathcal T_{\rm term}}|I_t|\le n_q. \]
	Hence \eqref{eq:car-dom} gives
	\[
		\begin{aligned}
			|F(\bar y)|&\le\sum_{t\in\mathcal T_{\rm term}}a_t|\Phi_t(P_{J_t}^a\bar y)|\\
			&\le\frac{1}{n_q}\sum_{(t,i)\in\mathcal E_\partial}a_t|\Phi_t(P_{J_t}^ay_i)|+\sum_{t\in\mathcal T_{\rm term}}a_t|\Phi_tP_{J_t}^a(\mathbf{1}_{I_t}\bar y)|.
		\end{aligned}
	\]

	Since \(a_t\le1\), for a BD terminal \(\Phi_t=d_{\xi_t,\psi_t}^{a*}\), its singleton carrier gives
	\(\Phi_t(P_{J_t}^ay_i)=\Phi_t(y_i)\), and Definition~\ref{def:zer-dep} with \eqref{eq:cod-lam} gives
	\(a_t|\Phi_t(y_i)|\le C/m_{k_i}\le C/\Lambda_q\le C\le\kappa C\).
	For a coordinate terminal, we have
	\(a_t|\Phi_t(P_{J_t}^ay_i)|\le\|\Phi_t\|\,\|P_{J_t}^a\|\,\|y_i\|\le\kappa C\).
	Hence
	\begin{equation}\label{eq:pai-bou}
		\frac{1}{n_q}\sum_{(t,i)\in\mathcal E_\partial}a_t|\Phi_t(P_{J_t}^ay_i)|\le\frac{\kappa C}{n_q}|\mathcal E_\partial|\le\frac{2\kappa CS_q}{n_q}.
	\end{equation}
	If \(\Phi_t\) is a coordinate terminal with \(h_t=0\), then
	\hyperref[itm:ana-ea1]{\textup{(EA1)}} gives the same bound as for a BD terminal.
	Each BD terminal and each such coordinate terminal meets at most one \(y_i\). Hence
	\begin{equation}\label{eq:pai-bd0}
		\begin{aligned}
			\sum_{\substack{t\in\mathcal T_{\rm term}\\
				\Phi_t\text{ a BD terminal or }h_t=0}}
			a_t|\Phi_tP_{J_t}^a(\mathbf{1}_{I_t}\bar y)|
			&\le\frac{C}{n_q}
			\sum_{\substack{t\in\mathcal T_{\rm term}\\
				\Phi_t\text{ a BD terminal or }h_t=0}}
			\sum_{\substack{i\in I_t\\\Phi_t(y_i)\ne0}}\frac{1}{m_{k_i}}&\\
			&\quad\le\frac{C}{n_q\Lambda_q}|\mathcal T_{\rm term}|
			\le\frac{CS_q}{n_q\Lambda_q}.
		\end{aligned}
	\end{equation}
	It remains to estimate the terms in the second sum for which \(t\) is a coordinate with \(h_t>0\).
	For \(h_t=q\), \(\Phi_t\) is a scalarized outer evaluation of corner \(c\), so \eqref{eq:q-col-bnd} applies on \(J_t\) whenever \(I_t\ne\varnothing\).
	For each terminal \(t\) with \(h_t>q\), the pairwise distinctness of \((k_i)\) gives
	\(\#\{i\in I_t:k_i=h_t\}\le1\).
	Using also the disjointness of the \(I_t\)'s, we obtain
	\[
		\begin{aligned}
			\#\{(t,i):h_t>q,\ i\in I_t,\ k_i=h_t\}
			&\le\sum_{h_t>q}1\le|\mathcal T_{\rm term}|\le S_q,\\
			\#\{(t,i):h_t>q,\ i\in I_t,\ k_i\ne h_t\}
			&\le\sum_{h_t>q}|I_t|\le\sum_{t\in\mathcal T_{\rm term}}|I_t|\le n_q.
		\end{aligned}
	\]
	Moreover, \eqref{eq:qnt-cod-grw} gives \(k_i=\sigma_c(q,\pi_{i-1})>q\).
	Hence \(\min\{h_t,k_i\}\ge q+1\) whenever \(h_t>q\), and Lemma~\ref{lem:off-wgt-exa} gives
	\[ |\Phi_t(y_i)| \le\frac{C_{\rm off}C}{m_{\min\{h_t,k_i\}}} \le\frac{C_{\rm off}C}{m_{q+1}} \qquad(h_t>q,\ i\in I_t,\ k_i\ne h_t). \]
	For \(k_i=h_t\), we use \(|\Phi_t(y_i)|\le\|\Phi_t\|\,\|y_i\|\le C\le\kappa C\).
	For \(0<h_t<q\), either \(a_t=0\) or \(t\) has depth \(D_q+2\); hence \(a_t\le m_1^{-(D_q+2)}\).
	Hence
	\begin{equation}\label{eq:pai-pos}
		\begin{aligned}
			\sum_{h_t=q}a_t|\Phi_tP_{J_t}^a(\mathbf{1}_{I_t}\bar y)|&\le\frac{3\kappa CS_q}{n_qm_q}+\frac{\kappa CS_q}{\Lambda_q}+\frac{3C_{\rm off}CS_q}{m_q\Lambda_q},\\
			\sum_{h_t>q}a_t|\Phi_tP_{J_t}^a(\mathbf{1}_{I_t}\bar y)|&\le\frac{1}{n_q}\left(\kappa CS_q+\frac{C_{\rm off}C}{m_{q+1}}n_q\right)=\frac{\kappa CS_q}{n_q}+\frac{C_{\rm off}C}{m_{q+1}},\\
			\sum_{0<h_t<q}a_t|\Phi_tP_{J_t}^a(\mathbf{1}_{I_t}\bar y)|&\le\frac{C}{n_q}\sum_{0<h_t<q}a_t|I_t|\le\frac{C}{n_q}m_1^{-(D_q+2)}\sum_{0<h_t<q}|I_t|\le C m_1^{-(D_q+2)}.
		\end{aligned}
	\end{equation}

	Adding \eqref{eq:pai-bou}, \eqref{eq:pai-bd0}, and \eqref{eq:pai-pos}, and then using \eqref{eq:led-rat} and \eqref{eq:led-dpt-tal}, we have
		\begin{align*}
			|F(\bar y)|
			\le&\frac{2\kappa CS_q}{n_q}
			+\frac{CS_q}{n_q\Lambda_q}
			+\frac{3\kappa CS_q}{n_qm_q}
			+\frac{\kappa CS_q}{\Lambda_q}
			+\frac{3C_{\rm off}CS_q}{m_q\Lambda_q}+\frac{\kappa CS_q}{n_q}
			+\frac{C_{\rm off}C}{m_{q+1}}
			+C m_1^{-(D_q+2)}\\
			\le&8\kappa(C_{\rm off}+1)C\left(\frac{S_q}{n_q}+\frac{S_q}{\Lambda_q}+m_1^{-(D_q+2)}+\frac{1}{m_{q+1}}\right)\\
			\le&8\kappa(C_{\rm off}+1)C\left(2^{-q-8}m_q^{-5}+2^{-q-12}m_q^{-4}+m_q^{-2}+m_q^{-5}\right)\\
			<&32\kappa(C_{\rm off}+1)\frac{C}{m_q^2}\\
			=&\frac{K_{\rm dep}C}{m_q^2}.
		\end{align*}
	Taking the supremum over \(F\in\mathcal{K}_a\) gives \eqref{eq:zer-dep}.
\end{proof}

Figure~\ref{fig:carrier-boundary-count} illustrates the two counts used above.
Boundary terms are counted by pairs \((t,i)\), whereas the contained blocks
are counted only once in \(\sum_t|I_t|\). This is why the deep remainder has
no additional factor \(S_q\).
In the figure, orange blocks cross carrier endpoints; blue blocks are contained in one carrier.
A single block, such as \(y_3\), may give more than one boundary incidence.

\begin{figure}[htbp]
	\centering
\begin{tikzpicture}[x=1cm,y=1cm,>=stealth,
	every node/.style={font=\small,align=center}]
	\node[anchor=west,font=\scriptsize] at (0,3.20) {Terminal carriers};
	\foreach \a/\b/\t in {0.80/3.75/1,4.35/7.10/2,7.80/11.45/3}{
		\draw[draw=black!65,fill=black!6,rounded corners=1pt]
			(\a,2.50) rectangle (\b,2.90);
		\node at ({(\a+\b)/2},2.70) {\(J_{t_{\t}}\)};
		\draw[densely dashed,black!40] (\a,2.50) -- (\a,1.30);
		\draw[densely dashed,black!40] (\b,2.50) -- (\b,1.30);
	}
	\node[anchor=west,font=\scriptsize,fill=white,inner sep=2pt] at (0,2.03) {Block ranges \(\operatorname{ran}y_i\)};
	\foreach \a/\b/\i in {0.15/1.30/1,3.25/4.90/3,6.65/8.35/5,10.90/12.05/7}{
		\draw[draw=orange!75!black,fill=orange!15,rounded corners=1pt]
			(\a,1.12) rectangle (\b,1.52);
		\node[below=3pt] at ({(\a+\b)/2},1.12) {\(y_{\i}\)};
	}
	\foreach \a/\b/\i in {1.85/2.75/2,5.35/6.10/4,8.90/10.20/6}{
		\draw[draw=blue!55!black,fill=blue!12,rounded corners=1pt]
			(\a,1.12) rectangle (\b,1.52);
		\node[below=3pt] at ({(\a+\b)/2},1.12) {\(y_{\i}\)};
	}
	\draw[->,black!65] (0,0.46) -- (12.30,0.46)
		node[below left,font=\scriptsize] {rank};
	\node[text=orange!75!black,anchor=west,font=\scriptsize] at (0,-0.12)
		{Boundary: \((t_1,3),(t_2,3)\in\mathcal E_\partial\)};
	\node[text=blue!55!black,anchor=west,font=\scriptsize] at (6.15,-0.12)
		{Contained: \(2\in I_{t_1},\ 4\in I_{t_2},\ 6\in I_{t_3}\)};
	\node at (6.15,-0.78)
		{\(\displaystyle |\mathcal E_\partial|\le2|\mathcal T_{\rm term}|,
			\qquad \sum_{t\in\mathcal T_{\rm term}}|I_t|\le n_q\)};
\end{tikzpicture}
	\caption{Boundary incidences and contained blocks.}
	\label{fig:carrier-boundary-count}
\end{figure}

\subsubsection{Operator-dependent sequence}

\begin{lemma}[Operator-dependent extraction]
	\label{lem:opr-crt-ext}
	Let \(c=ij\in\{00,11,01\}\), let \(S\in\Bcal(X_i,X_j)\), and let \((x_k)\subseteq X_i\) be a \(C_0\)-RIS with RIS indices \((j_k)\), where \(C_0\ge1\).
	Suppose that for some \(\varepsilon>0\), we have
	\[ \dist(Sx_k,\mathscr{M}_{ij}(x_k))\ge\varepsilon\qquad(k\ge1). \]
	For every sufficiently large \(q\in\mathcal{C}_c^{\rm seed}\), there is a \(C_{\rm dep}\)-bounded \(q\)-dependent sequence \((y_s,f_s)_{s=1}^{n_q}\) where \(C_{\rm dep}=2A_{\rm ex}C_0\) such that
	\[ \operatorname{Re}f_s(Sy_s)\ge\frac{\varepsilon}{4}\qquad(1\le s\le n_q). \]
	Moreover, for \(\bar y=n_q^{-1}\sum_{s=1}^{n_q}y_s\),
	\begin{equation}\label{eq:out-low}
		\norm{S\bar y}\ge\frac{\varepsilon}{8m_q}.
	\end{equation}
\end{lemma}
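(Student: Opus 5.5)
The plan is to build the dependent sequence one outer step at a time, using Lemma~\ref{lem:opr-exf} for each step and Proposition~\ref{prop:fin-ext}\textup{(3)} to realize the next outer continuation. The lower bound \eqref{eq:out-low} then comes from the outer evaluation. Fix \(q\in\mathcal{C}_c^{\rm seed}\) large. By Proposition~\ref{prop:fin-ext}\textup{(2)}, choose a seed \(\vartheta=(c,q,\chi,o)\) born after the RIS start, and put \(\pi_0=(\vartheta;)\).

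Inductively, suppose \(\pi_{s-1}\) has been realized with \(s\le n_q\). Put \(k_s=\sigma_c(q,\pi_{s-1})\). Apply Lemma~\ref{lem:opr-exf} with cutoff \(R>\rho(\pi_{s-1})\), tailing the RIS beyond the ranges already used. This gives a complete level-\(k_s\) inner chain with terminal \(\eta_s\), a root-perturbed vector \(y_s=y_s^0+r_s\), the functional \(f_s=e^{j*}_{\eta_s}\), and a compatible certificate \(\mathfrak c_s\). These satisfy \(\norm{y_s}\le A_{\rm ex}\cdot2C_0\), \(\Delta_c(f_s,y_s)=0\), and \(\operatorname{Re}f_s(Sy_s)\ge\varepsilon/4\). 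By Proposition~\ref{prop:fin-ext}\textup{(3)}, there is a realized outer continuation carrying exactly \(\mathfrak c_s\); it yields \(\zeta_s\) and \(\pi_s\). Placement and the bounds in Definition~\ref{def:zer-dep} then follow from the certificate support \((\rho(\pi_{s-1}),\operatorname{rank}\eta_s]\), from \eqref{eq:roo-exa-est}, and from Lemma~\ref{lem:roo-crr-adm}. The inner RIS \((u_{s,t})\) is a \(2C_0\)-RIS, and \(\norm{r_s}\le MC/m_{k_s}\). Hence every clause of Definition~\ref{def:zer-dep} holds with constant \(C_{\rm dep}=2A_{\rm ex}C_0\), which dominates \(2C_0\) and \(M\cdot 2C_0\). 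The \(S\)-dependent choices only select occurrences that already exist in the datum, as in Remark~\ref{rem:opr-exf-qnt}.

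For \eqref{eq:out-low}, take the terminal outer atom \(\zeta=\zeta_{n_q}\) on side \(j\). Using the outer evaluation formula in \hyperref[itm:ana-g3]{\textup{(G3)}}, write
\[
e^{j*}_\zeta=\sum_{s=1}^{n_q}d^{j*}_{\zeta_s}+\frac1{m_q}\sum_{s=1}^{n_q} f_sP^j_{(\rho_{s-1},\infty)} .
\]
Then \(e^{j*}_\zeta(S\bar y)\) splits into three parts. The diagonal part is \(\frac1{m_qn_q}\sum_s f_sP_{(\rho_{s-1},\infty)}Sy_s\), and its real part is at least roughly \(\varepsilon/(4m_q)\). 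The off-diagonal cross terms \(f_{s'}P(Sy_s)\) with \(s'\ne s\), together with the BD parts \(d^{j*}_{\zeta_{s'}}(Sy_s)\), are the error. The mismatch \(f_s(P_{[1,\rho_{s-1}]}Sy_s)\) is also error.

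These errors are made small during the recursion itself. Before choosing step \(s\), impose via Corollary~\ref{cor:sim-app-ann} (applied inside Lemma~\ref{lem:opr-exf}) the following smallness conditions on \(Sy_s\): small \(P_{[1,\rho_{s-1}]}^jSy_s\), and small values of the finitely many earlier functionals \(d^{j*}_{\zeta_{s'}}S\) and \(f_{s'}P_{(\rho_{s'-1},\infty)}S\) for \(s'<s\). After choosing \(y_s\), pick the next cutoff so large that the tail \(\norm{P_{(r,\infty)}Sy_{s'}}\) is tiny for all \(s'\le s\); this controls the later functionals acting on earlier vectors. Alternatively, use that \((Sy_s)\) is weakly null by Lemma~\ref{lem:exa-par-res}. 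If all errors total less than \(\varepsilon/(8m_q)\), then \(\norm{S\bar y}\ge|e^{j*}_\zeta(S\bar y)|\ge\varepsilon/(8m_q)\).

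The main obstacle is this bookkeeping. The smallness conditions must be built into the exactification selections, so Lemma~\ref{lem:opr-exf} has to be reopened with extra functionals rather than used as a black box. In addition, the cross terms \(d^{j*}_{\zeta_{s'}}\) with \(s'\ge s\) are only defined after \(y_s\) is chosen; these are handled by the tail-cutoff argument with \(\kappa\delta\)-type bounds, exactly as in the proof of Lemma~\ref{lem:opr-exf}. "Sufficiently large \(q\)" is needed only so that \(\sigma_c\)-levels exceed the RIS indices in use and the tails can be taken beyond them.
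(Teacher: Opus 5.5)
Your proposal is correct and follows the paper's proof. Both build the stems $\pi_s$ one at a time with Lemma~\ref{lem:opr-exf} and Proposition~\ref{prop:fin-ext}\textup{(3)}, then expand the terminal outer coordinate as in \hyperref[itm:ana-g3]{\textup{(G3)}}, and kill the BD, cross and head error terms by making the earlier functionals small on $Sy_s$ and placing $\zeta_s$ itself after a tail cutoff $R_s$ for $Sy_1,\ldots,Sy_s$. The one difference is that the paper takes your ``alternative'': it picks $(y_s,f_s)$ far out in the weakly null sequence of Lemma~\ref{lem:exa-par-res}, so Lemma~\ref{lem:opr-exf} is used as a black box and the reopening you flag as the main obstacle is unnecessary (and the largeness of $q$ is not actually used in this argument).
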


\begin{proof}
	Put \(C=2C_0\) and \(C_{\rm dep}=A_{\rm ex}C\).
	Fix a seed \(\vartheta\) of level \(q\), with initial cutoff \(r_0\), and set
	\[ \beta_d=\frac{\varepsilon}{64m_qn_q},\qquad \beta_\times=\frac{\varepsilon}{64n_q},\qquad \beta_0=\frac{\varepsilon}{64},\qquad \theta=\frac{1}{\kappa}\min\{\beta_d,\beta_\times\}=\frac{\varepsilon}{64\kappa m_qn_q}. \]

	Suppose that the first \(s-1\) outer cells have been selected and that the last predecessor rank is \(r_{s-1}\).
	The realized stem \(\pi_{s-1}\) determines \(k_s=\sigma_c(q,\pi_{s-1})\).
	By Lemma~\ref{lem:exa-par-res}, choose \((y_s,f_s)\) sufficiently far out in the corresponding weakly null exact-pair sequence so that
	\begin{align}
		|d_{\zeta_l}^{j*}(Sy_s)|&<\beta_d \quad(l<s),
		\label{eq:out-crb-sml}\\
		|\widetilde f_l(Sy_s)|&<\beta_\times \quad (l<s),
		\label{eq:out-pkt-sml}\\
		\norm{P_{[1,r_{s-1}]}^jSy_s}&<\beta_0.
		\label{eq:out-dia-sml}
	\end{align}
	Take the compatible rational certificate from Lemma~\ref{lem:opr-exf} and choose \(R_s\) beyond its support so that
	\begin{equation}\label{eq:out-tal-sml}
		\norm{P_{(R_s,\infty)}^jSy_t}<\theta\qquad(1\le t\le s).
	\end{equation}
	Choose one of the cofinally pre-existing realizations guaranteed by Proposition~\ref{prop:fin-ext} as an outer successor \(\zeta_s\) after \(R_s\), carrying this certificate and the strong packet \(f_s\).
	Put
	\( r_s=\operatorname{rank}\zeta_s\), and put \(\widetilde f_s=f_sP_{(r_{s-1},\infty)}^j. \)
	If \(s<n_q\), choose the next pair only after this occurrence.

	After \(n_q\) steps, the terminal outer coordinate has the analysis
	\begin{equation}\label{eq:out-crt-ana}
		G_q=\sum_{s=1}^{n_q}d_{\zeta_s}^{j*}+\frac{1}{m_q}\sum_{s=1}^{n_q}\widetilde f_s.
	\end{equation}
	The realized stems and outer cells give the history and placement in Definition~\ref{def:zer-dep}.
	Lemma~\ref{lem:opr-exf} gives the root-perturbed exact-pair structure and
	\[
		\begin{gathered}
			\norm{y_s}\le C_{\rm dep},\qquad \norm{y_s-y_s^0}\le\frac{MC}{m_{k_s}}\le\frac{C_{\rm dep}}{m_{k_s}},\\
			\sup_{\substack{\gamma\ \text{an atom on side }i\\ \varphi\in B_{(G_\gamma^i)^*}}}|d_{\gamma,\varphi}^{i*}(y_s)|\le\frac{C_{\rm dep}}{m_{k_s}},\qquad
			\Delta_c(f_s,y_s)=0,\qquad \operatorname{Re}f_s(Sy_s)\ge\frac{\varepsilon}{4}.
		\end{gathered}
	\]
	Its underlying \(C\)-RIS is a \(C_{\rm dep}\)-RIS, so \((y_s,f_s)_{s=1}^{n_q}\) is a \(C_{\rm dep}\)-bounded \(q\)-dependent sequence.

	The recursive order gives \(r_l>R_s\) for \(l\ge s\) and \(r_{l-1}>R_s\) for \(l>s\).
	By \eqref{eq:out-crb-sml}, Lemma~\ref{lem:fml-blk-bio}, \eqref{eq:fml-ext-bnd}, and \eqref{eq:out-tal-sml},
	\begin{align}
		\left|\sum_{l,s=1}^{n_q}d_{\zeta_l}^{j*}(Sy_s)\right|
		&\le\sum_{l<s}|d_{\zeta_l}^{j*}(Sy_s)|+\sum_{l\ge s}|d_{\zeta_l}^{j*}(P_{(R_s,\infty)}^jSy_s)|\notag\\
		&<\sum_{l<s}\beta_d+\sum_{l\ge s}\kappa\theta\le n_q^2\beta_d<\frac{\varepsilon n_q}{32m_q}.
		\label{eq:out-crb-err}
	\end{align}
	Since \(f_s\) is contractive and \(P_{(r_{s-1},\infty)}^j-I=-P_{[1,r_{s-1}]}^j\), \eqref{eq:out-dia-sml} gives
	\begin{align}
		\left|\sum_{s=1}^{n_q}(\widetilde f_s-f_s)(Sy_s)\right|
		&=\left|\sum_{s=1}^{n_q}f_sP_{[1,r_{s-1}]}^jSy_s\right|\le\sum_{s=1}^{n_q}\norm{P_{[1,r_{s-1}]}^jSy_s}<n_q\beta_0<\frac{\varepsilon n_q}{32}.
		\label{eq:out-hed-err}
	\end{align}
	Since \(\norm{\widetilde f_l}\le\kappa\), \eqref{eq:out-pkt-sml} and \eqref{eq:out-tal-sml} give
	\begin{align}
		\left|\sum_{l\ne s}\widetilde f_l(Sy_s)\right|
		&\le\sum_{l<s}|\widetilde f_l(Sy_s)|+\sum_{l>s}|\widetilde f_l(P_{(R_s,\infty)}^jSy_s)|\notag\\
		&<\sum_{l<s}\beta_\times+\sum_{l>s}\kappa\theta\le n_q(n_q-1)\beta_\times<\frac{\varepsilon n_q}{32}.
		\label{eq:out-crs-err}
	\end{align}
	By \eqref{eq:out-crt-ana} and \eqref{eq:out-crb-err}--\eqref{eq:out-crs-err}, we have
	\begin{align*}
		&\left|G_q(S\bar y)-\frac{1}{m_qn_q}\sum_sf_s(Sy_s)\right|\notag\\
		=&\left|\frac{1}{n_q}\sum_{l,s}d_{\zeta_l}^{j*}(Sy_s)+\frac{1}{m_qn_q}\left(\sum_{l,s}\widetilde f_l(Sy_s)-\sum_sf_s(Sy_s)\right)\right|\notag\\
		\le&\frac{1}{n_q}\left|\sum_{l,s}d_{\zeta_l}^{j*}(Sy_s)\right|+\frac{1}{m_qn_q}\left|\sum_{l\ne s}\widetilde f_l(Sy_s)\right|+\frac{1}{m_qn_q}\left|\sum_s(\widetilde f_s-f_s)(Sy_s)\right|\notag\\
		<&\frac{3\varepsilon}{32m_q}<\frac{\varepsilon}{8m_q}.
	\end{align*}
	It follows that
	\[ \operatorname{Re}G_q(S\bar y) \ge\frac{1}{m_qn_q}\sum_{s=1}^{n_q}\operatorname{Re}f_s(Sy_s)-\frac{\varepsilon}{8m_q} \ge\frac{\varepsilon}{4m_q}-\frac{\varepsilon}{8m_q} =\frac{\varepsilon}{8m_q}. \]
	Since \(\norm{G_q}\le1\), this proves \eqref{eq:out-low}.
\end{proof}

For readers who do not wish to follow the technical construction in this section, we now give a short proof. It recalls the main idea and the inputs from the protected module, the finite-extension property, and the RIS estimates.

\begin{proposition}[The three certified local orbits]
	\label{prop:crt-loc-orb}
	Let \(c=ij\in\{00,11,01\}\), let \(S\in\Bcal(X_i,X_j)\), and let \((x_k)\subseteq X_i\) be a RIS.
	Then
	\[ \dist(Sx_k,\mathscr{M}_{ij}(x_k))\longrightarrow0. \]
\end{proposition}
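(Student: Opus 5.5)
The plan is to argue by contradiction and combine Lemma~\ref{lem:opr-crt-ext} with the paired mean inequality, Lemma~\ref{lem:pai-bi}. Suppose the conclusion fails. Then there are \(\varepsilon>0\) and a subsequence, still denoted \((x_k)\), with \(\dist(Sx_k,\mathscr{M}_{ij}(x_k))\ge\varepsilon\) for every \(k\). A subsequence of a \(C_0\)-RIS is again a \(C_0\)-RIS, using the corresponding subsequence of RIS indices. We may take \(C_0\ge1\) by enlarging the constant, so the hypotheses of Lemma~\ref{lem:opr-crt-ext} hold. The case \(S=0\) needs no separate treatment: then \(\dist(0,\mathscr M_{ij}(x_k))=0\) because \(0\in\mathscr M_{ij}(x_k)\), so a uniform \(\varepsilon\) cannot exist.

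Fix \(q\in\mathcal{C}_c^{\rm seed}\) large enough for Lemma~\ref{lem:opr-crt-ext} to apply. That lemma produces a \(C_{\rm dep}\)-bounded \(q\)-dependent sequence \((y_s,f_s)_{s=1}^{n_q}\), with \(C_{\rm dep}=2A_{\rm ex}C_0\), whose average \(\bar y=n_q^{-1}\sum_s y_s\) satisfies
\[
\norm{S\bar y}\ge\frac{\varepsilon}{8m_q}.
\]
Lemma~\ref{lem:pai-bi}, applied to this same dependent sequence in \(X_i\) (the corner is \(c=ij\), so the source side is \(a=i\)), gives
\[
\norm{\bar y}\le\frac{K_{\rm dep}C_{\rm dep}}{m_q^2}.
\]
Combining the two bounds gives
\[
\frac{\varepsilon}{8m_q}\le\norm{S}\,\frac{K_{\rm dep}C_{\rm dep}}{m_q^2},
\qquad\text{that is,}\qquad
m_q\le\frac{8\norm{S}K_{\rm dep}C_{\rm dep}}{\varepsilon}.
\]
The constants \(K_{\rm dep}\) and \(C_{\rm dep}\) depend only on \(m_1\) and \(C_0\), and \(\mathcal{C}_c^{\rm seed}\) is infinite with \(m_q\to\infty\). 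Choosing \(q\) large therefore makes this inequality false, which is the contradiction.

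The only point to check carefully is that the two lemmas are applied to the same object. The extraction lemma must output a genuine dependent sequence in the sense of Definition~\ref{def:zer-dep}, with the same constant that Lemma~\ref{lem:pai-bi} consumes. Lemma~\ref{lem:opr-crt-ext} states exactly this, so no further work is needed. All the real difficulty, namely exactification, certificates, and the collision counting, has already been absorbed into those two lemmas.
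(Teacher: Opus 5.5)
Your proposal is correct and is essentially the paper's own proof: you pass to a uniformly separated subsequence (still a \(C_0\)-RIS with \(C_0\ge1\)), invoke Lemma~\ref{lem:opr-crt-ext} at a large seed level \(q\) to get a \(C_{\rm dep}\)-bounded \(q\)-dependent sequence with \(\norm{S\bar y}\ge\varepsilon/(8m_q)\), bound \(\norm{\bar y}\le K_{\rm dep}C_{\rm dep}/m_q^2\) by Lemma~\ref{lem:pai-bi} on the source side \(a=i\), and let \(q\to\infty\) in \(\mathcal{C}_c^{\rm seed}\). Your separate remark on the case \(S=0\) is harmless but unnecessary.
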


\begin{proof}
	Otherwise we can pass to a subsequence for which there is \(\varepsilon>0\) such that
	\[ \dist(Sx_k,\mathscr{M}_{ij}(x_k))\ge\varepsilon\qquad(k\ge1), \]
	and choose \(C_0\ge1\) and RIS indices \((j_k)\) so that this subsequence is a \(C_0\)-RIS.
	We recall the three earlier inputs. The protected-module construction gives the local orbits in \eqref{eq:fou-orb}. Corollary~\ref{cor:ris-spc-avg} controls the inner RIS special averages. Corollary~\ref{cor:ris-wek-nul} makes the original RIS weakly null, and the shrinking conclusion of Proposition~\ref{prop:ris-red} makes successive bounded exact vectors weakly null. Fix a sufficiently large \(q\in\mathcal{C}_c^{\rm seed}\). At the \(s\)-th outer step, the current stem \(\pi_{s-1}\) fixes the inner level \(k_s\). Lemma~\ref{lem:opr-exf} recursively selects rational perturbations \(u_{s,1}<\cdots<u_{s,n_{k_s}}\) of terms from the remaining RIS tail and chooses cofinally pre-existing occurrences carrying their separating packets in a complete level-\(k_s\) inner chain with terminal atom \(\eta_s\). Put
	\[ k_s=\sigma_c(q,\pi_{s-1}), \qquad y_s^0=\frac{m_{k_s}}{n_{k_s}}\sum_{t=1}^{n_{k_s}}u_{s,t}, \qquad f_s=e_{\eta_s}^{j*}. \]
	Definition~\ref{def:roo-exa} gives \(y_s\) as the root perturbation of \(y_s^0\) determined by \(f_s\), and Lemma~\ref{lem:exa-par-res} lets these pairs be chosen successively and weakly null. Lemmas~\ref{lem:dfc-cnt} and~\ref{lem:opr-exf} give their two relevant properties
	\[ f_s|_{\mathscr{M}_{ij}(y_s)}=0, \qquad \operatorname{Re}f_s(Sy_s)\ge\frac{\varepsilon}{4}. \]
	Thus the inner chain converts one block of the original RIS into one exact pair. The outer chain has a different role. At each outer step, choose one of the cofinally pre-existing realizations guaranteed by Proposition~\ref{prop:fin-ext} that carries the next certificate. Its terminal evaluation combines the functionals \(f_s\) with the outer weight \(m_q^{-1}\), while the weak-null choices control the cross terms. Lemma~\ref{lem:opr-crt-ext} packages this recursion. The uniform separation above is exactly its hypothesis. Hence it gives a \(C_{\rm dep}\)-bounded \(q\)-dependent sequence in the sense of Definition~\ref{def:zer-dep}, where \(C_{\rm dep}=2A_{\rm ex}C_0\), and
	\[ \bar y=\frac{1}{n_q}\sum_{s=1}^{n_q}y_s, \qquad \norm{S\bar y}\ge\frac{\varepsilon}{8m_q}. \]
	This is precisely the class of sequences covered by Lemma~\ref{lem:pai-bi}. Its proof combines the RIS basic inequality in Lemma~\ref{lem:vec-bas-ine} with Lemmas~\ref{lem:crt-stm-col} and~\ref{lem:car-dom-tre}. Therefore
	\[ \norm{\bar y}\le\frac{K_{\rm dep}C_{\rm dep}}{m_q^2}. \]
	By Subsection~\ref{sec:req-cat} and Proposition~\ref{prop:fin-ext}\textup{(2)}, the allowed seed levels are unbounded and every such level has a copy after every cutoff. Thus \(q\) can be arbitrarily large, and a copy of that level can be chosen arbitrarily far out. The two estimates give
	\[ \frac{\varepsilon}{8m_q} \le\norm{S}\norm{\bar y} \le\frac{K_{\rm dep}C_{\rm dep}\norm{S}}{m_q^2}, \]
	which is impossible for large \(q\).
\end{proof}

\section{Reverse corner and missing levels}
\label{sec:rev}

The reverse corner has no protected target orbit, so the collision argument above is not available.
We use the reserved reverse levels instead.
A level \(r\in\mathcal{R}\) occurs in a source coordinate and is absent from every target evaluation tree.
This lets a source coordinate norm the image of the average,
while Lemma~\ref{lem:mis-wgt} gives an \(O(m_r^{-2})\) upper bound for the norm of the average in \(X_1\).

The stronger auxiliary-average estimate obtained by omitting a weight is a
well-established tool in AH-type constructions; see
\cite[Proposition~2.5]{ArgyrosHaydon2011}
and \cite[Proposition~11.7, \textup{(11.3)}]{MotakisPelczar2025}.
The proof below adapts this method to the reserved reverse levels of our
construction and transfers the estimate to RIS averages through the basic inequality.

For \(r\in\mathcal{R}\), let \(W^{(r)}\) be the smallest auxiliary norming set containing every \(\zeta e_k^*\),
where \(k\in\N_+\) and \(|\zeta|=1\), and closed, for \(h\ne r\), under
\[ f_1<\cdots<f_d,\quad d\le3n_h \quad\longmapsto\quad \frac{1}{m_h}\sum_{t=1}^df_t. \]
Write \(\norm{\cdot}_{T,r}\) for its norm.

\begin{remark}[Auxiliary-level inheritance]
	\label{rem:mis-lev-inh}
	In the induction in Lemma~\ref{lem:vec-bas-ine}, a level-zero evaluation is absorbed by the exceptional coordinate term.
	At a positive-level evaluation of level \(h\), the child estimates are combined by one
	\((\mathcal{A}_{3n_h},m_h^{-1})\)-operation. Passing from an absolutely convex packet to one scalarized coordinate adds no auxiliary operation.
	Hence, if no coordinate evaluation used in the induction has level \(r\), then the resulting auxiliary functional belongs to \(W^{(r)}\cup\{0\}\).
\end{remark}

\begin{lemma}[Missing-level auxiliary average]
	\label{lem:mis-wgt}
	For every \(r\in\mathcal{R}\),
	\begin{equation}\label{eq:mis-aux-avg}
		\left\|\frac{1}{n_r}\sum_{k=1}^{n_r}e_k\right\|_{T,r}
		\le\frac{2}{m_r^2}.
	\end{equation}
	Consequently every \(C\)-RIS \((x_k)_{k=1}^{n_r}\subseteq X_1\) satisfies
	\begin{equation}\label{eq:mis-tgt-avg}
		\norm{\frac{1}{n_r}\sum_{k=1}^{n_r}x_k}_{X_1}
		\le\frac{4BC}{m_r^2}.
	\end{equation}
\end{lemma}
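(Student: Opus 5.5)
The plan has two parts: first prove the auxiliary bound \eqref{eq:mis-aux-avg} by the stopping-tree counting already used for Lemma~\ref{lem:qnt-aux-avg}, then transfer it to \(X_1\) with the basic inequality and Remark~\ref{rem:mis-lev-inh}.

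\emph{Auxiliary bound.} Fix \(g\in W^{(r)}\) and a tree analysis of \(g\), and put \(u_r=n_r^{-1}\sum_{k\le n_r}e_k\). Each coordinate \(q\le n_r\) in the support of \(g\) lies on a unique branch. I stop that branch at the first of three events:
\begin{itemize}
\item a node of level \(h>r\), which contributes a factor at most \(m_{r+1}^{-1}\le m_r^{-5}\);
\item the \(D_r\)-th node of level \(h<r\), which gives a factor \(m_1^{-D_r}\le m_r^{-2}\);
\item a coordinate leaf.
\end{itemize}
No node has level \(r\), so every weighted node met before stopping has level \(<r\) and at most \(3n_h\le A_r\) successors. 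Hence the leaf class has at most \(\sum_{s<D_r}A_r^s\le L_r/m_r\) elements, while the other two classes have at most \(n_r\) elements each.

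The leaf class contributes at most \(L_r/(m_rn_r)\), and this is the delicate term. A leaf reached with no weighted node above it means \(g\) is a single coordinate, giving \(1/n_r\). Otherwise the branch passes through at least one weighted node of level \(<r\), so the leaf coefficient is at most \(m_1^{-1}\); this is not small enough by itself. The fix is to use the counting bound instead: \(n_r\ge 2^{r+8}L_rm_r^4\), so \(L_r/(m_rn_r)\le m_r^{-5}\), which suffices. Summing the three classes gives
\[
|g(u_r)|\le m_r^{-5}+m_r^{-2}+m_r^{-5}\le 2m_r^{-2}.
\]
Taking the supremum over \(g\) proves \eqref{eq:mis-aux-avg}, and the same bound holds for translates.

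\emph{Transfer to \(X_1\).} Take \(f\in\mathcal{K}_1\) and apply Lemma~\ref{lem:vec-bas-ine} with \(s=0\) and \(\lambda_k=1/n_r\). By (G4), no target atom has level \(r\), and every coordinate arising recursively in packets of target evaluations is again a target coordinate. So no evaluation used in the induction has level \(r\), and by Remark~\ref{rem:mis-lev-inh} the resulting \(g\) lies in \(W^{(r)}\cup\{0\}\). Since \(g\) has nonnegative coefficients, the support of \(g\) can be shifted back to \(\{1,\dots,n_r\}\) without decreasing \(g(u_r)\), using the translation invariance from the first part. This gives
\[
|f(\bar x)|\le BC\bigl(n_r^{-1}+2m_r^{-2}\bigr)\le \frac{3BC}{m_r^2}\le\frac{4BC}{m_r^2},
\]
because \(n_r\ge m_r^2\). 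Taking the supremum over \(\mathcal{K}_1\), which norms \(X_1\), yields \eqref{eq:mis-tgt-avg}.

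The main obstacle is confirming that the basic-inequality recursion never meets a level-\(r\) coordinate. This depends on target packets referring only to target atoms; the protected mirrors live on side \(0\) and the evaluation is on side \(1\), so this should hold.
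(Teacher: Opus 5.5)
Your proposal is correct and follows the paper's proof. You prove \eqref{eq:mis-aux-avg} with the same stopping-tree count: a higher level is at least $r+1$, depth stops at $D_r$, and the leaf count is controlled via $n_r\ge 2^{r+8}L_rm_r^4$. You then transfer it with the basic inequality, \textup{(G4)}, and Remark~\ref{rem:mis-lev-inh}, exactly as the paper does.

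The differences are cosmetic. You bound $|\lambda_{k_0}|=1/n_r\le m_r^{-2}$ directly, obtaining $3BC/m_r^2$, whereas the paper uses $e_{k_0}^*\in W^{(r)}$. Your translation remark is unnecessary, since the RIS is already indexed by $\{1,\dots,n_r\}$.
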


\begin{proof}
	By the stopping-tree argument in Lemma~\ref{lem:aux-avg}, with \(h_0=r\) and \(d=D_r\), it is enough to stop at a coordinate leaf, at the first level larger than \(r\), or after \(D_r\) lower-level nodes. Since level \(r\) is absent, the first higher level is at least \(r+1\), and the lower-level branching is bounded by \(A_r\). By \eqref{eq:par-grw}, \eqref{eq:n-grw}, and the definitions of \(D_r\) and \(L_r\), every \(g\in W^{(r)}\) satisfies
	\[
		\begin{aligned}
			\left|g\left(\frac{1}{n_r}\sum_{k=1}^{n_r}e_k\right)\right|
			&\le\frac{1}{m_{r+1}}+m_1^{-D_r}
			+\frac{1+A_r+\cdots+A_r^{D_r-1}}{n_r}\\
			&\le\frac{1}{m_r^5}+\frac{1}{m_r^2}
			+\frac{1}{2^{r+8}m_r^5}<\frac{2}{m_r^2}.
		\end{aligned}
	\]
	Then \eqref{eq:mis-aux-avg} follows.

	Fix \(f\in\mathcal{K}_1\) and apply Lemma~\ref{lem:vec-bas-ine} with \(s=0\).
	By \hyperref[itm:ana-g4]{\textup{(G4)}}, every target coordinate evaluation used in the induction has level different from \(r\).
	Remark~\ref{rem:mis-lev-inh} gives \(g\in W^{(r)}\cup\{0\}\), while \(e_{k_0}^*\in W^{(r)}\) by definition. Hence
	\[
		\begin{aligned}
			\left|f\left(\sum_k\lambda_kx_k\right)\right|
			&\le BC\left(
			e_{k_0}^*\left(\sum_k|\lambda_k|e_k\right)
			+g\left(\sum_k|\lambda_k|e_k\right)\right)\le 2BC\norm{\sum_k|\lambda_k|e_k}_{T,r}.
		\end{aligned}
	\]
	Taking the supremum over \(f\in\mathcal{K}_1\), putting \(\lambda_k=1/n_r\), and using \eqref{eq:mis-aux-avg} gives
	\[ \norm{\frac{1}{n_r}\sum_{k=1}^{n_r}x_k} \le2BC\cdot\frac{2}{m_r^2} =\frac{4BC}{m_r^2}, \]
	which is \eqref{eq:mis-tgt-avg}.
\end{proof}

\begin{proposition}[Reverse local orbit]
	\label{prop:rev-loc-orb}
	If \(S:X_1\to X_0\) is bounded and \((x_k)\subseteq X_1\) is a bounded RIS, then \(\norm{Sx_k}\to 0\).
\end{proposition}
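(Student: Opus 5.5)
We argue by contradiction, following the dependent-sequence scheme but replacing the collision estimate by the missing-level estimate of Lemma~\ref{lem:mis-wgt}. Suppose \(\norm{Sx_k}\not\to0\). Pass to a subsequence with \(\norm{Sx_k}\ge\varepsilon>0\) for all \(k\), and fix \(C\) so that \((x_k)\) is a \(C\)-RIS in \(X_1\). By Corollary~\ref{cor:ris-wek-nul} the sequence \((x_k)\) is weakly null, so \((Sx_k)\) is weakly null in \(X_0\). Since the FDD of \(X_0\) is shrinking (Proposition~\ref{prop:ris-red}), a standard gliding-hump perturbation lets us replace \(Sx_k\) by successive block vectors \(w_k=P^0_{(a_k,b_k]}Sx_k\) with \(\sum_k\norm{Sx_k-w_k}\) as small as we like. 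We take \(x_k\) themselves, or rational perturbations as in Lemma~\ref{lem:opr-exf}, so that the blocks \(w_k\) are rational.

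Next, for each \(k\) we norm \(w_k\) by one rational source packet. Since \(\mathcal{K}_0\) norms \(X_0\), the bipolar argument of Lemma~\ref{lem:rat-app-ann} gives a rational packet \(h_k\) with \(\norm{h_k}_{\ell_1}\le1\), raw support in \((a_k,b_k]\), and \(\operatorname{Re}h_k(w_k)>\varepsilon/2\). Fix a large \(r\in\mathcal{R}\). Using \hyperref[itm:ana-g4]{\textup{(G4)}} and Proposition~\ref{prop:fin-ext}\textup{(1)}, we choose adaptively a reverse chain \(\xi_1\prec\cdots\prec\xi_{n_r}\) of level \(r\) whose \(s\)-th cell contains the support of \(w_{k_s}\) and whose packet is \(h_{k_s}\). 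As in Lemma~\ref{lem:opr-crt-ext}, we choose \(k_s\) only after \(\xi_{s-1}\) is fixed. Weak nullness of \((Sx_k)\) and tail smallness then make the BD parts \(d^{0*}_{\xi_l}(Sx_{k_s})\), the cross terms \(h_l P(Sx_{k_s})\) for \(l\ne s\), and the head terms \(P_{[1,r_{s-1}]}Sx_{k_s}\) all small. The evaluation analysis \eqref{eq:syn-vec-ea} of \(G=e^{0*}_{\xi_{n_r}}\) then gives
\[
\norm{S\bar x}\ge\operatorname{Re}G(S\bar x)\ge\frac{\varepsilon}{4m_r},\qquad \bar x=\frac{1}{n_r}\sum_{s=1}^{n_r}x_{k_s}.
\]
The error bookkeeping is the same as in the proof of Lemma~\ref{lem:opr-crt-ext}.

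On the other side, \((x_{k_s})_s\) is a \(C\)-RIS in \(X_1\). Lemma~\ref{lem:mis-wgt} gives \(\norm{\bar x}\le 4BC/m_r^2\), because no target evaluation has level \(r\) and the basic inequality therefore produces auxiliary functionals in \(W^{(r)}\). Combining the two bounds yields \(\varepsilon/(4m_r)\le 4BC\norm{S}/m_r^2\). This fails for large \(r\in\mathcal{R}\), which gives the contradiction.

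The main obstacle is the legality of the reverse chain. Its packets must be admissible rational source packets supported in the correct cells, chosen after the preceding predecessor rank is known. Proposition~\ref{prop:fin-ext}\textup{(1)} supplies this adaptive realization, and the required packet support is guaranteed by the gliding-hump truncation. A secondary point is controlling the cross terms: this needs the weak nullness of \((Sx_k)\) together with the tail estimates \(\norm{P_{(R,\infty)}Sx_{k_s}}\) being small for previously selected indices.
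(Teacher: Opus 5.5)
Your proposal is correct and follows the paper's proof essentially step for step: you truncate the weakly null images $Sx_k$ to successive blocks with error below $\varepsilon/(8m_r)$, norm each block by a rational source packet carried by an adaptively chosen level-$r$ reverse chain so that the terminal source coordinate gives $\norm{S\bar x}\ge\varepsilon/(4m_r)$, and contradict the missing-level bound $\norm{\bar x}\le 4BC/m_r^2$ of Lemma~\ref{lem:mis-wgt}. Two harmless remarks: once each block lies strictly inside its cell, the BD parts and cross terms vanish exactly, so the bookkeeping of Lemma~\ref{lem:opr-crt-ext} is not needed; and the blocks $w_k$ need not be rational (in general they cannot be made rational), since only the packets enter the requirement codes.
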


\begin{proof}
	Suppose not. After passing to a subsequence, choose \(C\ge1\) and \(\varepsilon>0\) such that
	\((x_k)\) is a \(C\)-RIS and \(\norm{Sx_k}\ge\varepsilon\). Fix an arbitrarily large \(r\in\mathcal{R}\).
	By Corollary~\ref{cor:ris-wek-nul}, \((x_k)\) is weakly null, so \((Sx_k)\) is weakly null.

	Fix an initial cutoff \(\chi\) before the first selected block and put \(p_0=\chi\).
	By the weak-null truncation and rational approximation used in the proof of Lemma~\ref{lem:rat-app-ann}, with
	\(\mathscr{M}_{10}(x)=\{0\}\) from \eqref{eq:fou-orb}, choose recursively a subsequence, denoted again by \((x_i)\),
	finite intervals \(I_i\), and contractive represented rational source packets \(\mathbf h_i\).
	Put \(h_i=\left.B_{\mathbf h_i}^*(1)\right|_{X_0}\). After \(I_i\) and \(\mathbf h_i\) are chosen,
	choose one of the cofinally pre-existing realizations guaranteed by Proposition~\ref{prop:fin-ext}\textup{(1)} as the next occurrence \(\xi_i\) of a level-\(r\) reverse chain, carrying the packet \(\mathbf h_i\),
	at a rank \(p_i=\operatorname{rank}\xi_i\) beyond \(I_i\cup\supp_{\rm raw}\mathbf h_i\). Thus
	\begin{equation}\label{eq:rev-win-dat}
		\begin{gathered}
			\xi_1\prec\cdots\prec\xi_{n_r},
			\qquad I_i\cup\supp_{\rm raw}\mathbf h_i\subseteq(p_{i-1},p_i),\\
			\operatorname{Re}h_iP_{I_i}^0Sx_i>\frac{\varepsilon}{2},
			\qquad \norm{Sx_i-P_{I_i}^0Sx_i}<\frac{\varepsilon}{8m_r}
			\quad(1\le i\le n_r).
		\end{gathered}
	\end{equation}

	By \hyperref[itm:ana-g4]{\textup{(G4)}} and the evaluation analysis \eqref{eq:ful-vec-ea},
	the terminal source coordinate has the following expansion. Coordinate contractivity gives \(\norm{f_r}\le1\).
	\begin{equation}\label{eq:act-rev-coo}
		f_r=e_{\xi_{n_r}}^{0*}
		=\sum_{i=1}^{n_r}d_{\xi_i}^{0*}
		+\frac{1}{m_r}\sum_{i=1}^{n_r}
		h_iP_{(p_{i-1},\infty)}^0,
		\qquad \norm{f_r}\le1.
	\end{equation}
	Put \(\bar{x}=n_r^{-1}\sum_ix_i\) and \(\bar{z}=n_r^{-1}\sum_iP_{I_i}^0Sx_i\).
	By FDD biorthogonality and \eqref{eq:rev-win-dat},
	\[
		d_{\xi_i}^{0*}P_{I_j}^0=0,
		\qquad
		h_iP_{(p_{i-1},\infty)}^0P_{I_j}^0=
		\begin{cases}
			h_iP_{I_i}^0,&j=i,\\
			0,&j\ne i.
		\end{cases}
	\]
	By \eqref{eq:act-rev-coo} and \eqref{eq:rev-win-dat}, we have
	\[ \operatorname{Re}f_r(\bar z) =\frac{1}{m_rn_r}\sum_i\operatorname{Re}h_iP_{I_i}^0Sx_i >\frac{\varepsilon}{2m_r}, \qquad \norm{S\bar x-\bar z} \le\frac{1}{n_r}\sum_i\norm{Sx_i-P_{I_i}^0Sx_i} <\frac{\varepsilon}{8m_r}. \]
	Since \(\norm{f_r}\le1\), it follows that
	\[ \norm{S\bar{x}}\ge\operatorname{Re}f_r(S\bar{x}) \ge\operatorname{Re}f_r(\bar z)-\norm{S\bar x-\bar z} >\frac{\varepsilon}{2m_r}-\frac{\varepsilon}{8m_r} >\frac{\varepsilon}{4m_r}. \]
	On the other hand, Lemma~\ref{lem:mis-wgt} gives \(\norm{\bar{x}}\le4BC/m_r^2\).
	Hence
	\[ \frac{\varepsilon}{4m_r}<\norm{S\bar{x}}\le\frac{4BC\norm{S}}{m_r^2}, \]
	which is impossible for arbitrarily large \(r\in\mathcal{R}\).
\end{proof}

\section{Operator classification and consequences}

\subsection{Local orbit theorem}

By Proposition~\ref{prop:crt-loc-orb} and Proposition~\ref{prop:rev-loc-orb}, we have the following four local orbit estimates.

\begin{theorem}[Four local orbit estimates]\label{thm:loc-orb}
	Let \(S:X_i\to X_j\) be bounded and let \((x_k)\) be a bounded RIS in \(X_i\).
	Then
	\begin{equation}\label{eq:loc-orb}
		\dist\bigl(Sx_k,\mathscr{M}_{ij}(x_k)\bigr)\longrightarrow0.
	\end{equation}
\end{theorem}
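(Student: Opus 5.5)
The plan is a case split over the four corners \(ij\). All the analytic content sits in the two propositions just proved, so the theorem is essentially their combination. I will match each corner to the orbit it carries in \eqref{eq:fou-orb}, check that the hypotheses of the relevant proposition hold, and add a remark about the intermediate passage to subsequences.

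\emph{The corners \(00\), \(11\), \(01\).} Here \(c=ij\in\{00,11,01\}\). The bounded RIS \((x_k)\) in \(X_i\) is exactly the input of Proposition~\ref{prop:crt-loc-orb}, which then gives \(\dist(Sx_k,\mathscr{M}_{ij}(x_k))\to0\) directly. The orbit is \(\K x_k\) in the self corners and \(\{T_vx_k:v\in V^*\}\) in the forward corner. One detail needs checking. The orbits in \eqref{eq:fou-orb} were defined for rational algebraic vectors. For a general vector \(x\) of finite FDD support they are defined by the same formula, and this is the sense in which Proposition~\ref{prop:crt-loc-orb} is stated. A RIS is a block sequence, hence finitely supported, so no further extension is required.

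\emph{The reverse corner \(10\).} Here \(\mathscr{M}_{10}(x_k)=\{0\}\), so the claim reduces to \(\norm{Sx_k}\to0\). This is Proposition~\ref{prop:rev-loc-orb} applied to the bounded RIS \((x_k)\subseteq X_1\) and the operator \(S:X_1\to X_0\).

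\emph{Subsequences.} Both propositions were proved by contradiction: one passes to a subsequence on which the distance stays at least \(\varepsilon\), and a subsequence of a RIS is again a RIS, with the inherited indices. So no extra subsequence argument is needed here; the statement holds for the full sequence. I expect no genuine obstacle in this theorem. The only point that needs care is the bookkeeping on the definition of the orbit for non-rational block vectors.
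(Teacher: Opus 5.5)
Your proposal is correct and is essentially the paper's proof, which obtains the theorem directly by combining Proposition~\ref{prop:crt-loc-orb} for the corners \(00\), \(11\), \(01\) with Proposition~\ref{prop:rev-loc-orb} for the reverse corner, where \(\mathscr{M}_{10}(x)=\{0\}\). Your remarks on subsequences and on reading \eqref{eq:fou-orb} for non-rational block vectors are harmless but not needed, since both propositions are already stated for the full sequence and in that generality.
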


\subsection{Global classification and the Calkin algebra}

The passage from local orbit estimates to global operator classification will use compatibility and limiting arguments for the local coefficients.
In each self corner, synchronization shows that the local scalar coefficients obtained from different RISs have the same limit.
In the forward corner, probes read finite-dimensional coefficient functionals,
and two-probe estimates show that their restrictions are asymptotically compatible.
Since these functionals are uniformly bounded and the coefficient fibres have dense union in \(V\), their compatible restrictions determine only one \(v\in V^*\).
After subtracting the corresponding scalar operator or \(T_v\), the residual operator kills every bounded RIS and is therefore compact. We adapt the paired-RIS argument in \cite[proof of Theorem~7.4]{ArgyrosHaydon2011}.

\begin{lemma}[Synchronizing RISs]\label{lem:ris-syn}
	Two normalized RISs in the same space have subsequences \((x_k)\), \((y_k)\) such that
	\[ x_k<y_k<x_{k+1} \]
	and \((x_k+y_k)\) is a bounded RIS.
	Moreover the same holds for two subsequences of a single RIS.
\end{lemma}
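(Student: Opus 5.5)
The plan is to build the two subsequences by a single diagonal recursion that interleaves their ranges and spaces out the RIS indices. Let \((x_k)\) be a \(C_1\)-RIS with indices \((j_k)\) and \((y_k)\) a \(C_2\)-RIS with indices \((j'_k)\), both normalized and block sequences in \(X_i\). We choose \(k_1<k_2<\cdots\) and \(l_1<l_2<\cdots\) recursively so that
\[
\max\operatorname{ran}x_{k_t}<\min\operatorname{ran}y_{l_t},\qquad \max\operatorname{ran}y_{l_t}<\min\operatorname{ran}x_{k_{t+1}} .
\]
Because both sequences are block sequences with ranges tending to infinity, this is always possible. At each step we also require
\[
j_{k_{t+1}}>\max\operatorname{ran}y_{l_t},\qquad j'_{l_{t+1}}>\max\operatorname{ran}y_{l_t}.
\]
This is possible because RIS indices are strictly increasing and unbounded. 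The interleaving \(x_{k_t}<y_{l_t}<x_{k_{t+1}}\) holds by construction.

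Next we check that \(w_t=x_{k_t}+y_{l_t}\) is a RIS. Its ranges are successive, and \(\norm{w_t}\le C_1+C_2\). For new indices, put \(\tilde j_t=\min\{j_{k_t},j'_{l_t}\}\). This sequence is strictly increasing because both index sequences increase along the chosen subsequences. It also satisfies \(\tilde j_{t+1}>\max\operatorname{ran}w_t=\max\operatorname{ran}y_{l_t}\), by the two extra requirements above. Finally, if \(0<\operatorname{lev}(\alpha)=h<\tilde j_t\), then \(h<j_{k_t}\) and \(h<j'_{l_t}\). The triangle inequality then gives
\[
|e^{i*}_{\alpha,\psi}(w_t)|\le \frac{C_1+C_2}{m_h},
\]
since passing to a subsequence of a RIS, with the corresponding subsequence of indices, is still a RIS with the same constant. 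Hence \((w_t)\) is a \((C_1+C_2)\)-RIS, as Definition~\ref{def:fml-ris} requires.

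The case of two subsequences of a single RIS \((x_k)\) is the same argument applied to \(x_{k}\) and \(x_{k'}\) with interleaved indices \(k_1<k'_1<k_2<\cdots\). There the indices \(j_k\) are already increasing along the merged sequence, so \(\tilde j_t=j_{k_t}\) works directly.

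The only point needing care is the strict increase and the range condition for \(\tilde j_t\). This is why we impose the growth requirement at each step rather than taking the minimum after the fact. No deeper obstacle is expected.
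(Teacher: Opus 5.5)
Your proposal is correct and follows essentially the same route as the paper: pass to interleaved subsequences, take \(\tilde j_t=\min\{j_{k_t},j'_{l_t}\}\) as the new RIS indices, and use the triangle inequality to obtain a \((C_1+C_2)\)-RIS. Your explicit recursion, which forces \(\tilde j_{t+1}>\max\operatorname{ran}y_{l_t}\), simply spells out the subsequence selection that the paper leaves implicit.
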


\begin{proof}
	The proof is almost the same as that of
	Proposition~\ref{prop:fml-prb}\textup{(P2)(c)}.
	After passing to subsequences and relabelling, we have
	\[ x_k<y_k<x_{k+1},\qquad \ell_k=\min\{j_k^x,j_k^y\},\qquad \ell_{k+1}>\max\operatorname{ran}(x_k+y_k), \]
	where \((j_k^x)\) and \((j_k^y)\) are the corresponding RIS indices.
	If \(0<\operatorname{lev}(\gamma)=h<\ell_k\), then
	\[ \norm{x_k+y_k}\le C_x+C_y,\qquad |e_{\gamma,\varphi}^*(x_k+y_k)| \le\frac{C_x+C_y}{m_h}. \]
	Hence \((x_k+y_k)\) is a \((C_x+C_y)\)-RIS.
	The same argument applies to two subsequences of one RIS.
\end{proof}

\begin{lemma}[Globalizing scalar local orbits]\label{lem:slf-glo}
	Let \(X_i\) be \(X_0\) or \(X_1\), and let \(S\in\Bcal({X_i})\).
	If
	\[ \dist(Sx_k,\K x_k)\longrightarrow0 \]
	for every bounded RIS \((x_k)\), then there is a unique \(\lambda\in\K\) such that \(S-\lambda I_{X_i}\) is compact.
\end{lemma}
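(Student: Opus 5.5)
The plan follows the paired-RIS argument of \cite[Theorem~7.4]{ArgyrosHaydon2011}: extract a scalar from one RIS, show that every other RIS produces the same scalar, and then invoke the compactness test of Proposition~\ref{prop:ris-red}.

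First, for a bounded RIS \((x_k)\) that is seminormalized (\(\inf_k\norm{x_k}>0\)), choose scalars \(\lambda_k\) with \(\norm{Sx_k-\lambda_kx_k}\to0\). Since \(|\lambda_k|\norm{x_k}\le\norm{Sx_k}+o(1)\), the sequence \((\lambda_k)\) is bounded. We claim \(\lambda_k\) converges. Otherwise two subsequences have distinct limits \(\lambda\ne\mu\), and Lemma~\ref{lem:ris-syn} (the single-RIS case) interleaves them as \(x_k'<x_k''<x_{k+1}'\) with \((x_k'+x_k'')\) a bounded RIS. The hypothesis then gives scalars \(\nu_k\) with
\[
\norm{(\lambda-\nu_k)x_k'+(\mu-\nu_k)x_k''}\to0 .
\]
Because the FDD projections are bounded by \(\kappa\), projecting onto \(\ran x_k'\) gives \(|\lambda-\nu_k|\norm{x_k'}\to0\), and projecting onto \(\ran x_k''\) gives \(|\mu-\nu_k|\norm{x_k''}\to0\). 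Seminormalization forces \(\nu_k\to\lambda\) and \(\nu_k\to\mu\), a contradiction. So each seminormalized RIS has a limit scalar \(\lambda(x)\).

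Next, the same synchronization applied to two seminormalized RISs \((x_k)\) and \((y_k)\) (after normalizing them, which preserves the RIS property with a larger constant) shows \(\lambda(x)=\lambda(y)\). Call the common value \(\lambda\). We need at least one seminormalized RIS to exist; Proposition~\ref{prop:fml-prb}(a) supplies one, namely \((z_{\gamma_k})\) on side \(1\) or \((j_{\gamma_k}q)\) on side \(0\), both of norm one by \eqref{eq:ter-par}.

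Now put \(R=S-\lambda I_{X_i}\); the goal is \(\norm{Rx_k}\to0\) for every bounded RIS. Suppose instead that \(\norm{Rx_k}\ge\varepsilon\) along a subsequence. Since \(\norm{Rx_k}\le\norm{R}\norm{x_k}\), that subsequence is seminormalized, so its local scalars \(\lambda_k\) tend to \(\lambda\). Then \(\norm{Rx_k}\le\norm{Sx_k-\lambda_kx_k}+|\lambda_k-\lambda|\norm{x_k}\to0\), a contradiction. Proposition~\ref{prop:ris-red} now gives that \(R\) is compact.

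For uniqueness, if \(\lambda\ne\lambda'\) both worked, then \((\lambda-\lambda')I_{X_i}\) would be compact, which is impossible because \(X_i\) is infinite dimensional (it contains a probe at every rank). The step needing the most care is the first: separating the two interleaved blocks by FDD projections must use blocks that are successive on the side \(X_i\), and the zero terms excluded from RISs must be handled by passing to the seminormalized subsequences as above.
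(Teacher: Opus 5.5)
Your argument is correct and is essentially the paper's proof. You extract local scalars along a seminormalized RIS (the paper uses a normalized one), and you use Lemma~\ref{lem:ris-syn} with the interval-projection bound to show that two subsequential limits, or the limits for two different RISs, coincide. You then observe that a subsequence on which \(S-\lambda I_{X_i}\) stays large would be a seminormalized RIS whose local scalars tend to \(\lambda\), and you finish with Proposition~\ref{prop:ris-red}.

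One side remark needs correcting. When \(V=\{0\}\), the source probes have fibres \(E_\gamma=V_n=\{0\}\), so on \(X_0\) they give neither a seminormalized RIS nor infinite dimensionality. This case matters, because it is used in the proof of Theorem~\ref{thm:dua-cal} when \(\dim V=1\). Neither point is a real obstruction. No seminormalized RIS is actually needed: if none exists, your final contradiction holds for every \(\lambda\). And \(X_0\) is infinite dimensional because scalar source-only atoms occur cofinally, as noted in the proof of Theorem~\ref{thm:tri-cal}.
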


\begin{proof}
	For a normalized RIS \((x_k)\), choose \(a_k\in\K\) with
	\[ \varepsilon_k:=\norm{Sx_k-a_kx_k}\longrightarrow0. \]
	Since \(\|x_k\|=1\), we have
	\[ |a_k|=\|a_kx_k\|\le\|Sx_k\|+\varepsilon_k \le\|S\|+\varepsilon_k, \]
	so \((a_k)\) is bounded.
	If two of its subsequences converge to \(a\) and \(b\), write them as \(u_k=x_{p_k}\) and \(v_k=x_{q_k}\),
	and synchronize them so that \(u_k<v_k<u_{k+1}\).
	Lemma~\ref{lem:ris-syn} makes \(w_k=u_k+v_k\) a bounded RIS,
	so there are \(c_k\in\K\) with \(\eta_k=\|Sw_k-c_kw_k\|\to 0\).
	And we have
	\[ (c_k-a_{p_k})u_k+(c_k-a_{q_k})v_k =(Su_k-a_{p_k}u_k)+(Sv_k-a_{q_k}v_k) -(Sw_k-c_kw_k). \]
	The interval projections onto \(\ran u_k\) and \(\ran v_k\) have norm at most \(2M\), so
	\[ \max\{|c_k-a_{p_k}|,|c_k-a_{q_k}|\} \le2M\norm{(c_k-a_{p_k})u_k+(c_k-a_{q_k})v_k} \le2M(\varepsilon_{p_k}+\varepsilon_{q_k}+\eta_k)\longrightarrow0. \]
	Thus \(a=b\).
	So \((a_k)\) converges. And we
	denote its limit by \(\lambda_S\).
	For any other normalized RIS, the same argument gives a coefficient limit. After synchronizing subsequences of the two RISs, Lemma~\ref{lem:ris-syn} makes their sums a bounded RIS, and the preceding projection estimate shows that the two limits agree. Thus \(\lambda_S\) is independent of the normalized RIS.
	Hence, for every bounded RIS, we have \((S-\lambda_S I_{X_i})x_k\to 0\).
	Indeed, put \(R=S-\lambda_S I_{X_i}\).
	If a RIS had a subsequence with \(\|Rx_k\|\ge\varepsilon\),
	then \(\|x_k\|\ge\varepsilon/\|R\|\) on that subsequence.
	The normalized vectors \(u_k=x_k/\|x_k\|\) are a RIS,
	so the definition of \(\lambda_S\) gives \(\|Ru_k\|\to 0\) and then \(\|Rx_k\|\to 0\),
	which is a contradiction.
	Proposition~\ref{prop:ris-red} makes \(S-\lambda_S I_{X_i}\) compact.
	Uniqueness follows because \(I_{X_i}\) is not compact.
\end{proof}

Hence, for the self corners, we obtain the Argyros--Haydon scalar-plus-compact property.
Every operator in the reverse corner is compact.

\begin{corollary}\label{cor:slf-rev}
	\[ \Bcal(X_i)/\Kcal(X_i)=\K[I_{X_i}]\quad(i=0,1), \qquad \Bcal(X_1,X_0)=\Kcal(X_1,X_0). \]
\end{corollary}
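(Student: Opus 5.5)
The corollary follows by assembling the local orbit estimates with the two globalization tools already available; no new estimate is needed.

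For the self corners, fix \(i\in\{0,1\}\) and \(S\in\Bcal(X_i)\). By Theorem~\ref{thm:loc-orb} with \(j=i\) and \(\mathscr{M}_{ii}(x)=\K x\) from \eqref{eq:fou-orb}, every bounded RIS \((x_k)\) in \(X_i\) satisfies \(\dist(Sx_k,\K x_k)\to0\). This is exactly the hypothesis of Lemma~\ref{lem:slf-glo}. That lemma produces a unique \(\lambda\in\K\) with \(S-\lambda I_{X_i}\in\Kcal(X_i)\), so \([S]=\lambda[I_{X_i}]\) in the quotient. The reverse inclusion \(\K[I_{X_i}]\subseteq\Bcal(X_i)/\Kcal(X_i)\) is trivial. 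Together these give \(\Bcal(X_i)/\Kcal(X_i)=\K[I_{X_i}]\).

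For the reverse corner, let \(S\in\Bcal(X_1,X_0)\). Proposition~\ref{prop:rev-loc-orb}, which is also the case \(\mathscr{M}_{10}=\{0\}\) of Theorem~\ref{thm:loc-orb}, gives \(\norm{Sx_k}\to0\) for every bounded RIS in \(X_1\). Proposition~\ref{prop:ris-red}, applied with \(Y=X_0\) and \(R=S\), then shows that \(S\) is compact. The inclusion \(\Kcal(X_1,X_0)\subseteq\Bcal(X_1,X_0)\) is immediate, so the two spaces are equal.

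The only point to check is that the RIS notion in Proposition~\ref{prop:ris-red} and in Lemma~\ref{lem:slf-glo} is the same bounded \(C\)-RIS of Definition~\ref{def:fml-ris} that appears in the local orbit theorem. It is, since every RIS is bounded by its constant \(C\). There is no genuine obstacle here: all the analytic work sits in Propositions~\ref{prop:crt-loc-orb} and~\ref{prop:rev-loc-orb}.
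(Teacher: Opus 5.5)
Your proposal is correct and is exactly the paper's route: the self corners follow from Theorem~\ref{thm:loc-orb} with \(\mathscr{M}_{ii}(x)=\K x\) fed into Lemma~\ref{lem:slf-glo}, and the reverse corner follows from Proposition~\ref{prop:rev-loc-orb} combined with the compactness test of Proposition~\ref{prop:ris-red} applied with \(Y=X_0\). Your check that every \(C\)-RIS is bounded by \(C\), so the RIS notions in these results coincide, is the only bookkeeping the argument needs.
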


It remains to treat the forward corner.

\begin{theorem}[Globalizing the forward orbit]\label{thm:fwd-glo}
	For every \(S\in\Bcal(X_0,X_1)\) there is a unique \(v\in V^*\) such that \(S-T_v\) is compact.
	Moreover
	\( \norm{v}\le M^2\norm{S}. \)
\end{theorem}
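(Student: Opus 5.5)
The plan is to read off local coefficient functionals with the probes, show they are asymptotically compatible, glue them into one \(v\in V^*\), and then show \(S-T_v\) kills every bounded RIS, so that Proposition~\ref{prop:ris-red} gives compactness. For a probe \(\gamma\in\mathcal P\), put \(a_\gamma=\phi_\gamma Sj_\gamma\in E_\gamma^*\). By \eqref{eq:ter-par}, \(\norm{a_\gamma}\le\norm{S}\). Fix \(r\) and a rational \(q\in V_r\cap B_V\). Choose probes \(\gamma_k\) as in Proposition~\ref{prop:fml-prb}(P2)(a) with \(E_{\gamma_k}\supseteq V_r\). Then \((j_{\gamma_k}q)\) is an \(M\)-RIS, so Theorem~\ref{thm:loc-orb} for the forward corner gives \(v_k\in V^*\) with \(\norm{Sj_{\gamma_k}q-T_{v_k}j_{\gamma_k}q}\to0\). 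Since \(T_{v_k}j_{\gamma_k}q=v_k(q)z_{\gamma_k}\), applying \(\phi_{\gamma_k}\) shows that \(Sj_{\gamma_k}q\) is asymptotically \(a_{\gamma_k}(q)z_{\gamma_k}\).

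The key compatibility step uses the two-probe RIS of Proposition~\ref{prop:fml-prb}(P2)(b). Let \(y_k=j_{\gamma_k}q+j_{\delta_k}q\). The local orbit estimate gives \(w_k\) with \(Sy_k\approx w_k(q)(z_{\gamma_k}+z_{\delta_k})\). Comparing with the single-probe estimates, via the contractive functionals \(\phi_{\gamma_k},\phi_{\delta_k}\) and the disjoint ranks, gives \(|a_{\gamma_k}(q)-a_{\delta_k}(q)|\to0\). Interleaving arbitrary probe sequences with large ranks, as in Lemma~\ref{lem:slf-glo}, I expect to show that \(\lim a_\gamma(q)\) exists as \(\operatorname{rank}\gamma\to\infty\) over probes with \(E_\gamma\supseteq V_r\), for each rational \(q\). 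Using uniform boundedness and density of \((V_r)_\Q\), this defines \(v_r\in V_r^*\) with \(\norm{v_r}\le\norm{S}\) (up to the constant \(M^2\) coming from normalizations). The family \((v_r)\) is restriction-compatible by construction, so Subsection~\ref{sec:coe-fib} yields a unique \(v\in V^*\) with \(\norm{v}\le M^2\norm{S}\).

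Next put \(R=S-T_v\). By construction, \(Rj_\gamma q\to0\) along probes. For a general bounded RIS \((x_k)\) in \(X_0\), Theorem~\ref{thm:loc-orb} gives \(v_k\) with \(Rx_k\approx T_{v_k-v}x_k\). I would show \(\norm{T_{v_k-v}x_k}\to0\) by adding probes: use Proposition~\ref{prop:fml-prb}(P2)(c) to form the RIS \(x_k+j_{\gamma_k}q_k\), where \(q_k\) is chosen in the unit ball of \(E_{\gamma_k}\) to nearly norm \(v_k-v\) restricted to a large fibre. The local estimate on the combined RIS forces one coefficient \(w_k\) to serve both pieces. The probe piece forces \(w_k\approx v\) on \(q_k\), and the \(x_k\) piece forces \(T_{w_k}x_k\approx T_{v_k}x_k\) (interval projections have norm at most \(\kappa\)). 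Since \(T_{w}x_k\) depends only on \(w\) restricted to \(E_{x_k}\), taking the fibre of \(\gamma_k\) to contain \(E_{x_k}\) gives \(\norm{T_{v_k-v}x_k}\to0\). Proposition~\ref{prop:ris-red} then makes \(R\) compact. Uniqueness follows from Lemma~\ref{lem:tv-ess-low}.

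The main obstacle is that \(v_k\) from the local estimate is not unique: only its restriction to a finite fibre matters, and the probe step must control \(v_k-v\) uniformly on \(E_{x_k}\), not just on a single \(q\). My first attempt would exploit linearity in \(q\) through the finite-dimensionality of \(E_{x_k}\). I would pick finitely many probes, each with fibre containing \(E_{x_k}\), that test a finite net of \(B_{E_{x_k}}\), and sum them into one RIS block. If the RIS constant degrades with the number of probes, I would instead use a Hahn--Banach choice of a single norming \(q_k\) as described above.
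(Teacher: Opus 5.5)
Your overall route matches the paper's: probe coefficients $a_\gamma=\phi_\gamma Sj_\gamma$, one- and two-probe RISs, gluing $v$, a probe-augmented RIS to show $R=S-T_v$ kills RISs, then Proposition~\ref{prop:ris-red} and Lemma~\ref{lem:tv-ess-low}. But the last step has a genuine gap, which you flag yourself and do not close.

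From the combined RIS $x_k+j_{\gamma_k}q_k$ you get $w_k$ with $T_{w_k}x_k\approx T_{v_k}x_k$ and $w_k(q_k)\approx v(q_k)$. You would then need $(w_k-v_k)(q_k)\approx0$. By \eqref{eq:int}, $\norm{T_wx_k}=\sup_\eta|w(U_\eta x_k)|$, so $T_{w_k}x_k\approx T_{v_k}x_k$ controls $w_k-v_k$ only at the vectors $U_\eta x_k$. This norm is not uniformly comparable with $\norm{w|_{E_{x_k}}}\norm{x_k}$. Hence nothing is known at a unit vector $q_k$ chosen by Hahn--Banach to norm $(v_k-v)|_E$. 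Norming $w_k-v$ instead is circular, since $w_k$ exists only after $q_k$ is fixed. A finite net needs a number of probes growing with $\dim E_{x_k}$, which destroys the uniform RIS constant.

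The missing idea is to probe exactly the vector through which a witnessing target coordinate reads $T_wx_k$. Choose $\eta_k$ with $\operatorname{Re}e_{\eta_k}^{1*}(Rx_k)\ge\varepsilon/2$ and put $u_k=U_{\eta_k}x_k$, so $\norm{u_k}\le\norm{x_k}$ by \eqref{eq:u-bnd}. If $u_k=0$ on a subsequence, then $e_{\eta_k}^{1*}$ annihilates $\mathscr{M}_{01}(x_k)$, which contradicts Theorem~\ref{thm:loc-orb}. Otherwise take a probe $\gamma_k$ with $E_{\gamma_k}\supseteq E_{\eta_k}$, placed after $x_k$ and $\eta_k$ and before $x_{k+1}$, with $|\phi_{\gamma_k}(Rx_k)|<2^{-k}$. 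Set $y_k=x_k+j_{\gamma_k}u_k$ and $g_k^*=e_{\eta_k}^{1*}-\phi_{\gamma_k}$. Rank separation kills the cross terms, and $e_{\eta_k}^{1*}(T_wx_k)=w(u_k)=\phi_{\gamma_k}(T_wj_{\gamma_k}u_k)$. So $g_k^*$ annihilates $\mathscr{M}_{01}(y_k)$, while $\operatorname{Re}g_k^*(Ry_k)\ge\varepsilon/2-(1+M)\norm{Rj_{\gamma_k}u_k}-2^{-k}$. No second coefficient $w_k$ ever has to be compared with $v_k$.

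That estimate exposes a second gap: it needs $\norm{Rj_{\gamma_k}u_k}\to0$ for vectors $u_k$ varying in fibres of unbounded dimension. Your compatibility argument is pointwise, for a fixed rational $q\in V_r$, so ``$Rj_\gamma q\to0$ along probes'' holds only for each fixed $q$. You need the uniform quantities
$\varepsilon_N=\sup\{\dist(Sj_\gamma u,\K z_\gamma):\gamma\in\mathcal P,\ \operatorname{rank}\gamma\ge N,\ u\in B_{E_\gamma}\}$
and
$\omega_N=\sup\{|a_\gamma(u)-a_\delta(u)|:N\le\operatorname{rank}\gamma<\operatorname{rank}\delta,\ u\in B_{E_\gamma\cap E_\delta}\}$.
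Show each tends to zero by contradiction, using Proposition~\ref{prop:fml-prb}\textup{(P2)(a),(b)} with varying vectors; both parts allow arbitrary vectors in the unit balls. Then $\sup_{u\in B_{E_\gamma}}|a_\gamma(u)-v(u)|\le\omega_N$ and $\sup_{u\in B_{E_\gamma}}\norm{Rj_\gamma u}\le(1+M^2)\varepsilon_N+M\omega_N\to0$ as $\operatorname{rank}\gamma\to\infty$. This is exactly what the final step uses. The same uniform estimate is also what you would need to justify $P^1_{\operatorname{ran}x_k}Sj_{\gamma_k}q_k\approx0$ when splitting $Sy_k$ by interval projections in your version.
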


\begin{proof}
	By \eqref{eq:ter-par}, for a probe \(\gamma\in\mathcal P\) and \(a_\gamma=\phi_\gamma Sj_\gamma\in E_\gamma^*\), we have
		\[ \norm{a_\gamma}\le M^2\norm{S}. \]
	For \(N\in\N_+\) and \(z_\gamma=d_\gamma^1(1)\) defined in Subsection~\ref{sec:fre-prb}, put
	\[
		\varepsilon_N=\sup_{\substack{\gamma\in\mathcal P,\ \operatorname{rank}\gamma\ge N\\
			u\in B_{E_\gamma}}}
			\dist(Sj_\gamma u,\K z_\gamma).
	\]
	We first prove that \(\varepsilon_N\to 0\).
	Suppose not. By the proof of Proposition~\ref{prop:fml-prb}\textup{(P2)(a)},
	the violating probes can be chosen so that, for some \(\delta>0\),
	\[ u_k\in B_{E_{\gamma_k}}, \qquad (j_{\gamma_k}u_k)_k\text{ is a RIS}, \qquad \dist(Sj_{\gamma_k}u_k,\K z_{\gamma_k})>\delta. \]
	And it follows that \(u_k\ne0\). By the probe identity in
	\hyperref[itm:ana-g6]{\textup{(G6)}},
	\[ \mathscr{M}_{01}(j_{\gamma_k}u_k) =\{w(u_k)z_{\gamma_k}:w\in V^*\} =\K z_{\gamma_k}. \]
	This contradicts Theorem~\ref{thm:loc-orb}. Hence \(\varepsilon_N\to 0\).

	If \(\operatorname{rank}\gamma\ge N\) and \(u\in B_{E_\gamma}\), then
	\begin{equation}\label{eq:ter-err}
		\begin{aligned}
			\norm{Sj_\gamma u-a_\gamma(u)z_\gamma}
			&\le\inf_{c\in\K}\bigl(\norm{Sj_\gamma u-cz_\gamma}
				+|\phi_\gamma(Sj_\gamma u-cz_\gamma)|\norm{z_\gamma}\bigr)\\
			&\le(1+M^2)\dist(Sj_\gamma u,\K z_\gamma)
			\le(1+M^2)\varepsilon_N.
		\end{aligned}
	\end{equation}

	For \(N\in\N_+\), put
	\[
		\omega_N=\sup_{\substack{\gamma,\delta\in\mathcal P,\
			N\leq\operatorname{rank}\gamma<\operatorname{rank}\delta\\
			u\in B_{E_\gamma\cap E_\delta}}}
			|a_\gamma(u)-a_\delta(u)|.
	\]
	We next prove that \(\omega_N\to 0\).
	Suppose not. By the proof of Proposition~\ref{prop:fml-prb}\textup{(P2)(b)},
	there are \(\tau>0\), probes \(\gamma_k<\delta_k<\gamma_{k+1}\), and
	unit vectors \(u_k\in E_{\gamma_k}\cap E_{\delta_k}\) such that
	\[ |a_{\gamma_k}(u_k)-a_{\delta_k}(u_k)|>\tau, \qquad (j_{\gamma_k}u_k+j_{\delta_k}u_k)_k\text{ is a RIS}. \]
	By \hyperref[itm:ana-g6]{\textup{(G6)}}, we have
	\[ \mathscr{M}_{01}(j_{\gamma_k}u_k+j_{\delta_k}u_k) =\K(z_{\gamma_k}+z_{\delta_k}). \]
	Theorem~\ref{thm:loc-orb} gives \(c_k\in\K\) such that
	\[ \norm{S(j_{\gamma_k}u_k+j_{\delta_k}u_k) -c_k(z_{\gamma_k}+z_{\delta_k})}\longrightarrow0. \]
	By Lemma~\ref{lem:fml-blk-bio} and \eqref{eq:ter-err},
	\begin{align*}
		\tau&<|a_{\gamma_k}(u_k)-a_{\delta_k}(u_k)|\le |a_{\gamma_k}(u_k)-c_k|+|a_{\delta_k}(u_k)-c_k|\\
		&\le2M\Bigl(
		\norm{S(j_{\gamma_k}u_k+j_{\delta_k}u_k)
		-c_k(z_{\gamma_k}+z_{\delta_k})}\\
		&\qquad \qquad +\norm{Sj_{\gamma_k}u_k-a_{\gamma_k}(u_k)z_{\gamma_k}}
		+\norm{Sj_{\delta_k}u_k-a_{\delta_k}(u_k)z_{\delta_k}}
		\Bigr)\longrightarrow0,
	\end{align*}
	which is a contradiction. Hence \(\omega_N\to 0\).

	For \(u\in V_{\Q}\) and probes \(\gamma<\delta\) whose fibres contain \(u\), we have
	\[ |a_\gamma(u)-a_\delta(u)| \le\omega_{\operatorname{rank}\gamma}\norm{u}. \]
	Proposition~\ref{prop:fml-prb}\textup{(P1)} gives cofinally many such probes,
	so we may define
	\[
		v(u)=\lim_{\substack{\gamma\to\infty\\E_\gamma\ni u}}a_\gamma(u)
		\qquad(u\in V_{\Q}).
	\]
	By applying Proposition~\ref{prop:fml-prb}\textup{(P1)} to finitely many vectors and passing to the limit, we obtain
	\[ v(\alpha u+\beta w)=\alpha v(u)+\beta v(w), \qquad |v(u)|\le M^2\norm{S}\norm{u} \]
	for \(u,w\in V_{\Q}\) and rational scalars \(\alpha,\beta\).
	The density of \(V_{\Q}\) and of the rational scalars gives a unique
	\(v\in V^*\) with \(\norm{v}\le M^2\norm{S}\).

	Let \(\operatorname{rank}\gamma\ge N\) and \(u\in B_{E_\gamma}\cap V_{\Q}\).
	By Proposition~\ref{prop:fml-prb}\textup{(P1)}, probes \(\delta>\gamma\) with \(E_\delta\supseteq E_\gamma\)
	occur arbitrarily late. Hence
	\[
		|a_\gamma(u)-v(u)|
		=\lim_{\substack{\delta\to\infty\\E_\delta\supseteq E_\gamma}}
			|a_\gamma(u)-a_\delta(u)|
		\le\omega_N.
	\]
	The same estimate holds on \(B_{E_\gamma}\) by density. Therefore
	\begin{equation}\label{eq:mov-col}
		\sup_{\substack{\gamma\in\mathcal P,\ \operatorname{rank}\gamma\ge N\\
			u\in B_{E_\gamma}}}
			|a_\gamma(u)-v(u)|
		\le\omega_N\longrightarrow0.
	\end{equation}

	Put \(R=S-T_v\). By \hyperref[itm:ana-g6]{\textup{(G6)}}, \eqref{eq:ter-err} and \eqref{eq:mov-col} give
	\begin{equation}\label{eq:prb-r}
		\sup_{\substack{\gamma\in\mathcal P,\ \operatorname{rank}\gamma\ge N\\
			u\in B_{E_\gamma}}}
			\norm{Rj_\gamma u}
		\le(1+M^2)\varepsilon_N+M\omega_N\longrightarrow0.
	\end{equation}

	We now prove that \(R\) kills every  RIS.
	Suppose that a  \(C_x\)-RIS \((x_k)\), with \(C_x\ge1\) and RIS indices
	\((\ell_k)\), satisfies \(\norm{Rx_k}\ge\varepsilon>0\).
	Choose \(\eta_k\) with \(|e_{\eta_k}^{1*}(Rx_k)|\ge\varepsilon/2\). And we may assume
	\[ \operatorname{Re}e_{\eta_k}^{1*}(Rx_k)\ge\frac{\varepsilon}{2}. \]
	Put \(u_k=U_{\eta_k}x_k\).
	If \(u_k=0\) on a subsequence, then \eqref{eq:int} gives
	\[ e_{\eta_k}^{1*}(T_wx_k)=w(u_k)=0, \qquad \dist(Rx_k,\mathscr{M}_{01}(x_k)) \ge |e_{\eta_k}^{1*}(Rx_k)| \ge\frac{\varepsilon}{2}, \]
	which contradicts Theorem~\ref{thm:loc-orb}.
	So we pass to a subsequence on which \(u_k\ne0\).
	Since \(\phi_\gamma=d_{\gamma,1}^{1*}\) is defined in Subsection~\ref{sec:fre-prb}, Proposition~\ref{prop:fml-prb}\textup{(P1)},  the FDD tails of
	\(Rx_k\), and the cofinal RIS indices allow us to choose probes \(\gamma_k\) such that
	\[
		\begin{gathered}
			\max\{\max\operatorname{ran}x_k,\operatorname{rank}\eta_k\}
			<\operatorname{rank}\gamma_k
			<\min\{\min\operatorname{ran}x_{k+1},\ell_{k+1}\},\\
			E_{\eta_k}\subseteq E_{\gamma_k},
			\qquad |\phi_{\gamma_k}(Rx_k)|<2^{-k}.
		\end{gathered}
	\]
	Then put
	\[ y_k=x_k+j_{\gamma_k}u_k, \qquad g_k^*=e_{\eta_k}^{1*}-\phi_{\gamma_k}. \]
	By \eqref{eq:u-bnd}, \(\norm{u_k}\le C_x\).
	The proof of Proposition~\ref{prop:fml-prb}\textup{(P2)(c)}, applied to
	\(x_k/C_x\) and \(u_k/C_x\), shows that \((y_k)\) is a
	\(C_x(1+M)\)-RIS. Moreover, \eqref{eq:prb-r} gives
	\[
		\norm{Rj_{\gamma_k}u_k}
		\le C_x\sup_{\substack{\gamma\in\mathcal P,\
			\operatorname{rank}\gamma\ge\operatorname{rank}\gamma_k\\
			u\in B_{E_\gamma}}}
			\norm{Rj_\gamma u}
		\longrightarrow0.
	\]

	By the rank conditions, Lemma~\ref{lem:fml-blk-bio},
	\hyperref[itm:ana-g6]{\textup{(G6)}}, and \eqref{eq:int}, for every \(w\in V^*\), we have
	\[
		\begin{gathered}
			e_{\eta_k}^{1*}(T_wj_{\gamma_k}u_k)=0,\quad \phi_{\gamma_k}(T_wx_k)=0,\quad
			e_{\eta_k}^{1*}(T_wx_k)
			=w(u_k)=\phi_{\gamma_k}(T_wj_{\gamma_k}u_k).
		\end{gathered}
	\]
	Hence
	\[
		\begin{aligned}
			g_k^*(T_wy_k)=e_{\eta_k}^{1*}(T_wx_k)
				+e_{\eta_k}^{1*}(T_wj_{\gamma_k}u_k)
				-\phi_{\gamma_k}(T_wx_k)
				-\phi_{\gamma_k}(T_wj_{\gamma_k}u_k)=w(u_k)-w(u_k)=0.
		\end{aligned}
	\]
	Thus \(g_k^*\) annihilates \(\mathscr{M}_{01}(y_k)\). Also
	\begin{align*}
		\operatorname{Re}g_k^*(Ry_k)
		&=\operatorname{Re}e_{\eta_k}^{1*}(Rx_k)
			+\operatorname{Re}g_k^*(Rj_{\gamma_k}u_k)
			-\operatorname{Re}\phi_{\gamma_k}(Rx_k)\\
		&\ge\frac{\varepsilon}{2}
			-(1+M)\norm{Rj_{\gamma_k}u_k}-2^{-k}
			\ge\frac{\varepsilon}{3}
	\end{align*}
	for all sufficiently large \(k\). Since \(\norm{g_k^*}\le1+M\),
	\[ \dist(Ry_k,\mathscr{M}_{01}(y_k)) \ge\frac{\operatorname{Re}g_k^*(Ry_k)}{\norm{g_k^*}} \ge\frac{\varepsilon}{3(1+M)}, \]
	which contradicts Theorem~\ref{thm:loc-orb}.
	Therefore \(R\) kills every bounded RIS, and Proposition~\ref{prop:ris-red} makes
	\(R\) compact.

	If \(S-T_w\) is also compact, then \eqref{eq:tv-ess-low} gives
	\[ 0=\norm{[T_{v-w}]}=\norm{v-w}, \]
	so \(v=w\).
\end{proof}

\begin{proof}[Proof of Theorem~\ref{thm:tri-cal}]
	By Theorem~\ref{thm:dat-cns}, there are separable spaces \(X_0,X_1\) with FDDs.
	Both are infinite dimensional, because the target probes give a nonzero target vector at every rank by \eqref{eq:ter-par},
	while Proposition~\ref{prop:fin-ext} meets scalar source-only generic requirements cofinally.
	Take \(X_V=(X_0\oplus X_1)_{\ell_\infty}\).
	Corollary~\ref{cor:slf-rev} and Theorem~\ref{thm:fwd-glo}
	give the three types of corner assertions, and Proposition~\ref{prop:cor-alg}
	gives the Banach-algebra isomorphism.
	Lemma~\ref{lem:tv-ess-low} gives \(\norm{[T_v]}=\norm{v}\). The block embedding and the lower-left corner projection induce inverse contractions between the forward-corner quotient and the lower-left radical of \(\Cal(X_V)\).
	Under this isomorphism the Jacobson radical is
	\[
		\left\{\begin{pmatrix}0&0\\v&0\end{pmatrix}:v\in V^*\right\}.
	\]
\end{proof}

\subsection{Structural consequences}\label{sec:str-con}

The remaining consequences use the main construction and elementary Banach-algebra arguments.
For a Banach space \(G\), put
\[
	\mathfrak{A}(G)=
	\left\{\begin{pmatrix}\lambda&0\\g&\mu\end{pmatrix}:
	\lambda,\mu\in\K,\ g\in G\right\},
\]
with the multiplication in \eqref{eq:tri-prd}.

\begin{proof}[Proof of Theorem~\ref{thm:dua-cal}]
	Suppose first that \(\dim V\ge2\).
	Choose \(v_0,v_1\in V\) and \(\phi_0,\phi_1\in V^*\) such that \(\phi_i(v_j)=\delta_{ij}\), and put
	\[ F=\operatorname{span}\{v_0,v_1\},\qquad Pf=f-f(v_0)\phi_0-f(v_1)\phi_1. \]
	Let
	\[ Q:V\longrightarrow W:=V/F \]
	be the quotient map.
	Since \(Q^*\) is an isometric isomorphism from \(W^*\) onto \(F^\perp\),
	and since the biorthogonal system gives the topological direct sum, we have
	\[ V^*=\K\phi_0\oplus F^\perp\oplus\K\phi_1. \]
	Hence
	\[ \Theta:V^*\longrightarrow\mathfrak{A}(W^*),\qquad \Theta(f)= \bigl(f(v_0),(Q^*)^{-1}Pf,f(v_1)\bigr) \]
	is a Banach-space isomorphism.
	Define
	\begin{align}
		\norm{f}_\diamond={}&|f(v_0)|+\norm{Pf}+|f(v_1)|,
		\label{eq:dia-nrm}\\
		f\diamond g={}&f(v_0)g(v_0)\phi_0+g(v_0)Pf
		+f(v_1)Pg+f(v_1)g(v_1)\phi_1.
		\label{eq:dia-prd}
	\end{align}
	By the definition of \(\Theta\), the norm in \eqref{eq:dia-nrm} is the pullback of the \(\ell_1\)-sum norm on \(\mathfrak{A}(W^*)\),
	so it is equivalent to the original dual norm.
	Since
	\[ (\lambda,h,\mu)(\lambda',k,\mu') =(\lambda\lambda',h\lambda'+\mu k,\mu\mu'), \]
	the multiplication in \eqref{eq:dia-prd} is the pullback of the triangular multiplication.
	Thus \(\norm{\cdot}_\diamond\) is submultiplicative, the unit is \(\phi_0+\phi_1\),
	and the radical is precisely \(F^\perp\), with square zero.

	The space \(W\) is separable.
	Applying Theorem~\ref{thm:tri-cal} to \(W\) gives a separable space \(X_W\) and a Banach-algebra isomorphism
	\[ \Cal(X_W)\simeq\mathfrak{A}(W^*). \]
	Taking \(Z_V=X_W\) and composing with \(\Theta\) proves the assertion.

	If \(\dim V=1\), apply Theorem~\ref{thm:tri-cal} to the zero coefficient space.
	Its first self-corner \(X_0(0)\) satisfies \(\Cal(X_0(0))\simeq\K\).
	Choose \(\phi\in S_{V^*}\), and define
	\[ (a\phi)\diamond(b\phi)=ab\phi, \qquad \norm{a\phi}_\diamond=|a|. \]
	Then \((V^*,\norm{\cdot}_\diamond,\diamond)\simeq\K\), and we take \(Z_V=X_0(0)\).
\end{proof}

\begin{proof}[Proof of Corollary~\ref{cor:ban-uni}]
	The first assertion follows from Theorem~\ref{thm:dua-cal} by forgetting the multiplication.
	If \(E\ne\{0\}\) is separable and reflexive, then \(E^*\) is separable.
	Apply the first assertion to \(V=E^*\) and use the canonical isomorphism \(E\simeq E^{**}\).
	Thus \(E\simeq\Cal(Z_{E^*})\), and this Calkin algebra is reflexive.
\end{proof}

\begin{proof}[Proof of Corollary~\ref{cor:ref-rad}]
	Let \(E\) be separable and reflexive.
	Then \(E^*\) is separable, so the triangular theorem applies to \(V=E^*\).
	Put \(Y_E=X_{E^*}\).
	Reflexivity gives \(V^*=E^{**}\simeq E\), and hence
	\[
		\Cal(Y_E)\simeq
		\left\{\begin{pmatrix}\lambda&0\\e&\mu\end{pmatrix}:
		\lambda,\mu\in\K,\ e\in E\right\}.
	\]
	As a Banach space the algebra on the right is \(\K\oplus E\oplus\K\), and is therefore reflexive.
	Its radical and semisimple quotient are given by \eqref{eq:tgt-rad}.
\end{proof}

\begin{proof}[Proof of Corollary~\ref{cor:ell-rad}]
	Apply Theorem~\ref{thm:tri-cal} with \(V=\ell_1\) and use \(\ell_1^*=\ell_\infty\).
\end{proof}

Let \(q_V:\Bcal(X_V)\to\Cal(X_V)\) be the quotient map, and let
\(\mathcal{E}(X_V)\) be the closed ideal of inessential operators.
By Atkinson's theorem and the invertibility characterization of the Jacobson
radical \cite{Atkinson1951,Pietsch1980,Dales2000},
\(S\in\mathcal{E}(X_V)\) if and only if
\(q_V(S)\in\operatorname{rad}\Cal(X_V)\).

\begin{corollary}[Inessential operators and commutators]
	\label{cor:ine-com}
	The quotient \(\mathcal{E}(X_V)/\Kcal(X_V)\) is isometric to \(V^*\), and
	\(\mathcal{E}(X_V)^2\subseteq\Kcal(X_V)\). Moreover,
	\(\operatorname{rad}\Cal(X_V)=\{ab-ba:a,b\in\Cal(X_V)\}\).
	Consequently, every \(S\in\mathcal{E}(X_V)\) can be written as
	\(S=AB-BA+K\), where \(A,B\in\Bcal(X_V)\) and \(K\in\Kcal(X_V)\).
\end{corollary}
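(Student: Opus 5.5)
The plan is to work entirely inside \(\Cal(X_V)\simeq\mathfrak{A}\), using Theorem~\ref{thm:tri-cal} together with the stated characterization \(S\in\mathcal{E}(X_V)\iff q_V(S)\in\operatorname{rad}\Cal(X_V)\).

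\textbf{Step 1: the inessential ideal.} The characterization gives \(\mathcal{E}(X_V)=q_V^{-1}(\operatorname{rad}\Cal(X_V))\). Hence \(\mathcal{E}(X_V)/\Kcal(X_V)\) is isometric, as a quotient space, to \(\operatorname{rad}\Cal(X_V)\) with the Calkin quotient norm. The radical is the image of the lower-left corner, and \(\norm{[T_v]}=\norm{v}\) by Lemma~\ref{lem:tv-ess-low}.

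One point needs care: the Calkin norm of the full block class \(\bigl[\begin{smallmatrix}0&0\\T_v&0\end{smallmatrix}\bigr]\) on the \(\ell_\infty\)-sum must be compared with the forward-corner quotient norm \(\norm{[T_v]}\).

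- For an upper bound, take the corner compression by the contractive maps \(\jmath_1,\pi_0\). For a block operator with only a lower-left entry, the operator norm on \((X_0\oplus X_1)_{\ell_\infty}\) equals the norm of that entry. So the infimum over compact perturbations of that corner gives \(\le\norm{[T_v]}\).
- For a lower bound, compressing any compact perturbation \(K\) by \(\pi_1(\cdot)\jmath_0\) is contractive and lands on \(T_v+\)compact.

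This is the ``inverse contractions'' remark at the end of the proof of Theorem~\ref{thm:tri-cal}. Since \((\operatorname{rad})^2=0\) by \eqref{eq:tgt-rad}, we get \(q_V(ST)=0\) for \(S,T\in\mathcal{E}(X_V)\), that is, \(\mathcal{E}(X_V)^2\subseteq\Kcal(X_V)\).

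\textbf{Step 2: the radical consists of single commutators.} Compute in \(\mathfrak{A}\) with \(a=(\lambda,v,\mu)\) and \(b=(\lambda',v',\mu')\) using \eqref{eq:tri-prd}:
\[ ab-ba=\bigl(0,\;v\lambda'+\mu v'-v'\lambda-\mu'v,\;0\bigr). \]
Every commutator therefore lies in \(\mathfrak{N}=\operatorname{rad}\) (this also follows because \(\mathfrak{A}/\mathfrak{N}\) is commutative).

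Conversely, given \(w\in V^*\), take \(a=(1,0,0)\) and \(b=(0,w,0)\). Then \(ab=(0,0,0)\) and \(ba=(0,w,0)\), so \(ba-ab=(0,w,0)\). Swapping the roles, \(w=ab-ba\) with \(a=(0,w,0)\) and \(b=(1,0,0)\). Thus \(\operatorname{rad}\Cal(X_V)=\{ab-ba\}\) exactly, with single commutators rather than spans. Transport this back through the algebra isomorphism \(\Phi\) of Proposition~\ref{prop:cor-alg}.

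\textbf{Step 3: lift to operators.} For \(S\in\mathcal{E}(X_V)\), the class \(q_V(S)\) lies in the radical, so \(q_V(S)=ab-ba\) by Step 2. Choose lifts \(A,B\in\Bcal(X_V)\) of \(a,b\); concretely, \(A=\begin{pmatrix}0&0\\T_w&0\end{pmatrix}\) and \(B=\begin{pmatrix}I_{X_0}&0\\0&0\end{pmatrix}\). Then \(q_V(S-(AB-BA))=0\), so \(K:=S-AB+BA\) is compact.

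The only step needing genuine care is the norm identification in Step 1, specifically that the corner quotient norm and the full Calkin norm agree for lower-left classes. The rest is algebra.
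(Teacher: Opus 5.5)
Your proposal is correct and follows essentially the same route as the paper. Both arguments identify \(\mathcal{E}(X_V)/\Kcal(X_V)\) with \(\operatorname{rad}\Cal(X_V)\) via \(q_V\), use that the radical has square zero, place every commutator in the radical (the paper argues via commutativity of \(\K\oplus\K\)), get the converse from the commutator with the idempotent \(\operatorname{diag}(1,0)\), and then lift to operators. Your explicit check that the Calkin norm of a lower-left class equals the forward-corner quotient norm only spells out the ``inverse contractions'' step that the paper takes from the proof of Theorem~\ref{thm:tri-cal}.
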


\begin{proof}
	The quotient map identifies \(\mathcal{E}(X_V)/\Kcal(X_V)\) isometrically with \(\operatorname{rad}\Cal(X_V)\). By Theorem~\ref{thm:tri-cal}, this radical is isometric to \(V^*\) and has
	square zero. Hence \(\mathcal{E}(X_V)^2\subseteq\Kcal(X_V)\).
	Since \(\mathfrak{A}/\operatorname{rad}(\mathfrak{A})\simeq\K\oplus\K\)
	is commutative, every commutator in \(\mathfrak{A}\) belongs to its radical.
	Conversely, \eqref{eq:tgt-rad} and direct multiplication give
	\[
		\left[
			\begin{pmatrix}0&0\\v&0\end{pmatrix},
			\begin{pmatrix}1&0\\0&0\end{pmatrix}
		\right]
		=\begin{pmatrix}0&0\\v&0\end{pmatrix}.
	\]
	Hence Theorem~\ref{thm:tri-cal} gives
	\(\operatorname{rad}\Cal(X_V)=\{ab-ba:a,b\in\Cal(X_V)\}\).
	If \(S\in\mathcal{E}(X_V)\), choose \(a,b\in\Cal(X_V)\) such that
	\(q_V(S)=ab-ba\), and let \(A,B\in\Bcal(X_V)\) be lifts of \(a,b\).
	Then \(K:=S-(AB-BA)\in\Kcal(X_V)\).
\end{proof}

The construction uses the rational information recorded by \(\mathcal O_V\) in Remark~\ref{rem:orc-rec}.
All rational threshold comparisons determine the norm on the dense rational core \(V_{\Q}\),
and its completion is \(V\).
By Theorem~\ref{thm:tri-cal}, the resulting Calkin algebra recovers \(V^*\) as its Jacobson radical.
Thus the construction gives a \emph{faithful coding} of the dual Banach-space isomorphism type, in the following sense.

\begin{corollary}[Faithful dual coding]\label{cor:dua-cod}
	For separable Banach spaces \(U\) and \(V\),
	\[ \Cal(X_U)\simeq\Cal(X_V)\ \text{as Banach algebras} \quad\Longleftrightarrow\quad U^*\simeq V^*\ \text{as Banach spaces}. \]
\end{corollary}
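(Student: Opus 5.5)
The reverse implication is immediate from Theorem~\ref{thm:tri-cal}. Suppose \(J:U^*\to V^*\) is a Banach-space isomorphism. Define
\[
\Psi:\mathfrak{A}(U^*)\to\mathfrak{A}(V^*),\qquad (\lambda,g,\mu)\mapsto(\lambda,Jg,\mu).
\]
This map is a bounded linear bijection with bounded inverse. It is multiplicative because the product \eqref{eq:tri-prd} is
\[
(\lambda,g,\mu)(\lambda',g',\mu')=(\lambda\lambda',\,g\lambda'+\mu g',\,\mu\mu'),
\]
and this formula is linear in the off-diagonal entries with only scalar coefficients. Composing \(\Psi\) with the isomorphisms \(\Cal(X_U)\simeq\mathfrak{A}(U^*)\) and \(\Cal(X_V)\simeq\mathfrak{A}(V^*)\) supplied by Theorem~\ref{thm:tri-cal} gives a Banach-algebra isomorphism \(\Cal(X_U)\simeq\Cal(X_V)\).

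For the forward implication, let \(\Phi:\Cal(X_U)\to\Cal(X_V)\) be a Banach-algebra isomorphism, meaning a bounded algebra isomorphism with bounded inverse. The plan is to show that \(\Phi\) carries the Jacobson radical onto the Jacobson radical. The radical is defined purely algebraically, for instance as the intersection of all maximal modular left ideals, or as the set of \(a\) such that \(1-ba\) is invertible for every \(b\). Any algebra isomorphism preserves these notions, so \(\Phi(\operatorname{rad}\Cal(X_U))=\operatorname{rad}\Cal(X_V)\). The restriction of \(\Phi\) is then a bounded linear bijection between the two radicals, with bounded inverse given by the restriction of \(\Phi^{-1}\).

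Theorem~\ref{thm:tri-cal} identifies \(\operatorname{rad}\Cal(X_U)\) isometrically with \(U^*\) and \(\operatorname{rad}\Cal(X_V)\) isometrically with \(V^*\); this also follows from \eqref{eq:tgt-rad} transported through the isomorphism with \(\mathfrak{A}\). Composing these identifications with the restriction of \(\Phi\) yields a Banach-space isomorphism \(U^*\simeq V^*\).

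No step is a real obstacle. The only point needing care is to confirm that the radical appearing in Theorem~\ref{thm:tri-cal} is the Jacobson radical of the whole Calkin algebra, and not merely the lower-left corner in some chosen representation. This is exactly what \eqref{eq:tgt-rad} establishes for \(\mathfrak{A}\), and it transfers through the Banach-algebra isomorphism in Proposition~\ref{prop:cor-alg}. The degenerate case \(U=\{0\}\) needs no separate treatment: then \(U^*=\{0\}\), the radical is zero, and the same argument applies.
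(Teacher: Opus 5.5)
Your proof is correct and follows essentially the same route as the paper. In the forward direction you restrict the Banach-algebra isomorphism to the Jacobson radicals, which Theorem~\ref{thm:tri-cal} identifies with \(U^*\) and \(V^*\); in the reverse direction you use the induced map \((\lambda,g,\mu)\mapsto(\lambda,Jg,\mu)\) between the triangular algebras.
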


\begin{proof}
	Every Banach-algebra isomorphism \(\mathfrak{A}(E)\to\mathfrak{A}(F)\) maps the Jacobson radical onto the Jacobson radical.
	Its restriction therefore gives a Banach-space isomorphism \(E\to F\).
	Conversely, every Banach-space isomorphism \(R:E\to F\) induces the Banach-algebra isomorphism
	\[
		\begin{pmatrix}\lambda&0\\e&\mu\end{pmatrix}
		\longmapsto
		\begin{pmatrix}\lambda&0\\Re&\mu\end{pmatrix}.
	\]
	Apply this observation with \(E=U^*\), \(F=V^*\), and use Theorem~\ref{thm:tri-cal}.
\end{proof}

\begin{proof}[Proof of Theorem~\ref{cor:ide-lat}]
	Take \(V=\ell_1\) and \(X=X_{\ell_1}\). By
	Theorem~\ref{thm:tri-cal} and Corollary~\ref{cor:ine-com},
	the quotient map identifies \(\mathcal{E}(X)/\Kcal(X)\) isometrically with \(\operatorname{rad}\Cal(X)\), which is isometric to \(\ell_\infty\).
	The triangular multiplication gives
	\[ (\lambda,v,\mu)(0,w,0)=(0,\mu w,0), \qquad (0,w,0)(\lambda,v,\mu)=(0,w\lambda,0). \]
	Thus the image of every closed subspace of \(\ell_\infty\) is a closed
	two-sided ideal in \(\Cal(X)\). Taking preimages under the quotient map gives
	the stated lattice isomorphism and quotient identification.
	Writing \(\mathcal{J}_W\) for the preimage associated with a closed subspace
	\(W\subseteq\ell_\infty\), we have
	\[ W_1\subseteq W_2 \quad\Longleftrightarrow\quad \mathcal{J}_{W_1}\subseteq\mathcal{J}_{W_2}. \]

	If \(Y\) is separable, then \(B_{Y^*}\) is weak-star metrizable. Choose a
	weak-star dense sequence \((y_n^*)\) in \(B_{Y^*}\). The map
	\(y\mapsto(y_n^*(y))_n\) embeds \(Y\) isometrically into \(\ell_\infty\), because
	\(\sup_n|y_n^*(y)|=\norm{y}\). Its image corresponds to an ideal \(\mathcal{J}\), and the quotient identification gives an isometry from \(Y\) onto \(\mathcal{J}/\Kcal(X)\).
\end{proof}

\begin{remark}[The obstruction example is not a dual space]
	\label{rem:acu-not-dual}
	Let \(A_D=\mathbb{C} I_{E_D}\oplus J_D\) be the algebra constructed in \cite[Section~5]{AcuavivaAcuaviva2026},
	where \(E_D=\ell_1(\N,D)\) and \(J_D\) is the closure of the finite-support \(D\)-valued matrices.
	Choose a state \(\varphi\in D^*\).
	For a finite matrix \(a=(d_{ij})\), put
	\[ R(a)=\sum_i\varphi(d_{ii})\varepsilon_{ii}. \]
	Then
	\[ \norm{R(a)}=\sup_i|\varphi(d_{ii})| \leq\sup_i\norm{d_{ii}}\leq\norm{a}. \]
	Thus \(R\) extends to a projection from \(J_D\) onto the closure of the finite scalar diagonal matrices,
	which is isometric to \(c_0\).
	Since the displayed sum defining \(A_D\) is topological, \(c_0\) is complemented in \(A_D\).

	If \(A_D\) were isomorphic to a dual space, then it would be complemented in its bidual.
	A complemented subspace of such a space is complemented in its own bidual: if \(j:Z\to A_D\), \(q:A_D\to Z\),
	and \(P:A_D^{**}\to A_D\) are the corresponding maps, then \(qPj^{**}:Z^{**}\to Z\) restricts to the identity on \(Z\).
	Taking \(Z=c_0\) would make \(c_0\) complemented in \(c_0^{**}=\ell_\infty\),
	contrary to the Phillips--Sobczyk theorem \cite{Phillips1940,Sobczyk1941}.
	Hence \(A_D\) is not isomorphic to any dual Banach space.
\end{remark}

\section{Discussion}
\label{sec:dis}

Part~I packages the construction into a countable language of finite-extension requirements,
and Theorem~\ref{thm:dat-cns} records the analytic input used in Part~II.
We isolate the finite-certificate and oracle-relative viewpoints, and then discuss two possible extensions.
The multi-object and injury constructions are only directions for future work and require new estimates.

\subsection{Finite-certificate principle}

The spaces are fixed before any bounded operator is chosen.
Hence the construction enumerates finite rational requirements, not operators.
For a rational finitely supported vector, the relevant  orbit is finite dimensional,
so a persistent local error can be recorded by finite rational data.
In the two self corners and the forward corner, \hyperref[itm:ana-g3]{\textup{(G3)}} and Proposition~\ref{prop:fin-ext} allow us to choose one of the cofinally pre-existing realizations carrying a matching certified continuation.
The reverse corner instead uses \hyperref[itm:ana-g4]{\textup{(G4)}} and the missing-level estimates of Section~\ref{sec:rev}.
Thus none of the later operator arguments changes the datum.

\subsection{Oracle-relative finite extensions}

The rank recursion is relative to the oracle \(\mathcal{O}_V\) of Remark~\ref{rem:orc-rec},
supplied by the local structure of \(V\).
It provides the finite norm and admissibility data used by the fixed program \(\mathfrak{M}\).
The finite admissibility, acyclicity, and fairness checks are proved in Remark~\ref{rem:orc-rec},
Lemma~\ref{lem:req-adm}, and Lemma~\ref{lem:fai-per}; Proposition~\ref{prop:fin-ext} gives the finite-extension property.
The analytic estimates remain separate.

\subsection{Finite families and operator corners}

The present construction has two spaces.
Its four corners are
\[
	\begin{aligned}
		\Bcal(X_i)/\Kcal(X_i)&=\K[I_{X_i}] &&(i=0,1),\\
		\Bcal(X_1,X_0)/\Kcal(X_1,X_0)&=0,\\
		\Bcal(X_0,X_1)/\Kcal(X_0,X_1)&\simeq V^*.
	\end{aligned}
\]
The two sorts therefore act as two objects, and the protected module controls one nonzero quotient corner.

A natural next problem is to use finitely many sorts \((X_i)_{i=1}^d\) and prescribe several quotient corners.
A first target is
\[ \K^d\ltimes R, \qquad R=\bigoplus_{(i,j)\in E}V_{ij}^*, \qquad R^2=\{0\}, \]
where \(E\) is a finite directed graph and every \(V_{ij}\) is separable.
At the formal level, finitely many sorts and corner types would still give a countable requirement language.
This is only a bookkeeping observation and does not give the needed BD estimates.
Separate protected modules and weight pools could record the finite data for the different corners.

The new difficulty is analytic.
In the square-zero case, every composition of two radical corner operators must be compact.
Separate local orbit estimates for the two factors do not prove this by themselves.
One would also need compatible collision, interval-restriction, and mixed-corner estimates.
If nonzero products along paths are prescribed, the protected modules must respect composition.
For a path
\[ X_i\longrightarrow X_j\longrightarrow X_k, \]
the composition of the two protected families would have to agree, modulo compact operators,
with the multiplication prescribed in the \(i\)-to-\(k\) corner.
This is not part of the present construction and would require new composition-compatible modules.

\subsection{Possible injury constructions}

Lemma~\ref{lem:fai-per} ensures persistence without injury in the present stage construction.
In other problems, the finite requirements may not remain jointly active under monotone end extensions.
This suggests a version in which the current realization of a requirement may later be abandoned and restarted at a larger rank.

An old BD atom cannot be removed after it has been constructed.
Therefore atoms belonging to an injured realization would remain in the space even when they are no longer used by the active coding history.
A finite-injury construction would need an analytic estimate of the form
\[ \text{abandoned finite realization} \longrightarrow \text{compact contribution}. \]
For example, the abandoned pieces might have to be confined to finite intervals,
or their tail effects might have to tend to zero uniformly.
It is not enough that each individual requirement is injured only finitely many times,
because there may still be infinitely many abandoned pieces.
A new compactness theorem would be needed to control their total effect.
An infinite-injury version would require still stronger convergence estimates and is further from the present argument.

\subsection{Reusing the analytic input}

Theorem~\ref{thm:dat-cns}, summarized by
\hyperref[itm:ana-g1]{\textup{(G1)}}--\hyperref[itm:ana-g7]{\textup{(G7)}}
and \hyperref[itm:ana-ea1]{\textup{(EA1)}}--\hyperref[itm:ana-ea4]{\textup{(EA4)}},
provides the analytic input for the operator proof.
A different requirement language or protected module can reuse Part~II only after these properties have been verified.
This is not automatic: new corner products require composition-compatible modules and mixed-corner estimates,
whereas an injury rule requires the compactness control described above.

\section*{Acknowledgements}
Rui Liu and Jie Shen were partially supported by the National Natural Science Foundation of China (Grant Nos. 12471131 and 12071230). Rui Liu is grateful for the opportunities to make several extended visits to the Departments of Mathematics at Texas A\&M University and at The University of Texas at Austin, during which he benefited greatly from the guidance of Thomas Schlumprecht and Edward Odell and from many helpful discussions with them. Rui Liu and Jie Shen also thank Longyun Ding and Su Gao for their valuable advice and discussions on set theory, and for their encouragement and support over the years. Jie Shen is grateful to Liang Yu for his lectures on recursion theory at the 2025 Fudan Logic Summer School. He also thanks Zhaokuan Hao, Ruizhi Yang, Ningyuan Yao, and Will Johnson, as well as the School of Philosophy at Fudan University, for organizing and hosting the summer school.

\section*{AI assistance, provenance, and authors' responsibility}\label{sec:ai-assistance}

\begin{enumerate}[label=\textup{(\arabic*)},leftmargin=*]
	\item \textbf{The authors' original design.}
	The authors formulated the main theorem and developed the specific architecture of the construction independently of generative-AI assistance.
	We incorporated oracle Turing machines and recursion-theoretic methods into the AH-type construction developed here.
	The authors designed the two-sorted finite-extension framework, the protected module, and the finite certificates.
	We also designed the paired vector-valued probes, the use of cofinal realizations, and the organization of the four operator corners.
	We also developed the vector-valued Bourgain--Delbaen implementation needed for this architecture and the reduction of operator classification to local orbit estimates.
	These claims concern the specific design used in this paper.

	\item \textbf{Research and writing support.}
	We used  GPT-5.6 Sol (OpenAI) and
	GPT-6 Astra (OpenAI) to check notation, formulas, and the domains and codomains of maps, look for gaps in arguments, suggest clearer wording, generate figures and tables, and help find references. GPT-5.6 Sol (OpenAI) and GPT-6 Astra (OpenAI) helped us locate related work in the Bourgain--Delbaen, Gowers--Maurey, and Argyros--Haydon literature.
	We checked the relevant statements in the original sources.
	In particular, the technical notes in Sections~6--8 identify established methods for auxiliary averages, off-weight and missing-weight estimates, stopping trees, and counting arguments.
	The references are given at the points where these methods are used.

	\item \textbf{AI contributions to the proofs.}
	The stopping-event formulation in Lemma~\ref{lem:qnt-aux-avg} was prompted by an AI suggestion.
	We then checked the literature and found that the underlying counting and tree-truncation techniques were already established.

	The contribution to Lemmas~\ref{lem:car-dom-tre}--\ref{lem:pai-bi} was more substantial.
	We confirmed that the remaining gap we had identified in the proof concerned the dependent-sequence estimate for evaluations with low level.
	GPT-5.6 Sol (OpenAI) then supplied the carrier-labelled stopping-tree mechanism and the associated counting proof that we used to close this gap.
	We rewrote the proof in our notation and checked it independently against the definitions and assumptions of our construction.
	Later source checks confirmed earlier uses of weight and depth stopping, boundary-incidence counting, and summation over disjoint intervals.
	These sources are cited beside the relevant arguments.

	\item \textbf{Authors' responsibility.}
	We independently checked the AI-assisted arguments included in the paper and made all final decisions.
	The authors take full responsibility for the mathematical statements, proofs, citations, and final text.
	No AI system is treated as an author or as an authority for mathematical correctness.
\end{enumerate}

\bibliographystyle{amsplain}
\bibliography{cite}

\end{document}